\theoremstyle{plain}
\newtheorem{thm}{Theorem}[section]
\newtheorem{lemma}[thm]{Lemma}
\newtheorem{prop}[thm]{Proposition}
\newtheorem{proposition}[thm]{Proposition}
\newtheorem{conj}[thm]{Conjecture}
\def\@rst #1 #2other{#1}
\newcommand\MR[1]{\relax\ifhmode\unskip\spacefactor3000 \space\fi
  \MRhref{\expandafter\@rst #1 other}{#1}}
\newcommand{\MRhref}[2]{\href{http://www.ams.org/mathscinet-getitem?mr=#1}{MR#2}}
\theoremstyle{definition}
\newtheorem{defn}[thm]{Definition}
\newtheorem{definition}[thm]{Definition}
\newtheorem{remark}[thm]{Remark}
\newtheorem{prob}[thm]{Problem}
\numberwithin{equation}{section}
\newcommand{\dsb}{\begin{adjustwidth}{2.5em}{0pt}
\begin{footnotesize}}
\newcommand{\dse}{\end{footnotesize}
\end{adjustwidth}}
\newcommand{\ssb}{\begin{adjustwidth}{2.5em}{0pt}}
\newcommand{\sse}{\end{adjustwidth}}
\newcommand{\aryb}{\begin{eqnarray*}}
\newcommand{\arye}{\end{eqnarray*}}
\def\alb#1\ale{\begin{align*}#1\end{align*}}
\def\allb#1\alle{\begin{align}#1\end{align}}
\newcommand{\eqb}{\begin{equation}}
\newcommand{\eqe}{\end{equation}}
\newcommand{\eqbn}{\begin{equation*}}
\newcommand{\eqen}{\end{equation*}}
\newcommand{\BB}{\mathbbm}
\newcommand{\ol}{\overline}
\newcommand{\ul}{\underline}
\newcommand{\op}{\operatorname}
\newcommand{\frk}{\mathfrak}
\newcommand{\ep}{\varepsilon}
\newcommand{\wt}{\widetilde}
\newcommand{\wh}{\widehat}
\newcommand{\mcl}{\mathcal}
\newcommand{\bdy}{\partial}
\newcommand{\eps}{\varepsilon}
\newcommand{\cc}{{\mathbf{c}}}
\newcommand{\ccL}{{\mathbf{c}_{\mathrm L}}}
\newcommand{\ccM}{{\mathbf{c}_{\mathrm M}}}
\DeclareMathOperator{\SLE}{SLE}
\DeclareMathOperator{\Cov}{Cov}
\DeclareMathOperator{\Var}{Var}
 \def\bbi{\mathbbm{i}}
 \def\Z{\mathbb{Z}} 
 \def\R{\mathbb{R}} 
 \renewcommand{\P}{\mathbb{P}}
  \def\bbH{\mathbb{H}}
 \def\D{\mathbb{D}} 
 \def\C{\mathbb{C}}
 \def\cU{\mathcal{U}}
 \def\cT{\mathcal{T}}
 \def\cS{\mathcal{S}}
 \def\cQ{\mathcal{Q}}
 \def\cP{\mathcal{P}}
 \def\cN{\mathcal{N}}
 \def\cM{\mathcal{M}}
 \def\cL{\mathcal{L}}
 \def\cG{\mathcal{G}}
 \def\cF{\mathcal{F}}
 \def\cD{\mathcal{D}}
 \def\cC{\mathcal{C}}
 \def\cB{\mathcal{B}}
\let\originalleft\left
\let\originalright\right
\renewcommand{\left}{\mathopen{}\mathclose\bgroup\originalleft}
\renewcommand{\right}{\aftergroup\egroup\originalright}
\title{Cutting $\gamma$-Liouville quantum gravity by Schramm-Loewner evolution for $\kappa \notin \{\gamma^2,16/\gamma^2\}$}
 \date{ }
 \author{
\begin{tabular}{c} Morris Ang\\[-5pt]\small Columbia University \end{tabular}
\begin{tabular}{c} Ewain Gwynne\\[-5pt]\small University of Chicago \end{tabular}  
}
\begin{document}

\maketitle

\begin{abstract}
There are many deep and useful theorems relating Schramm-Loewner evolution (SLE$_\kappa$) and Liouville quantum gravity ($\gamma$-LQG) in the case when the parameters satisfy $\kappa \in \{\gamma^2,16/\gamma^2\}$. 
Roughly speaking, these theorems say that the SLE$_\kappa$ curve cuts the $\gamma$-LQG surface into two or more independent $\gamma$-LQG surfaces.

We extend these theorems to the case when $\kappa \notin \{\gamma^2,16/\gamma^2\}$. Roughly speaking we show that if we have an appropriate variant of SLE$_\kappa$ and an independent $\gamma$-LQG disk, then the SLE curve cuts the LQG disk into two or more $\gamma$-LQG surfaces which are \emph{conditionally} independent given the values along the SLE curve of a certain collection of auxiliary imaginary geometry fields, viewed modulo conformal coordinate change. 
These fields are sampled independently from the SLE and the LQG and have the property that that the sum of the central charges associated with the SLE$_\kappa$ curve, the $\gamma$-LQG surface, and the auxiliary fields is 26. This condition on the central charge is natural from the perspective of bosonic string theory.  
We also prove analogous statements when the SLE curve is replaced by, e.g., an LQG metric ball or a Brownian motion path. 
Statements of this type were conjectured by Sheffield and are continuum analogs of certain Markov properties of random planar maps decorated by two or more statistical physics models. 

We include a substantial list of open problems.
\end{abstract}

\tableofcontents
\bigskip

\noindent\textbf{Acknowledgments.}
This work has benefited from enlightening discussions with many people, including Amol Aggarwal, Jacopo Borga, Ahmed Bou-Rabee, Nina Holden, Minjae Park, Josh Pfeffer, Guillaume Remy, Scott Sheffield, Xin Sun, Jinwoo Sung, and Pu Yu. 
M.A.\ was supported by the Simons Foundation as a Junior Fellow at the Simons Society of Fellows. 
E.G.\ was partially supported by a Clay research fellowship and by NSF grant DMS-2245832.

\section{Introduction}

\subsection{Overview} 
\label{sec-overview}

\textbf{Schramm-Loewner evolution} (SLE$_\kappa$) for $\kappa > 0$ is a one-parameter family of random fractal curves in the plane originally introduced by Schramm~\cite{schramm0}. SLE describes or is conjectured to describe the scaling limits of various discrete random curves which arise in statistical mechanics. See, e.g.,~\cite{lawler-book,bn-sle-notes} for introductory expository works on SLE. 

\textbf{Liouville quantum gravity} ($\gamma$-LQG) for $\gamma \in (0,2]$ is a one-parameter family of random fractal surfaces (2d Riemannian manifolds) which arise, e.g., in string theory and conformal field theory~\cite{polyakov-qg1}, and as the scaling limits of random planar maps. LQG surfaces are too rough to be Riemannian manifolds in the literal sense. Instead, a $\gamma$-LQG surface can be represented as a random metric measure space parametrized by a domain in $\BB C$ (or more generally a Riemann surface), viewed modulo a conformal change of coordinates rule. See Definition~\ref{def-QS} for a precise definition and, e.g.,~\cite{bp-lqg-notes,gwynne-ams-survey,sheffield-icm} for introductory expository articles on LQG. 

Instead of the parameters $\kappa$ and $\gamma$, it is often useful to instead describe SLE and LQG in terms of the \textbf{central charge} parameters, which are related to $\kappa$ and $\gamma$ by\footnote{Some works associate LQG with the \textbf{matter central charge} $\ccM$ instead of our $\ccL$. Our $\ccL$ is the central charge of Liouville conformal field theory and is related to $\ccM$ by $\ccM = 26-\ccL$.}
\eqb \label{eqn-cc}
\cc_{\op{SLE}} = 1 - 6\left(\frac{2}{\sqrt\kappa} - \frac{\sqrt\kappa}{2}\right)^2 \quad\text{and} \quad
\ccL = 1 + 6\left(\frac{2}{\gamma} + \frac{\gamma}{2} \right)^2 .
\eqe
Each of SLE and LQG can be associated with conformal field theories with their respective central charges, see, e.g.,~\cite{kang-makarov-cft,dkrv-lqg-sphere}.

\begin{figure}[ht!]
\begin{center}
\includegraphics[width=0.65\textwidth]{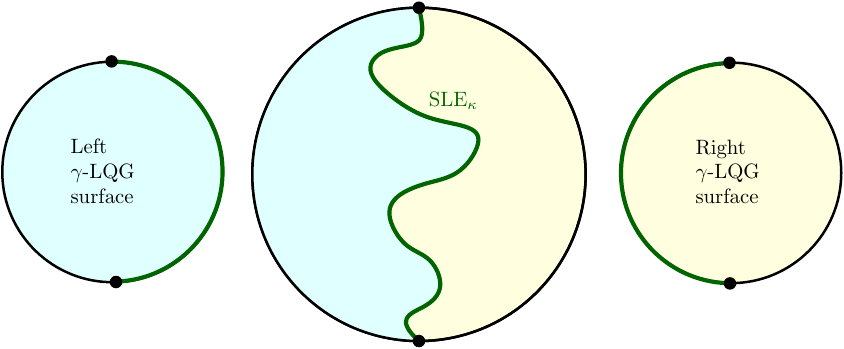}  
\caption{\label{fig-disk-welding} A Liouville quantum gravity surface parametrized by the disk together with an SLE$_\kappa$-type curve between two marked boundary points. Previous works (see in particular \cite{shef-zipper,wedges, ahs-welding}) have shown that if $\kappa = \gamma^2$ and the precise variant of SLE is chosen appropriately, then the sub-LQG surfaces parametrized by the regions to the left and right of the curve are conditionally independent given their LQG boundary lengths (see Definition~\ref{def-QS} for a precise definition of LQG surfaces). In Theorem~\ref{thm-sle-chordal}, we show that if $\kappa \not=\gamma^2$, then the sub-LQG surfaces parametrized by the left and right sides of the curve are conditionally independent given information about the values along the curve of certain auxiliary random fields, sampled independently from the SLE and the LQG. We also prove variants of this statement for other types of SLE$_\kappa$ curves, including ones with multiple complementary connected components. 
}
\end{center}
\end{figure}

In this paper we will study the relationships between SLE and LQG. The first such relationship, called the \textbf{quantum zipper}, was established by Sheffield in~\cite{shef-zipper}. Roughly speaking, this result and its many extensions (including the \textbf{mating of trees} theorem~\cite{wedges}) say the following. Suppose we have a certain SLE$_\kappa$-type curve and a certain $\gamma$-LQG surface, sampled independently from each other, and that the parameters are \textbf{matched} in the sense that
\eqb \label{eqn-matched}
\kappa \in \left\{\gamma^2 ,\frac{16}{\gamma^2} \right\} , \quad\text{equivalently} \quad \cc_{\op{SLE}} = 26-\ccL .
\eqe
Then the sub-LQG surfaces parametrized by the complementary connected components of the SLE$_\kappa$ curve are conditionally independent given the LQG lengths of their boundaries, and their laws can be described explicitly. See Figure~\ref{fig-disk-welding}. This independence property is the continuum analog of certain Markovian properties for random planar maps decorated by statistical physics models. See the survey article~\cite{ghs-mating-survey} for more explanation.

Results of the above type have a huge number of applications, to such topics as SLE and LQG individually, conformal field theory (see, e.g.,~\cite{ars-fzz}), the geometry of random planar maps (see, e.g.,~\cite{ghs-map-dist}), random permutations (see, e.g.,~\cite{borga-skew-permuton}), and the moduli of random surfaces (see, e.g.,~\cite{ars-annulus}). Particularly noteworthy consequences include the equivalence of $\gamma = \sqrt{8/3}$ LQG and the Brownian map \cite{lqg-tbm1} and convergences of conformally embedded random planar maps to LQG \cite{gms-tutte, hs-cardy-embedding}. Many of these applications are surveyed in~\cite{ghs-mating-survey}. 

In this paper, we will establish the first relationships between SLE and LQG in the case when the parameters are \textbf{mismatched}, meaning that $\gamma$ and $\kappa$ are not related as in~\eqref{eqn-matched}. Roughly speaking, we prove the following. Suppose we have an appropriate SLE$_\kappa$-type curve and a $\gamma$-LQG surface (specifically, an LQG disk), sampled independently from each other as above, but whose parameters do not satisfy~\eqref{eqn-matched}. Then the sub-LQG surfaces parametrized by the complementary connected components of the SLE$_\kappa$ curve are not independent, but they are \emph{conditionally} independent if we condition on certain extra information along the SLE$_\kappa$ curve. The necessary extra information is described by one or more random generalized functions, sampled independently from the SLE and the LQG, with the property that $\cc_{\op{SLE}} + \ccL$ plus the central charges associated with the extra generalized functions is equal to 26. These extra random generalized functions are described in terms of the theory of \textbf{imaginary geometry}~\cite{ig1,ig2,ig3,ig4}, which we review in Section~\ref{sec-sle-prelim}. See Theorem~\ref{thm-sle-chordal} for a precise statement. 
Conditional independence statements of the above type were conjectured by Sheffield in private communication.

We also prove similar conditional independence statements when the SLE curve is replaced by other interesting random sets, such as a conformal loop ensemble gasket, a Brownian motion path, or an LQG metric ball. See Theorems~\ref{thm-cle}, \ref{thm-bm} and~\ref{thm-ball}.

The condition that the total central charge should be 26 dates back to Polyakov's seminal path integral formulation of bosonic string theory. For the conformal field theory corresponding to LQG coupled with conformal matter, the central charge condition is equivalent to the Weyl invariance of the theory \cite{polyakov-qg1, dp-multiloop}, i.e., invariance when the underlying Riemannian manifold $(\Sigma, g)$ is replaced by $(\Sigma, e^{2\sigma} g)$. 
This perspective, together with the ansatz that the conformal field theory should correspond to random planar maps decorated by statistical physics models (see the next paragraph), gives physical context and justification for the results of this paper.
The relationships between SLE and LQG in the matched case~\eqref{eqn-matched} can be viewed as Markov properties for Liouville CFT with central charge $\ccL$ coupled to a single matter field of central charge $26-\ccL$. 
This paper shows that one also has Markov properties when instead Liouville CFT is coupled to multiple matter fields with total central charge $26-\ccL$.   

Analogously to the matched case, our results are continuum analogs of certain Markovian properties for random planar maps decorated by \emph{multiple} statistical physics models.
Roughly speaking, these properties say the following. Suppose we have a random planar map decorated by a two or more statistical physics models (e.g., a uniform spanning tree and two discrete Gaussian free fields) and we construct an interface from one of the models (e.g., a branch of the spanning tree). Then the planar maps in the complementary connected components of the interface are conditionally independent given the information about other statistical mechanics models along the interface (in our example, this corresponds to the restrictions of the discrete Gaussian free fields to the spanning tree branch). Similar Markovian properties also hold for related objects, such as uniform meanders. 
See Appendices~\ref{sec-rpm} and~\ref{sec-meander} for further explanations. 

Furthermore, just like in the matched case, our results have a large number of potential applications and extensions, some of which are discussed in Section~\ref{sec-open-problems}.

The proofs in this paper involve several novel ideas which we expect to be useful elsewhere. These include a conditional independence statement for uniform meanders (Appendix \ref{sec-meander}) and a ``rotational invariance" property for the central charges associated with a collection of independent Gaussian free fields in the setting of imaginary geometry (Proposition~\ref{prop-ig-rotate}). This rotational invariance property is the source of the ``total central charge 26" condition in our results. See Remark~\ref{remark-total-central-charge} and Section~\ref{sec-outline} for further discussion of the main ideas of the proofs.

Finally, a potential extension of the present work would allow us to rigorously construct a \textbf{Markovian string trajectory} in $\R^d$ corresponding to bosonic string theory, from a Liouville quantum gravity surface together with $d$ independent Gaussian free fields. See Section~\ref{sec-string-theory} for further discussion.

\subsection{LQG and imaginary geometry surfaces}

\subsubsection{Liouville quantum gravity}

For $\gamma \in (0,2)$, \textbf{$\gamma$-Liouville quantum gravity (LQG)} is, heuristically speaking, the random geometry described by the random Riemannian metric tensor
\eqbn
e^{\gamma \Phi} \, (dx^2 + dy^2) 
\eqen
where $dx^2 + dy^2$ denotes the Euclidean metric tensor on a domain $D\subset\BB C$ and $\Phi$ is a variant of the Gaussian free field (GFF) on $D$. This Riemannian metric tensor does not make literal sense since $\Phi$ is a generalized function. 
Following~\cite{shef-kpz,shef-zipper,wedges}, we rigorously define LQG surfaces as equivalence classes of field/domain pairs modulo a conformal coordinate change formula depending on the parameter $Q := \frac\gamma2 + \frac2\gamma$. Equivalently,
\[
Q(\ccL) = \sqrt{\frac{\ccL - 1}6} \quad \text{ for }\ccL > 25, \qquad\qquad\qquad \ccL(Q) = 1 + 6Q^2 \quad \text{ for } Q > 2.
\]
Let $\wh \C= \C \cup \{ \infty\}$ be the Riemann sphere.

\begin{definition}\label{def-QS}
	Consider pairs $(D,\Phi)$ where $D\subset \C$ is a bounded open set and $\Phi$ is a distribution defined on $D$.
	For $\ccL >25$, 
	a \textbf{(generalized) $\ccL$-LQG surface}
	is an equivalence class of such pairs under the equivalence relation $\sim_{\ccL}$ where $(D, \Phi) \sim_{\ccL} (\wt D, \wt \Phi)$ if there exists a homeomorphism $f:  \C \to \C$ with $f(D) = \wt D$ that is conformal on $D$ such that $\Phi = \wt \Phi \circ f + Q(\ccL) \log |f'|$. 
\end{definition}

We emphasize that the domain $D$ in Definition~\ref{def-QS} need not be simply connected or even connected.  
Definition~\ref{def-QS} differs slightly from the definitions of LQG surfaces found elsewhere in the literature (e.g., in~\cite{wedges}), in that we require that $D$ is bounded and we require that $f : \C \to \C$ instead of just $f : D \to \wt D$ or $f : \ol D \to \ol{\wt D}$. The reason for these modifications is that we will eventually consider LQG surfaces coupled to imaginary geometry fields, so we need to ensure that there is a well-defined notion of the argument of $f'$. See Section~\ref{subsec-coord-change} for further discussion.

We call an equivalence class representative $(D, \Phi)$ an \textbf{embedding} of the  LQG surface. When $\Phi$ is a variant of the Neumann (free-boundary) GFF on $D$, one can define the LQG area measure $\mu_\Phi$ on $D$, which is a limit of regularized versions of $e^{\gamma \Phi(x+i y)} \,dx\,dy$~\cite{shef-kpz,kahane,rhodes-vargas-log-kpz,berestycki-gmt-elementary}.
Similarly, one can define the LQG boundary length measure $\nu_\Phi$ on $\partial D$ and the LQG metric (distance function) $\mathfrak d_\Phi$ on $\ol D$ \cite{dddf-lfpp, gm-uniqueness} (see~\cite{hm-metric-gluing} for the extension of the metric from $D$ to $\ol D$). See Section~\ref{sec-metric-prelim} for more background on the LQG metric. These are compatible with the LQG coordinate change, i.e.,  in Definition~\ref{def-QS} we have $\mu_{\wt \Phi} = f_* \mu_\Phi$, $\nu_{\wt \Phi} = f_* \nu_\Phi$ and $\mathfrak d_{\wt \Phi} = f_* \mathfrak d_\Phi$, and are thus intrinsic to the LQG surface.

\begin{remark} \label{remark-subsurface}
If $(D,\Phi)$ is an embedding of an LQG surface and $U\subset D$ is open, we will often slightly abuse notation by writing $(U , \Phi)/{\sim_{\ccL}}$ instead of $(U,\Phi|_U)/{\sim_{\ccL}}$ for the LQG surface obtained by restricting $\Phi$ to $U$. Moreover, if instead $D\subset \C$ is a bounded set, not necessarily open, then $(D, \Phi)/{\sim_{\ccL}}$ refers to the LQG surface $(\mathrm{int}(D), \Phi)/{\sim_{\ccL}}$. These conventions also apply for imaginary geometry surfaces.
\end{remark}

\subsubsection{Imaginary geometry}

\textbf{Imaginary geometry (IG)} is, heuristically speaking, the random geometry described by the vector field
\eqb \label{eqn-ig-vector-field}
e^{i \Psi / \chi} 
\eqe 
where $\chi > 0$ and $\Psi$ is a variant of the Gaussian free field on a domain $D\subset\BB C$~\cite{ig1,ig2,ig3,ig4}. 
We associated IG with the central charge $\cc_{\mathrm{M}} < 1 $ which is related to $\chi$ by 
\eqb \label{eq-c-chi}
\chi(\cc_\mathrm{M}) =  \sqrt{\frac{1-\cc_\mathrm{M}}6} \quad \text{ for }\cc_\mathrm{M} \leq 1, \qquad\qquad\qquad \cc_\mathrm{M}(\chi) = 1 -  6 \chi^2 \quad \text{ for } \chi \geq 0.
\eqe
IG surfaces  with central charge $\cc_{\mathrm{M}}$ play a central role in the study of SLE$_\kappa$ when $\cc_\mathrm{SLE}(\kappa) = \cc_{\mathrm{M}}$. Roughly speaking, the flow lines of the vector field~\eqref{eqn-ig-vector-field}, i.e., the solutions to the formal differential equation $\frac{d}{dt} \eta(t) = e^{i \Phi(\eta(t))/\chi}$, are SLE$_\kappa(\rho)$ curves where $\kappa \in (0,4)$ satisfies $\cc_{\mathrm{SLE}}(\kappa) = \ccM$. 
If $\kappa' = 16/\kappa > 4$ is the other solution to $\cc_{\mathrm{SLE}}(\kappa) = \ccM$, then SLE$_{\kappa'}(\rho')$ curves can instead be viewed as ``counterflow lines" of this same vector field.

We will also consider IG surfaces in the case when $\chi = 0$, i.e., $\cc_{\mathrm{M}} = 1$ and $\kappa = 4$. In this case, we view SLE$_4$ curves as level lines of the field $\Psi$ (as in~\cite{ss-contour}), instead of as flow lines of the vector field~\eqref{eqn-ig-vector-field}. See Section~\ref{sec-sle-prelim} for background on $\SLE_\kappa(\rho)$, flow lines, counterflow lines, and level lines.

Similarly to the case of LQG, we define imaginary geometry surfaces rigorously in terms of a conformal coordinate change rule. Previous works using imaginary geometry have considered IG surfaces described by a single field. In this paper we will need to consider IG surfaces described by multiple fields, which are associated with a vector of central charge values.

\begin{definition}\label{def-IG}
	Let $\cc_1, \dots, \cc_n \leq 1$ and write $\ul \cc = (\cc_1, \dots, \cc_n)$. 
	Consider tuples  $(D,\Psi_1, \dots, \Psi_n)$ where $D\subset \C$ is a bounded open set and $\Psi_1, \dots, \Psi_n$ are distributions defined on $D$.  A \textbf{$\ul \cc$-IG surface} is an equivalence class of such tuples under the equivalence relation $\sim_{\ul \cc}$ where $(D, \Psi_1, \dots, \Psi_n) \sim_{\ul \cc} (\wt D, \wt \Psi_1, \dots, \wt \Psi_n)$ if there exists an integer $m$ and a homeomorphism $f:  \C \to \C$ with $f(D) = \wt D$  that is conformal on $D$ such that $\Psi_i = \wt \Psi_i \circ f - \chi(\cc_i) \arg f'  + 2\pi \chi(\cc_i) m$ for all $i$. 
\end{definition}

Here, when $D$ is simply connected, $\arg f'$ is continuous version of the argument, so $\arg f'$ is uniquely specified up to an additive constant in $2\pi \Z$. For general $D$,   $\arg f': D \to \R$ can still be uniquely defined up to an element of $2\pi \Z$ because $f$ is a homeomorphism of $ \C$, see  Section~\ref{subsec-arg} for details.  
Definition~\ref{def-IG} does not depend on the choice of $\arg f'$ since $m$ can take any integer value.

\begin{remark}\label{rem-multiple-IG-fields}
	The original definition of IG surface  in \cite{shef-zipper, ig1} only considers $n=1$ and makes the integer $m$ implicit in the choice of the $\arg$ function. We make $m$ explicit to clarify the $n > 1$ case.
	A single IG field can be understood as being defined modulo $2\pi \chi(\cc_\mathrm{M})$, but this is no longer the case when there are multiple IG fields. For instance, the difference in boundary values for two IG fields with $\cc = \cc_\mathrm{M}$ is well defined as a real number and not just modulo $2\pi \chi(\cc_\mathrm{M})$. See Section~\ref{subsec-coord-change} for more discussion on differences between our Definitions~\ref{def-QS} and~\ref{def-IG} and other definitions in the literature. 
\end{remark}

\subsubsection{LQG surfaces decorated by imaginary geometry fields}

We now extend Definition~\ref{def-QS} to the setting where a $\ccL$-LQG surface is decorated by {IG fields}, that is,  distributions that transform according to the IG coordinate change rule. 

\begin{definition}\label{def-QS-IG}
	Let $\ul{ \mathbf c} = (\ccL, \mathbf c_1, \dots, \mathbf c_n)$ where $\ccL \geq 25$ and $\mathbf c_i \leq 1$ for each $i=1,\dots,n$. Consider tuples $(D, \Phi, \Psi_1, \dots, \Psi_n)$ where $D \subset \C$ is a bounded open set and $\Phi, \Psi_1, \dots, \Psi_n$ are distributions defined on $D$. We define an equivalence relation where $(D, \Phi, \Psi_1, \dots, \Psi_n) \sim_{\ul {\mathbf c}} (\wt D, \wt \Phi, \wt \Psi_1, \dots, \wt \Psi_n)$ if there exists an integer $m$ and a homeomorphism $f:  \C \to  \C$ with $f(D) = \wt D$ that is conformal on $D$ such that 
\eqb \label{eqn-qs-ig-coord}
\Phi = \wt \Phi \circ f + Q(\ccL) \log |f'| \quad \text{and} \quad \Psi_i = \wt \Psi_i \circ f - \chi(\mathbf c_i) \arg f' + 2\pi \chi(\mathbf c_i) m ,\quad \forall i=1,\dots,n .
\eqe  
We call an equivalence class under $\sim_{\ul{\mathbf c}}$ a \textbf{(generalized) $\ccL$-LQG surface decorated by IG fields with central charges $\mathbf c_1, \dots, \mathbf c_n$}. 
\end{definition}

An LQG surface decorated by $n$ IG fields can be thought of as a continuum analog of a random planar map decorated by $n$ statistical physics models. Indeed, for certain decorated random planar maps of this type, the LQG surface should describe the scaling limit of the underlying random planar map and the IG fields should describe the scaling limit of some notion of ``height function" associated with the statistical physics models. See, e.g.,~\cite{blr-dimer-universality} for an example of a situation where an imaginary geometry field arises as the scaling limit of a height function.   
 
We can extend Definition~\ref{def-QS-IG} to additionally keep track of sets and curves, by requiring that the homeomorphism identifies corresponding objects. Precisely, consider $n+4$-tuples \\$(D, \Phi, \Psi_1, \dots, \Psi_n, (K_i)_{i\in I} , (\eta_j)_{j\in J})$ where $(D,\Phi,\Psi_1,\dots,\Psi_n)$ is as in Definition~\ref{def-QS-IG}, $(K_i)_{i\in I}$ is a collection of subsets of $\ol D$, and $(\eta_j)_{j\in J}$ is a collection of parametrized curves in $\ol D$. For two such $n+4$-tuples, we say that 
\eqbn
(D, \Phi, \Psi_1, \dots, \Psi_n, (K_i)_{i\in I} , (\eta_j)_{j\in J} ) \sim_{\ul{\mathbf c}}(\wt D, \wt \Phi, \wt \Psi_1, \dots, \wt \Psi_n, (\wt K_i)_{i\in I} , (\wt \eta_j)_{j\in J} )
\eqen if for some $f$ and $m$ as in Definition~\ref{def-QS-IG}, we have that~\eqref{eqn-qs-ig-coord} holds and also
\eqb \label{eqn-qs-ig-coord-decorated}
f(K_i) = \wt K_i ,\quad \forall i  \in I \quad \text{and} \quad f \circ \eta_j = \wt \eta_j ,\quad \forall j \in J .
\eqe
In the case where $K_i = \{ z_i\}$ is a single point, we simply write $z_i$ rather than $\{ z_i\}$. 

We will often omit the subscript in $\sim_{\ccL}$ and $\sim_{\ul{\mathbf c}}$ and just write $\sim$ when the choice of $\ccL$ or $\underline{\mathbf c}$ is clear from context. 

The main LQG and IG surfaces we will work with in this paper are as follows.

\begin{defn} \label{def-lqg-disk}
For $\ccL > 25$, let $\mathrm{UQD}_{\ccL}$ be the law of the $\ccL$-LQG disk $(\D, \Phi , -\bbi)/{\sim}_{\ccL}$ conditioned to have unit boundary length and having one marked boundary point sampled from its boundary length measure. This LQG surface describes the  scaling limit of planar maps with the disk topology. It was introduced in  \cite[Section 4.5]{wedges}, and can alternatively be defined via Liouville CFT~\cite{hrv-disk,cercle-quantum-disk}; see Definition~\ref{def-disk} for a precise description. Similarly, let $\mathrm{UQD}_{\ccL}^\bullet$ be the law of the LQG surface $(\D,\Phi,-\bbi,0)/{\sim}_{\ccL}$ where $(\D,\Phi,-\bbi)/{\sim}_{\ccL}$ is sampled from the re-weighted probability measure $\frac{\mu_\Phi(\D)}{\mathbb E[\mu_\Phi(\D)]}\cdot \mathrm{UQD}_{\ccL}$ (where $\mu_\Phi$ is the LQG area measure) and 0 corresponds to a point sampled from $\mu_\Phi$, normalized to be a probability measure.
\end{defn}
In this paper we will not need to work with the precise descriptions of $\mathrm{UQD}_{\ccL}$ or $\mathrm{UQD}_{\ccL}^\bullet$.

	Let $\mathfrak h : \D \to \R$ be the harmonic function whose boundary value at $z \in \partial \D$ is $\arg(\bbi z)$, where arg takes values in $[0,2\pi)$. On the boundary, $\mathfrak h(z)$ equals the  counterclockwise tangent direction at $z$, with a discontinuity at $-\bbi$.
\begin{defn} \label{def-ig-disk}
For $\cc_{\mathrm{M}} \leq 1$, let $\mathrm{IG}_{\cc_\mathrm{M}}$ denote the law of the $\cc_\mathrm{M}$-IG surface $(\D, h + \chi(\cc_{\mathrm{M}})\mathfrak h, -\bbi)$ on the one-pointed domain $(\D, -\bbi)$ where $h$ is a zero boundary GFF on $\D$. Similarly, for $\ul \cc = (\cc_1, \dots, \cc_n)$, let $h_1, \dots, h_n$ be $n$ independent zero boundary GFFs on $\D$, and let $\mathrm{IG}_{\ul \cc}$ be the law of the $\ul \cc$-IG surface $(\D, h_1 + \chi(\cc_1) \mathfrak h, \dots, h_n + \chi(\cc_n) \mathfrak h, -\bbi)/{\sim_{\ul \cc}}$. 
\end{defn}

	See Lemma~\ref{lem-ig-bdy-cond} for a description of $\mathrm{IG}_{\cc_\mathrm{M}}$ in other one-pointed domains. In particular, for the unbounded one-pointed domain $(\bbH, \infty)$ the IG surface corresponding to  $\mathrm{IG}_{\ul\cc}$ would be $(\bbH, h_1, \dots, h_n, \infty)/{\sim_{\ul\cc}}$ where the $h_i$ are independent zero boundary GFFs, see Section~\ref{subsec-coord-change} for details.

 \begin{remark}
 	The zero boundary GFF, viewed modulo conformal maps, is the special case of $\mathrm{IG}_\ccM$ where $\ccM = 1$. Indeed,  $\chi (1) = 0$ so the $(\ccM = 1)$-IG coordinate change rule is $(D, \Psi)\sim_{\ccM=1} (\wt D, \wt \Psi)$ if $\Psi = \wt \Psi \circ f$ for some homeomorphism $f: \C \to \C$ which is conformal on $D$. Thus, for a Jordan domain $D$ and $x \in \partial D$, if $\Psi$ is a zero boundary GFF on $D$ then the law of $(D, \Psi, x)/{\sim_{\ccM = 1}}$ is $\mathrm{IG}_{\ccM= 1}$.
 \end{remark}

\subsection{Main results}
\label{subsec-intro-results}

The main results of this paper take the following form. Suppose we have an LQG disk decorated by a vector of imaginary geometry fields and a set $K$ (which could be an SLE curve, an LQG metric ball, etc.), as in Definition~\ref{def-QS-IG} and the discussion just after. 
If the sum of the central charges of all of these objects is 26, then the IG-decorated LQG surfaces obtained by restricting our fields to the complementary connected components of $K$ are conditionally independent given, roughly speaking, the IG-decorated LQG surface obtained by restricting the fields to an infinitesimal neighborhood of $K\cup \partial \D$.

For concreteness, we only state our results for the LQG disk, but we expect that similar statements hold for LQG surfaces with other topologies; see Problem~\ref{prob-other-surfaces}.
 
Our first main result gives a conditional independence statement of the above type when we cut by a chordal SLE curve. See Figures~\ref{fig-disk-welding} and~\ref{fig-thms} (a). We first discuss the setup. 

Suppose $\ccL > 25$ and $\cc_\mathrm{SLE}, \cc_1, \dots, \cc_n \leq 1$ satisfy 
\eqb \label{eqn-cc-sum}
\ccL + \cc_\mathrm{SLE}+ \cc_1 + \cdots + \cc_n = 26 .
\eqe 
Let $(\D, \wt \Phi, -\bbi)$ be an embedding of a central charge-$\ccL$ LQG disk with unit boundary length (i.e., a sample from $\mathrm{UQD}_{\ccL}$; see Definition~\ref{def-lqg-disk}), let $x \in \partial \D$ be the point such that the two boundary arcs from $-\bbi$ to $x$ each have LQG length $\frac12$, and apply a conformal map fixing $-\bbi$ and sending $x \mapsto \bbi$ to get an embedding $(\D, \Phi, -\bbi)$ of the same LQG surface. For this embedding, the LQG lengths of the boundary arcs from $-\bbi$ to $\bbi$ are equal. Equivalently, $(\D,\Phi , -\bbi , \bbi)$ is an embedding of a doubly marked LQG disk with left and right boundary lengths each equal to $1/2$.

Independently from $\Phi$, let $(\D, \Psi_1, \dots, \Psi_n, -\bbi)$ be an embedding of an imaginary geometry surface sampled from $\mathrm{IG}_{(\cc_1, \dots, \cc_n)}$ (Definition~\ref{def-IG}).
Also let $\kappa  > 0$ satisfy $\cc_{\mathrm{SLE}}(\kappa) = \cc_{\mathrm{SLE}}$ and let $K$ be the trace of an independent $\SLE_\kappa(\rho_L; \rho_R)$ curve in $(\D, -\bbi, \bbi)$ (with force points immediately to the left and right of its starting point) such that 
\eqb \label{eqn-rho-values}
\min(\rho_L, \rho_R) > -2 \quad \text{and} \quad \rho_L+ \rho_R \in \{-2,\kappa - 6\} .
\eqe 
This constraint on $\rho_L,\rho_R$ ensures that $K$ can be realized as a flow line or counterflow line of an imaginary geometry surface as in Definition~\ref{def-ig-disk} embedded into $(\BB D , -\bbi)$, see~\cite[Theorem 1.1]{ig1} or Section~\ref{sec-sle-prelim}.

Recall that we want to condition on the IG-decorated LQG surface obtained by restricting $\Phi ,\Psi_1,\dots,\Psi_n$ to an infinitesimal neighborhood of $K\cup \partial \D$. 
To make sense of this, we will define the $\sigma$-algebra we want to condition on as the intersection of $\sigma$-algebras obtained by restricting our fields to neighborhoods of $K\cup \partial \D$. 
This is similar to how one defines the $\sigma$-algebra generated by the restriction of the GFF to a closed set when talking about local sets (see~\cite[Section 3.3]{ss-contour}).

The LQG metric $\mathfrak d_\Phi$ provides a convenient way of defining neighborhoods of $K\cup \partial \D$ in a way which is intrinsic to the LQG surface.
We write $\cB_r(z ; {\mathfrak d_\Phi})$ to denote the ${\mathfrak d_\Phi}$-metric ball centered at $z$ having radius $r$, and more generally, for a set $S\subset\D$, we write 
\eqb \label{eqn-lqg-nbd}
\cB_r(S; {\mathfrak d_\Phi}) = \{ z \: : \: {\mathfrak d_\Phi}(z, S) < r\} . 
\eqe 
We then define (using the notational convention of Remark~\ref{remark-subsurface}) the $\sigma$-algebra 
\eqb \label{eqn-restriction-sigma-algebra}
\cF 
= \sigma\left( \left(  \cB_{0+}(K \cup \bdy\D ; \mathfrak d_\Phi) , \Phi   , \Psi_1, \dots, \Psi_n\right)/{\sim} \right) 
:= \bigcap_{\eps > 0} \sigma((\cB_\eps(K \cup \partial \D; {\mathfrak d_\Phi}), \Phi, \Psi_1, \dots, \Psi_n, K,  -\bbi)/{\sim}).
\eqe 
See Section~\ref{subsec-sigma} for a precise definition of $\sigma$-algebras generated by decorated LQG surfaces.
 We use the LQG metric $\mathfrak d_\Phi$ to give a concise definition of $\cF$ in~\eqref{eqn-restriction-sigma-algebra}, but there are also other equivalent definitions. For example, $\cF$ can equivalently be defined using space-filling SLE segments rather than the LQG metric, see Lemma~\ref{lem-equiv-F}.

We emphasize that $\cF$ contains much less information than $\sigma( K \cup \bdy\D , \Phi|_K   , \Psi_1|_K , \dots, \Psi_n|_K )$ since we are viewing everything modulo conformal maps.  
For example, in the closely related case where $\kappa = \gamma^2 \in (2,4)$,  $\Phi$ is an embedding of the so-called weight 4 $\gamma$-LQG disk with left and right boundary lengths both equal to 1, $K$ is an independent $\SLE_\kappa$ curve, and we have no auxiliary imaginary geometry fields (i.e., $n=0$), then it is possible to show\footnote{This follows from two inputs. Firstly, conditioned on $\nu_\Phi(K)$, cutting by $K$ gives two independent LQG disks with two marked boundary points and boundary arcs of lengths 1 and $\nu_\Phi(K)$ \cite{ahs-welding}. Secondly, if $(\D, \wt \Phi, -\bbi)/{\sim}$ is an LQG disk, then $\sigma((\partial \D, \wt \Phi)/{\sim}) = \sigma(\nu_{\wt\Phi}(\partial \D))$; this can be obtained from \cite[Theorem 5.1]{msw-non-simple-cle} by exploring a small neighborhood of $\partial \D$ using conformal percolation interfaces.} that $\cF = \sigma(( \cB_{0+}(K \cup \partial \D; \mathfrak d_\Phi) , \Phi)/{\sim})$ equals $\sigma(\nu_\Phi(K))$, where $\nu_\Phi$ denotes $\gamma$-LQG length.
We expect that similar statements also hold in other situations where $\kappa=\gamma^2$ and there are no auxiliary imaginary geometry fields. See also Problem~\ref{prob-exact}. 

\begin{figure}[ht!]
	\begin{center}
		\includegraphics[scale=0.45]{}%
		\caption{\label{fig-thms} For (a)--(e), $K$ is the green set. Our main results state that conditioned on an infinitesimal LQG neighborhood of $K\cup \partial \D$ viewed as an LQG surface decorated by IG fields $\Psi_1, \dots, \Psi_n$, the complementary LQG surfaces decorated by IG fields are conditionally independent. \textbf{(a)} $K$ is an $\SLE_\kappa$-type curve for $\kappa > 0$ (Theorem~\ref{thm-sle-chordal}); in the figure we have $\kappa > 4$. \textbf{(b)} $K$ is the gasket of a $\mathrm{CLE}_\kappa$ for $\kappa \in [4,8)$ (we have drawn simple loops for clarity, but in actuality CLE$_\kappa$ loops intersect themselves and each other for $\kappa \in (4,8)$).  \textbf{(c)} $K$ is the union of a pair of $\SLE_\kappa$-type flow lines of an IG field started at a bulk point where $\kappa <  4$ (Theorem~\ref{thm-sle-interior}). \textbf{(d)} $K$ is the image of a Brownian motion run until it hits the boundary (Theorem~\ref{thm-bm}). \textbf{(e)} $K$ is an LQG metric ball grown until it hits $\partial \D$ (Theorem~\ref{thm-ball}). \textbf{(f)} A conditional independence statement (Theorem~\ref{thm-peeling}) also holds for the LQG-IG peeling processes $(K_\cdot)_{[0,T]}$ of  Definition~\ref{def-quantum-peeling}. 
			For each $t, \delta > 0$, conditioned on $T \geq t$, let $t'$ be the earlier of $T$ and the time the process hits $\partial \cB_\delta(K_t; {\mathfrak d_\Phi})$. Here $K_t$ is dark green, $K_{t'} \backslash K_t$ is light green, and $\cB_\eps(K_t; {\mathfrak d_\Phi})$ is gray and green.
			For LQG-IG peeling processes,  
			$(\cB_\delta(K_t; {\mathfrak d_\Phi}), \Phi, (K_\cdot)_{[0,t']})/{\sim}$ is conditionally independent of $(\Phi, \Psi_1, \dots, \Psi_n)$ given $(\cB_\delta(K_t; {\mathfrak d_\Phi}), \Phi, (K_\cdot)_{[0,t]})/{\sim}$. 
		}
	\end{center}
\end{figure}

\begin{thm}\label{thm-sle-chordal} 
In the setting described just above, the IG-decorated LQG surfaces $(U, \Phi, \Psi_1, \dots, \Psi_n)/{\sim}$ parametrized by the connected components\footnote{To be precise, one assigns indices to the connected components $U_i$ in some way measurable with respect to $\cF$, for instance in decreasing order of their central charge-$\ccL$ LQG boundary lengths with respect to $\Phi$. Then Theorem~\ref{thm-sle-chordal} states that the IG-decorated LQG surfaces $(U_i, \Phi, \Psi_1, \dots, \Psi_n)/{\sim}$ are conditionally independent given $\cF$.} $U$ of $\D \backslash K$ (Definition~\ref{def-QS-IG}) are conditionally independent given the $\sigma$-algebra $\cF$ of~\eqref{eqn-restriction-sigma-algebra}. 
\end{thm}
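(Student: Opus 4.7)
The plan is to reduce Theorem~\ref{thm-sle-chordal} to the classical matched quantum zipper of~\cite{shef-zipper, wedges, ahs-welding} via an augmentation that encodes $K$ as an IG flow line, followed by the rotational invariance of Proposition~\ref{prop-ig-rotate}. Under our hypotheses on $(\rho_L, \rho_R)$, the imaginary geometry construction of~\cite{ig1} lets us enlarge the probability space by an independent IG field $\Psi_0$ on $(\D,-\bbi)$ of central charge $\cc_{\mathrm{SLE}}$, such that $K$ becomes the flow or counterflow line of $\Psi_0$ from $-\bbi$ to $\bbi$. The marginal law of $(\Phi, K, \Psi_1, \dots, \Psi_n)$ is preserved, and we now have a $\ccL$-LQG surface decorated by an $(n+1)$-tuple of IG fields whose total matter central charge equals $\cc_{\mathrm{SLE}} + \sum_{i=1}^n \cc_i = 26 - \ccL$ by~\eqref{eqn-cc-sum}.

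The next step is to apply Proposition~\ref{prop-ig-rotate}: the joint law of the IG fields is invariant under orthogonal rotations of the $\chi$-vector $\vec\chi = (\chi(\cc_{\mathrm{SLE}}), \chi(\cc_1), \dots, \chi(\cc_n))$, with the fields transforming by the same matrix. Choosing a rotation that takes $\vec\chi$ to $(|\vec\chi|, 0, \dots, 0)$, the first rotated field $\Psi^*_0$ has central charge $\cc^*_0 = 1 - 6|\vec\chi|^2 = 26-\ccL-n$, and the remaining $n$ rotated fields have $\chi = 0$ and so behave as ordinary zero-boundary GFFs, independent of $\Psi^*_0$. In this frame, the coupling of $\Phi$ with the flow line $K^*$ of $\Psi^*_0$ is --- together with the $n$ decoupled GFFs --- an instance of the matched setting: the classical zipper~\cite{shef-zipper, wedges, ahs-welding} gives conditional independence of the LQG sub-surfaces cut by $K^*$ given the LQG boundary lengths, and combining with the GFF Markov property for the $n$ free rotated fields yields the analogous conditional independence for the full IG-decorated LQG sub-surfaces cut by $K^*$ in the rotated frame.

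The main obstacle is that $K \neq K^*$ in general, since the rotation mixes $\Psi_0$ with the auxiliary fields and the flow-line operation does not commute with such mixing. To bridge this, the plan is to combine the above with a complementary family of rotations that \emph{fix} $\Psi_0$ (hence $K$) and only mix the auxiliary fields $(\Psi_1,\dots,\Psi_n)$; such a rotation reduces to the case of a single non-trivial auxiliary field $\chi$-component without altering the distinguished interface $K$. By alternating the two kinds of rotations, and exploiting crucially that the conditioning $\sigma$-algebra $\cF$ in~\eqref{eqn-restriction-sigma-algebra} is defined modulo conformal coordinate change (cf.\ Lemma~\ref{lem-equiv-F}), one should transport the conditional independence from $(K^*, \cF^*)$ to $(K, \cF)$. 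Carrying this transport out rigorously --- verifying that rotational invariance interacts correctly with the equivalence relation $\sim_{\ul{\mathbf c}}$ of Definition~\ref{def-QS-IG} and that the boundary-value information for the rotated fields along $K^*$ corresponds to the boundary-value information for the original fields along $K$ --- will be the most delicate part of the argument.
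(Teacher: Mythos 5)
Your first step --- augmenting by an independent IG field $\Psi_0$ of central charge $\cc_{\mathrm{SLE}}$ so that $K$ becomes its flow line (or counterflow line / level line) --- is exactly what the paper does. The second step, invoking the rotational invariance of Proposition~\ref{prop-ig-rotate}, is also in the right spirit. But the rotation target you choose is wrong, and this is fatal for the argument.

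You rotate the $(n+1)$-dimensional $\chi$-vector $\bigl(\chi(\cc_{\mathrm{SLE}}), \chi(\cc_1),\dots,\chi(\cc_n)\bigr)$ to $\bigl(|\vec\chi|,0,\dots,0\bigr)$ and correctly compute that the first rotated field has central charge $26-\ccL-n$ while the other $n$ become $\chi=0$ free GFFs. You then assert that the coupling of $\Phi$ with the flow line of this first field is ``an instance of the matched setting.'' It is not: the matched condition is $\cc = 26-\ccL$, and $26-\ccL-n < 26-\ccL$ for every $n\geq 1$. The $n$ free fields each carry central charge $+1$, so concentrating all the $\chi$-content in a single field pushes its central charge \emph{below} the matched value by exactly $n$. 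The classical quantum zipper therefore does not apply in the rotated frame, and the remainder of the plan collapses.

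The paper avoids this by rotating to central charges $(26-\ccL,0,\dots,0)$ rather than $(26-\ccL-n,1,\dots,1)$: the first field is genuinely matched, but the price is that the remaining $n$ fields have $\cc=0$, i.e.\ $\chi = 1/\sqrt 6 \neq 0$, so they are \emph{not} free GFFs and cannot be discarded by the domain Markov property alone. Handling them is the real content of the proof, and it requires Dub\'edat's locality theorem for $\cc=0$ IG fields (Theorem~\ref{thm-loc-dom}), of which your proposal makes no use. The paper further needs a substantial discretization/mating-of-trees argument (the $\eps$-LQG-IG local set machinery of Definitions~\ref{def-eps-local}--\ref{def-quantum-local}, Propositions~\ref{prop-eps-indep} and~\ref{prop-decomp-quilt-cell}, and the combinatorial template factorization of Section~\ref{subsec-topo}) in order to get any conditional independence statement off the ground even in the matched $n=1$ case --- the conclusion of Theorem~\ref{thm-sle-chordal} does not follow directly from~\cite{shef-zipper, wedges, ahs-welding} even when $n=0$ and $\kappa\not=\gamma^2$, since the conditioning is on a $\sigma$-algebra generated by an infinitesimal LQG neighborhood of $K$, not on LQG boundary lengths. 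Finally, your proposal to ``transport'' independence from $K^*$ to $K$ by alternating rotations is not an argument: a rotation fixing $\Psi_0$ (hence $K$) cannot change the central charge of $\Psi_0$, so it never brings $\cc_{\mathrm{SLE}}$ closer to $26-\ccL$, and no amount of alternating undoes the fundamental central-charge mismatch created by the first rotation. The paper's architecture sidesteps the problem entirely by never requiring $K$ to be a flow line of the rotated field: in Theorem~\ref{thm-main} the rotation is used only to build an auxiliary space-filling $\eta'$ and the Poissonian discretization, while $K$ remains a local set of the original $\Psi_0$.
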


\begin{remark} \label{remark-total-central-charge}
Conditional independence statements of the type proven in this paper have been expected to be true at least for several years, e.g., based on Markovian properties of decorated random planar maps (see Appendix~\ref{sec-rpm}). 
We first heard about such properties from Sheffield in private communication. 

However, proofs of such statements turned out to be rather elusive, in large part because it is not clear where the condition that the total central charge is 26 appears in the continuum setting (except in the matched case~\eqref{eqn-matched}). 
In this paper, the total central charge condition arises by combining the relationship between SLE and LQG in the matched case; a certain locality property for imaginary geometry fields with central charge $\cc_{\mathrm{M}} = 0$ (Theorem \ref{thm-loc-dom}), which allows us to ignore such fields when proving conditional independence statements; and a ``rotational invariance" property for the total central charge associated with a vector of IG fields (Proposition~\ref{prop-ig-rotate}). See Section~\ref{sec-outline} for details.
 
Another way to see the central charge in the continuum, which may provide an alternative route to results of the type proven in this paper, is to weight the law of LQG surfaces by the Brownian loop soup partition function, in an appropriate regularized sense. See~\cite{apps-central-charge} for details.  
\end{remark}

Instead of using $\SLE_\kappa$, we can cut by a \textbf{conformal loop ensemble} (CLE$_\kappa$), a random countable collection of non-nested loops in a simply connected domain $D\subset\mathbb C$ which locally look like SLE$_\kappa$ curves \cite{shef-cle, shef-werner-cle}. See Figure~\ref{fig-thms} (b).
The \textbf{gasket} of the CLE$_\kappa$ is the closure of the union of the loops.

\begin{thm}\label{thm-cle}
	Theorem~\ref{thm-sle-chordal} holds if we replace $K$ by the gasket of a $\mathrm{CLE}_\kappa$ in $\D$ for $\kappa \in [4,8)$, sampled independently from everything else.
\end{thm}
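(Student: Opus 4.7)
The plan is to realize the CLE$_\kappa$ gasket $K$ as the limit of an increasing family of finite unions of SLE$_\kappa(\rho_L;\rho_R)$ curves with $\rho_L+\rho_R=\kappa-6$, and to apply Theorem~\ref{thm-sle-chordal} iteratively to each such curve. Concretely, for $\kappa\in(4,8)$ I would use the branching CLE exploration of Sheffield~\cite{shef-cle}: fix a countable dense set $(z_j)_{j\ge 1}$ in $\D$ and let $\eta_j$ be the SLE$_\kappa(\kappa-6)$ branch of the exploration in $\D$ from $-\bbi$ targeted at $z_j$; for $\kappa=4$ I would use the analogous level-line exploration of CLE$_4$ (e.g.\ with $\rho_L=\rho_R=-1$). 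In either case $\rho_L+\rho_R=\kappa-6$ with $\min(\rho_L,\rho_R)>-2$, so the constraint~\eqref{eqn-rho-values} in the hypothesis of Theorem~\ref{thm-sle-chordal} is satisfied. Set $E_N:=\bigcup_{j\le N}\eta_j$; standard properties of the CLE exploration give that $(E_N)$ is an increasing sequence of closed sets whose closure converges to $K$, and that each complementary component of $\D\setminus E_N$ refines to a single component of $\D\setminus K$ as $N\to\infty$.

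Writing $\cF_N$ for the analog of~\eqref{eqn-restriction-sigma-algebra} with $E_N$ in place of $K$, I would prove by induction on $N$ that (i) the IG-decorated LQG sub-surfaces indexed by the complementary components of $\D\setminus E_N$ are conditionally independent given $\cF_N$, and (ii) the conditional law given $\cF_N$ of the data restricted to each such component $U$ is of the form required by the hypothesis of Theorem~\ref{thm-sle-chordal}, namely an IG-decorated LQG disk with the correct central charges together with an independent CLE$_\kappa$ in $U$. The base case $N=0$ is immediate from the independence of $\Phi$, $(\Psi_i)$, and the CLE in the initial setup. For the inductive step, let $U$ be the component containing $z_{N+1}$; by the Markov property of CLE and the inductive hypothesis, conditionally on $\cF_N$ and on the restrictions of the data to the other components, $\eta_{N+1}$ is an SLE$_\kappa(\kappa-6)$ curve in $U$ independent of the IG fields restricted to $U$. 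Applying Theorem~\ref{thm-sle-chordal} within $U$ then yields conditional independence of the two finer sub-surfaces of $U\setminus\eta_{N+1}$, which combined with the unaffected conditional independence of the other components gives the full inductive claim at stage $N+1$.

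To conclude I would pass to the limit $N\to\infty$ along the increasing filtration $(\cF_N)$. The main technical obstacle, and the step I would expect to require the most care, is identifying $\bigvee_N \cF_N$ with $\cF$, the $\sigma$-algebra~\eqref{eqn-restriction-sigma-algebra} associated to the CLE gasket $K$. The inclusion $\bigvee_N \cF_N\subset\cF$ is immediate from $E_N\subset K$, but the reverse inclusion requires showing that for each $\eps>0$ and all sufficiently large $N$ the LQG $\eps$-neighborhood $\cB_\eps(E_N\cup\partial\D;\mathfrak d_\Phi)$ covers $\cB_{\eps/2}(K\cup\partial\D;\mathfrak d_\Phi)$. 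This should follow from the density of $\bigcup_j\eta_j$ in $K$ together with continuity of the LQG metric, but making it rigorous will require some quantitative input on the local structure of the CLE exploration near the gasket. Granting this step, conditional independence at each finite stage passes to conditional independence given $\cF$ by a standard martingale convergence argument applied to bounded measurable functionals of the IG-decorated sub-surfaces, together with the observation that the decorated sub-surfaces of $\D\setminus E_N$ converge to those of $\D\setminus K$ as $N\to\infty$.
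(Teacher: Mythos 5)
Your proposal has a genuine gap that is structurally unavoidable with the route you chose, so let me flag it clearly rather than focusing on the smaller technicalities you acknowledge.

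The inductive step (ii) asserts that, given $\cF_N$, the conditional law of the data restricted to each complementary component $U$ of $\D\setminus E_N$ is again ``an IG-decorated LQG disk with the correct central charges together with an independent CLE$_\kappa$,'' so that Theorem~\ref{thm-sle-chordal} can be re-applied inside $U$. This is exactly the kind of statement Theorem~\ref{thm-sle-chordal} does \emph{not} supply. That theorem gives conditional independence of the sub-surfaces across the curve but says nothing about what the conditional law of any one of those sub-surfaces is. In fact, identifying that conditional law in the mismatched case is explicitly left open in this paper (see Problem~\ref{prob-cond-law} and the discussion in Section~\ref{sec-open-problems}). After even a single cut, the sub-surface is not known to be a $\mathrm{UQD}_{\ccL}$-distributed disk (indeed, it manifestly has random rather than unit boundary length, and there is no identified analog of the weight-$2$ or weight-$\gamma^2$ quantum wedge picture here), and the auxiliary fields restricted to $U$ are not known to be $\mathrm{IG}_{(\cc_1,\dots,\cc_n)}$-distributed, so the hypotheses of Theorem~\ref{thm-sle-chordal} are not verified for the second application and the induction cannot proceed. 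The CLE Markov property gives you the conditional law of $\eta_{N+1}$, but not of $(\Phi,\Psi_1,\dots,\Psi_n)$ on $U$.

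The paper sidesteps this entirely by routing through Theorem~\ref{thm-main}, which is formulated for LQG-IG local sets (Definitions~\ref{def-eps-local}--\ref{def-quantum-local}); that framework is set up precisely so that one never needs to know the conditional law of any explored or unexplored sub-surface, only a locality/Markov property of the exploration. The actual proof of Theorem~\ref{thm-cle} is then short: introduce one extra auxiliary IG field $\Psi$ of central charge $\cc_{\mathrm{SLE}}$ (so $(\D,\Psi_1,\dots,\Psi_n,\Psi,-\bbi)\sim\mathrm{IG}_{(\cc_1,\dots,\cc_n,\cc_{\mathrm{SLE}})}$), realize the CLE$_\kappa$ gasket via the branching $\SLE_\kappa(\kappa-6)$ exploration as a local set of $\Psi$ \cite[Section 1.2.3]{ig4} (or, for $\kappa=4$, as the CLE$_4$ level-loop local set of a GFF), conclude that $K$ is an LQG-IG local set, apply Theorem~\ref{thm-main}, and then argue as in the end of the proof of Theorem~\ref{thm-sle-chordal} that keeping track of $\Psi$ near $K$ adds no information beyond $K$ itself (the flow/level-line boundary data of $\Psi$ along $K$ is a deterministic function of $K$), so the $\sigma$-algebra that appears is exactly $\cF$. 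Your ``branching exploration, then pass to the limit'' instinct is the right geometric picture, but the theorem you iterate needs to be Theorem~\ref{thm-main} (or directly the local-set definition), not Theorem~\ref{thm-sle-chordal}.
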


The full parameter range for $\mathrm{CLE}_\kappa$ is $\frac83 < \kappa < 8$. We expect that Theorem~\ref{thm-cle} can also be proven for the remaining case $\kappa \in (\frac83, 4)$ by similar arguments, see Remark~\ref{rem-cle}.

Our next theorem applies to SLE started at the bulk marked point of $\mathrm{UQD}_{\ccL}^\bullet$ which we sample via imaginary geometry, see Figure~\ref{fig-thms} (c). This setup is particularly interesting since the outer boundary of space-filling SLE run until it hits the marked point is a pair of flow lines started at that point~\cite[Section 1.2.3]{ig4}, see Section~\ref{sec-sle-prelim} for IG background.

\begin{thm}\label{thm-sle-interior}
Suppose $\kappa < 4$.
Let $\ccL > 25$ and $\cc_1, \dots, \cc_n \leq 1$ satisfy $\ccL + \cc_\mathrm{SLE}(\kappa) + \cc_1 + \cdots + \cc_n = 26$. Let $(\D, \Phi, 0, -\bbi)$ be an embedding of a unit boundary length LQG disk with a marked interior point, i.e., a sample from $\mathrm{UQD}_{\ccL}^\bullet$ (Definition~\ref{def-lqg-disk}).  Independently, let $(\D, \Psi_1, \dots, \Psi_n, \Psi,  -\bbi)$ be an embedding of a sample from $\mathrm{IG}_{(\cc_1, \dots, \cc_n,  \cc_\mathrm{SLE}(\kappa) )}$ (Definition~\ref{def-ig-disk}). Let $K$ be the union of a pair of flow lines of $\Psi$ started at $0$ in $\D$ and run until they hit the boundary, with two distinct specified angles~\cite[Theorem 1.1]{ig4}. Let $\cF =  \sigma(( \cB_{0+}(K \cup \partial \D ; \mathfrak d_\Phi) , \Phi, \Psi_1, \dots, \Psi_n,   0)/{\sim})$ be defined as in~\eqref{eqn-restriction-sigma-algebra} (with an extra marked point at 0). Then the decorated LQG surfaces $(U, \Phi, \Psi_1, \dots, \Psi_n)/{\sim}$ parametrized by the connected components $U$ of $\D \backslash K$ are conditionally independent given $\cF$. 
\end{thm}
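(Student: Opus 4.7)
The plan is to derive Theorem~\ref{thm-sle-interior} by reducing to the chordal version Theorem~\ref{thm-sle-chordal}, exploring $K$ in two stages. Write $K = \eta_1 \cup \eta_2$ where $\eta_j$ is the flow line of $\Psi$ from $0$ at angle $\theta_j$. The joint law of $(\eta_1, \eta_2)$ can be sampled sequentially using imaginary geometry~\cite{ig4}: first sample $\eta_1$ as a radial-type $\SLE_\kappa(\rho)$ from $0$ to $\partial \D$, then sample $\eta_2$ conditional on $\eta_1$ as an IG flow line in whichever complementary component of $\D \setminus \eta_1$ it enters. After conformally uniformizing each component $U_\pm$ of $\D \setminus \eta_1$, the interior point $0$ becomes a boundary point, so the conditional law of the image of $\eta_2 \cap U_\pm$ is that of a chordal $\SLE_\kappa(\rho_L; \rho_R)$-type curve between two boundary points; using the IG boundary conditions that $\eta_1$ induces on $U_\pm$, one verifies $\min(\rho_L, \rho_R) > -2$ and $\rho_L + \rho_R \in \{-2, \kappa - 6\}$, so that Theorem~\ref{thm-sle-chordal} applies inside each $U_\pm$.

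The first (radial) stage would be handled by the same three-ingredient strategy announced in Remark~\ref{remark-total-central-charge}: a matched-case welding result (here, a radial/interior analog of the chordal quantum zipper, relating a $\mathrm{UQD}_{\ccL}^\bullet$-type LQG disk with a marked interior point and an independent interior-starting SLE to the conditional independence of its complementary sub-surfaces given boundary length data in the matched case $\kappa = \gamma^2$, in the spirit of the quantum cone welding of~\cite{wedges}); the rotational invariance of the total IG central charge (Proposition~\ref{prop-ig-rotate}), which absorbs the central charge deficit into the auxiliary IG fields when $\kappa \ne \gamma^2$; and the $\cc = 0$ locality property (Theorem~\ref{thm-loc-dom}), which lets us freely insert or remove IG fields of central charge zero without affecting conditional independence. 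This yields conditional independence of the IG-decorated LQG surfaces parametrized by $U_+$ and $U_-$ given the $\sigma$-algebra $\cF_1$ of the infinitesimal LQG neighborhood of $\eta_1 \cup \partial \D$. Applying Theorem~\ref{thm-sle-chordal} inside each $U_\pm$ then yields conditional independence of the surfaces cut off by $\eta_2 \cap U_\pm$ given $\sigma$-algebras $\cF_2^\pm$; combining via the tower property and verifying $\cF = \cF_1 \vee \cF_2^+ \vee \cF_2^-$ by an approximation argument (using the $\mathfrak d_\Phi$-definition of these $\sigma$-algebras and the fact that $\mathfrak d_\Phi$-neighborhoods of $\eta_1 \cup \eta_2 \cup \partial \D$ are asymptotically generated by $\mathfrak d_\Phi$-neighborhoods of $\eta_1 \cup \partial \D$ together with those of $\eta_2$ inside each $U_\pm$) gives the desired conditional independence given $\cF$.

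The main obstacle is the radial first stage, since the chordal theorem does not directly handle a flow line emanating from a bulk marked point. Obtaining the required matched-case interior welding with the correct $\mathrm{UQD}_{\ccL}^\bullet$ marginal is delicate: the natural matched-case object is a quantum cone (which has a marked interior point but infinite area), and transferring to a finite-area disk with a bulk marked point would presumably require disintegrating over $\mu_\Phi(\D)$ together with a careful Girsanov-type reweighting to identify the local behavior near $0$. A further subtlety, shared with the proof of Theorem~\ref{thm-sle-chordal}, is ensuring that Proposition~\ref{prop-ig-rotate} can be applied cleanly when the marked point is in the bulk; this requires tracking the additive integer $m$ in Definition~\ref{def-QS-IG} under rotation of the IG fields, and checking that the argument function $\arg f'$ behaves consistently around the interior marked point.
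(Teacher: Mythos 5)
Your two-stage exploration (radial $\eta_1$ first, then chordal $\eta_2$ inside each component) is genuinely different from the paper's route, and it has the gap you flag yourself: the radial/interior analog of the quantum zipper with the correct $\mathrm{UQD}_{\ccL}^\bullet$ marginal is not available and is not a routine extension. Worse, even granting it, the second stage does not reduce to Theorem~\ref{thm-sle-chordal}: after conditioning on $\eta_1$ and its $\sigma$-algebra $\cF_1$, the conditional law of the IG-decorated LQG surface parametrized by $U_\pm$ is not that of a sample from $\mathrm{UQD}_{\ccL}$ together with an independent $\mathrm{IG}_{(\cc_1,\dots,\cc_n)}$ sample, so the hypotheses of Theorem~\ref{thm-sle-chordal} are not met inside $U_\pm$; nor is the LQG-IG local set framework (Definition~\ref{def-eps-local}) defined relative to that conditional law. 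The claim $\cF = \cF_1 \vee \cF_2^+ \vee \cF_2^-$ is also asserted without a mechanism, and would need to be proved modulo null sets rather than exactly.

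The paper avoids all of this by never decomposing $K$: it observes that the pair of interior flow lines $K$ is a local set of $\Psi$ (by \cite[Theorem 1.1]{ig4}, the same way chordal flow lines are in the proof of Theorem~\ref{thm-sle-chordal}), hence an LQG-IG local set in the sense of Definition~\ref{def-eps-local-pt}, and then applies Theorem~\ref{thm-main-pt}, the $\mathrm{UQD}_{\ccL}^\bullet$ version of the general local-set theorem. The bulk marked point is handled once and for all at the bottom of the argument in Lemma~\ref{lem-eps-indep-pt}: the $\mathrm{UQD}_{\ccL}^\bullet$ case of the key mating-of-trees independence statement is obtained from the $\mathrm{UQD}_{\ccL}$ case (Proposition~\ref{prop-eps-indep}) simply by weighting by $\mu_\Phi(\D)$ and sampling a $\mu_\Phi$-distributed point conditionally independently, which does not disturb the conditional law of the complementary surfaces given $\cM_\tau$. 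All of Steps 2 and 3 (locality of $\cc = 0$ IG fields, rotation via Proposition~\ref{prop-ig-rotate}) then carry over verbatim. This buys a much shorter and more robust argument: there is no radial welding input to establish, no conditional-law bookkeeping across two exploration stages, and no special treatment of the interior marked point inside the IG coordinate-change machinery beyond what Definition~\ref{def-eps-local-pt} already records.
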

In Theorem~\ref{thm-sle-interior} we exclude the $\kappa = 4$ case since the natural analog of $K$ in this case is not a pair of curves, but rather a collection of nested ``level loops". This is related to the fact that the $\kappa = 4$ analog of space-filling $\SLE_\kappa$ is not a curve \cite{ahps-critical-mating}.
Next, in the case when $\cc_{\mathrm{SLE}}=0$, instead of cutting using an $\SLE_\kappa$-type curve, we can use Brownian motion, see Figure~\ref{fig-thms}  (d). We understand Brownian motion to have central charge 0, see Remark~\ref{rem-discrete-cc} for details.

\begin{thm}\label{thm-bm}
	Suppose $\ccL> 25$ and $\cc_1, \dots, \cc_n \leq 1$ satisfy $\ccL+ \cc_1 + \cdots + \cc_n = 26$. Let $(\D, \Phi, 0, -\bbi)$ be an embedding of a sample from $\mathrm{UQD}_{\ccL}^\bullet$ (Definition~\ref{def-lqg-disk}). Independently, let $(\D, \Psi_1, \dots, \Psi_n,  -\bbi)$ be an embedding of a sample from $\mathrm{IG}_{(\cc_1, \dots, \cc_n)}$ (Definition~\ref{def-ig-disk}). Let $K$ be the trace of a Brownian motion started at $0$ and stopped when it hits $\partial \D$, sampled independently from everything else. Let $\cF =  \sigma(( \cB_{0+}(K \cup \partial \D ; \mathfrak d_\Phi) , \Phi, \Psi_1, \dots, \Psi_n, K, 0)/{\sim})$ be defined as in~\eqref{eqn-restriction-sigma-algebra}. Then the decorated LQG surfaces $(U, \Phi, \Psi_1, \dots, \Psi_n)/{\sim}$ parametrized by the connected components $U$ of $\D \backslash K$ are conditionally independent given $\cF$. 
\end{thm}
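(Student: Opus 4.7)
The plan is to reduce Theorem~\ref{thm-bm} to Theorem~\ref{thm-sle-interior} at $\kappa = 8/3$, exploiting the fact that Brownian motion and SLE$_{8/3}$ both carry central charge $0$, together with the locality property for central charge $0$ IG fields (Theorem~\ref{thm-loc-dom}). First, augment the setup by sampling an independent IG field $\Psi_{n+1}$ with central charge $\cc_\mathrm{SLE}(8/3) = 0$ (which in particular has $\ccM = 0$, so Theorem~\ref{thm-loc-dom} applies to it). The augmented total central charge $\ccL + \cc_1 + \cdots + \cc_n + 0 = 26$ is then exactly the hypothesis of Theorem~\ref{thm-sle-interior} with $\kappa = 8/3$.

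Next, approximate the BM trace $K$ by a sequence of SLE$_{8/3}$-type sets $K_\eps$ constructed from $\Psi_{n+1}$: discretize the BM at time scale $\eps$ to obtain positions $z_0 = 0, z_1, z_2, \dots$; at each $z_k$, grow a pair of flow lines of $\Psi_{n+1}$ starting at $z_k$ and stopped when they exit a small $\mathfrak d_\Phi$-ball around $z_k$; and let $K_\eps$ be the union of these flow lines. Applying Theorem~\ref{thm-sle-interior} iteratively along this sequence of explorations (conditioning at each step on the previously revealed structure) yields conditional independence of the LQG-IG surfaces in the complementary components of $K_\eps$ given the analog $\cF_\eps$ of $\cF$ enlarged to include $\Psi_{n+1}$.

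Finally, pass to the limit $\eps \to 0$. Three convergence statements must be verified: (i) $K_\eps \to K$ in the Hausdorff metric with respect to $\mathfrak d_\Phi$, which should follow from continuity of Brownian motion and control on the sizes of the SLE$_{8/3}$ explorations; (ii) the decorated LQG-IG surfaces in the complementary components of $K_\eps$ converge to those of $K$; and (iii) after using Theorem~\ref{thm-loc-dom} to ``quotient out'' the extra $\Psi_{n+1}$ information, the $\sigma$-algebras $\cF_\eps$ converge to $\cF$. The main obstacle is expected to be (iii): showing that the locality of $\Psi_{n+1}$ is strong enough that no residual auxiliary information survives in the limit, so that conditioning on $\cF_\eps$ becomes equivalent to conditioning on $\cF$. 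A potentially cleaner alternative would be to subsume Theorem~\ref{thm-bm} into a general LQG-IG peeling framework as hinted at in Theorem~\ref{thm-peeling}, which could avoid some of the convergence technicalities.
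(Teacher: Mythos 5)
Your approach is genuinely different from the paper's, but it has serious gaps; before describing them, let me say what the paper actually does, because it is much shorter. The paper reduces Theorem~\ref{thm-bm} to the general Theorem~\ref{thm-main-pt} by verifying that the Brownian trace $K$ is an LQG-IG local set in the sense of Definition~\ref{def-eps-local-pt}. This verification is a two-line argument: Brownian motion is conformally invariant and local (in the sense that Brownian motion in $D_1\subset D_2$ run until exiting $D_1$ agrees in law with Brownian motion in $D_2$ run until exiting $D_1$), so the growth of $K$ depends on the decorated surface only through the explored neighborhood, which is exactly the local-set axiom. Your closing remark --- that it would be cleaner to ``subsume Theorem~\ref{thm-bm} into a general LQG-IG peeling framework'' --- is precisely what the paper does, and all of Theorems~\ref{thm-sle-chordal}--\ref{thm-peeling} are special cases of Theorems~\ref{thm-main} and \ref{thm-main-pt}.

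Your proposed route (augment by an independent $\cc = 0$ field $\Psi_{n+1}$, approximate $K$ by unions of short $\SLE_{8/3}$ flow-line pairs of $\Psi_{n+1}$ rooted along a time-discretization of the Brownian path, apply Theorem~\ref{thm-sle-interior} iteratively, and pass to the limit) has two concrete problems. First, a topological one: $K$ is a two-dimensional set whose complement in $\D$ has infinitely many connected components (all the regions enclosed by the recurrent Brownian path), whereas each $K_\eps$ is a union of finitely many curve segments of Hausdorff dimension $4/3$ whose complement looks nothing like $\D\setminus K$. Even if $K_\eps \to K$ in Hausdorff distance, the complementary connected components of $\D\setminus K_\eps$ do not converge to those of $\D\setminus K$ in any sense suitable for transferring conditional independence; this is exactly why the paper's framework works with the metric neighborhoods $\cB_\eps(K\cup\partial\D;\mathfrak d_\Phi)$ and the space-filling SLE segments $K_\eps = \bigcup_{i\in I}D_i$, which do have the right topology. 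Second, even granting the convergence, step (iii) --- ``quotienting out'' the auxiliary $\Psi_{n+1}$ --- is not something Theorem~\ref{thm-loc-dom} delivers: that theorem is a four-term Radon--Nikodym identity for the law of the field restricted to a band $B$ between two crosscuts, not a mechanism for showing that conditioning on $\cF_\eps$ (which carries information about $\Psi_{n+1}$ near $K_\eps$) collapses to conditioning on $\cF$ (which does not involve $\Psi_{n+1}$ at all). In the paper, the auxiliary $\cc=0$ fields appear only on the way to proving the master Theorem~\ref{thm-main}/\ref{thm-main-pt}, never in the statement conditioned on; you would need a separate argument to remove $\Psi_{n+1}$ from the conditioning, and you have not supplied one. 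Finally, a more structural point: the entire scheme treats the locality of Brownian motion only implicitly, through an approximation by flow lines, when in fact that locality is exactly the hypothesis that plugs directly into Definition~\ref{def-eps-local-pt}; bypassing this makes the argument both harder and leakier.
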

 
Our next result applies when we cut by an LQG metric ball, and is illustrated in Figure~\ref{fig-thms} (e). 

\begin{thm}\label{thm-ball}
	Theorem~\ref{thm-bm} holds if we instead let $K$ be the ${\mathfrak d_\Phi}$-metric ball centered at $0$ grown until it hits $\partial \D$, i.e., $K = \ol{\cB_{{\mathfrak d_\Phi}(0, \partial \D)}(0; {\mathfrak d_\Phi})}$. 
\end{thm}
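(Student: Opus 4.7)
\medskip
\noindent\textbf{Proof plan for Theorem~\ref{thm-ball}.} The plan is to deduce Theorem~\ref{thm-ball} from the conditional independence statement for LQG-IG peeling processes (Theorem~\ref{thm-peeling}, described in Figure~\ref{fig-thms}(f)) applied to the natural peeling process given by growing the LQG metric ball from $0$. Concretely, define $T := \mathfrak d_\Phi(0, \partial\D)$ and set $K_t := \ol{\cB_t(0;\mathfrak d_\Phi)}$ for $t \in [0,T]$, so that $K = K_T$. The first task is to verify that $(K_t)_{t \in [0,T]}$ fits the framework of Definition~\ref{def-quantum-peeling}. This should be essentially immediate from the fact that the LQG metric is an intrinsic, local functional of $\Phi$: the ball $K_t$ is measurable with respect to $\Phi$ restricted to any open set containing it, and the growth of the ball in an infinitesimal additional amount of time depends only on the decorated surface in a small $\mathfrak d_\Phi$-neighborhood of $K_t$. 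The absence of any direct dependence on the IG fields $\Psi_1,\dots,\Psi_n$ (the ball is determined by $\Phi$ alone) makes the Markov/adaptedness hypotheses for an LQG-IG peeling process trivially satisfied.

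Once the metric ball growth is identified as an LQG-IG peeling process, for each $t < T$ and $\delta > 0$ Theorem~\ref{thm-peeling} gives, with $t' := \min(T, \inf\{s > t : K_s \not\subset \cB_\delta(K_t;\mathfrak d_\Phi)\})$, that $(\cB_\delta(K_t;\mathfrak d_\Phi), \Phi, \Psi_1,\dots,\Psi_n, (K_\cdot)_{[0,t']}, 0)/{\sim}$ is conditionally independent of the decorated surface on $\D \setminus \cB_\delta(K_t;\mathfrak d_\Phi)$ given $(\cB_\delta(K_t;\mathfrak d_\Phi), \Phi, \Psi_1,\dots,\Psi_n, (K_\cdot)_{[0,t]}, 0)/{\sim}$. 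To extract the desired statement about the complementary components $U$ of $\D \setminus K$, I would send $t \uparrow T$ and $\delta \downarrow 0$ simultaneously along a countable sequence and invoke the martingale convergence theorem for the corresponding conditional expectations, using that the $\sigma$-algebra $\cF$ of~\eqref{eqn-restriction-sigma-algebra} is by definition the decreasing intersection of $\sigma(( \cB_\eps(K\cup\partial\D;\mathfrak d_\Phi), \Phi, \Psi_1,\dots,\Psi_n, K, 0)/{\sim})$ as $\eps \downarrow 0$, and that for each fixed $\eps > 0$ one has $\cB_\delta(K_t;\mathfrak d_\Phi) \subset \cB_\eps(K\cup\partial\D;\mathfrak d_\Phi)$ for $(t,\delta)$ sufficiently close to $(T,0)$.

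Finally, I need to upgrade the conditional independence of ``inside vs.\ outside $\cB_\delta(K_t;\mathfrak d_\Phi)$'' to conditional independence \emph{among} the complementary components $U$ of $\D \setminus K$. For this, each such $U$ is eventually contained in the outside of $\cB_\delta(K_t;\mathfrak d_\Phi)$ for $(t,\delta)$ near $(T,0)$, and moreover any two distinct components $U \neq U'$ become separated by $\cB_\delta(K_t;\mathfrak d_\Phi)$ in the limit; chaining these pairwise separations with the peeling conditional independence yields full conditional independence of the family $\{(U,\Phi,\Psi_1,\dots,\Psi_n)/{\sim}\}$ given $\cF$. The main obstacle I anticipate is Step~3, the limit: one must check that the topology of the metric ball at the boundary-hitting time $T$ is ``nice enough'' so that, with probability one, every pair of complementary components is separated by some $\cB_\delta(K_t;\mathfrak d_\Phi)$ (i.e., that no two components share a boundary point that is not approached from within $K$), and that the intersection of $\sigma$-algebras really recovers exactly $\cF$ rather than something strictly larger. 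These are precisely the same technical issues that must be faced in the analogous limiting arguments underlying Theorems~\ref{thm-sle-chordal}--\ref{thm-bm}, and I would expect them to be handled by a combination of the LQG metric confluence/continuity properties and the space-filling-SLE characterization of $\cF$ alluded to in Lemma~\ref{lem-equiv-F}.
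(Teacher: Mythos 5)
Your core intuition — that the proof should reduce to a locality statement for the LQG metric — is exactly right, but the route you take through Theorem~\ref{thm-peeling} has a genuine mismatch with the hypotheses of that theorem.

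Definition~\ref{def-quantum-peeling} and Theorem~\ref{thm-peeling} are stated for the unpointed disk $\mathrm{UQD}_{\ccL}$ and for processes $(K_t)_{t\le T}$ with $K_0 = \partial\D$, i.e.\ peeling inward from the boundary. Your $K_t = \ol{\cB_t(0;\mathfrak d_\Phi)}$ starts from a single interior point of the pointed disk $\mathrm{UQD}_{\ccL}^\bullet$, which is the setting of Theorem~\ref{thm-bm}; it satisfies neither the initial condition $K_0=\partial\D$ nor the $\mathrm{UQD}_{\ccL}$ hypothesis of Theorem~\ref{thm-peeling}. To make your plan work you would first have to formulate and prove a pointed-disk, bulk-started analog of Definition~\ref{def-quantum-peeling} and Theorem~\ref{thm-peeling}, and proving that analog would itself reduce to verifying that the final set is an LQG-IG local set of the pointed disk — which is exactly the more direct thing to do.

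What the paper does is precisely that shortcut: observe that the metric ball $K = \ol{\cB_{\mathfrak d_\Phi(0,\partial\D)}(0;\mathfrak d_\Phi)}$ is an LQG-IG local set in the sense of Definition~\ref{def-eps-local-pt} (the pointed-disk analog of Definition~\ref{def-eps-local}), because of the locality property of $\mathfrak d_\Phi$ — exactly as you note when checking your Step~1. Then Theorem~\ref{thm-main-pt} applies directly and yields the conclusion. The limiting arguments in your Steps~2--3 (sending $t\uparrow T$, $\delta\downarrow 0$, backward martingale convergence, separation of complementary components, and identifying the intersection of $\sigma$-algebras with $\cF$) are already packaged inside the proof of Theorem~\ref{thm-main}/\ref{thm-main-pt}; once the LQG-IG local set property is established, there is nothing left to re-derive. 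The technical worry you flag at the end — whether the intersection $\sigma$-algebra really equals $\cF$ rather than something larger — is addressed in the paper via Kolmogorov's zero-one law (see the proof of Lemma~\ref{lem-equiv-F}), and again you inherit it for free from Theorem~\ref{thm-main-pt}.
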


There are also analogs of Theorem~\ref{thm-bm} and~\ref{thm-ball} for other growth processes started from 0 and stopped upon hitting $\bdy\D$ which depend on the LQG surface in a local manner. Some examples include \textbf{LQG harmonic balls} defined in~\cite{bg-harmonic-ball}, which describe the scaling limit of internal DLA on random planar maps~\cite{bg-idla}; and finite unions of Brownian motion paths, LQG metric balls, and harmonic balls. See Theorem~\ref{thm-main-pt} for a general statement.

Finally, we can define exploration processes that depend only on an infinitesimal neighborhood of the explored region; see Figure~\ref{fig-thms}(f). Conditional independence results also hold for such processes. 

\begin{definition}\label{def-quantum-peeling}
	Suppose $\ccL > 25$ and $\cc_1, \dots, \cc_n \leq 1$ satisfy $\ccL + \cc_1 + \cdots + \cc_n = 26$. Let $(\D, \Phi, -\bbi)$ be an embedding of a sample from $\mathrm{UQD}_{\ccL}$ (Definition~\ref{def-disk}) and independently let $(\D, \Psi_1, \dots, \Psi_n, -\bbi)$ be an embedding of a sample from $\mathrm{IG}_{(\cc_1, \dots, \cc_n)}$ (Definition~\ref{def-ig-disk}). Let $(K_t)_{t \leq T}$ be an increasing process of compact connected subsets of $\ol \D$ with random duration $T < \infty$, such that $K_0 = \partial \D$, and the process is continuous with respect to the Hausdorff metric. We call $(K_t)_{t \leq T}$ an \textbf{LQG-IG peeling process} if the following holds. 
	
	Let $\delta > 0$. For $t \leq T$ define $\cF_t^\delta = \sigma(({\cB_\delta(K_t\cup \partial \D; {\mathfrak d_\Phi})}, \Phi, \Psi_1, \dots, \Psi_n, (K_s)_{s \leq t})/{\sim})$. Let $\tau$ be any stopping time for $\cF_t^\delta$. Let $\tau' = T \wedge \inf\{t\geq \tau\: : \: K_t \cap  \partial \cB_\delta(K_\tau; {\mathfrak d_\Phi}) \cap \D \neq \emptyset\}$. Conditioned on $\cF_\tau^\delta$, the decorated LQG surface $({\cB_\delta(K_\tau \cup \partial \D; {\mathfrak d_\Phi})}, \Phi, \Psi_1, \dots, \Psi_n, (K_t)_{t \leq \tau'})/{\sim}$ is conditionally independent from $(\Phi, \Psi_1, \dots, \Psi_n)$.  
\end{definition}

Roughly speaking, $(K_t)_{t\leq T}$ is an LQG-IG peeling process if the growth of $K_t$ depends locally on $\Phi,\Psi_1,\dots,\Psi_n$, modulo LQG / IG coordinate change. Examples of LQG-IG peeling processes include LQG metric balls started at boundary points (due to the locality property of the LQG metric, see Section~\ref{sec-metric-prelim}), chordal SLE$_6$ curves sampled independently from everything else (due to the locality property of SLE$_6$~\cite{lawler-book}), and flow lines or counterflow lines of $\Psi_1,\dots,\Psi_n$ started from boundary points (since flow and counterflow lines depend on the IG field in a local way). The reason for the name ``LQG-IG peeling process" is that these processes are in some sense a continuum analog of ``peeling processes" for random planar maps~\cite{angel-peeling}, except that the growth is allowed to depend on the IG fields $\Psi_1,\dots,\Psi_n$, instead of just on the LQG field $\Phi$.

\begin{thm}\label{thm-peeling}
	In the setting of Definition~\ref{def-quantum-peeling}, conditioned on $\cF := \bigcap_{\delta > 0} \cF_T^\delta$, the decorated LQG surfaces $(U, \Phi, \Psi_1, \dots, \Psi_n)/{\sim}$ parametrized by the connected components $U$ of $\D \backslash K_T$ are conditionally independent.
\end{thm}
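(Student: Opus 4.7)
The strategy is to combine the local Markov property built into Definition~\ref{def-quantum-peeling} with a discretization of the peeling process via stopping times.

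I would first reduce to proving that, for each $\delta > 0$, conditionally on $\cF_T^\delta$, the IG-decorated LQG surfaces on the connected components of $\D \setminus K_T$ are conditionally independent; the result for $\cF = \bigcap_{\delta > 0} \cF_T^\delta$ then follows by backwards martingale convergence applied to the decreasing family $(\cF_T^\delta)_{\delta > 0}$. To obtain the fixed-$\delta$ statement, recursively define stopping times $\tau_0^\delta = 0$ and $\tau_{k+1}^\delta = (\tau_k^\delta)'$ as in Definition~\ref{def-quantum-peeling}. Continuity of $(K_t)_{t \leq T}$ in Hausdorff distance and compactness of $\ol \D$ imply that $\tau_N^\delta = T$ for some a.s.~finite random $N = N(\delta)$, and these stopping times generate a filtration $\cF_{\tau_0^\delta}^\delta \subset \cF_{\tau_1^\delta}^\delta \subset \cdots \subset \cF_{\tau_N^\delta}^\delta = \cF_T^\delta$.

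For each component $U$ of $\D \setminus K_T$, define its \emph{birth step} $\sigma(U) \in \{0,1,\dots,N\}$ to be the smallest $k$ such that $U$ is already a connected component of $\D \setminus K_{\tau_k^\delta}$. Since $U \cap K_T = \emptyset$ and $K_s \subseteq K_T$ for all $s \leq T$, the peeling process never enters $U$; hence $U$ persists as a component of $\D \setminus K_{\tau_\ell^\delta}$ for every $\ell \geq \sigma(U)$, and its IG-decorated LQG surface is ``frozen'' from step $\sigma(U)$ onward. I would then prove by induction on $k \in \{0,1,\dots,N\}$ that, conditionally on $\cF_{\tau_k^\delta}^\delta$, the decorated LQG surfaces on components born by step $k$ are conditionally independent. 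In the inductive step, the Markov property at $\tau_k^\delta$ gives that the local decorated surface $(\cB_\delta(K_{\tau_k^\delta} \cup \partial \D; \mathfrak d_\Phi), \Phi, \Psi_1, \dots, \Psi_n, (K_t)_{t \leq \tau_{k+1}^\delta})/\sim$ is conditionally independent of the global fields given $\cF_{\tau_k^\delta}^\delta$; combined with the inductive hypothesis applied to the already-born components, this decouples components newly born at step $k+1$ from older ones. For multiple components born simultaneously at step $k+1$, their mutual conditional independence follows by using that they are separated from each other by the new growth inside $\cB_\delta(K_{\tau_k^\delta}; \mathfrak d_\Phi)$ together with the spatial Markov property of the Gaussian free fields underlying $\Phi, \Psi_1, \dots, \Psi_n$.

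The main technical obstacle is this last point: verifying conditional independence of multiple components pinched off at the same step. This requires carefully combining the peeling Markov property (which decouples local from global) with the spatial Markov property of the GFF (which decouples disjoint regions given boundary data), all while respecting the equivalence relation $\sim$ under which $\cF_{\tau_k^\delta}^\delta$ is defined. A related subtlety is that ``componenthood'' at step $k+1$ depends on the global topology of $K_{\tau_{k+1}^\delta}$, not only on local data, so one must verify that the events and measurements involved in the conditional independence statement are compatible with the $\sigma$-algebra structure in~\eqref{eqn-restriction-sigma-algebra}. Once the induction is complete for each $\delta > 0$, taking $\delta \to 0$ via backwards martingale convergence on the reverse filtration $(\cF_T^\delta)_{\delta > 0}$ yields the theorem.
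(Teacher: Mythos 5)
Your discretization by stopping times $\tau_0^\delta, \tau_1^\delta, \dots$ and the observation that finitely many steps reach $T$ are exactly the skeleton of the paper's Lemma~\ref{lem-peeling-implies-local}. But the paper uses that lemma for a more modest purpose: to show $K_T$ is an LQG-IG local set (Definition~\ref{def-quantum-local}). The conditional independence is then obtained by invoking Theorem~\ref{thm-main}, whose proof occupies the entire machinery of Sections~\ref{sec-proofs}--\ref{sec-n=1} (mating of trees, locality of $\cc=0$ fields, rotation of IG field vectors).

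Your proposal instead tries to extract the conditional independence directly in the inductive step, by appealing to ``the spatial Markov property of the Gaussian free fields underlying $\Phi,\Psi_1,\dots,\Psi_n$.'' This is the gap, and it is fatal rather than technical. The GFF domain Markov property decouples $\Phi|_{U_1}$ and $\Phi|_{U_2}$ given $\Phi$ on a separating annulus, \emph{as distributions on a fixed embedded domain}. The statement of Theorem~\ref{thm-peeling} concerns the LQG surfaces $(U,\Phi,\Psi_1,\dots,\Psi_n)/{\sim}$, conditioned on the $\sigma$-algebra $\cF_T^\delta$ which is also defined modulo conformal coordinate change. Passing from ``fields in a fixed embedding'' to ``surfaces modulo $\sim$'' discards the embedding, and conditioning on the much smaller $\sigma$-algebra $\cF_T^\delta$ (which does not determine the embedding) can and generically does destroy the conditional independence that holds at the level of embedded fields. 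Indeed, if your argument were sound, it would prove the conditional independence for an LQG disk decorated by \emph{any} independent IG fields, with no constraint on central charges; but the theorem is only true under $\ccL + \cc_1 + \cdots + \cc_n = 26$, and the entire purpose of the auxiliary IG fields and the ``total central charge 26'' hypothesis is to supply precisely the extra conditioning information needed to make this independence hold. You acknowledge the difficulty of ``respecting the equivalence relation $\sim$'' but treat it as a technical point to be checked; in fact it is the core content of the result, and no amount of care in bookkeeping will close the gap without the probabilistic input of Proposition~\ref{prop-eps-indep} (or equivalently, of Theorem~\ref{thm-main}).
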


\subsection{Towards a Markovian string theory}
\label{sec-string-theory}

A potential application of the present work is to construct a \textbf{Markovian string theory} corresponding to bosonic string theory, a problem posed by Sheffield in private communication. To motivate this, we begin with a lower-dimensional analogy, where a particle is moving in $d$-dimensional space. In classical mechanics, the trajectory $\phi:[0,1] \to \R^d$ of the particle is deterministic and minimizes the action $S(\phi)$. In quantum mechanics, however, one has to consider a formal ``sum over all possible trajectories'' called a path integral, given by $e^{\bbi S(\phi)} D\phi$ where $D\phi$ is the formal ``uniform measure on the space of functions $[0,1] \to \R^d$''. Performing Wick rotation corresponds to considering the real-valued ``measure'' $e^{-S_\mathrm{Euc}(\phi)}D\phi$ where $S_\mathrm{Euc}$ is the Euclidean action corresponding to $S$. When $S_\mathrm{Euc}(\phi) = \int_0^1 \frac12 (\frac{d \phi}{dt})^2 \, dt$, this measure can be interpreted as the law of Brownian motion, a canonical model for the trajectory of a particle in $\R^d$. 
	
We now turn to the setting of bosonic string theory in $\R^d$ for integer $d \geq 1$. A string is a map from the circle $\BB S^1$ to $\R^d$, and its trajectory is described by a map $\phi: \BB S^1 \times [0,1] \to \R^d$. The resulting surface $\phi(\BB S^1 \times [0,1])$ is called the \textbf{worldsheet}. Polyakov \cite{polyakov-qg1} proposed that after Wick rotation, the worldsheet is a random surface described by LQG with central charge $\ccL = 26-d$ decorated by $d$ independent Gaussian free fields. Namely, the worldsheet is the embedding of the LQG surface in $\R^d$ using the $d$ fields as coordinates. The physics and mathematics literature largely considers this construction at the level of partition functions (see e.g.\ \cite{nakayama-lqg, grv-higher-genus}): rather than studying an actual embedding in $\R^d$, one simply weights the law of the LQG surface by $(\det \Delta)^{-d/2}$, the partition function of $d$ free fields. On the other hand, in analogy with the previous paragraph, one could attempt to construct a random string trajectory in $\R^d$ from the worldsheet, and show that it satisfies a natural Markov property, analogous to the Markov property of Brownian motion. 

The present work represents substantial progress since it gives Markov properties when LQG is coupled with multiple random fields. 
The main additional step needed to get from the results of this paper to the construction of a Markovian string theory in $\R^d$ for $d\in \{1,\dots,25\}$ is to extend our results to the case of LQG with $\ccL \in [1,25)$ (see Problems~\ref{prob-critical} and~\ref{prob-supercritical}). 
Indeed, for $d\in \{1,\dots,25\}$, we also have $\ccL = 26-d \in \{1,\dots,25\}$. 
This means that the LQG involved in Polyakov's bosonic string theory is in the supercritical (a.k.a.\ strongly coupled) phase. This phase is much more mysterious than the subcritical or critical cases when $\ccL \geq 25$, even at a physics level of rigor. However, recent mathematical progress on supercritical LQG has been made, e.g., in~\cite{ghpr-central-charge,dg-supercritical-lfpp,pfeffer-supercritical-lqg,dg-uniqueness,ag-supercritical-cle4}.

Defining Markovian string trajectories in $\R^d$ may also be of interest in the probabilistic study of Yang-Mills theory, due to various manifestations of gauge-string duality and formulas expressing Wilson loop observables in terms of sums over discrete surfaces in $\BB R^d$. See~\cite{chatterjee-lattice-gauge,mp-matrix-group,cps-random-surface-ym} for some recent rigorous results along these lines.

\subsection{Main ideas of the proof}
\label{sec-outline}

Theorems~\ref{thm-sle-chordal}--\ref{thm-peeling}
are all special cases of Theorem~\ref{thm-main} for $\mathrm{UQD}_{\ccL}$ and Theorem~\ref{thm-main-pt} for $\mathrm{UQD}_{\ccL}^\bullet$.
The proof of Theorem~\ref{thm-main} can be broken down into three steps, each generalizing the result of the previous. Theorem~\ref{thm-main-pt} is similarly shown. 
These steps are carried out in Sections~\ref{subsec-n=1},~\ref{subsec-locality} and~\ref{subsec-main-proof} respectively. 

\medskip \noindent \textbf{Step 1.}
We prove a discretized version of Theorem~\ref{thm-main} with $n = 1$ (a single IG field with central charge $26 - \ccL$) using the seminal \textbf{mating-of-trees} theorem  which describes a $\gamma$-LQG surface decorated by an independent space-filling $\SLE_{\kappa' = 16/\gamma^2}$ curve in terms of planar Brownian motion \cite{wedges}. The desired conditional independence is a consequence of the Markov property of Brownian motion and a combinatorial independence property for submaps of a planar map arising from discretizing LQG decorated by SLE (a minor variant of the \textbf{mated-CRT map}).
The details of this part of the argument are given in Section~\ref{sec-n=1}.  
The combinatorial independence property is a generalization of a certain conditional independence property for uniform meanders, which is of independent interest and which we prove in Appendix \ref{sec-meander}.

\medskip \noindent \textbf{Step 2.} We use the \textbf{locality} of $\mathrm{IG}_{\mathbf c= 0}$ to extend to the setting of $n \geq 1$ IG fields with central charges $(\mathbf c_1, \dots, \mathbf c_n) = (26 - \ccL, 0, \dots, 0)$. This locality property, stated as Theorem~\ref{thm-loc-dom}, is due to \cite{dubedat-coupling} and can be viewed as an IG field variant of the locality of $\SLE_6$.

\medskip \noindent \textbf{Step 3.} 
Suppose we have a vector of $n$ independent IG fields with central charges $\cc_1,\dots,\cc_n \leq 1$ satisfying $\cc_1 + \cdots + \cc_n = 26 - \ccL$. We \textbf{rotate} this vector of IG fields to get another vector of $n$ independent IG fields with central charges $(26-\ccL,0,\dots,0)$. We then apply the case treated in Step 2 to conclude. 

The rotation is accomplished by means of the following elementary but extremely useful observation, which states that $n$ independent IG fields can be linearly transformed into another collection of $n$ independent IG fields with the same total central charge.

\begin{proposition}[Rotation of IG fields]\label{prop-ig-rotate}
	Fix $n \geq 1$ and let $A$ be an $n \times n$ orthogonal matrix. Let $\cc_1, \dots, \cc_n \leq 1$ and let $\chi_i = \chi(\cc_i)$. Suppose $(\D, \Psi_1, \dots, \Psi_n, -\bbi)$ is an embedding of a sample from $\mathrm{IG}_{(\cc_1, \dots, \cc_n)}$ (Definition~\ref{def-ig-disk}). Define
		\begin{equation}\label{eq-rotate}
		\left( \begin{array}{c}
			\wh\Psi_1 \\ 
			\vdots \\ 
			\wh\Psi_n
		\end{array}  \right) 
		= A \left( \begin{array}{c}
			\Psi_1 \\ 
			\vdots \\ 
			\Psi_n
		\end{array}  \right), \qquad 	\left( \begin{array}{c}
		\wh\chi_1 \\ 
		\vdots \\ 
		\wh\chi_n
	\end{array}  \right) 
= A \left( \begin{array}{c}
\chi_1 \\ 
\vdots \\ 
\chi_n
\end{array} \right)
	\end{equation}
and suppose $\wh \chi_i \geq 0$ for all $i$. 
Let $\wh \cc_i = 1 - 6 \wh \chi_i^2$. Then $\sum_i \wh \cc_i = \sum_i \cc_i$, the IG surface \\$(\D, \wh \Psi_1, \dots, \wh \Psi_n, -\bbi)/{\sim_{(\wh \cc_1, \dots, \wh \cc_n)}}$ is well defined (independently of the choice of embedding  $\Psi_1,\dots,\Psi_n$)  and its law is 
$\mathrm{IG}_{(\wh \cc_1, \dots, \wh \cc_n)}$. 

\end{proposition}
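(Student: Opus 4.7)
The plan is to verify directly three items: (a) the identity $\sum_i \wh\cc_i = \sum_i \cc_i$; (b) compatibility of the rotation with the $\sim_{(\wh\cc_1,\dots,\wh\cc_n)}$ coordinate change rule, so that the equivalence class of the rotated tuple depends only on the equivalence class of the original; and (c) the claim that in the canonical embedding from Definition~\ref{def-ig-disk}, the joint law of $(\wh\Psi_1,\dots,\wh\Psi_n)$ is the one prescribed by $\mathrm{IG}_{(\wh\cc_1,\dots,\wh\cc_n)}$. Item (a) is immediate from orthogonality: $\sum_i \wh\chi_i^2 = |A(\chi_1,\dots,\chi_n)^T|^2 = \sum_i \chi_i^2$, together with $\cc_i = 1 - 6\chi_i^2$.

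For (b), I would take a witnessing coordinate change $(D,\Psi_1,\dots,\Psi_n)\sim_{\ul\cc}(\wt D,\wt\Psi_1,\dots,\wt\Psi_n)$, specified by a homeomorphism $f:\C\to\C$ conformal on $D$ and an integer $m$, so that $\Psi_i = \wt\Psi_i\circ f - \chi_i\arg f' + 2\pi\chi_i m$. Applying the $i$-th row of $A$ and using linearity yields
\[
\wh\Psi_i = \Bigl(\sum_j A_{ij}\wt\Psi_j\Bigr)\circ f - \wh\chi_i\arg f' + 2\pi\wh\chi_i m,
\]
which is precisely the $\sim_{(\wh\cc_1,\dots,\wh\cc_n)}$-rule for the rotated tuple, using the \emph{same} $f$ and $m$. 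For (c), I would write the canonical embedding as $\Psi_i = h_i + \chi_i\mathfrak h$ with $h_1,\dots,h_n$ independent zero-boundary GFFs on $\D$; then $\wh\Psi_i = \wh h_i + \wh\chi_i\mathfrak h$, where $\wh h_i := \sum_j A_{ij}h_j$. The vector $(h_1,\dots,h_n)$ is a centered Gaussian whose covariance is the zero-boundary Green's function on $\D$ in each slot and independent across slots, so orthogonality of $A$ gives $(\wh h_1,\dots,\wh h_n) \eqD (h_1,\dots,h_n)$; that is, $\wh h_1,\dots,\wh h_n$ are independent zero-boundary GFFs. Together with the hypothesis $\wh\chi_i\ge 0$, which gives $\wh\chi_i = \chi(\wh\cc_i)$, this matches Definition~\ref{def-ig-disk} and identifies the law as $\mathrm{IG}_{(\wh\cc_1,\dots,\wh\cc_n)}$.

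The proposition is thus a direct verification, and the only genuinely nontrivial point to highlight is in step (b): the argument requires that a \emph{single} integer $m$ appear in the coordinate change rule of Definition~\ref{def-IG} simultaneously for all $n$ fields, rather than $n$ independent integers $m_1,\dots,m_n$. This is precisely why the definition is formulated with a shared $m$ (cf.\ Remark~\ref{rem-multiple-IG-fields}); an individualized version would be incompatible with rotating by a general orthogonal matrix, since $A$ mixes the coordinates and so would couple what would otherwise be distinct $m_i$'s.
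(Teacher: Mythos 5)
Your proposal is correct and follows essentially the same three-step structure as the paper's proof: (a) orthogonality preserves $\sum\chi_i^2$; (b) linearity of $A$ applied to the coordinate-change rule with a shared $f$ and $m$; (c) rotational invariance of the law of $n$ i.i.d.\ zero-boundary GFFs under orthogonal transformations. The only cosmetic difference is in (c), where you argue via the covariance structure while the paper expands each $h_i$ in an orthonormal basis with i.i.d.\ Gaussian coefficients and applies $A \ul g \eqD \ul g$ coefficientwise; these are the same fact stated two ways. Your closing observation that the shared integer $m$ in Definition~\ref{def-IG} is exactly what makes step (b) work is correct and worth emphasizing, and indeed motivates Remark~\ref{rem-multiple-IG-fields}.
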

\begin{proof}
	Since orthogonal matrices preserve the Euclidean norm, we have 
	\eqbn
	\sum_{i=1}^n \wh \cc_i = n - 6\sum_{i=1}^n \wh \chi_i^2 = n - 6 \sum_{i=1}^n \chi_i^2 = \sum_{i=1}^n \cc_i .
	\eqen
	 Next, we verify that multiplication by $A$ as in~\eqref{eq-rotate} transforms a $(\cc_1, \dots, \cc_n)$-IG surface into a $(\wh \cc_1, \dots, \wh \cc_n)$-IG surface; that is, multiplication by $A$ is compatible with the coordinate change rule in Definition~\ref{def-IG}. If $(D, \Psi_1, \dots, \Psi_n) \sim_{(\cc_1, \dots, \cc_n)} (\wt D, \wt \Psi_1, \dots, \wt \Psi_n)$ and $f $ is a homeomorphism witnessing this equivalence with $f(D)  = \wt D$, then 
	\[A \left( \begin{array}{c}
		\Psi_1 \\ 
		\vdots \\ 
		\Psi_n
	\end{array}  \right) = A \left( \begin{array}{c}
		\wt \Psi_1 \circ f - \chi_1 (\arg f' - 2\pi m)\\ 
		\vdots \\ 
		\wt \Psi_n \circ f - \chi_n (\arg f' - 2\pi m)
	\end{array}  \right) = A \left( \begin{array}{c}
		\wt \Psi_1 \circ f\\ 
		\vdots \\ 
		\wt \Psi_n \circ f
	\end{array}  \right) - \left( \begin{array}{c}
		\wh \chi_1 (\arg f' - 2\pi m) \\
		\vdots\\
		\wh \chi_n (\arg f' - 2\pi m)
	\end{array} \right).\]
	
	Finally, we identify the law of $(\D, \wh \Psi_1, \dots, \wh \Psi_n, -\bbi)/{\sim_{(\wh \cc_1, \dots, \wh \cc_n)}}$. Without loss of generality we may assume that $\Psi_i = h_i + \chi(\cc_i) \mathfrak h$ for $i = 1, \dots, n$ where $h_1, \dots, h_n$ are independent zero boundary GFFs  on $\D$ and $\mathfrak h$ is the harmonic function on $\D$ with boundary data given by $\arg(\bbi \cdot)$, as in Definition~\ref{def-ig-disk}. 
	
	Let $(\wh h_1,\dots,\wh h_n)$ be the vector of fields obtained by multiplying $(h_1,\dots,h_n)$ by $A$, as in~\eqref{eq-rotate}. We claim that $(\wh h_1,\dots,\wh h_n)$ also has the law of $n$ independent zero boundary GFFs. Indeed, a zero boundary GFF can be sampled as $\sum_{i=1}^\infty g_i f_i$ where $(f_i)$ is an orthonormal basis of the Hilbert space completion of compactly supported functions on $\D$ with finite Dirichlet energy and $g_i$ are independent standard Gaussians \cite{shef-gff}. If $\ul g$ is a standard Gaussian in $\R^n$ then $\ul g \stackrel d= A\ul g$. Applying this to the coefficients in the orthonormal basis expansions of $h_1,\dots,h_n$ gives the claim.
	
	From~\eqref{eq-rotate}  we have $\wh \Psi_i = \wh h_i + \wh \chi_i \mathfrak h$ for all $i$, so the law of  $(\D, \wh \Psi_1, \dots, \wh \Psi_n)/{\sim_{(\wh \cc_1, \dots, \wh \cc_n)}}$ is $\mathrm{IG}_{(\wh \cc_1, \dots, \wh \cc_n)}$. 
\end{proof}

\begin{remark}
The idea of rotating a vector of fields via an orthogonal matrix to get a new vector of fields with the same total charge is also used in~\cite{ag-supercritical-cle4}. In that paper, a key idea is to rotate the field describing a critical ($\gamma=2$) LQG disk and a zero-boundary GFF to get two LQG disks of central charges $\ccL , 26-\ccL \in (1,25)$. See also~\cite[Appendix A]{ag-supercritical-cle4} for a rotational invariance property analogous to Proposition~\ref{prop-ig-rotate} for vectors of fields sampled from the infinite measure on LQG disks or spheres. 
\end{remark}

\section{Preliminaries}
\label{sec-prelim}

\subsection{Comments on conformal coordinate change}\label{subsec-coord-change}

Our definitions of LQG and IG surfaces (Definitions~\ref{def-QS},~\ref{def-IG} and~\ref{def-QS-IG}) differ slightly from definitions elsewhere in the literature \cite{shef-zipper, ig1,  wedges}.  In this subsection we will explain the extent of the difference and the reason why we use a different definition. 
For concreteness we will focus on the case of IG surfaces, but a similar discussion also applies for LQG surfaces. 

In most other works on imaginary geometry, one only considers simply connected IG surfaces, and defines an IG surface as an equivalence class of pairs $(D, \Psi)$ where $D \subset \C$ is a \emph{simply connected} and \emph{possibly unbounded} open set and $\Psi$ is a distribution on $D$, and $(D, \Psi) \sim_{\cc}' (\wt D, \wt \Psi)$ if there is a conformal map $f: D \to \wt D$ such that $\Psi = \wt \Psi \circ f - \chi(\cc) \arg f' + 2\pi \chi(\cc) m$ for some integer $m$. Note that \emph{$f$ need not extend to a homeomorphism of $\C$}.

For the set of pairs $(D, \Psi)$ where $D$ is a Jordan domain, the relations $\sim_\cc$ and $\sim_\cc'$ are equivalent: $(D, \Psi) \sim_\cc (\wt D, \wt \Psi)$ if and only if $(D, \Psi) \sim_\cc' (\wt D, \wt \Psi)$. On the larger set of pairs $(D, \Psi)$ where $D$ is bounded and simply connected, $(D, \Psi) \sim_\cc (\wt D , \wt \Psi)$ implies $(D, \Psi) \sim_\cc'(\wt D, \wt \Psi)$ (but the reverse implication does not hold, since $\bdy D$ can have ``exterior" self-intersections). Consequently, 
for any bounded simply connected $D$, any object determined by $(D, \Psi)/{\sim_\cc'}$ is also determined by $(D, \Psi)/{\sim_\cc}$. 
In particular, flow and counterflow lines of a simply connected IG surface $(D, \Psi)/{\sim_\cc}$ are determined by  $(D, \Psi)/{\sim_\cc}$ (see Section~\ref{sec-sle-prelim}). To discuss these objects, we will sometimes want to embed in the upper half-plane $\bbH$ via $\sim_\cc'$ to match the convention of \cite{ig1}.

We now explain why we use $\sim_\cc$ rather than $\sim_\cc'$. 
For non-simply connected domains, 
in order to define an IG surface as an equivalence class of pairs $(D, \Psi)$, we need $\arg f'$ to be well-defined modulo a single global additive multiple of $2\pi \chi$. This is not necessarily true if we only require $f:D \to \wt D$ to be conformal, but as we see in Proposition~\ref{prop-arg-f'} there is a canonical way to define $\arg f'$ (modulo additive multiple of $2\pi$) when $f$ extends to a homeomorphism of $\C$. This explains the condition on $f$ in Definition~\ref{def-IG}. The homeomorphism condition distinguishes the point $\infty$ on the Riemann sphere, and we choose to work with bounded domains to avoid interacting with this point.

We similarly work with a modified definition of LQG surface, so as to make them compatible with IG surfaces (as in Definition~\ref{def-QS-IG}).

\begin{remark} \label{remark-slitted}
An important configuration that arises in \cite{ig1,shef-zipper, wedges} is the slitted domain $\bbH\backslash \eta$ where $\eta$ is a simple curve from $0$ to the interior of $\bbH$ arising as a segment of a flow line of an IG field $\Psi$. Crucially, one can embed the slitted surface in $\bbH$ using $\sim_\cc'$, i.e., $(\bbH \backslash \eta, \Psi) \sim_\cc' (\bbH, \wt \Psi)$ for some $\wt \Psi$. This is not possible using our equivalence relation $\sim_\cc$. We will not need to consider slitted domains in this paper since we will always cut our domains by curves which disconnect the domain (usually into connected components which are each Jordan domains), and we view the connected components as parametrizing separate LQG/IG surfaces.
\end{remark}

\subsection{$\sigma$-algebras generated by LQG surfaces}
\label{subsec-sigma}
One can define a topology (and hence a $\sigma$-algebra) for LQG surfaces with specific conformal structures (e.g., simply connected LQG surfaces) by conformally mapping to a canonical reference domain, see, e.g.,~\cite[Section 2.2.5]{gwynne-miller-char}. It is not obvious how to define a topology for, say, LQG surfaces parametrized by domains with infinitely many complementary connected components. However, it is straightforward to define the $\sigma$-algebras generated by such LQG surfaces, as we now explain.

Consider the set $\mathcal X$ of pairs $(D, \phi)$ where $D$ is a bounded open set and $\phi$ is a generalized function belonging to the local Sobolev space $H^{-1}_\mathrm{loc}(D)$. We equip $\mcl X$ with some reasonable topology. For concreteness, we use the topology whereby $(D_n,\phi_n)$ converges to $(D,\phi)$ if and only if $(\C\cup\{\infty\})\setminus D_n \to (\C\cup\{\infty\})\setminus D$ with respect to the Hausdorff distance for the spherical metric on $\C \cup \{\infty\}$; and, for each bounded open set $U$ with $\ol U\subset D$, we have $\phi_n \to \phi$ with respect to the metric on $H^{-1}(U)$  (i.e., the one induced by the operator norm on the dual of $H^1(U)$). One can check that this topology is separable and metrizable. 
 
Let $\mathcal B$ be the Borel $\sigma$-algebra for this topology on $\mathcal X$, and let $\mathcal B'$ be the sub-$\sigma$-algebra generated by Borel measurable functions $F: \mathcal X \to \R$ such that $F(D, \phi) = F(\wt D, \wt \phi)$ whenever $(D, \phi)\sim_\ccL (\wt D, \wt \phi)$,  where $\sim_\ccL$ is as in Definition~\ref{def-QS}.  For a random LQG surface $\cS$, we define $\sigma(\cS)$ to be the collection of events of the form $\{ (D, \Phi) \in E\}$ where $E \in \mathcal B'$ and $(D, \Phi)$ is any embedding of $\cS$. Informally, the information carried by $\sigma(\cS)$ is precisely the values of the functions of $(D, \Phi)$ which are invariant under LQG coordinate change. 

More generally, this same approach allows us to define the $\sigma$-algebras generated by LQG surfaces decorated by IG fields, sets, and/or curves as in~\eqref{eqn-qs-ig-coord-decorated}.

\subsection{Unit boundary length LQG disk}
We will not need to use the precise definition of the law $\mathrm{UQD}_{\ccL}$ of the unit boundary length LQG disk, but we include one possible definition (which comes from~\cite[Corollary 1.2]{cercle-quantum-disk}) for completeness. 
See~\cite{wedges,hrv-disk,ahs-integrability} for alternative definitions. 

\begin{definition}\label{def-disk}
	Let $\ccL> 25$ and let $\gamma \in (0,2)$ satisfy $\ccL= 1 + 6(\frac\gamma2+\frac2\gamma)^2$. 
  Let $h$ be a Neumann GFF on $\D$ normalized to have average zero on $\partial \D$ and let 
  \eqbn
  \wh\Phi_0 = h - \gamma \log |\cdot + \bbi| - \gamma \log |\cdot -1| - \gamma \log |\cdot +1| . 
  \eqen
 Let $\wh\Phi$ be sampled from the law of $\wh\Phi_0$ weighted by $\nu_{\wh \Phi_0}(\partial \D)^{\frac{4}{\gamma^2}-2}/\BB E[\nu_{\wh \Phi_0}(\partial \D)^{\frac{4}{\gamma^2}-2}]$ where $\nu_{\wh \Phi_0}$ is the $\gamma$-LQG boundary length measure, and let $\Phi = \wh \Phi - \frac2\gamma \log \nu_{\wh \Phi}(\partial \D)$. Then $\mathrm{UQD}_{\ccL}$ is the law of the $\gamma$-LQG surface $(\D, \Phi, -\bbi)/{\sim_{\ccL}}$. 
\end{definition}

\subsection{LQG metric}
\label{sec-metric-prelim}

Let $D\subset\BB C$ be an open domain such that Brownian motion started at any point of $D$ a.s.\ exits $D$ in finite time. We say that a random generalized function $\Phi$ on $D$ is a \textbf{GFF plus a continuous function on $D$} if there is a coupling of $\Phi$ with a zero-boundary GFF $\Phi^0$ on $D$ such that a.s.\ $\Phi - \Phi^0$ is a continuous function on $D$ (with no assumption about the behavior of $\Phi-\Phi^0$ near $\bdy D$). We say that $\Phi$ is a \textbf{free-boundary GFF plus a continuous function on $\ol D$} if there is a coupling of $\Phi$ with a free-boundary GFF $\Phi^{\mathrm{Free}}$ on $D$ such that a.s.\ $\Phi -  \Phi^{\mathrm{Free}}$ is a continuous function on $\ol D$.  It is clear from Definition~\ref{def-disk} that if $\Phi$ is an embedding of the LQG disk, then $\Phi$ is a free-boundary GFF plus a continuous function plus finitely many functions of the form $\gamma \log |\cdot-z|$, for $z\in \bdy D$.
 
Let $D\subset\BB C$, and let $\Phi$ be a GFF plus a continuous function on $D$. For each $\ccL > 25$, there is a unique (up to deterministic multiplicative constant) metric $\mathfrak d_\Phi: D \times D \to [0,\infty)$ (i.e., a measurable assignment $\Phi \mapsto \mathfrak d_\Phi$) satisfying a list of natural axioms \cite{dddf-lfpp,gm-uniqueness}, called the \textbf{LQG metric}. Heuristically, $\mathfrak d_\Phi(z,w)$ corresponds to the infimum over paths from $z$ to $w$ of ``the integral of $e^{\xi(\ccL)\Phi}$ along the path'', where $\xi(\ccL) >0$ is a constant defined in terms of the fractal dimension of $\ccL$-LQG. See~\cite{ddg-metric-survey} for a survey of the LQG metric and its properties. The LQG metric can also be defined for $\ccL \in (1,25]$~\cite{dg-supercritical-lfpp,dg-uniqueness}, but we will not need this case here.

We will now review the basic properties of $\mathfrak d_\Phi$ which we will need in this paper.
\medskip

\noindent
\textbf{Topology.} Almost surely, the LQG metric $\mathfrak d_\Phi$ induces the Euclidean topology on $D$. Furthermore, a.s.\ $\frk d_\Phi$ is a length metric, i.e., $\frk d_\Phi(z,w)$ is the infimum of the $\frk d_\Phi$-lengths of paths joining $z$ and $w$~\cite[Axiom I]{gm-uniqueness}.
\medskip

\noindent
\textbf{Coordinate change.} If $f : D \to \wt D$ is a conformal map and $\Phi = \wt \Phi \circ f + Q \log |f'|$, then~\cite{gm-coord-change} almost surely
\eqb  \label{eqn-lqg-metric-coord}
\frk d_{\wt \Phi}(f(z),f(w))= \frk d_\Phi(z,w) ,\quad\forall z,w\in D. 
\eqe 
\medskip

\noindent
\textbf{Locality.} For any open set $U \subset D$, the internal metric $\mathfrak d_\Phi(\cdot,\cdot; U)$ is the metric on $U$ defined by 
\eqb \label{eqn-internal-metric}
\mathfrak d_\Phi(z,w; U) := \inf \mathrm{len}(P; \mathfrak d_\phi) ,\quad \forall z,w\in U
\eqe 
 where the infimum is taken over paths $P$ from $z$ to $w$ in $U$ and $\mathrm{len}(P; \mathfrak d_\Phi)$ denotes the $\mathfrak d_\Phi$-length of $P$. Note that $\mathfrak d_\Phi(z,w;U) \geq \mathfrak d_\Phi(z,w)$.
We will frequently use the locality property of $\mathfrak d_\Phi$~\cite[Axiom II]{gm-uniqueness}: for any deterministic open set $U \subset D$, the internal metric $\mathfrak d_\Phi(\cdot,\cdot; U)$ is a.s.\ given by a measurable function of $\Phi|_U$. 
In particular, for $z \in U$ and $r >0$ the event $\{ \cB_r(z; \mathfrak d_\Phi) 
\subset U\}$ is measurable with respect to $\sigma(\Phi|_U)$, and 
on this event the set $\cB_r (z; \mathfrak d_\Phi)$ is measurable with respect to $\sigma(\Phi|_U)$. 
\medskip

\noindent
\textbf{Boundary extension.}
When $\Phi$ is a free-boundary GFF plus a continuous function on $\ol D$, the LQG metric $\mathfrak d_\Phi$ extends by continuity to a metric $\mathfrak d_\Phi: \ol D \times \ol D \to [0,\infty)$ which induces the Euclidean topology on $\ol D$ \cite[Proposition 1.6]{hm-metric-gluing}. The same is true if we add finitely many functions of the form $\alpha \log |\cdot-z|$ for $\alpha < Q$ and distinct deterministic points $z\in\bdy\BB D$. In particular, the LQG metric associated with an embedding of the quantum disk extends to a metric on $\ol D$ which induces the Euclidean topology. It is immediate that for this extended definition of $\mathfrak d_\Phi$, the locality property stated above still holds for relatively open $U \subset \ol D$.

\subsection{SLE and imaginary geometry}
\label{sec-sle-prelim}
For $\rho_L, \rho_R > -2$, there is a natural variant of $\SLE_\kappa$ called $\SLE_\kappa(\rho_L; \rho_R)$ \cite{lsw-restriction, dubedat-duality, ig1} which is locally absolutely continuous with respect to $\SLE_\kappa$ away from the boundaries. It keeps track of two force points, which we will always assume are immediately to the left and right of the starting point of the curve.  $\SLE_\kappa(\rho_L; \rho_R)$ hits the left boundary if and only if $\rho_L < \frac\kappa2 - 2$, and similarly for the right boundary.  

Imaginary geometry \cite{ig1, ig2, ig3, ig4} studies SLE via its coupling with IG fields, building on~\cite{dubedat-coupling}. We only discuss a few special cases of the general theory here. Let $\kappa\neq 4, \cc = \cc_\mathrm{SLE}(\kappa)$ and let $\rho_L, \rho_R> -2$. Let $\Psi$ be a zero boundary GFF in $\bbH$ plus the harmonic function with boundary values $-\frac{\pi}{\sqrt\kappa}(1 + \rho_L)$ on $(-\infty, 0)$ and $\frac{\pi}{\sqrt\kappa}(1+\rho_R)$ on $(0, \infty)$. Then there is a random curve $\eta$ in $\ol \bbH$ measurable with respect to $\Psi$ whose marginal law is $\SLE_\kappa(\rho_L; \rho_R)$, such that conditioned on $\eta$, the restrictions of $\Psi$ to the connected components $U$ of $\bbH \backslash \eta$ are conditionally independent. The conditional law of $\Psi|_U$ is that of a zero boundary GFF plus a harmonic function with boundary conditions on $\partial U \cap \R$ as above and boundary conditions on $\partial U \cap \eta$ given by a constant plus $-\chi(\cc) \arg f'$ where $f: U \to \bbH$ is a suitable conformal map.
The boundary conditions on $\eta$ are measurable with respect to $\eta$ and are compatible with $\cc$-IG coordinate change (Definition~\ref{def-IG}).

In this coupling, $\eta$ is a \textbf{local set} \cite[Section 3.3]{ss-contour} of $\Psi$: for any relatively open set $U \subset \ol \bbH$ containing neighborhoods of $0$ and $\infty$, the event $\{ \eta \subset U\}$ is measurable with respect to $\Psi|_U$, and on this event $\eta$ is measurable with respect to $\Psi|_U$. 

When $\kappa < 4$, the curve $\eta$ is called a \textbf{flow line} of $\Psi$, and similarly we say an $\SLE_\kappa(\rho_L; \rho_R)$ curve is a flow line of angle $\theta$ if it is a flow line of $\Psi + \theta \chi(\cc)$. When $\kappa > 4$ these curves are instead called \textbf{counterflow lines}\footnote{By convention, in the above coupling $\eta$ is a counterflow line of $-\Psi$ rather than $\Psi$.}. Flow lines can also be started at interior points $z \in \bbH$ and run until they hit $\partial \bbH$, and satisfy properties analogous to those stated above \cite{ig4}. The case $\kappa = 4$ ($\chi = 0$) falls outside the scope of the imaginary geometry framework, but the above statements for chordal $\eta$ still hold, and $\eta$ is called a \textbf{level line} \cite{ss-contour,wang-wu-level-lines}. 

For $\kappa > 4$, given a zero boundary GFF $\Psi$ in $\bbH$ there is a  counterclockwise space-filling $\SLE_{\kappa}$ loop based at $\infty$ measurable with respect to $\Psi$ defined as follows. For each point with rational coordinates $z \in \BB Q^2 \cap \bbH$, let $\eta^{z,L}$ and $\eta^{z, R}$ be the flow lines of $\Psi - \frac\pi{\sqrt\kappa}$ started at $z$ with angles $\theta^L = -\frac\pi2$ and $\theta^R = \frac\pi2$ respectively. We define a total order $\preceq$ on $\BB Q^2 \cap \bbH$ by saying $z \preceq w$ if $z$ lies in a connected component of $\BB H \backslash (\eta^{w,L} \cup \eta^{w, R})$ whose boundary is traced by the left side of $\eta^{w, L}$ and the right side of $\eta^{w, R}$. 
This ordering is well defined due to properties of interacting flow lines, and gives rise to a space-filling loop in $\BB H$. See \cite[Appendix A.3]{bg-LBM} for details.
Since this coupling is constructed via the  flow lines of $\Psi$ viewed as a $\cc_\mathrm{SLE}(\kappa)$-IG field, in any simply connected domain  with one boundary point  we have a coupling of the counterclockwise space-filling $\SLE_{\kappa}$ loop with $\mathrm{IG}_{\mathbf c_\mathrm{SLE}(\kappa)}$. 

Let $\eta$ be a counterclockwise space-filling $\SLE_\kappa$ loop in $\ol \D$, and let $z, z' \in \ol \D$ be distinct points chosen independently of $\eta$. Let $a < b$ be the two times when $\eta$ hits $\{ z, z'\}$. When $\kappa \geq 8$,  a.s.\ the interior of $\eta([a,b])$ is simply connected. On the other hand, when $\kappa \in (4,8)$, a.s.\ the interior of $\eta([a,b])$ has countably many connected components.
See Figure~\ref{fig-discretization}.

\subsection{Total curvature and winding}
\label{subsec-winding}

We recall the notion of total curvature, also known as winding in some probability literature such as \cite{ig1}. This will be used in Section~\ref{subsec-arg} to make sense of LQG / IG coordinate change as in Definitions~\ref{def-IG} and~\ref{def-QS-IG} for domains which are not necessarily connected, and in Section~\ref{subsec-topo} for the main combinatorial argument of the paper (see also Appendix~\ref{sec-meander}).

Suppose $\eta:[0,1] \to \C$ is a regular curve, meaning it is continuously differentiable with nonvanishing derivative. Let $\theta:[0,1] \to \R$ be  the unique continuous function such that $\theta(0) \in [0,2\pi)$ and $e^{-\bbi\theta(t)}\eta'(t) >0 $ for each $t \in [0,1]$. The \emph{total curvature} of $\eta$ is defined to be $\theta(1) - \theta(0)$, and more generally we define the total curvature of $\eta|_{[t,t']}$ to be  $\theta(t') - \theta(t)$. This name comes from the fact that when $\eta$ is twice-differentiable, if $\wh \eta: [0,T] \to \C$ is the parametrization of $\eta$ according to arc length and $k(t) = \frac{\wh \eta''(t)}{\bbi \wh \eta'(t)}\in \R$ is its signed curvature, then 
\[\theta(1) - \theta(0) = \int_0^T k(t) \, dt. \]

		A regular simple closed curve $\eta:[0,T] \to \C$ is a simple loop ($\eta(0) = \eta(T)$) with a continuous nonvanishing derivative such that $\eta'(0) = \eta'(T)$. The following classical fact is often called \emph{Hopf's Umlaufsatz}, see e.g.\ \cite[Theorem 17.4]{tu-differential-geometry}.
		\begin{lemma}\label{lem-hopf}
			The total curvature of a regular simple closed curve is $\pm 2\pi$, with the $+$ sign if the loop is oriented in the counterclockwise direction and $-$ sign otherwise.
		\end{lemma}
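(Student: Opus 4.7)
The plan is to follow Hopf's original argument via the secant map, which is the natural approach for a $C^1$ regular curve where pointwise curvature need not exist but total curvature is defined via the continuous argument function $\theta$. By reversing the parametrization if necessary (which negates the total curvature), I may assume $\eta$ is oriented counterclockwise, so that the enclosed Jordan region $\Omega$ lies to the left of $\eta$; it then suffices to prove the total curvature equals $+2\pi$.

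After a rigid motion and a translation of the parameter, I may further assume that $\eta(0) = \eta(T)$ is the unique lowest point of $\eta([0,T])$ (i.e., the point minimizing the imaginary part), and that $\eta'(0)$ points in the positive real direction. Consider the closed triangle
\[
\Delta = \{(s,t) \in [0,T]^2 : 0 \le s \le t \le T\}
\]
and define the secant map $\Phi : \Delta \to S^1$ by
\[
\Phi(s,t) = \frac{\eta(t) - \eta(s)}{|\eta(t) - \eta(s)|} \quad \text{for } s<t \text{ with } (s,t) \ne (0,T),
\]
with $\Phi(s,s) := \eta'(s)/|\eta'(s)|$ and $\Phi(0,T) := -\eta'(0)/|\eta'(0)|$. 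Simplicity of $\eta$ ensures $\eta(t) \ne \eta(s)$ for $s<t$ with $(s,t) \ne (0,T)$, and regularity together with a uniform-continuity argument for the secant gives continuity of $\Phi$ on all of $\Delta$. Since $\Delta$ is simply connected, $\Phi$ lifts to a continuous $\widetilde\Phi : \Delta \to \R$ with $\Phi = e^{i\widetilde\Phi}$.

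Next I compute the total change of $\widetilde\Phi$ around $\partial\Delta$, traversed counterclockwise as the three legs $L_1 = \{(s,s) : s \in [0,T]\}$, $L_2 = \{(s,T) : s \in [0,T]\}$ (traversed backward from $(T,T)$ to $(0,T)$), and $L_3 = \{(0,t) : t \in [0,T]\}$ (traversed from $(0,T)$ to $(0,0)$). On $L_1$, the restriction $\widetilde\Phi(s,s)$ is by definition (a lift of) $\theta(s)$, so its total change equals the total curvature $\theta(T) - \theta(0)$ that we want to compute. On $L_3$, i.e., $t \mapsto \Phi(0,t) = (\eta(t)-\eta(0))/|\eta(t)-\eta(0)|$, the vector lies in the closed upper half-plane (because $\eta(0)$ is the lowest point), starts at $\eta'(0)/|\eta'(0)| = +1$, and ends at $-\eta'(0)/|\eta'(0)| = -1$; traversed from $(0,T)$ to $(0,0)$ this contributes $-\pi$. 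Symmetrically, on $L_2$ the secant $(\eta(T)-\eta(s))/|\eta(T)-\eta(s)|$ lies in the closed lower half-plane and contributes another $-\pi$. Since $\widetilde\Phi$ is a continuous function on a simply connected set, the net change around $\partial\Delta$ is zero, yielding $(\theta(T)-\theta(0)) - \pi - \pi = 0$ with appropriate sign bookkeeping, equivalently $\theta(T)-\theta(0) = 2\pi$.

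The main obstacle is the sign bookkeeping in the $L_2, L_3$ contributions: one must verify that each half-circle is traversed in the \emph{same} rotational sense (clockwise, in the conventions above), which uses the hypothesis that $\Omega$ lies to the left of $\eta$ near $\eta(0)$ together with $\eta(0)$ being the lowest point. Once this is pinned down, the proof is complete; in the clockwise case the identical argument, applied after reversing orientation, gives $-2\pi$.
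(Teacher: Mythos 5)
Your proof is correct. The paper does not actually prove Lemma~\ref{lem-hopf} — it states the Umlaufsatz as a classical fact and cites \cite[Theorem 17.4]{tu-differential-geometry} — so there is no in-paper argument to compare against. What you have written is the standard Hopf proof via the secant (chord) map on the triangle $\Delta=\{(s,t):0\le s\le t\le T\}$, which is precisely the right argument at the $C^1$ level of regularity assumed in the paper, and the sign bookkeeping works out as you indicate: once $\eta(0)$ is taken to be a lowest point with $\eta'(0)$ real (and positive by the counterclockwise hypothesis), the lift $\widetilde\Phi$ is forced into $[0,\pi]$ along $L_3$ and into $[\pi,2\pi]$ along $L_2$, pinning $\widetilde\Phi(0,T)=\pi$ and $\widetilde\Phi(T,T)=2\pi$, hence total curvature $2\pi$. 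One small simplification: you do not actually need the lowest point to be unique, only that it is a lowest point (so the secants stay in the closed half-planes), and for a $C^1$ loop the horizontality of $\eta'(0)$ at a minimum of $\operatorname{Im}\eta$ is automatic; you can also avoid the verbal "rotational sense" discussion by arguing as above directly via which half-circle component of $\arg^{-1}(\overline{\mathbb H})$ or $\arg^{-1}(-\overline{\mathbb H})$ the continuous lift is confined to on each leg.
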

	\begin{lemma}\label{lem-total-winding}
		Suppose $\eta_0$ and $\eta_1$ are regular simple curves such that for some $\eps > 0$ we have  $\eta_0|_{[0,\eps]} = \eta_1|_{[0,\eps]}$ and $\eta_0|_{[1-\eps, 1]} = \eta_1|_{[1-\eps, 1]}$, and $\eta_0|_{[\eps, 1-\eps]}$ and $\eta_1|_{[\eps, 1-\eps]}$ are homotopic in the twice-slitted domain $\C \backslash (\eta_0([0,\eps]) \cup \eta_0([1-\eps, 1]))$.    Then the total curvatures of $\eta_0$ and $\eta_1$ agree.
		\end{lemma}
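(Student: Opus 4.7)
The plan is to reduce the claim to a statement about the middle arcs $\sigma_i := \eta_i|_{[\eps, 1-\eps]}$, and then to upgrade the given topological homotopy to a regular one so that total curvature varies continuously. Since $\eta_0$ and $\eta_1$ agree on $[0,\eps]$ and $[1-\eps,1]$, their total curvatures on those subintervals coincide, so it suffices to prove $TC(\sigma_0)=TC(\sigma_1)$. The arcs $\sigma_0, \sigma_1$ share endpoints $p_0 = \eta_i(\eps)$, $p_1 = \eta_i(1-\eps)$ and share tangent directions at both endpoints (since both agree there with $\eta_0$). Moreover, they are homotopic rel endpoints inside $D := \C\setminus(\alpha\cup\beta)$, where $\alpha := \eta_0([0,\eps])$ and $\beta := \eta_0([1-\eps,1])$.

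Next I would upgrade the given continuous homotopy to a regular homotopy $H:[0,1]\times[\eps,1-\eps]\to D\cup\{p_0,p_1\}$ such that $H(0,\cdot)=\sigma_0$, $H(1,\cdot)=\sigma_1$, $H(s,\eps)=p_0$, $H(s,1-\eps)=p_1$, with $\partial_t H(s,\eps)$ and $\partial_t H(s,1-\eps)$ equal to the prescribed endpoint tangent directions for all $s$, and $\partial_t H(s,t)\neq 0$ everywhere. This can be done by first convolving the continuous homotopy with a smooth mollifier (in $s$ and $t$) to obtain a smooth map close to the original, adjusting near $s=0,1$ and $t=\eps,1-\eps$ to restore the prescribed boundary conditions, and finally applying a small generic perturbation to eliminate the zeros of $\partial_t H$ (non-vanishing of the $t$-derivative is an open, dense condition for smooth maps into a $2$-manifold with these boundary conditions).

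Once such a regular $H$ is in hand, I would consider the function $\Theta:[0,1]\to\R$ defined by taking the continuous lift of $(s,t)\mapsto \arg\partial_t H(s,t)$ (well-defined since $\partial_t H$ is non-vanishing and $[0,1]\times[\eps,1-\eps]$ is simply connected) and setting $\Theta(s)$ equal to its value at $t=1-\eps$ minus its value at $t=\eps$. Then $\Theta(0)=TC(\sigma_0)$ and $\Theta(1)=TC(\sigma_1)$, and $\Theta$ is continuous in $s$. Because the endpoint tangent directions $\partial_t H(s,\eps)$ and $\partial_t H(s,1-\eps)$ are independent of $s$, the values of $\Theta(s)$ all lie in a single coset of $2\pi\Z$; by continuity, $\Theta$ is constant, yielding $TC(\sigma_0)=TC(\sigma_1)$ and hence $TC(\eta_0)=TC(\eta_1)$.

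The main obstacle is the smoothing/regularization step: producing a regular homotopy from a merely continuous one while preserving the pointwise boundary conditions (including the endpoint tangent directions). Once this is achieved, the rest is a soft continuity-plus-discreteness argument. Note that simplicity of $\eta_0,\eta_1$ is used only to ensure $\sigma_i$ meets $\alpha\cup\beta$ only at $\{p_0,p_1\}$ (so the homotopy hypothesis makes sense), but the intermediate curves $H(s,\cdot)$ need not be simple for the continuity argument to work.
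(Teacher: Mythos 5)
The proposal has a genuine gap in the smoothing step, and the gap is not a technicality but a conceptual issue: you are implicitly trying to prove that continuous-homotopy class (rel endpoints, in $D$) determines regular-homotopy class, which is false. Concretely, the assertion that ``non-vanishing of the $t$-derivative is an open, dense condition'' is wrong. For a smooth $H:[0,1]\times[\eps,1-\eps]\to\C$, the zero set of $\partial_t H$ is generically a finite set of points (two equations in two variables), not empty, and a zero with nonzero local index cannot be removed by a small perturbation. So you cannot in general perturb a continuous homotopy to a regular one.

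To see that the upgrade step \emph{must} fail as stated, note that total curvature is a regular-homotopy invariant, so if every continuous homotopy rel endpoints in $D$ could be replaced by a regular one then any two regular arcs in $D$ with the same endpoint data that are homotopic rel endpoints would have the same total curvature. But that is false without a simplicity assumption: take $\sigma_1$ to be $\sigma_0$ with a small regular full loop inserted at an interior point. Then $\sigma_1$ is regular, stays in $D$, has the same endpoints and endpoint tangents, and is homotopic to $\sigma_0$ rel endpoints in $D$ (the loop is contractible), yet its total curvature differs from that of $\sigma_0$ by $\pm 2\pi$. Hence no regular homotopy between them exists. Your closing remark that ``the intermediate curves $H(s,\cdot)$ need not be simple'' is true but beside the point; what matters is that the simplicity of $\eta_0,\eta_1$ themselves is an essential hypothesis, and your argument never uses it in a way that could distinguish $\sigma_1$ from $\sigma_0$-with-a-loop-inserted. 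The paper's proof is built precisely around this: it uses simplicity via Hopf's Umlaufsatz, closing both arcs with a common auxiliary arc to form simple closed curves of total curvature $\pm 2\pi$, and then handles the general case by a finite chain of simple regular approximants. A correct version of your approach would need to first produce a homotopy through \emph{simple} regular arcs and then argue along it, which is essentially what the paper does; the purely generic/regular-homotopy route you sketch does not close.
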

	\begin{proof}
		Let $\eta$ be a simple regular curve from $\eta_0(1)$ to $\eta_0(0)$ which concatenates with $\eta_0$ to form a   regular simple closed curve. By Lemma~\ref{lem-hopf}, the concatenation has total curvature $\pm 2\pi$ with sign depending on the orientation of the loop. Thus, if such an $\eta$ can be chosen such that the loop obtained by concatenating $\eta$ with $\eta_1$ is also simple, then $\eta_0$ and $\eta_1$ have the same total curvature, as needed. 
		
		We now address the general case where no such $\eta$ can be chosen. Our homotopy hypothesis implies that there exists $\ep  >0$ and a  homotopy of \emph{simple} curves $(\eta_t)_{[0,1]}$ from $\eta_0$ to $\eta_1$ such that $\eta_t|_{[0,\eps]}$ and $\eta_t|_{[1-\eps, 1]}$ are continuously differentiable and do not depend on $t \in [0,1]$, see e.g.\ \cite[Section 1.2.7]{fm-primer-mcg} for details on why we can take the curves to be simple. 
		 Pick a finite set of times $0 = t_0 < \cdots < t_k = 1$ such that for each $j = 1, \dots, k$ the points $\eta_0(0)$ and $\eta_0(1)$ lie on the boundary of the same connected component of $\C  \backslash (\eta_{t_{j-1}}([0,1]) \cup \eta_{t_j}([0,1]))$. Let $\wt \eta_0 = \eta_0$ and $\wt \eta_k = \eta_1$.  For $j = 1, \dots, k-1$ let $\wt \eta_j$ be a simple regular curve such that $\wt \eta_j|_{[0,\eps]} = \eta_0|_{[0,\eps]}$ and $\wt \eta_j|_{[1-\eps, 1]} = \eta_0|_{[1-\eps, 1]}$ for all $j$, and $\wt \eta_j|_{[\eps, 1-\eps]}$ is homotopic to $\eta_0|_{[\eps, 1-\eps]}$ in $\C \backslash (\eta_0([0,\eps]) \cup \eta_0([1-\eps, 1]))$. Moreover we assume $\eta_0(0)$ and $\eta_0(1)$ lie on the boundary of the same connected component of $\C \backslash (\wt \eta_{j-1}([0,1]) \cup \wt \eta_j([0,1]))$ for $j = 1, \dots, k$. Such curves $\wt \eta_j$ can be chosen by picking simple regular curves that stay sufficiently close to $\eta_{t_j}$ in the Hausdorff topology for each $j = 1, \dots, k-1$. By the first paragraph, the total curvatures of $\wt \eta_{j-1}$ and $\wt \eta_j$ agree for $j = 1, \dots, k$, and hence the total curvatures of $\wt \eta_0 = \eta_0$ and $\wt \eta_k = \eta_1$ agree, as needed. 
	\end{proof}
	
	By Lemma~\ref{lem-total-winding}, the following definition makes sense.
	
	\begin{definition}\label{def-winding-nonsmooth}
		Suppose $\eta: [0,1] \to \C$ is a (possibly non-smooth) simple curve such that $\eta|_{[0,\eps]}$ and $\eta|_{[1-\eps, 1]}$ are regular for some $\eps>0$. Define the total curvature of $\eta$ to be that of any regular simple curve $\eta^\mathrm{reg}:[0,1] \to \C$ such that  $\eta|_{[0,\eps]} = \eta^\mathrm{reg}|_{[0,\eps]}$ and $\eta|_{[1-\eps, 1]} = \eta^\mathrm{reg}|_{[1-\eps, 1]}$, and $\eta|_{[\eps, 1-\eps]}$ and $\eta^\mathrm{reg}|_{[\eps, 1-\eps]}$ are homotopic in the twice-slitted domain $\C \backslash (\eta([0,\eps]) \cup \eta([1-\eps, 1]))$. 
	\end{definition}

\subsection{IG coordinate change for general domains}\label{subsec-arg}

Suppose $D$ and $\wt D$ are bounded open subsets of $\C$ and $f:  \C \to \C$ is a homeomorphism with $f(D) = \wt D$ which is conformal on $D$. When $D$ is simply connected (which in particular means that it is connected), one can define $\arg f':D \to \R$ uniquely up to an additive constant in $2\pi \Z$.  In this section we extend the definition of $\arg f'$ to general $D$. Roughly speaking the condition that $f$ is a homeomorphism rules out the possibility of ``spinning'' a connected component an arbitrary number of times. This ensures that the definition of an IG surface (Definition~\ref{def-IG}) makes sense even if $D$ is not simply connected, or even connected.

 Here is the extended definition of $\arg f': D \to \R$. First, fix any point $z_0 \in D$ and define  $(\arg f')(z_0)$ to be any real $\theta$ satisfying $f'(z_0) e^{-\bbi \theta} > 0$; this specifies it up to an element of $2\pi \Z$. For $z \in D \backslash \{ z_0\}$, pick a curve $\eta:[0,1] \to \C$ with $\eta(0) = z_0$ and $\eta(1) = z$ and which is regular in neighborhoods of its endpoints. Let $\wt \eta = f \circ \eta$, then we define
\eqb \label{eq-def-arg}
(\arg f')(z) = (\arg f')(z_0) + w(\wt \eta) - w(\eta),
\eqe
where $w$ denotes  total curvature as in  Definition~\ref{def-winding-nonsmooth}. The following result states that this definition is valid. It uses Lemmas~\ref{lem-usual-arg} and~\ref{lem-winding-well-defined} which we state and prove at the end of this section. 
\begin{proposition}\label{prop-arg-f'}
	The above definition of $\arg f': D \to \R$ does not depend on the choices of $z_0$ and $\eta$, and agrees with the usual definition of $\arg f'$ when $D$ is  simply connected.
\end{proposition}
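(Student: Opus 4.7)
The proposition asserts three things: (i) the definition is independent of the path $\eta$; (ii) it is independent of the base point $z_0$; and (iii) it agrees with the standard definition on simply connected $D$. Claim (iii) will be the content of Lemma~\ref{lem-usual-arg}. Claim (ii) follows easily from (i): given a second valid base point $z_0'$ with chosen initial value, one applies (i) to a path from $z_0'$ to $z$ passing through $z_0$, and checks that the resulting function on $D$ differs from the original by a constant in $2\pi\Z$, which is precisely the ambiguity in the choice of $(\arg f')(z_0)$. The substantive content is claim (i), which I expect is the content of Lemma~\ref{lem-winding-well-defined}.

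For claim (i), the plan is to show that the quantity $\Phi(\eta) := w(f\circ\eta) - w(\eta)$ depends only on the endpoints $z_0,z$. Since $f$ is conformal (hence smooth and non-singular) at $z_0$ and $z$, modifying $\eta$ by replacing its initial and terminal regular arcs changes $w(\eta)$ and $w(f\circ\eta)$ by the same amount, leaving $\Phi(\eta)$ unchanged. So given two candidate paths $\eta_1,\eta_2$ I may arrange that they coincide on small regular arcs through $z_0$ and $z$; an approximation argument using Lemma~\ref{lem-total-winding} further lets me take both to be simple with non-crossing middles.

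With this setup, concatenate $\eta_1$ with $\eta_2^{-1}$, inserting fixed small smoothing caps at $z_0$ and $z$ that each turn by $+\pi$ so the tangent directions match at the junctions. The resulting $\gamma$ is a regular simple closed curve in $\C$ satisfying $w(\gamma) = w(\eta_1) - w(\eta_2) + 2\pi$ by additivity of total curvature. Hopf's Umlaufsatz (Lemma~\ref{lem-hopf}) gives $w(\gamma) = \pm 2\pi$ with sign determined by orientation. The image $f\circ\gamma$ is again a simple closed curve (as $f$ is a homeomorphism of $\C$), and since $f$ is conformal at $z_0$ and $z$, the $f$-images of the caps turn by $+\pi + o(1)$ as cap sizes shrink, giving $w(f\circ\gamma) = w(f\circ\eta_1) - w(f\circ\eta_2) + 2\pi + \delta$ for an error $\delta$ that is small with the cap size. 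Since $f$ is conformal on the nonempty open set $D$ it is orientation-preserving as a homeomorphism of $\C$, so $f\circ\gamma$ has the same orientation as $\gamma$ and Hopf gives $w(f\circ\gamma) = \pm 2\pi$ with the same sign as $w(\gamma)$. Since $w(f\circ\eta_1) - w(f\circ\eta_2) \in 2\pi\Z$ (same terminal tangent directions on both sides) and $\delta$ is small, $\delta = 0$; comparing then yields $w(\eta_1) - w(\eta_2) = w(f\circ\eta_1) - w(f\circ\eta_2)$, i.e., $\Phi(\eta_1) = \Phi(\eta_2)$.

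The main obstacle is that when $D$ is disconnected or multiply connected, the loop $\gamma$ may leave $D$, so $f\circ\gamma$ need not be globally smooth and classical Hopf does not apply verbatim. This is precisely what Definition~\ref{def-winding-nonsmooth} and Lemma~\ref{lem-total-winding} are designed for: total curvature is defined for simple curves that are merely regular near a base point and depends only on the tangent data there together with the homotopy class in the twice-slitted plane. With this extension, a simple closed curve regular near a base point is homotopic through such curves to a globally smooth one of the same orientation, and Hopf's conclusion $w = \pm 2\pi$ continues to hold. This lets the argument above go through unchanged.
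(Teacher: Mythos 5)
Your structural decomposition is right, and your base-point-independence argument is a fine variant of the paper's. However, there is a genuine gap in your path-independence argument. In the second paragraph you assert that, after arranging common initial and terminal arcs, "an approximation argument using Lemma~\ref{lem-total-winding} further lets me take both to be simple with non-crossing middles." This reduction is not available in general. Replacing $\eta_2$ by a curve $\eta_2'$ via Lemma~\ref{lem-total-winding} only preserves $\Phi(\eta)=w(f\circ\eta)-w(\eta)$ when $\eta_2'$ has the same end arcs and its middle is \emph{homotopic to $\eta_2$'s middle} in the twice-slitted plane $\C\setminus(\eta([0,\eps])\cup\eta([1-\eps,1]))$. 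That domain is a pair of pants (its fundamental group is free on two generators), and two simple arcs there between the same endpoints can lie in different homotopy classes with nonzero geometric intersection number --- for instance if $\eta_2$'s middle differs from $\eta_1$'s by a Dehn twist around one of the two slits. In such a case no homotopic replacement of $\eta_2$ is disjoint from $\eta_1$, so the concatenation $\gamma$ in your next step is \emph{not} a simple closed curve, Hopf's Umlaufsatz (Lemma~\ref{lem-hopf}) does not apply, and the whole computation of $w(\gamma)$ and $w(f\circ\gamma)$ breaks down. This is precisely the situation the paper's Lemma~\ref{lem-winding-well-defined} is built to handle: it assembles a closed curve from $\eta_1$, the reverse of $\eta_2$, and two small caps, allows this closed curve to have finitely many transversal double-point self-intersections (the $\eta_1\perp\eta_2$ case, reduced to from the general case via Lemma~\ref{lem-third-curve}), straightens by homeomorphism, and then invokes Whitney's rotation-number formula rather than Hopf's Umlaufsatz, since Whitney's formula remains a homeomorphism invariant for \emph{non-simple} regular loops with transversal double points. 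Your argument works only in the special case where the two middles can be made disjoint, which is not the general situation; to close the gap you would need to replace your Hopf step by an argument (like Whitney's) valid for closed curves with crossings.
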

\begin{proof}
	Agreement with the usual definition in the simply connected setting follows from Lemma~\ref{lem-usual-arg}. The non-dependence on $\eta$ is stated as Lemma~\ref{lem-winding-well-defined}. For different choices of base point $z_0$ and $z_0'$, the consistency of the above definition can be shown by concatenating with a curve from $z_0'$ to $z_0$. 
\end{proof}

We now use~\eqref{eq-def-arg} to  explain how total curvature relates to IG boundary conditions. 

\begin{lemma}\label{lem-ig-bdy-cond}
	Let $\cc_\mathrm{M} \leq 1$. 
	Suppose $D \subset \C$ is a simply connected domain whose boundary is a regular simple closed curve, and let $x \in \partial D$. Let $\theta_0 \in [0, 2\pi)$ be the angle such that the tangent to $D$ at $x$ in the counterclockwise direction is parallel to $e^{\bbi \theta_0}$. Let $\mathfrak h_D : D \to \R$ be the harmonic function whose boundary value at  $y \neq x$  is the total curvature of the counterclockwise boundary arc from  $x$ to $y$. If $h$ is a zero boundary GFF in $D$, then the law of $(D, h + \chi(\cc_\mathrm{M})(\mathfrak h_D + \theta_0), x)/{\sim_\cc}$ is $\mathrm{IG}_{\cc_\mathrm{M}}$. 
\end{lemma}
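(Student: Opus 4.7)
The plan is to verify directly that the configuration $(D, h + \chi(\cc_\mathrm{M})(\mathfrak h_D + \theta_0), x)$ is $\sim_{\cc_\mathrm{M}}$-equivalent to an embedding of $\mathrm{IG}_{\cc_\mathrm{M}}$. Let $g:\D \to D$ be a Riemann map with $g(-\bbi) = x$. Since $\partial D$ is a regular simple closed curve, Carath\'eodory's theorem extends $g$ to a homeomorphism $\ol\D\to\ol D$, which I would further extend to a homeomorphism $\C\to\C$ as allowed by Definition~\ref{def-IG} (e.g., by matching a conformal identification of the exterior components). By conformal invariance of the zero-boundary GFF, $h\circ g$ is a zero-boundary GFF on $\D$; so by Definition~\ref{def-IG}, the lemma reduces to checking the deterministic identity
\[
\mathfrak h \;=\; (\mathfrak h_D + \theta_0)\circ g \;-\; \arg g' \pmod{2\pi}
\]
on $\D$, with $\arg g'$ defined as in Section~\ref{subsec-arg}.

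Both sides are harmonic on $\D$: the left by Definition~\ref{def-ig-disk}, the term $\mathfrak h_D\circ g$ by conformality of $g$, and $\arg g' = \im \log g'$ because $g'$ is holomorphic and nowhere zero on the simply connected domain $\D$. Continuous boundary extensions on $\partial\D\setminus\{-\bbi\}$ exist by Kellogg's theorem (since $\partial D$ is $C^1$), so it suffices to check the identity mod $2\pi$ at each $z\in\partial\D\setminus\{-\bbi\}$: the difference would then be a bounded harmonic function on $\D$ with boundary values in $2\pi\Z$ off of one point, hence a constant in $2\pi\Z$ by the removable-singularity principle. At such a $z$, the left-hand side $\mathfrak h(z) = \arg(\bbi z)$ is exactly the counterclockwise tangent direction to $\D$ at $z$, while by the definitions of $\theta_0$ and $\mathfrak h_D$ via total curvature, $\theta_0 + \mathfrak h_D(g(z))$ is the counterclockwise tangent direction to $D$ at $g(z)$. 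The identity thus reduces to the geometric statement that, mod $2\pi$, $\arg g'(z)$ is the rotation carrying the ccw tangent of $\D$ at $z$ to the ccw tangent of $D$ at $g(z)$.

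To make this rotation statement consistent with the (non-standard) definition of $\arg g'$ from Section~\ref{subsec-arg}, I would argue directly from~\eqref{eq-def-arg}: for a smooth simple curve $\eta\subset\ol\D$ from an interior base point $z_0$ to $z$ whose terminal tangent aligns with the ccw tangent to $\D$ at $z$, the image $g\circ\eta$ terminates with tangent aligned to the ccw tangent of $D$ at $g(z)$, and~\eqref{eq-def-arg} together with Definition~\ref{def-winding-nonsmooth} expresses $\arg g'(z) - \arg g'(z_0)$ as the difference of total curvatures of $g\circ\eta$ and $\eta$; after calibrating at $z_0$ (where $\arg g'(z_0)$ carries its usual interior meaning), this yields exactly the required identity. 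The main obstacle I anticipate is the boundary-regularity bookkeeping needed to take clean limits of tangents and of $\arg g'$ along $\partial\D$; Kellogg's theorem provides $g\in C^1(\ol\D\setminus\{-\bbi\})$ with nowhere-vanishing $g'$, which reduces the limit manipulations to routine verifications. Alternatively, one can avoid any appeal to boundary regularity of $g'$ by working entirely with one-sided limits of total curvatures along smooth approach curves, using Definition~\ref{def-winding-nonsmooth}.
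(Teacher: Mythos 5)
Your proposal takes essentially the same approach as the paper: fix a Riemann map $\D\to D$ sending $-\bbi\mapsto x$, reduce the lemma to a deterministic identity between harmonic functions, and verify boundary values by interpreting $\mathfrak h$, $\mathfrak h_D+\theta_0$, and $\arg g'$ in terms of tangent directions / total curvature via~\eqref{eq-def-arg}. The paper handles the boundary-regularity issue by approximating the boundary arcs $\eta,\wt\eta$ by smooth curves interior to $\D$ and $D$ (in $C^1$ norm) and passing to the limit, which matches your proposed alternative of working with one-sided limits of total curvatures rather than invoking Kellogg's theorem.
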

\begin{proof}
	Recall the harmonic function $\mathfrak h: \D \to \R$ in Definition~\ref{def-ig-disk} whose boundary value at $y \in \partial \D$ is the total curvature of the counterclockwise boundary arc from $-\bbi$ to $y$. 
	Let $f: \D \to D$ be a conformal map sending $-\bbi \mapsto x$. By the definition of an IG surface (Definition~\ref{def-IG}), we must show that 
\eqbn
	\chi(\cc_\mathrm{M})\mathfrak h_D \circ f + \chi(\cc_\mathrm{M})\theta_0 - \chi(\cc_\mathrm{M}) \arg f'= \chi(\cc_\mathrm{M})(\mathfrak h + 2 \pi m)
\eqen
for some integer $m$. Since both functions are harmonic and $\arg f'(-\bbi) - \theta_0 \in 2\pi \Z$,  it suffices to show $\arg f'(y) = \arg f'(-\bbi) + \mathfrak h_D(f(y)) - \mathfrak h(y)$ for all  $y \in \partial \D$ with $y \neq -\bbi$. Let $\eta$ (resp.\ $\wt \eta$) be the counterclockwise boundary arc of $\D$ from $-\bbi$ to $y$ (resp.\ arc of $D$ from $x$ to $f(y)$). Since $\mathfrak h(y)$ (resp.\ $\mathfrak h_D(f(y))$) is the total curvature of $\eta$ (resp.\ $\wt \eta$), the claim would follow from~\eqref{eq-def-arg} if that equation applied to the curves $\eta$ and $\wt \eta$. Instead, we approximate $\eta$ and $\wt \eta$ by smooth curves in $\D$ and $D$, where the approximation is with respect to the supremum norm for the curve and its first derivative,  apply~\eqref{eq-def-arg}, and take a limit to conclude. 
\end{proof}

In the rest of this section, we state and prove the lemmas needed for the proof of Proposition~\ref{prop-arg-f'} above. Lemma~\ref{lem-usual-arg} is also stated in \cite[Remark 2.5]{blr-dimer-universality}; we give its proof here for completeness.

\begin{lemma}\label{lem-usual-arg}
	Suppose $D$ is a bounded simply connected open set and let $\arg_0 f': D \to \R$ be a continuous function such that $f'(z) e^{-\bbi (\arg_0 f')(z)} > 0$ for all $z \in D$. Let $\eta:[0,1] \to D$ be a regular curve (not necessarily simple) and $\wt \eta = f \circ \eta$. Then $(\arg_0 f')(\eta(1)) =  (\arg_0 f')(\eta(0)) + w(\wt \eta) - w(\eta)$. 
\end{lemma}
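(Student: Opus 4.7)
The plan is a direct calculation using the continuous lift of the argument along a curve. I would first observe that since $D$ is simply connected and $f'$ is continuous and nonvanishing on $D$, the function $\arg_0 f'$ exists and is uniquely determined up to a global additive multiple of $2\pi$ by the positivity condition. Similarly, since $\eta$ is regular, the continuous function $\theta:[0,1]\to\R$ with $\eta'(t) e^{-\bbi\theta(t)} > 0$ exists and its endpoint difference $\theta(1)-\theta(0)$ equals $w(\eta)$ by definition of total curvature.

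The key observation is that the chain rule gives $\wt\eta'(t) = f'(\eta(t))\,\eta'(t)$, and so
\[
\wt\eta'(t)\, e^{-\bbi\bigl((\arg_0 f')(\eta(t)) + \theta(t)\bigr)}
= \bigl(f'(\eta(t)) e^{-\bbi(\arg_0 f')(\eta(t))}\bigr)\bigl(\eta'(t) e^{-\bbi\theta(t)}\bigr) > 0,
\]
since both factors on the right are positive reals by hypothesis. Thus $t\mapsto (\arg_0 f')(\eta(t)) + \theta(t)$ is a continuous function which serves as a valid continuous argument of $\wt\eta'(t)$. By uniqueness of the continuous lift of the argument (any two such lifts differ by a constant multiple of $2\pi$, which does not affect endpoint differences), we obtain
\[
w(\wt\eta) = \bigl[(\arg_0 f')(\eta(1)) + \theta(1)\bigr] - \bigl[(\arg_0 f')(\eta(0)) + \theta(0)\bigr] = (\arg_0 f')(\eta(1)) - (\arg_0 f')(\eta(0)) + w(\eta),
\]
and rearranging yields the claim.

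There is essentially no obstacle here: the proof is a one-line chain-rule calculation once the continuous-lift formulation of total curvature is in place. The only mild subtlety is confirming that the sum of two continuous arguments (of $f'\circ\eta$ and of $\eta'$) is itself a valid continuous argument for the product $\wt\eta'$; this is immediate from the positivity criterion used to define each lift. No simple-connectedness is needed beyond what guarantees the global existence of $\arg_0 f'$, and the regularity of $\eta$ (not simplicity) is what ensures $\theta$ exists.
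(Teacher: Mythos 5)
Your proposal is correct and is essentially the same argument as the paper's: both use the chain rule $\wt\eta'(t) = f'(\eta(t))\eta'(t)$ to show that $(\arg_0 f')(\eta(t)) + \theta(t)$ is a continuous argument lift of $\wt\eta'$, then invoke the fact that two such lifts differ by a constant in $2\pi\Z$ to equate endpoint differences.
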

\begin{proof}
		Let $\theta:[0,1] \to \R$ be a continuous function such that $\eta'(t) e^{-\bbi \theta(t)} > 0$ for all $t$, and similarly define $\wt \theta$ for $\wt\eta$. Since $\wt \eta = f \circ \eta$ we have $\wt \eta'(t) = f'(\eta(t)) \eta'(t)$, so $\wt \theta(t) $ and $ (\arg_0 f') (\eta(t)) + \theta(t) $ agree up to an additive integer multiple of $2\pi$. The  function $\wt \theta(t) - (\arg_0 f')(\eta(t)) - \theta(t)$ is continuous in $t$ and takes values in $2\pi \Z$, thus is constant. Identifying its values at $0$ and $1$ gives the result.
\end{proof}

In Lemma~\ref{lem-winding-well-defined} below we do not assume $D$ is simply connected. 
\begin{lemma}\label{lem-winding-well-defined}
For $j = 1,2$ let $\eta_j:[0,1] \to \C$  be a simple curve which is regular in neighborhoods of its endpoints. Suppose  $\eta_1(0) = \eta_2(0)$ and $\eta_2(1) = \eta_2(1)$, and these endpoints lie in $D$. Let $\wt \eta_j = f \circ \eta_j$ for $j = 1,2$. Then $w(\wt \eta_1) - w(\eta_1) = w(\wt \eta_2) - w(\eta_2)$. 
\end{lemma}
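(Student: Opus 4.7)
My plan is to form the piecewise-regular closed loop $\Gamma = \eta_1 * \bar\eta_2$ based at $z_0$ and its image $\tilde\Gamma = f \circ \Gamma = \tilde\eta_1 * \bar{\tilde\eta}_2$, and apply the generalized Hopf Umlaufsatz (Lemma~\ref{lem-hopf} with corner contributions) to both. Before invoking it, I will use Definition~\ref{def-winding-nonsmooth} to assume that $\eta_1, \eta_2$ are regular simple curves on all of $[0,1]$: replacing $\eta_j|_{[\eps, 1-\eps]}$ by a regular simple curve homotopic to it in the twice-slitted domain leaves $w(\eta_j)$ unchanged, and since $f$ is a homeomorphism of $\C$, the induced homotopy of the middle of $\tilde\eta_j$ lies in the corresponding image slit domain, so $w(\tilde\eta_j)$ is preserved as well.

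\smallskip

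Next I will iteratively remove interior intersections of $\eta_1$ and $\eta_2$: at each transverse $p \in (\eta_1 \cap \eta_2) \setminus \{z_0, z_1\}$, pick a small disk $V \ni p$ missing $\{z_0, z_1\}$ and the initial/final segments of $\eta_1$, and redraw $\eta_1|_V$ inside $V$ so as to avoid $\eta_2|_V$. Since $V$ is simply connected, the old and new arcs are homotopic through simple arcs in $V$, so Lemma~\ref{lem-total-winding} preserves $w(\eta_1)$; the $f$-image of this local homotopy, combined with Definition~\ref{def-winding-nonsmooth} (needed because $\tilde\eta_1$ may fail to be regular in the middle), similarly preserves $w(\tilde\eta_1)$. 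After these reductions, $\eta_1 \cap \eta_2 = \{z_0, z_1\}$, so $\Gamma$ is a piecewise regular simple closed loop with corners only at $z_0, z_1$.

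\smallskip

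The piecewise-smooth Hopf Umlaufsatz then gives $w(\eta_1) - w(\eta_2) + T_0 + T_1 = \pm 2\pi$, where $T_j \in (-\pi, \pi]$ is the turning angle at $z_j$. The same argument applied to $\tilde\Gamma$, after replacing the middles of $\tilde\eta_j$ by disjoint regular simple approximations (which exist since $\tilde\eta_1 \cap \tilde\eta_2 = \{f(z_0), f(z_1)\}$ and $f$ is a homeomorphism), yields $w(\tilde\eta_1) - w(\tilde\eta_2) + \tilde T_0 + \tilde T_1 = \pm 2\pi$. Because $f$ is conformal at each $z_j$, it rotates both the incoming and outgoing tangents of $\Gamma$ at $z_j$ by the same angle $\arg f'(z_j)$, so $\tilde T_j = T_j$; and the two signs $\pm 2\pi$ agree because $f$ is orientation-preserving on $\C$ (its local degree is locally constant and equal to $+1$ on the conformal set $D$). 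Subtracting the two identities yields the claim. The main obstacle will be the intersection-removal step, where preservation of $w(\tilde\eta_1)$ must be justified despite the possible non-regularity of $\tilde\eta_1$; this is handled by combining Lemma~\ref{lem-total-winding} for regular approximations with Definition~\ref{def-winding-nonsmooth} applied to the $f$-image slit domain.
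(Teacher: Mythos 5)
The central gap is in your intersection-removal step, which is topologically impossible as described. At a transverse intersection $p \in (\eta_1 \cap \eta_2) \setminus \{z_0,z_1\}$, take any small disk $V \ni p$ disjoint from the endpoints. Then $\eta_2 \cap V$ is a single arc crossing $V$, and it separates $V$ into two components; because the intersection at $p$ is transverse, the two points where $\eta_1$ meets $\partial V$ lie in \emph{different} components of $V \setminus \eta_2$. Hence \emph{every} arc in $V$ joining those two boundary points must cross $\eta_2 \cap V$. There is no redraw of $\eta_1|_V$ inside $V$ that avoids $\eta_2$, so the loop $\Gamma = \eta_1 * \bar\eta_2$ cannot be made simple this way. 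Nor can you hope to remove intersections in pairs globally: the total number of interior transverse intersections of two simple arcs with the same endpoints can be odd (e.g.\ $\eta_2(t)=t$ and $\eta_1$ a small wiggle crossing the segment once), so no pairing procedure can reduce to zero. Once $\Gamma$ has an unremovable self-intersection, the ordinary (even piecewise-smooth) Hopf Umlaufsatz no longer applies and your two $\pm 2\pi$ identities are not available.

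The paper avoids this precisely because it does \emph{not} try to make the concatenated loop simple. It keeps the finitely many generic self-intersections, regularizes the loop by a homeomorphism of $\C$ fixing neighborhoods of two auxiliary ``corner'' loops $\eta_p$, $\eta_q$, and then invokes Whitney's formula for the total curvature of a \emph{non-simple} regular closed curve — a quantity which Whitney shows is invariant under orientation-preserving homeomorphisms of the plane. That invariance does the work you were trying to do via Hopf. Your treatment of the corners (matching turning angles at $z_0,z_1$ via conformality of $f$ there) is a legitimate alternative to the paper's device of smoothing the corners with the loops $\eta_p,\eta_q$, and your orientation argument for the sign of $\pm 2\pi$ is fine; but neither of those repairs the loss of simplicity, and to proceed you need Whitney's index/curvature formula (or some equivalent homeomorphism-invariant generalization of Hopf) rather than the simple-closed-curve Umlaufsatz.
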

\begin{proof}
	Call an intersection point of two curve segments \emph{generic} if at that point a curve crosses from one side to the other side of the other curve.
		For $\eta_1$ and $\eta_2$ as in Lemma~\ref{lem-winding-well-defined}, we use the notation $\eta_1 \perp \eta_2$ to mean that $\eta_1$ and $\eta_2$ intersect only finitely many times, each intersection is generic, the tangent vectors of $\eta_1$ and $\eta_2$ at their starting point are not parallel, and similarly their tangent vectors at their ending point are not parallel.
		
	We first prove the result under the assumption that $\eta_1 \perp \eta_2$. Let $p = \eta_j(0)$ and $q = \eta_j(1)$, and let $\eta_{2,r}(t) = \eta_2(1-t)$ be the time-reversal of $\eta_2$.
 	Let $\eta_{p}$ be a regular curve which intersects itself only at its endpoints  $\eta_{p}(0) = \eta_{p}(1) = p$, is disjoint from $\eta_1$ and $\eta_2$, lies in a simply connected neighborhood of $p$ in $D$, and satisfies $\eta_{2,r}'(1) = \eta_{p}'(0)$ and $\eta_{p}'(1) = \eta_1'(0)$. 
	Let $\eta_q:[0,1] \to \C$ be a regular curve which intersects itself only at its endpoints  $\eta_q(0) = \eta_q(1) = q$, is disjoint from $\eta_1$ and $\eta_2$, lies in a simply connected neighborhood of $q$ in $D$, and satisfies $ \eta_1'(1) = \eta_q'(0)$ and $\eta_q'(1) = \eta_{2,r}'(0)$. See Figure~\ref{fig-arg}. 
	
	 Then $\eta:[0,4] \to \C$ obtained by concatenating $\eta_p, \eta_1, \eta_q, \eta_{2,r}$ is a loop which intersects itself finitely many times, all generically. By standard topological arguments, there exists a homeomorphism $g: \C \to \C$ fixing simply connected neighborhoods of $\eta_p$ and $\eta_q$ in $D$, which sends $\eta$ to a regular loop $\eta^\mathrm{reg} = g\circ \eta$ which intersects itself finitely many times, each intersection being  transversal. 
	 Decompose the loop $\eta^{\mathrm{reg}}$ into four segments $\eta_p, \eta_1^\mathrm{reg}, \eta_q, \eta_{2,r}^\mathrm{reg}$. 
	 
	 Let $\wt \eta_p = f \circ \eta_p, \wt \eta_q = f\circ \eta_q,$ and $  \wt \eta = f \circ \eta$. Let $\wt g$ be a  homeomorphism of $\C$ fixing simply connected neighborhoods of $\wt \eta_p$ and $\wt \eta_q$ in $\wt D$, and sending $\wt \eta$ to a regular simple closed curve $\wt \eta^\mathrm{reg}$ with finitely many self-intersections, all of which are  transversal. Decompose this closed curve into four segments 
	$\wt \eta_p, \wt\eta_1^\mathrm{reg}, \wt\eta_q, \wt\eta_{2,r}^\mathrm{reg} $. Since the orientation-preserving homeomorphism $\wt g \circ f \circ g^{-1}: \C \to \C$ sends $\eta^\mathrm{reg}$ to $\wt \eta^\mathrm{reg}$, and these regular loops only intersect themselves finitely many times, each intersection being transversal and a double-point, 
 by  \cite{whitney-regular} the total curvatures of $\wt\eta^\mathrm{reg}$ and $\eta^\mathrm{reg}$ agree\footnote{\cite[Theorem 2]{whitney-regular} gives a formula for the total curvature of a loop which is invariant under orientation-preserving homeomorphism, see e.g.\ \cite[Theorem 2]{bp-whitney} for the particular formulation we use.
}. 
In other words, 
	\[w(\wt \eta_p) +  w(\wt \eta_1^\mathrm{reg}) + w(\wt \eta_q) + w(\wt \eta_{2, r}^\mathrm{reg})  = w(\eta_p) + w(\eta_1^\mathrm{reg}) + w(\eta_q) + w(\eta_{2, r}^\mathrm{reg}).\]
	By Lemma~\ref{lem-usual-arg} we have $w(\wt \eta_p) = w(\eta_p) $ and $w(\wt \eta_q) = w(\eta_q)$, so 
	$w(\wt\eta_1^\mathrm{reg}) - w(\eta_1^\mathrm{reg}) = w( \eta_{2, r}^\mathrm{reg}) - w(\wt \eta_{2, r}^\mathrm{reg})$. Applying Definition~\ref{def-winding-nonsmooth} gives $w(\wt \eta_1) - w(\eta_1) = w(\eta_{2, r}) - w(\wt \eta_{2, r}) = w(\wt \eta_2) - w(\eta_2)$ as desired. 
	
	Now we address the case where we do not assume $\eta_1 \perp \eta_2$. By  Lemma~\ref{lem-third-curve} just below, we can find a third curve $\eta:[0,1] \to \C$  such that $\eta_1 \perp \eta$ and $\eta_2 \perp \eta$. Then the previous paragraph implies $w(\wt \eta_1) - w(\eta_1) = w(f \circ  \eta) - w(\eta) = w(\wt\eta_2) - w(\eta_2)$ as needed. 
\end{proof}

 \begin{figure}[ht!]
	\begin{center}
		\includegraphics[scale=0.5]{}%
		\caption{\label{fig-arg} 
			Figure for the proof of Lemma~\ref{lem-winding-well-defined}. We introduce curves $\eta_p$ and $\eta_q$ in simply connected neighborhoods of $p$ and $q$ (gray) which join with $\eta_1$ and $\eta_{2,r}$ to yield a loop $\eta$ whose restriction to these neighborhoods is regular. Let $g:\C \to \C$ be a homeomorphism fixing these neighborhoods such that $g \circ \eta$ is regular. One can similarly choose $\wt g$ for the loop $f \circ \eta$. 
		}
	\end{center}
\end{figure}

\begin{lemma}\label{lem-third-curve}
	Let $\perp$ be as defined in the proof of Lemma~\ref{lem-winding-well-defined}.	For any curves $\eta_1, \eta_2$ as in Lemma~\ref{lem-winding-well-defined}, there exists a third curve $\eta$ such that $\eta_1 \perp \eta$ and $\eta_2 \perp \eta$. 
\end{lemma}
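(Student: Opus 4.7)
Proof plan: My plan is to construct \(\eta\) as a smooth simple arc from \(p\) to \(q\) in general position with respect to \(\eta_1\) and \(\eta_2\).

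First, I handle the endpoint tangent conditions. The tangent directions of \(\eta_1\) and \(\eta_2\) at \(p\), and likewise at \(q\), determine at most four forbidden directions on \(S^1\); pick unit vectors \(v_p, v_q\) avoiding the corresponding forbidden sets, and insist that \(\eta\) have initial tangent \(v_p\) and final tangent \(v_q\). This alone produces the endpoint tangent non-parallelism condition appearing in the definition of \(\perp\).

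Next, I reduce to the case where \(\eta_1,\eta_2\) are piecewise smooth. Since simple arcs in \(\mathbb{R}^2\) are tame, there should exist a planar homeomorphism \(\phi:\C\to\C\) that is the identity on small neighborhoods of \(p\) and \(q\) (so the tangents chosen above are preserved) and for which \(\phi(\eta_1)\) and \(\phi(\eta_2)\) are both piecewise smooth simple arcs. After this reduction, a standard Thom transversality / normal perturbation argument on the space of smooth simple arcs from \(p\) to \(q\) with prescribed endpoint tangents \(v_p, v_q\) produces a smooth simple arc \(\tilde\eta\) transverse to \(\phi(\eta_1)\) and to \(\phi(\eta_2)\); transversality plus compactness then give finitely many intersection points, each of which is automatically a crossing. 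Setting \(\eta := \phi^{-1}(\tilde\eta)\) yields the desired curve, since simplicity, finiteness of intersection, and the crossing property are all preserved by planar homeomorphisms.

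The hard step is the reduction: simultaneously straightening \(\eta_1\) and \(\eta_2\) via a single planar homeomorphism. The one-arc case is the classical Schoenflies theorem, but the two-arc version is delicate when the intersection \(\eta_1 \cap \eta_2\) is complicated (for instance Cantor-like), since straightening the second arc after the first can disturb the smoothness already achieved. The natural fix is to view \(\eta_1 \cup \eta_2\) as a topological embedding of a planar ``graph'' (possibly with uncountably many vertices along the common intersection set) and smooth the whole union in one move, leveraging the tameness of simple planar arcs. Once this technical point is in place, the rest of the argument is routine general position.
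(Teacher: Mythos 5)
Your proposal has a genuine gap at the step you yourself flag as ``the hard step'': the claim that there is a single planar homeomorphism $\phi$, fixing neighborhoods of the shared endpoints $p$ and $q$, making \emph{both} $\eta_1$ and $\eta_2$ piecewise smooth. You offer no proof of this, only a plausibility heuristic (``view $\eta_1\cup\eta_2$ as a topological graph\ldots and smooth the whole union''), but when $\eta_1\cap\eta_2$ is, say, a Cantor set, $\eta_1\cup\eta_2$ is not a finite graph, and I do not know of a standard simultaneous-taming theorem that applies directly to such a pair. The one-arc case is indeed classical Schoenflies, but the two-arc version with a wild intersection set is not something you can wave away. Until that claim is established, the transversality argument downstream of it has nothing to act on.

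The paper's argument sidesteps exactly this difficulty, and this is the main methodological difference worth noting. It straightens only $\eta_1$ (ordinary one-arc Schoenflies, which is standard), and then rather than trying to put $\eta_2$ in general position, it builds $\eta$ to be \emph{disjoint} from $\eta_1$ away from the endpoints; this makes $\eta_1\perp\eta$ automatic, with no intersections to control at all. Crossings with $\eta_2$ are then bounded combinatorially: a ``separating interval'' of $\eta_2$ must have a uniform diameter lower bound (because the corresponding Jordan loop must enclose one of the two endpoint stubs of $\eta$), and uniform continuity of $\eta_2$ then forces only finitely many such intervals, so $\eta$ can be drawn crossing $\eta_2$ once per interval. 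This is entirely elementary, avoids any transversality machinery, and in particular never needs to smooth $\eta_2$ at all. If you want to salvage your approach, you would either need to prove the simultaneous-taming statement (a nontrivial taming result in its own right) or restructure so that only one of the two arcs needs to be regularized, as the paper does.
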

\begin{proof}
	By applying a suitable homeomorphism if necessary, we may assume that $\eta_1$ is a line segment.

	We first choose the initial and ending segments of $\eta$. Let $\eta|_{[0,1/3]}$ be a line segment with $\eta(0) = \eta_1(0)$ such that  $\eta_1'(0)$ and $\eta_2'(0)$ are not parallel to $\eta'(0)$, and $\eta((0,1/3])$ is disjoint from $\eta_1$ and $\eta_2$. This is possible since $\eta_1$ and $\eta_2$ are regular in neighborhoods of their starting points.
 Similarly, let $\eta|_{[2/3, 1]}$ be a line segment ending at $\eta_1(1)$ such that $\eta_1'(1)$ and $\eta_2'(1)$ are not parallel to $\eta'(1)$, and $\eta([2/3, 1))$ is disjoint from $\eta_1$ and $\eta_2$. 
	
	Next, we say an interval $[a,b]$ is \emph{separating} if $\eta_2(a)$ and $\eta_2(b)$ each lie on $\eta_1$, $\eta_2((a,b))$ is disjoint from $\eta_1$, and the loop obtained as the union of $\eta_2([a,b])$ and the segment of $\eta_1$ joining $\eta_2(a)$ and $\eta_2(b)$ separates $\eta((0,1/3])$ from $\eta([2/3, 1))$. 
	If $[a,b]$ is separating, then  $\mathrm{diam}(\eta_2([a,b])) > \min(\mathrm{diam}(\eta((0,1/3])), \mathrm{diam}(\eta([2/3, 1))))$.  Since $[0,1]$ is compact and $\eta_2$ is continuous, the curve $\eta_2$ is uniformly continuous, thus the diameter lower bound implies there are at most finitely many separating intervals. The initial and ending segments of $\eta$ can then be extended to a simple curve $\eta:[0,1]\to \C$ such that $\eta((0,1))$ is disjoint from $\eta_1$ and crosses $\eta_2$ once for each separating interval. This is the desired curve. 
\end{proof}

\section{Proofs of main theorems}
\label{sec-proofs}

In this section, we prove our most general result Theorem~\ref{thm-main} on the LQG-IG local sets of Definition~\ref{def-eps-local}. The main theorems stated in Section~\ref{subsec-intro-results} are all straightforward consequences of Theorem~\ref{thm-main} and its minor variant Theorem~\ref{thm-main-pt}.

 First, we give the definition of an LQG-IG local set.
 This is an analog of the definition of a local set of the GFF~\cite[Section 3]{ss-contour} in the setting of LQG surfaces decorated by imaginary geometry fields.
  Roughly speaking, a random closed set $K$ is an LQG-IG local set of an IG-decorated LQG surface $\cS$ if, for ``any'' IG-decorated LQG subsurface $\cS'$ of $\cS$ chosen conditionally independently of $K$ given $\cS$, the event \{$K$ lies in $\cS'$\} is conditionally independent of $\cS$ given $\cS'$, and further conditioned on this event, the decorated LQG surface $\cS'$ further decorated by $K$ is conditionally independent of $\cS$. To make sense of a ``random LQG sub-surface'' $\cS'$, we will discretize our setup at scale $\eps$ and define $\eps$-LQG-IG local sets.
  
 Let $\mathbf c_{L} >25$, $n \geq 1$ and $\mathbf c_1, \dots, \mathbf c_n \leq 1$ satisfy $\ccL + \mathbf c_1 + \cdots + \mathbf c_n = 26$.  Let $(\D, \Phi, - \mathbbm i)$ be an embedding of a sample from $\mathrm{UQD}_{\ccL}$ (Definition~\ref{def-lqg-disk}), and independently let $(\D, \Psi_1, \dots, \Psi_{n}, -\bbi)$ be an embedding of a sample from $\mathrm{IG}_{(\cc_1, \dots, \cc_n)}$ (Definition~\ref{def-ig-disk}). Let $\eta'$ be a counterclockwise space-filling $\SLE$ with central charge $(26 - \ccL)$ measurable with respect to $\Psi_1 \dots, \Psi_n$ in the following way. Let $\wh \cc_1 = 26 - \ccL$ and $\wh \cc_2 = \cdots = \wh \cc_n = 0$, and let $\chi_i = \chi( \cc_i) $ and $\wh \chi_i = \chi(\wh \cc_i)$ for $i = 1, \dots, n$. Let $A$ be an orthogonal matrix which maps $(\chi_1, \dots, \chi_n)$ to $( \wh \chi_1, \dots, \wh \chi_n)$, and let $(\wh \Psi_1, \dots, \wh \Psi_n)$ be the image of $(\Psi_1, \dots, \Psi_{n})$ under $A$ as in~\eqref{eq-rotate}, so by Proposition~\ref{prop-ig-rotate} the law of $(\wh \Psi_1, \dots, \wh \Psi_n)$ is $\mathrm{IG}_{(\wh \cc_1, \dots, \wh \cc_n)}$. Let $\eta'$ be the counterclockwise space-filling SLE curve coupled with $\wh \Psi_{1}$ as in Section~\ref{sec-prelim}. 
 
In Definition~\ref{def-eps-local}, we will discretize the above setup 
 using a finite set of points $\Lambda \subset \D$ independent of $\eta'$, chosen using the LQG area measure $\mu_\Phi$. The curve $\eta'$ is split by the points of $\Lambda$  into $|\Lambda| + 1$ curve segments which we call $\eta'_0, \dots, \eta'_{|\Lambda|}$; each $\eta'_i$ is defined on a time interval $[0, t_i]$. For $i = 0, \dots, |\Lambda|$, let $D_i = \eta_i'([0,t_i])$ be the region traced out by $\eta_i'$, let $a_i  = \eta'_i(0)$ and $d_i = \eta_i'(t_i)$ be the start and end points of $\eta_i'$ respectively, and let $b_i$ (resp.\ $c_i$) be the furthest point on the clockwise (resp.\ counterclockwise) arc\footnote{For $\kappa' \geq 8$ the boundary of $\partial D_i$ is simple. For $\kappa' \in (4,8)$, the clockwise arc of $\partial D_i$ from $a_i$ to $d_i$ is a nonsimple curve, constructed by ordering the points of $\partial D_i$ hit by the left side of $\eta'_i$ according to the last time they are hit.} of $\partial D_i$ from $a_i$ to $d_i$ such that the boundary arc from $a_i$ to $b_i$ (resp.\ $c_i$) is a subset of $\ol{(\partial \D\cup \bigcup_{j \leq i} D_i)}$. 
See Figure~\ref{fig-discretization}. 

 \begin{figure}[ht!]
	\begin{center}
		\includegraphics[scale=0.49]{}%
		\caption{\label{fig-discretization} 
			Let $\kappa > 4$ satisfy $\cc_\mathrm{SLE}(\kappa) = 26 - \ccL$. In each figure the space-filling $\SLE_\kappa$ loop  $\eta'$ traces three $D_i$ regions in the order yellow, green, blue. The points of $\Lambda$ are shown in black. In particular $a_i$ and $d_i$ are black, whereas $b_i$ and $c_i$ are shown in color. The left boundaries of the $D_i$ (clockwise boundary arcs from $a_i$ to $d_i$) are red, while the right boundaries are black. 
			\textbf{Left:} When $\kappa \geq 8$, each of the $D_i$ is simply connected. \textbf{Right:} When $\kappa \in (4,8)$, each $D_i$ has countably many components, and arises from a sequence of simply-connected regions ordered by the time they are traced by $\eta'$. 
		}
	\end{center}
\end{figure}

 \begin{definition}\label{def-eps-local}
 	Let $\eps > 0$. 
 	An \textbf{$\eps$-LQG-IG local set} $K \subset \ol \D$ is a random compact set coupled with $(\Phi,  \Psi_1, \dots, \Psi_{n}, \eta')$ such that $K \cup \partial \D$ is a.s.\ connected, and 
 	the following holds. See Figure~\ref{fig-thm-indep}  (left). 
 	
 	Independently of $(\Psi_1, \dots, \Psi_n, K, \eta')$, 
 	sample a Poisson point process $\Lambda \subset \D$ with intensity measure $\eps^{-1} \mu_\Phi$ and define $D_i, \eta_i', a_i, b_i, c_i, d_i$ as above. Independently of everything else sample a nonnegative integer $T \geq 0$ with $\P[T = t] = 2^{-t-1}$. Let $I_0$ be the set of indices $i$ such that  $D_i \cap \partial \D \neq \emptyset$, and for $j \geq 0$  inductively define $I_{j+1}$ from $I_j$ by independently sampling a point $p_{j+1} \in (\partial \bigcup_{i \in I_j} D_i) \backslash \partial \D$ from the quantum length measure, then setting $I_{j+1} = I_j \cup \{ i \: : \: p_{j+1} \in \partial D_i\}$. We stop the process either at time $T$ or at the time $t$ when $\bigcup_{i \in I_t} D_i = \ol \D$, whichever is earlier; call this time $\tau$. 
 	
 	Conditioned on $( \bigcup_{i \in I_\tau} D_i, \Phi, \Psi_1, \dots, \Psi_n, \{ (\eta_i', a_i, b_i, c_i, d_i) : i \in I_{\tau}\}, \Lambda\cap \bigcup_{i \in I_\tau} D_i)/{\sim}$, the event $\{K \subset \bigcup_{i \in I_\tau} D_i\}$ is conditionally independent of $(\Phi, \Psi_1, \dots, \Psi_n, \eta',  \Lambda)$. 
 	Moreover,  further conditioning on $\{K \subset \bigcup_{i \in I_\tau} D_i\}$, the decorated LQG surface $( \bigcup_{i \in I_\tau} D_i, \Phi, K)/{\sim}$  is conditionally independent of $(\Phi,  \Psi_1, \dots, \Psi_n, \eta', \Lambda)$. 
 \end{definition}
The procedure of Definition~\ref{def-eps-local} can be viewed as a semi-continuous analog of \emph{peeling} for random planar maps (see, e.g., \cite{angel-uihpq-perc}). Indeed, let $\cM_t$ be the decorated LQG sub-surface parametrized by $\bigcup_{i \in I_t} D_t$, then if $t < \tau$, we explore from a point on $\partial \cM_t$ chosen in a way only depending on $\cM_t$ to obtain $\cM_{t+1}$. 

In Definition~\ref{def-eps-local}, we could equivalently replace $(\bigcup_{i \in I_\tau} D_i, \Phi, K)/{\sim}$ with 
\eqbn
( \bigcup_{i \in I_\tau} D_i, \Phi, \Psi_1, \dots, \Psi_n, \{ (\eta_i', a_i, b_i, c_i, d_i) : i \in I_{\tau}\}, \Lambda\cap \bigcup_{i \in I_\tau} D_i, K)/{\sim} ,
\eqen
since the latter decorated LQG surface is determined by $(\bigcup_{i \in I_\tau} D_i, \Phi, K)/{\sim}$ and 
\eqbn
( \bigcup_{i \in I_\tau} D_i, \Phi, \Psi_1, \dots, \Psi_n, \{ (\eta_i', a_i, b_i, c_i, d_i) : i \in I_{\tau}\}, \Lambda\cap \bigcup_{i \in I_\tau} D_i)/{\sim} . 
\eqen

Definition~\ref{def-eps-local} is closely related to the definition of local sets of the GFF given in  \cite[Lemma 3.9]{ss-contour} but more complicated since we consider LQG sub-surfaces rather than planar domains. Roughly speaking, replacing the deterministic open sets in their definition of GFF local sets with the randomly chosen LQG surface $( \bigcup_{i \in I_\tau} D_i, \Phi, \Psi_1, \dots, \Psi_n, \{ (\eta_i', a_i, b_i, c_i, d_i) : i \in I_{\tau}\}, \Lambda)/{\sim}$ gives Definition~\ref{def-eps-local}.

 We emphasize that in the above definition, for the decorated LQG surface we condition on, $ \{ (\eta_i', a_i, b_i, c_i, d_i) : i \in I_{\tau}\}$ is an \emph{unordered} set of tuples, and each tuple is not labelled by its index $i$.

 \begin{figure}[ht!]
 	\begin{center}
 		\includegraphics[scale=0.6]{}%
 		\caption{\label{fig-thm-indep} 
 			Combinatorially, the planar map structure depicted here cannot arise in our setup, but see Definition~\ref{def-T} for a description of the possible planar maps and  Figures~\ref{fig-BM} and~\ref{fig-template} (left) for examples.
 			\textbf{Left.}
 			Figure for Definition~\ref{def-eps-local}. Using $\eta'$ and $\Lambda$, the disk $\D$ is divided into regions $D_i$. A random domain $\bigcup_{i \in I_\tau} D_i$ (gray) is explored independently of $K$ (green).  We say $K$ is an $\eps$-LQG-IG local set if, roughly speaking, the event $\{ K \subset \bigcup_{i \in I_\tau} D_i\}$ is conditionally independent of the unexplored decorated LQG surfaces (white) given the explored decorated LQG surface (gray, not decorated by $K$), and further conditioning on  $\{ K \subset \bigcup_{i \in I_\tau} D_i\}$, the explored decorated LQG surface further decorated by $K$ is conditionally independent of the unexplored decorated LQG surfaces. 
 			\textbf{Right.} 
 			Figure for Proposition~\ref{prop-eps-indep}. 
 			The explored LQG surface $\cM_\tau$ is depicted in gray. Assuming there is only one IG field, and the loops $L_i$ (red) on $\cM_\tau$ are chosen  in a way measurable with respect to $\cM_\tau$, Proposition~\ref{prop-eps-indep} says the encircled IG field-decorated LQG surfaces are conditionally independent given $\cM_\tau$. 
 		}
 	\end{center}
 \end{figure}

 \begin{definition}\label{def-quantum-local}
 	A random compact set $K \subset \ol \D$ coupled with $(\Phi, \Psi_1, \dots, \Psi_n, \eta')$  is called an \textbf{LQG-IG local set} if it is an $\eps$-LQG-IG local set for all $\eps > 0$. 
 \end{definition}

Examples of LQG-IG local sets include flow lines of $\Psi_1$, $\SLE_6$, Brownian motion, and $\mathfrak d_\Phi$-metric balls run until stopping times intrinsic to the decorated LQG surface. See the proofs of Theorems~\ref{thm-sle-chordal},~\ref{thm-bm} and~\ref{thm-ball} in Section~\ref{subsec-other} for details. 

The following conditional independence result is the main result of this section. 
 
 \begin{thm}\label{thm-main}
 	Suppose $(\Phi, \Psi_1, \dots, \Psi_n, \eta', K)$ are as in Definition~\ref{def-quantum-local}. Let $\cF = \bigcap_{\eps > 0} \sigma((\cB_\eps(K \cup \partial \D; {\mathfrak d_\Phi}), \Phi, \Psi_1, \dots, \Psi_n, K, -\bbi)/{\sim})$. Then the decorated LQG surfaces $(U, \Phi, \Psi_1, \dots, \Psi_n)/{\sim}$ parametrized by the connected components $U$ of $\D \backslash K$ are conditionally independent given $\cF$. 
 \end{thm}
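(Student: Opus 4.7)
The plan is to carry out the three-step strategy from Section~\ref{sec-outline}, combined with a discretization at scale $\eps > 0$ matched to Definition~\ref{def-eps-local}, and then send $\eps \to 0$.

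First I would reduce Theorem~\ref{thm-main} to a scale-$\eps$ statement. Let
\[
\cF_\eps := \sigma\bigl(\bigl(\textstyle\bigcup_{i \in I_\tau} D_i,\, \Phi,\, \Psi_1, \dots, \Psi_n,\, \{(\eta_i', a_i, b_i, c_i, d_i) : i \in I_\tau\},\, \Lambda \cap \bigcup_{i \in I_\tau} D_i,\, K\bigr)/{\sim}\bigr)
\]
record the information about the $\eps$-LQG-IG exploration of Definition~\ref{def-eps-local}, further decorated by $K$.  Because the exploration stops at $\tau$ (so $\{K \subset \bigcup_{i \in I_\tau} D_i\}$ lies in $\cF_\eps$ and has probability tending to $1$), and because the explored region is contained in an arbitrarily small $\mathfrak d_\Phi$-neighborhood of $K \cup \bdy\D$ as $\eps \to 0$, one verifies that $\cF_\eps$ decreases to $\cF$ in the sense relevant for conditional independence. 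Standard limiting arguments then reduce the theorem to showing that, given $\cF_\eps$, the IG-decorated LQG surfaces parametrized by the connected components of $\D \setminus \bigcup_{i \in I_\tau} D_i$ are conditionally independent.

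Next I would execute Step 1: the matched case $n = 1$, $\cc_1 = 26 - \ccL$. Here $\cc_{\mathrm{SLE}}(\kappa') + \ccL = 26$ for the space-filling SLE parameter $\kappa'$, so the joint law of the embedding $(\D, \Phi, \eta')$ falls within the mating-of-trees framework of~\cite{wedges}. After Poisson sampling $\Lambda$ from $\eps^{-1}\mu_\Phi$, the cells $(D_i, \Phi, \Psi_1)/{\sim}$ are encoded by disjoint increments of a planar Brownian motion carrying their boundary length profiles and adjacencies---a finite-volume variant of the mated-CRT map. Conditional independence at scale $\eps$ then follows by combining the Markov property of Brownian motion with a combinatorial independence property for the discrete exploration skeleton, which is a generalization of the uniform meander conditional independence proved in Appendix~\ref{sec-meander}.

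Step 2 promotes Step 1 to central charges $(26 - \ccL, 0, \dots, 0)$ via the locality property for central charge $0$ IG fields (Theorem~\ref{thm-loc-dom}, an IG-field analog of the locality of $\SLE_6$). This locality allows the fields $\Psi_2, \dots, \Psi_n$ to be attached to each unexplored component of $\D \setminus \bigcup_{i \in I_\tau} D_i$ in a locally consistent fashion while remaining mutually independent conditional on $\cF_\eps$, thereby extending the matched-case conditional independence from Step 1.  Step 3 is then immediate from Proposition~\ref{prop-ig-rotate}: choosing an orthogonal matrix $A$ sending $(\chi(\cc_1), \dots, \chi(\cc_n))$ to $(\chi(26-\ccL), \chi(0), \dots, \chi(0))$ converts the general vector of IG fields into one satisfying the hypotheses of Step 2 while preserving the underlying IG surface (and hence $\cF$ and the IG-decorated complementary LQG surfaces), so the conditional independence transfers back.

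The main obstacle I anticipate is Step 1, specifically the combinatorial conditional independence for the exploration skeleton. After the Poisson sampling, the explored region has a complicated adjacency structure---especially when $\kappa' \in (4, 8)$, where individual cells are not simply connected and their left boundary arcs are nonsimple---and one must show that conditioning on this discrete skeleton together with the Brownian boundary length data does not induce any coupling between the unexplored components through the planar-map topology. Crafting a uniform meander-type conditional independence statement in enough generality to handle the peeling process of Definition~\ref{def-eps-local} is the combinatorial heart of the whole argument.
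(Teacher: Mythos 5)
The three-step architecture you outline is the right high-level plan, and your Step 2 and Step 3 descriptions are consistent with how the paper proceeds (IG locality for $\cc = 0$, then rotation via Proposition~\ref{prop-ig-rotate}). The gap is in your reduction/discretization step, where you have conflated two different objects, and this makes the limiting argument collapse.

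You base your scale-$\eps$ $\sigma$-algebra $\cF_\eps$ on the \emph{randomly peeled} region $M_\tau := \bigcup_{i\in I_\tau} D_i$ from Definition~\ref{def-eps-local}, and you then assert (i) that $\{K\subset M_\tau\}$ has probability tending to $1$, (ii) that $M_\tau$ is contained in an arbitrarily small $\mathfrak d_\Phi$-neighborhood of $K\cup\bdy\D$ as $\eps\to 0$, and (iii) that it suffices to show conditional independence of the components of $\D\setminus M_\tau$. All three fail. The stopping time $\tau$ is capped by $T$ with $\P[T=t] = 2^{-t-1}$, so $T$ stays of constant order while the number of cells needed to cover $K$ blows up as $\eps\to 0$; thus $\P[K\subset M_\tau]\to 0$, not $1$. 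Moreover, the peeling is driven by uniformly random boundary points, so $M_\tau$ has no reason to cluster near $K$; it is not contained in a small neighborhood of $K\cup\bdy\D$. And the connected components of $\D\setminus M_\tau$ are not the components of $\D\setminus K$, so establishing conditional independence for the former would not directly give the theorem. The object you actually need is the \emph{deterministic} (given everything else) cell-hull $K_\eps := \bigcup_{i\,:\,D_i\cap K\neq\emptyset} D_i$, and the correct intermediate statement is Proposition~\ref{prop-main-Keps}: conditional independence given $\cF_\eps^{\mathrm{MOT}}$, the $\sigma$-algebra generated by $(K_\eps,\Phi,\Psi_1,\dots,\Psi_n,\{(\eta_i',a_i,b_i,c_i,d_i): D_i\cap K\neq\emptyset\},K)/{\sim}$. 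The random peeling $M_\tau$ enters only \emph{inside} the proof of that proposition: Definition~\ref{def-eps-local} is precisely what allows one to trade the event $\{M_\tau = K_\eps\}$ for information measurable from $K_\eps$, thereby upgrading conditional independence given $\cM_\tau$ (Proposition~\ref{prop-local-indep}) into conditional independence given $\cF_\eps^{\mathrm{MOT}}$ — this is the content of the last two paragraphs of the proof of Proposition~\ref{prop-main-0s}.

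The limiting step from $\cF_\eps^{\mathrm{MOT}}$ to $\cF$ is also not ``standard'': one must couple the Poisson processes across scales by sampling a single $\Lambda^*\subset\D\times\R_+$ with intensity $\mu_\Phi\times\mathrm{Leb}$ and thinning, so that the family $(\cF_{\eps'}^{\mathrm{MOT}})_{\eps'>0}$ is genuinely decreasing (this uses that cells are locally determined by $\wh\Psi_1$, hence by the $\Psi_i$), then apply backward martingale convergence to pass to $\cF_{0+}^{\mathrm{MOT}}$, and finally use locality and conformal covariance of $\mathfrak d_\Phi$ together with the independent-increments property of the Poisson process to exchange $\cF_{0+}^{\mathrm{MOT}}$ for the metric-ball $\sigma$-algebra $\cF_\eps$ of the theorem. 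You should rework your reduction around $K_\eps$ and $\cF_\eps^{\mathrm{MOT}}$ and carry out this coupled limiting argument; the Steps 1--3 you sketch can then be slotted in essentially as you describe.
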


 There is a variant of Theorem~\ref{thm-main} for $\mathrm{UQD}_{\ccL}^\bullet$ (Theorem~\ref{thm-main-pt}) whose statement we defer to Section~\ref{subsec-other} to avoid notational confusion. Its proof is essentially the same as that of Theorem~\ref{thm-main}. 
 
 \begin{remark}\label{rem-mated-crt}
 	Definition~\ref{def-eps-local} gives a discretization of SLE-decorated LQG via a Poisson point process of intensity $\eps^{-1} \mu_\Phi$. 
 	A related discretization where $\eta'$ is segmented such that each $D_i$ has fixed LQG area is used to motivate and analyze the \emph{mated-CRT map} \cite{wedges, ghs-dist-exponent, gms-tutte}. The Poissonian formulation simplifies our arguments due to its locality property: the restrictions of a Poisson point process to fixed subdomains are independent. 
 \end{remark}

In Sections~\ref{subsec-n=1}--\ref{subsec-main-proof} we carry out the three steps described in the introduction to prove Theorem~\ref{thm-main}. 
In Section~\ref{subsec-n=1} we state Proposition~\ref{prop-eps-indep}  which is a special case of Theorem~\ref{thm-main} where $n=1$ and $K$ is a particular set defined in terms of mating of trees; its proof is deferred to Section~\ref{sec-n=1}. 
In Section~\ref{subsec-locality} we use the locality of $\mathrm{IG}_{\mathbf c=0}$ fields to extend to the $n \geq 1$ setting where all but one IG field has central charge 0. Finally, in Section~\ref{subsec-main-proof} we use Proposition~\ref{prop-ig-rotate} to generalize to the setting of arbitrary central charges, completing the proof of Theorem~\ref{thm-main}. In Section~\ref{subsec-other} we prove Theorem~\ref{thm-main-pt}, and deduce the results stated in the introduction from Theorems~\ref{thm-main} and~\ref{thm-main-pt}.

\subsection{An $n=1$ special case via mating-of-trees}\label{subsec-n=1}

In Definition~\ref{def-eps-local} with $n=1$, we independently sample an LQG surface from $\mathrm{UQD}_{\ccL}$ and an IG field from $\mathrm{IG}_{26 - \ccL}$, discretize this IG-decorated LQG surface in a Poissonian way, and discover a random decorated LQG sub-surface via a peeling process. 
Proposition~\ref{prop-eps-indep} below states that given this explored LQG sub-surface, its complementary LQG surfaces are conditionally independent, where ``complementary LQG surface'' is appropriately defined. See Figure~\ref{fig-thm-indep} (right).

\begin{prop}\label{prop-eps-indep}
	Consider the setting of Definition~\ref{def-eps-local} with $n=1$ (so $\eta'$ is the counterclockwise space-filling SLE curve coupled with $\Psi_1$). Let $L_1, \dots, L_m$ be simple loops in the interior of $\bigcup_{i \in I_\tau} D_i$ and for $k=1,\dots,m$ let $O_k$ be the bounded connected component of $\C \backslash L_k$. Suppose $(\bigcup_{i \in I_\tau} D_i, \Phi, L_1, \dots, L_m)/{\sim}$ is measurable with respect to $\cM_\tau:=( \bigcup_{i \in I_\tau} D_i, \Phi, \Psi_1, \{ \eta_i', a_i, b_i, c_i, d_i) : i \in I_{\tau}\}, \Lambda)/{\sim}$, and the $O_k$ are pairwise disjoint. Then conditioned on $\cM_\tau$, the IG-decorated LQG surfaces $(O_1, \Phi, \Psi)/{\sim}, \dots, (O_m, \Phi, \Psi)/{\sim}$ are conditionally independent.
\end{prop}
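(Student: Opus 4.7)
The plan is to reduce Proposition~\ref{prop-eps-indep} to a combinatorial independence statement via the mating-of-trees theorem. Since $n=1$ and $\cc_1 = 26 - \ccL$, the central charge condition puts us in the matched case: the orthogonal rotation in the setup before Definition~\ref{def-eps-local} is trivial, so $\wh\Psi_1 = \Psi$ and $\eta'$ is the counterclockwise space-filling $\SLE_\kappa$ (with $\cc_{\mathrm{SLE}}(\kappa) = 26 - \ccL$) coupled directly with $\Psi$ via imaginary geometry. By the mating-of-trees theorem~\cite{wedges}, adapted to the unit boundary length quantum disk, the pair $(\Phi, \eta')$ is encoded by a correlated planar Brownian motion $Z = (L, R)$ parameterized over $[0, T]$, where $T$ is the total LQG area of the disk and $L_t, R_t$ record the left and right LQG-boundary lengths of $\eta'([0, t])$. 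The decorated surface $(\D, \Phi, \Psi)/{\sim}$ is thus determined by $Z$ together with the boundary data and $\Psi$.

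Next, I would Poissonize the discretization. Pushed forward along $\eta'^{-1}$ (using the quantum-area parameterization), the Poisson point process $\Lambda$ becomes a Poisson process on $[0, T]$ of intensity $\eps^{-1}$ independent of $Z$. The cuts partition $\eta'$ into tiles $D_i = \eta'([s_i, s_{i+1}])$, and the boundary structure of each $D_i$ (including the marked points $a_i, b_i, c_i, d_i$ of Figure~\ref{fig-discretization}) is a deterministic function of $Z$ restricted to $[s_i, s_{i+1}]$. Two tiles share a positive-length boundary arc iff $Z$ matches them via a suitable running extremum, producing a planar map $\cP^\eps$ which is a Poissonian variant of the mated-CRT map (see Remark~\ref{rem-mated-crt}). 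The peeling process $(I_j)_{j \leq \tau}$ becomes a random exploration on $\cP^\eps$, and the decorated surface $\cM_\tau$ is encoded by the increments of $Z$ on the explored time intervals together with the combinatorial adjacency structure of the explored submap.

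The technical heart of the argument is a combinatorial independence property for $\cP^\eps$, analogous to the conditional independence for uniform meanders established in Appendix~\ref{sec-meander}. Concretely, I would show: given the explored submap $\cP^\eps_\tau$, the unexplored tiles cluster into pockets (connected components of $\D \setminus \bigcup_{i \in I_\tau} D_i$), and the time intervals associated to pockets enclosed by distinct loops $L_k$ lie in mutually disjoint subsets of $[0, T]$. This disjointness follows from planarity of $\cP^\eps$ together with the fact that each $L_k$ lies in the explored region, so $L_k$ separates the unexplored pockets it encircles from those outside $O_k$; because the $O_k$ are pairwise disjoint, no pocket can be simultaneously enclosed by two loops. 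Given this combinatorial disjointness, the strong Markov property of $Z$ implies that the Brownian increments on these disjoint intervals are conditionally independent given the values at their endpoints, all of which are encoded in $\cM_\tau$.

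Translating back through mating-of-trees, the IG-decorated LQG surfaces of the pockets enclosed in distinct $O_k$ are conditionally independent given $\cM_\tau$, so Proposition~\ref{prop-eps-indep} follows since $(O_k, \Phi, \Psi)/{\sim}$ is determined by $\cM_\tau$ together with the IG-LQG surfaces of the pockets lying inside $O_k$. The main obstacle is the combinatorial independence in the third step: while the analogous meander result of Appendix~\ref{sec-meander} is the key input, care is needed to translate it to the Poissonian mated-CRT map setting, especially in the regime $\kappa \in (4, 8)$ where individual tiles $D_i$ have countably many simply-connected components (see Figure~\ref{fig-discretization}) and the adjacency combinatorics of $\cP^\eps$, together with the bookkeeping of $(a_i, b_i, c_i, d_i)$, is more intricate than in the $\kappa \geq 8$ case.
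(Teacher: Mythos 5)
Your high-level plan---mating-of-trees, Poissonization, then a combinatorial independence step modelled on the meander result of Appendix~\ref{sec-meander}---is the same route the paper follows (Sections~\ref{subsec-mot}--\ref{subsec-indep-main-proof}). The gap is in the third step, which is where all the difficulty lives and where your proposed argument would fail.

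You claim that because the pockets enclosed by distinct $L_k$ occupy disjoint time intervals of $Z$, the strong Markov property gives conditional independence of the pocket fillings given $\cM_\tau$. This is not sufficient, for two related reasons. First, disjointness of time intervals does not by itself imply conditional independence when there is a \emph{global coupling constraint} linking them: the tiles in all the pockets, together with the explored tiles, must assemble into a single planar map traced by one space-filling curve. A priori, the set of admissible fillings of hole $1$ could depend on how hole $2$ is filled, because the global traversal order of the tiles is a joint function of all the fillings. This is exactly what the paper's Proposition~\ref{prop-product-templates} rules out, and it is genuinely non-obvious: the proof requires assigning a winding function $\theta$ to the subtemplate (Lemmas~\ref{lem-adj-winding} and~\ref{lem-winding}) and then invoking Hopf's Umlaufsatz to show that any choice of hole-fillings compatible with $\theta$ yields a single Hamiltonian cycle (Lemma~\ref{lem-hamiltonian}) and respects the ordering constraint~\eqref{item-T-d} (Lemma~\ref{lem-adj-order-i}). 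Your appeal to the meander analogy is the right intuition, but the meander proof also hinges on the Umlaufsatz---not on disjointness of intervals---and your argument does not reproduce that input.

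Second, and subtler: conditioning on $\cM_\tau$ is \emph{not} the same as conditioning on the endpoint values of $Z$ at the pocket boundaries. The paper emphasizes that in $\cM_\tau$ the tuples $(\eta_i', a_i, b_i, c_i, d_i)$ form an \emph{unordered} set, so the index ordering (i.e., the traversal order of the explored tiles) is not part of the conditioning. Different hole fillings can correspond to different orderings of the same explored tiles, hence different Brownian encodings of the same $\cM_\tau$. Thus there are no well-defined ``endpoints of the pocket intervals'' to condition on until one has already established the template factorization. The paper handles this by working with the weighted Lebesgue measure on quilts (Lemma~\ref{lem-unif-temp}), factoring the combinatorial choice (Proposition~\ref{prop-product-templates}) and the length measure (Lemma~\ref{lem-leb-fac}), and only then invoking the cell decomposition of Proposition~\ref{prop-decomp-quilt-cell}. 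Your sketch inverts this order and thereby smuggles in the conclusion. To repair it, you would need to supply the winding argument of Section~\ref{subsec-topo} (or something equivalent) as a genuine intermediate step, rather than folding it into ``planarity plus strong Markov.''
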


\begin{remark}
	For $\gamma \in (0,\sqrt2]$, the proposition still holds if we simply define the $O_i$ to be the complementary connected components of $\D\backslash M_\tau$. 
	For $\gamma \in (\sqrt2, 2)$, this simpler formulation runs into topological issues because the interior of each $D_i$ has multiple components, hence $D_i$ might intersect multiple connected components of $\D \backslash M_\tau$. While a suitable modification of this formulation is true, we prefer to avoid this issue altogether by using the loops $L_i$.
\end{remark}

We defer the proof of Proposition~\ref{prop-eps-indep} to Section~\ref{sec-n=1}. The starting point is the mating-of-trees theorem for the LQG disk \cite{wedges, ag-disk}, which describes the independent coupling of $\mathrm{UQD}_{\ccL}$ and $\mathrm{IG}_{26 - \ccL}$ in terms of a pair of correlated one-dimensional Brownian motions. See Section~\ref{subsec-mot}. A statement like Proposition~\ref{prop-eps-indep} should then seem plausible due to the independent increments of Brownian motion; see Proposition~\ref{prop-decomp-quilt-cell} for a factorized description of the LQG surfaces $(D_i, \Phi, \eta_i', a_i, b_i, c_i, d_i)/{\sim}$ coming from Brownian motion.

 On the other hand, consider the planar map whose faces correspond to the $D_i$ from Definition~\ref{def-eps-local}; its precise definition is given in  Section~\ref{subsec-quilt-def} (see Figure~\ref{fig-template} (left)). This random planar map has the global condition that the faces must ``chain together'' to form a \emph{single} path, corresponding to the order they are traced by the space-filling SLE loop. Further, adjacent faces (not necessarily consecutive) must intersect in a way which is compatible with their relative ordering in this path. See Definition~\ref{def-T} for details. As such,  one could a priori expect complicated dependencies between disjoint regions of the map.  
To deal with this difficulty, we show that conditioned on the random submap corresponding to $\cM_\tau$ in Proposition~\ref{prop-eps-indep}, the set of possible complementary submaps factorizes as a product  (Proposition~\ref{prop-product-templates}), i.e.\ the possible realizations of each submap do not depend on the other complementary submaps. We prove this in Section~\ref{subsec-topo} via a  topological/combinatorial argument involving \emph{total curvature}, a.k.a.\ \emph{winding} (Section~\ref{subsec-winding}). In the context of imaginary geometry, winding is natural since it is related to the boundary conditions of flow and counterflow lines. We prove that the global conditions described above are equivalent to local conditions related to winding, and hence obtain the combinatorial factorization Proposition~\ref{prop-product-templates}. 

Proposition~\ref{prop-eps-indep} is proved by combining the probabilistic input of Proposition~\ref{prop-decomp-quilt-cell} with the combinatorial input of Proposition~\ref{prop-product-templates}.

A simpler version of the argument of Section~\ref{subsec-topo} leads to a conditional independence statement for the ``upper" and ``lower" halves of a uniform meander given its associated winding function. We record this result and its proof in Appendix~\ref{sec-meander}, both to motivate the argument of Section~\ref{subsec-topo} and because the result is of independent interest.

\subsection{Extending to $(\mathbf c_1, \dots, \mathbf c_n) = (26 - \ccL, 0, \dots, 0)$ via locality of $\mathbf c=0$ IG fields}\label{subsec-locality}

In this section we build on Proposition~\ref{prop-eps-indep} to prove Proposition~\ref{prop-main-0s}, a special case of Theorem~\ref{thm-main} where all but one of the IG fields has central charge 0 and  we approximate $K$ by the union of the space-filling SLE segments $D_i$ which it intersects rather than by $\cB_\eps(K; {\mathfrak d_\Phi})$. We first state the more general  Proposition~\ref{prop-main-Keps}  whose proof we defer to Section~\ref{subsec-main-proof}.

\begin{proposition}\label{prop-main-Keps}
	Let $\eps > 0$. In the setting of Definition~\ref{def-eps-local}, let $K$ be an $\eps$-LQG-IG local set coupled with $(\Phi, \Psi_1, \dots, \Psi_n, \eta')$. Sample a Poisson point process $\Lambda \subset \D$ with intensity measure $\eps^{-1} \mu_\Phi$ independently of $(\Psi_1, \dots, \Psi_n, \eta', K)$ and define $D_i, \eta_i', a_i, b_i, c_i, d_i$ as in Definition~\ref{def-eps-local}. 
	Then the decorated LQG surfaces $(U, \Phi, \Psi_1, \dots, \Psi_{n})/{\sim}$ parametrized by the connected components $U$ of $\D \backslash K$ are conditionally independent  given $\cF_\eps^\mathrm{MOT}$, where
	\begin{equation} \label{eqn-local-set-sigma}
		\cF_\eps^\mathrm{MOT} := \sigma\left( (K_\eps,\Phi, \Psi_1, \dots, \Psi_{n}, \{ (\eta'_i, a_i, b_i, c_i, d_i)\::\: i \in I\}, K)/{\sim} \right) ,
	\end{equation}
 $I = \{ i \:: \: D_i \cap K \neq \emptyset\}$, and $K_\eps = \bigcup_{i \in I} D_i$.
\end{proposition}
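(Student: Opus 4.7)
The plan is to reduce Proposition~\ref{prop-main-Keps} to the special case $(\wh\cc_1,\dots,\wh\cc_n) = (26-\ccL, 0,\dots,0)$ already handled by Proposition~\ref{prop-main-0s}, by applying the IG field rotation from Proposition~\ref{prop-ig-rotate}. Concretely, I would introduce the rotated fields $(\wh\Psi_1,\dots,\wh\Psi_n)$ obtained from $(\Psi_1,\dots,\Psi_n)$ by applying the orthogonal matrix $A$ specified just before Definition~\ref{def-eps-local}, which sends $(\chi_1,\dots,\chi_n)$ to $(\wh\chi_1,\dots,\wh\chi_n) = (\chi(26-\ccL), 0,\dots,0)$. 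By Proposition~\ref{prop-ig-rotate} the $(\wh\cc_1,\dots,\wh\cc_n)$-IG surface $(\D,\wh\Psi_1,\dots,\wh\Psi_n,-\bbi)/{\sim}$ has law $\mathrm{IG}_{(\wh\cc_1,\dots,\wh\cc_n)}$ and is independent of $\Phi$, and by construction $\eta'$ is precisely the counterclockwise space-filling SLE coupled with $\wh\Psi_1$ as described in Section~\ref{sec-sle-prelim}. This is exactly the configuration to which Proposition~\ref{prop-main-0s} applies.

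Next I would verify that the rotation preserves all of the probabilistic structure relevant to the statement. Since $A$ is orthogonal, and in particular invertible, $(\Psi_1,\dots,\Psi_n)$ and $(\wh\Psi_1,\dots,\wh\Psi_n)$ are measurable functions of each other; and the calculation in the proof of Proposition~\ref{prop-ig-rotate} shows that multiplication by $A$ intertwines the coordinate-change relations $\sim_{(\cc_1,\dots,\cc_n)}$ and $\sim_{(\wh\cc_1,\dots,\wh\cc_n)}$. Together these facts imply that for any (possibly random) subset $U\subset\ol\D$, the $\sigma$-algebras generated by the decorated LQG surfaces $(U,\Phi,\Psi_1,\dots,\Psi_n)/{\sim}$ and $(U,\Phi,\wh\Psi_1,\dots,\wh\Psi_n)/{\sim}$ coincide. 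In particular, $K$ is an $\eps$-LQG-IG local set with respect to $(\Phi,\Psi_1,\dots,\Psi_n,\eta')$ if and only if it is one with respect to $(\Phi,\wh\Psi_1,\dots,\wh\Psi_n,\eta')$, and the $\sigma$-algebra $\cF_\eps^{\mathrm{MOT}}$ in~\eqref{eqn-local-set-sigma} is unchanged when the $\Psi_i$ are replaced by the $\wh\Psi_i$ throughout.

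Proposition~\ref{prop-main-0s}, applied to the rotated setup, then yields conditional independence of the decorated surfaces $(U,\Phi,\wh\Psi_1,\dots,\wh\Psi_n)/{\sim}$ on the connected components $U$ of $\D\setminus K$ given $\cF_\eps^{\mathrm{MOT}}$. The $\sigma$-algebra equality from the previous paragraph immediately translates this into the desired conditional independence of $(U,\Phi,\Psi_1,\dots,\Psi_n)/{\sim}$. Since the main conceptual content of the argument resides in Propositions~\ref{prop-ig-rotate} and~\ref{prop-main-0s}, I expect the main obstacle for the present proof to be the careful bookkeeping of the second paragraph---in particular, confirming that the rather intricate $\sigma$-algebras implicit in the definition of an $\eps$-LQG-IG local set (which depend on the random domain $\bigcup_{i\in I_\tau} D_i$ and on the restriction of $\Lambda$ to it) all transform correctly under the rotation.
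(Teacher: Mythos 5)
Your proposal is correct and coincides with the paper's own proof: both reduce to Proposition~\ref{prop-main-0s} by rotating $(\Psi_1,\dots,\Psi_n)$ to $(\wh\Psi_1,\dots,\wh\Psi_n)$ via the orthogonal matrix $A$ and observing that, for any $D$, the decorated LQG surfaces $(D,\Phi,\Psi_1,\dots,\Psi_n)/{\sim_{\ul\cc}}$ and $(D,\Phi,\wh\Psi_1,\dots,\wh\Psi_n)/{\sim_{\wh{\ul\cc}}}$ determine each other, so the $\varepsilon$-LQG-IG local set property of $K$ and the $\sigma$-algebra $\cF_\eps^{\mathrm{MOT}}$ are unchanged under the rotation. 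The bookkeeping you flag in your final paragraph is indeed the only thing to check, and the paper addresses it in a single line.
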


In this section we only prove the following special case of Proposition~\ref{prop-main-Keps}. 
The general case is proven in Section~\ref{subsec-main-proof} using Proposition~\ref{prop-ig-rotate}.

\begin{proposition}\label{prop-main-0s}
	Proposition~\ref{prop-main-Keps} holds when $(\mathbf c_1, \mathbf c_2, \dots, \mathbf c_n) = (26 - \ccL, 0, \dots, 0)$ and $\eta'$ is the counterclockwise space-filling SLE coupled with $\Psi_1$. 
\end{proposition}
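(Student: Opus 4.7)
The plan is to combine three ingredients: Proposition~\ref{prop-eps-indep}, which gives conditional independence for the decorated LQG surface $(\D, \Phi, \Psi_1)/{\sim}$ along loops measurable with respect to the mating-of-trees exploration; the $\eps$-LQG-IG local set property of $K$ from Definition~\ref{def-eps-local}, which lets us trade the random explored region for a neighborhood of $K$; and the locality of $\mathbf c = 0$ IG fields from Theorem~\ref{thm-loc-dom}, which lets us strip off the extra fields $\Psi_2, \dots, \Psi_n$.

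First I will reduce to proving conditional independence across the components of $\D \setminus K_\eps$ rather than $\D \setminus K$. Since $\cF_\eps^{\mathrm{MOT}}$ contains the decorated surface $(K_\eps, \Phi, \Psi_1, \dots, \Psi_n, K)/{\sim}$, each component of $\D \setminus K$ decomposes into some components of $\D \setminus K_\eps$ together with pieces of $K_\eps \setminus K$ that are measurable with respect to $\cF_\eps^{\mathrm{MOT}}$, so the reduction is immediate. Next, since $\Psi_2, \dots, \Psi_n$ are independent $\mathbf c = 0$ IG fields sampled independently of $(\Phi, \Psi_1, \eta', K, \Lambda)$, Theorem~\ref{thm-loc-dom} gives that, conditioned on the restrictions $\Psi_j|_{K_\eps}$ modulo IG coordinate change (which are included in $\cF_\eps^{\mathrm{MOT}}$), the restrictions $\Psi_j|_V$ for $V$ ranging over components of $\D \setminus K_\eps$ are jointly conditionally independent and independent of the remaining data. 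It therefore suffices to prove conditional independence of $(V, \Phi, \Psi_1)/{\sim}$ across components $V$ of $\D \setminus K_\eps$, given the $n=1$ part of $\cF_\eps^{\mathrm{MOT}}$.

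The core step is to combine Proposition~\ref{prop-eps-indep} with the $\eps$-LQG-IG local set property. I will run the peeling exploration from Definition~\ref{def-eps-local} and condition on the geometric time $T$ being large enough to ensure $K \subset \bigcup_{i \in I_\tau} D_i$ (hence $K_\eps \subset \bigcup_{i \in I_\tau} D_i$); this event has positive probability. By the local set property, on this event $K$ is conditionally independent of the unexplored complementary LQG surfaces given $\cM_\tau$, so the conditional law of those surfaces given $(\cM_\tau, K)$ coincides with their conditional law given $\cM_\tau$ alone. Applying Proposition~\ref{prop-eps-indep} to loops measurable with respect to $\cM_\tau$ that bound the complementary holes of the exploration, and then extending via the local set property to loops measurable with respect to $(\cM_\tau, K)$, specifically the boundaries of the components of $\D \setminus K_\eps$, yields conditional independence of the decorated surfaces over components of $\D \setminus K_\eps$ given $(\cM_\tau, K)$. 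The explored cells in $\cM_\tau \setminus K_\eps$ that glue unexplored pieces together into a common component of $\D \setminus K_\eps$ are measurable with respect to $(\cM_\tau, K)$ and hence do not disturb this conditional independence. Finally, I will average over the extra exploration data in $\cM_\tau \setminus \cF_\eps^{\mathrm{MOT}}$, using the Markov structure of the peeling process and the independence of the Poissonian cell sampling, to obtain the target statement conditional on $\cF_\eps^{\mathrm{MOT}}$ alone.

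The main obstacle is precisely this averaging step: Proposition~\ref{prop-eps-indep} naturally produces conditional independence given the full exploration data $\cM_\tau$, whereas $\cF_\eps^{\mathrm{MOT}}$ encodes only $K$, $K_\eps$, and the associated field restrictions. The $\eps$-LQG-IG local set property is the device that bridges the two, but one must check that the extra exploration outside $K_\eps$ decomposes compatibly across the components of $\D \setminus K_\eps$, which requires some care since the peeling process of Definition~\ref{def-eps-local} is not manifestly local in each such component.
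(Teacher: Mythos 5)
The overall structure of your plan — combine Proposition~\ref{prop-eps-indep}, the $\eps$-LQG-IG local set property, and $\cc=0$ locality — is the right one and matches the paper's strategy. But there is a substantive gap in the second paragraph.

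You assert that Theorem~\ref{thm-loc-dom} directly gives: ``conditioned on the restrictions $\Psi_j|_{K_\eps}$ modulo IG coordinate change (which are included in $\cF_\eps^{\mathrm{MOT}}$), the restrictions $\Psi_j|_V$ for $V$ ranging over components of $\D \setminus K_\eps$ are jointly conditionally independent and independent of the remaining data.'' This is far from immediate. Theorem~\ref{thm-loc-dom} is a Radon--Nikodym derivative identity for fields on a crosscut region $B$ inside \emph{four specified planar domains}; it says nothing directly about conditioning on a decorated LQG surface modulo coordinate change. If you were conditioning on the literal field $\Psi_j|_{K_\eps}$ in a fixed embedding, the GFF domain Markov property would hand you the conditional independence with no locality input at all. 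The difficulty — and the entire reason Theorem~\ref{thm-loc-dom} is needed — is that $\cF_\eps^{\mathrm{MOT}}$ only records the fields modulo conformal coordinate change, so the embedding (the conformal welding of the pieces) is a hidden random variable that couples the different $V$'s. Unpacking this requires first upgrading Theorem~\ref{thm-loc-dom} to an identity for $\ul\cc$-IG surfaces modulo coordinate change (this is Proposition~\ref{prop-locality-qs}, which already takes a chain of four applications of Theorem~\ref{thm-loc-dom}), and then running a delicate Radon--Nikodym derivative factorization that threads the $\cc=0$ locality through the $n=1$ conditional independence. That argument is Proposition~\ref{prop-local-indep}, which you do not cite or reproduce. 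Moreover, the paper never ``strips off'' the $\cc=0$ fields to reduce to the $n=1$ problem in the way you propose; Proposition~\ref{prop-local-indep} carries all fields along and shows that the RN derivative $dP_{\cS_\ell,\cS_r}/dP_{\cM_\tau}$ of the conditional law of the $\cc=0$ restrictions factors as $F_\ell(\cS_\ell,\cM_\tau^*)\,F_r(\cS_r,\cM_\tau^*)$, which is strictly weaker than the clean decoupling you claim. It is not clear your stronger claim is even true as stated.

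A second, more minor issue: the averaging step in your third paragraph. You flag this yourself, but you condition only on $\{K \subset \bigcup_{i\in I_\tau} D_i\}$ and then propose to average out the extra exploration beyond $K_\eps$ ``using the Markov structure of the peeling process,'' which you do not pin down. The paper sidesteps this entirely by conditioning on the stronger event $\{\wh\cM_\tau = \wh\cM\}$ — that the exploration produces exactly $K_\eps$ and nothing more — and then checking via the local set definition that this event is conditionally independent of the relevant data given $\cF_\eps^{\mathrm{MOT}}$. With that conditioning, $\wh\cM_\tau$ becomes $\cF_\eps^{\mathrm{MOT}}$-measurable on the event and there is no surplus exploration to average out. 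You should either adopt this device or carefully justify the averaging; as written, the step is incomplete.
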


Proposition~\ref{prop-main-0s} follows from Proposition~\ref{prop-eps-indep}  and a \emph{locality} property for $\mathbf c=0$ IG fields. In Section~\ref{subsec-loc-planar} we state the locality property for planar domains due to Dub\'edat \cite{dubedat-coupling}. In Section~\ref{subsec-loc-ig} we extend locality to the setting of IG surfaces. We use this locality property to prove Proposition~\ref{prop-main-0s} in Section~\ref{subsec-main-ig0-proof}. 

\subsubsection{Locality in planar domains}\label{subsec-loc-planar}
In this section we state Theorem~\ref{thm-loc-dom} on the locality of $\mathrm{IG}_{\mathbf c= 0}$ in planar domains, due to Dub\'edat \cite{dubedat-coupling}. For the reader's convenience we include the argument in Appendix~\ref{appendix-locality}.

For $i,j \in \{1,2\}$ let $D_{ij}$ be a bounded  simply-connected domain with smooth boundary. Suppose that for the $D_{ij}$, there exists a pair of common open boundary segments and a pair of common smooth disjoint cross-cuts $\eta_\ell, \eta_r$ joining them; call the region they bound $B\subset D_{ij}$, and let $x \in \partial B \cap D_{ij}$ be a common boundary point. We call the connected component of $D_{ij} \backslash B$ immediately clockwise of $x$ the \emph{left subdomain} of $D_{ij}$, and similarly define the \emph{right subdomain}.  For $i,j \in \{1,2\}$ we suppose the left subdomains of $D_{i1}$ and $D_{i2}$ agree and the right subdomains of $D_{1j}$ and $D_{2j}$ agree. See Figure~\ref{fig-Dij}.

\begin{figure}[ht!]
	\begin{center}
		\includegraphics[scale=0.4]{}%
		\caption{\label{fig-Dij} Setup for Theorem~\ref{thm-loc-dom}.}
	\end{center}
\end{figure}

For an IG surface $(D_{ij}, \Psi, x)/{\sim_{\cc}}$ sampled from $\mathrm{IG}_\cc$ on the one pointed domain (Definition~\ref{def-ig-disk}), let $\mathrm{IG}_\cc^{B, ij}$ denote the law of the decorated IG surface $(B, \Psi, \eta_\ell, \eta_r,  x)/{\sim_\cc}$.  

\begin{lemma}
The measures $\mathrm{IG}^{B,ij}_{\mathbf c}$ are mutually absolutely continuous for all $i, j \in \{1,2\}$.	
\end{lemma}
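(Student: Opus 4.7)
The strategy is to decompose $\Psi|_B$ using the Markov property of the GFF, reducing the question to mutual absolute continuity of Gaussian laws on the space of harmonic functions on $B$. By Lemma~\ref{lem-ig-bdy-cond}, an embedding of $\mathrm{IG}_{\mathbf c}$ on $(D_{ij}, x)$ may be written as $\Psi = h_{ij} + \mathfrak h_{ij}$, where $h_{ij}$ is a zero-boundary GFF on $D_{ij}$ and $\mathfrak h_{ij}$ is a deterministic harmonic function whose boundary values are determined locally by the counterclockwise tangent direction along $\partial D_{ij}$. Since the two common boundary arcs of $\partial B \cap \partial D_{ij}$ are shared, the boundary values of $\mathfrak h_{ij}$ on these arcs agree across all $(i,j)$; the restrictions $\mathfrak h_{ij}|_B$ are smooth harmonic functions on $B$ (away from the common marked point $x$, where behaviour is also common).

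Applying the Markov property to $B \subset D_{ij}$ yields $h_{ij}|_B = h^B + g_{ij}$, where $h^B$ is a zero-boundary GFF on $B$ whose law does not depend on $(i,j)$ and which is independent of $g_{ij}$, and where $g_{ij}$ is a centered Gaussian random harmonic function on $B$ whose boundary trace vanishes on the common boundary arcs and equals $h_{ij}|_{\eta_\ell \cup \eta_r}$ on the cross-cuts. Thus
\[
\Psi|_B \;=\; h^B \;+\; \bigl(g_{ij} + \mathfrak h_{ij}|_B\bigr).
\]
Because $B$ is a Jordan domain with multiple fixed marked boundary features ($\eta_\ell,\eta_r,x$), the equivalence $\sim_{\mathbf c}$ acting on such embeddings is trivial modulo the global shift by $2\pi \chi(\mathbf c)\mathbb Z$, which does not affect absolute continuity. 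Since $h^B$ contributes the same law for every $(i,j)$, mutual absolute continuity of $\mathrm{IG}_{\mathbf c}^{B,ij}$ reduces to mutual absolute continuity of the Gaussian laws of the random harmonic functions $g_{ij} + \mathfrak h_{ij}|_B$.

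These random harmonic functions are Gaussian, with means $\mathfrak h_{ij}|_B$ and covariance kernels $H_{ij}(z,w) = G_{D_{ij}}(z,w) - G_B(z,w)$, which are smooth on $\overline B \times \overline B$ because the logarithmic singularities of the two Green's functions cancel. Mutual absolute continuity then follows from the Feldman--Hajek criterion: the difference of means $\mathfrak h_{ij}|_B - \mathfrak h_{i'j'}|_B$ is a smooth harmonic function on $B$ that vanishes on the common boundary arcs, so it lies in the Cameron--Martin space (finite Dirichlet energy); and the smoothness of $H_{ij}-H_{i'j'}$ on $\overline B \times \overline B$ ensures that the corresponding difference of covariance operators is Hilbert--Schmidt. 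The main technical obstacle is the clean verification of Feldman--Hajek, which is standard for Gaussian processes with explicit smooth covariance kernels. An equivalent route, which may be more convenient in practice, is to compute the finite-codimensional-in-spirit Gaussian law of the trace $h_{ij}|_{\eta_\ell \cup \eta_r}$ directly in terms of the Poisson kernels of the left and right subdomains together with $G_B$, and write down the Radon--Nikodym derivative between these trace laws for different $(i,j)$ explicitly.
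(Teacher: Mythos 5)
Your first two steps match the paper exactly: decompose $\Psi_{ij}$ via Lemma~\ref{lem-ig-bdy-cond} as a zero-boundary GFF $h_{ij}$ plus the explicit harmonic function $\chi(\cc)(\mathfrak h_{ij}+\theta_0)$, and observe (via the total-curvature Lemma~\ref{lem-total-winding}, which you invoke implicitly when you say the boundary data is ``determined locally by the counterclockwise tangent direction'') that the boundary values of $\mathfrak h_{ij}$ agree on $\partial B \cap \partial D_{ij}$ for all $(i,j)$. The paper then finishes in one line by citing~\cite[Proposition 3.4]{ig1}, which is precisely the statement that GFFs on different domains with boundary data agreeing near a common boundary arc have mutually absolutely continuous restrictions to any set bounded away from the parts of the domains that differ.

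Where you diverge is that you attempt to re-derive that cited proposition from scratch via the Markov decomposition and the Feldman--Hajek criterion. The Markov reduction is fine, but the Feldman--Hajek verification is the entire content of the step and you do not carry it out; you flag it yourself as ``the main technical obstacle.'' Two specific points need care. First, the claim that $H_{ij}(z,w)=G_{D_{ij}}(z,w)-G_B(z,w)$ is ``smooth on $\overline B\times\overline B$'' is not quite right: it is harmonic (hence smooth) on $B\times B$ and bounded up to the boundary away from corners, but it does not vanish on the cross-cuts and has limited regularity at the corner points where $\eta_\ell,\eta_r$ meet $\partial D_{ij}$. Second, the Cameron--Martin condition requires identifying the Cameron--Martin space of the full field $\Psi_{ij}|_B$ (whose covariance operator is $G_{D_{ij}}|_{B\times B}$, not $G_B$), and checking that $\chi(\cc)(\mathfrak h_{ij}-\mathfrak h_{i'j'})|_B$ — a harmonic function that vanishes on $\partial B\cap\partial D_{ij}$ but is generally nonzero on the cross-cuts — lies in it; identifying this with ``finite Dirichlet energy'' is not justified since the Cameron--Martin space here is not $H^1_0(B)$. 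These are exactly the delicate points that~\cite[Proposition 3.4]{ig1} is designed to package up, which is why the paper cites it rather than re-proving it. Your outline is plausibly completable, but as written the key step is asserted, not proved.
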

\begin{proof}
	Let $\theta_0 \in [0,2\pi)$  be the angle such that the counterclockwise tangent to $B$ at $x$ is parallel to $e^{\bbi \theta_0}$, and let $\mathfrak h_{ij}$ be the harmonic function on $D_{ij}$ whose boundary value at $y \neq x$ equals the total curvature of the counterclockwise boundary arc from $x$ to $y$. By Lemma~\ref{lem-ig-bdy-cond}, if $h_{ij}$ is a zero boundary GFF on $D_{ij}$, then the law of $(D_{ij}, h_{ij} + \chi(\cc) (\mathfrak h_{ij} + \theta_0), x)/{\sim_\cc}$ is $\mathrm{IG}_\cc$. By Lemma~\ref{lem-total-winding}, the values of $\mathfrak h_{ij}$ on $\partial B \cap \partial D_{ij}$ (and hence on a neighborhood of $\partial B \cap \partial D_{ij}$ in $\partial D_{ij}$) agree for all $i,j$. Therefore the laws of $(h_{ij} + \chi(\cc) (\mathfrak h_{ij} + \theta_0))|_{B}$ are mutually absolutely continuous for all $i,j$~\cite[Proposition 3.4]{ig1}. 
\end{proof}

\begin{thm}[{\cite{dubedat-coupling}}]\label{thm-loc-dom}
	\eqb\label{eq-locality-planar}
	\frac{d\mathrm{IG}_{\mathbf c=0}^{B,11}}{d\mathrm{IG}_{\mathbf c=0}^{B,21}} (\cS)   = \frac{d\mathrm{IG}_{\mathbf c=0}^{B,12}}{ d\mathrm{IG}_{\mathbf c=0}^{B,22}} (\cS)   \qquad\text{ for } \mathrm{IG}^{B,ij}_{\mathbf c=0}\text{-a.e. IG surface }\cS.
	\eqe
\end{thm}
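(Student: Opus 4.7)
The result is due to Dub\'edat \cite{dubedat-coupling}, and the proof in Appendix~\ref{appendix-locality} follows his approach. The plan is to show that the log-Radon-Nikodym derivative $\log (d\mathrm{IG}^{B,ij}_{\mathbf{c} = 0}/d\mathrm{IG}^{B,22}_{\mathbf{c}=0})$ decomposes additively as $F_i + G_j$ with $F_i$ depending only on the left subdomain $L_i$ and $G_j$ only on the right subdomain $R_j$; this additivity is equivalent to~\eqref{eq-locality-planar}. By Lemma~\ref{lem-ig-bdy-cond}, each $\mathrm{IG}^{B,ij}_{\mathbf{c}=0}$ is a Gaussian measure with mean $\chi(\mathfrak h_{ij}|_B + \theta_0)$ (with $\chi = 1/\sqrt 6$) and covariance $G_{D_{ij}}|_{B \times B}$. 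Via the Markov property of the GFF, I would first write $\Psi_{ij}|_B$ as the sum of a zero-boundary GFF on $B$ (common across $(i,j)$) and a random harmonic function whose boundary data is deterministic and common on $\partial B \cap \partial D_{ij}$ (by the preceding lemma) and random on $\eta_\ell \cup \eta_r$. This reduces the claim to the analogous identity for the joint Gaussian laws $\lambda_{ij}$ of $(\Psi_{ij}|_{\eta_\ell}, \Psi_{ij}|_{\eta_r})$.

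The precision operator of $\lambda_{ij}$ admits the decomposition $P_{ij} = \Lambda^{\mathrm{int}}_B + \Lambda^{(L)}_i \oplus \Lambda^{(R)}_j$, where $\Lambda^{\mathrm{int}}_B$ is the Dirichlet-to-Neumann operator on $\eta_\ell \cup \eta_r$ viewed from inside $B$ (independent of $(i,j)$), while $\Lambda^{(L)}_i$ and $\Lambda^{(R)}_j$ are the Dirichlet-to-Neumann operators from the subdomains $L_i$ and $R_j$, respectively. Since $L_i$ touches $B$ only along $\eta_\ell$ and $R_j$ only along $\eta_r$, the exterior part is block-diagonal, and one directly reads off $P_{11} + P_{22} = P_{12} + P_{21}$, which is precisely the additivity of the quadratic part of $\log(d\lambda_{ij}/d\lambda_{22})$. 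A parallel decomposition of $\mathfrak h_{ij}|_{\eta_\ell \cup \eta_r}$ into common, left-only, and right-only pieces gives the additivity of the linear part.

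The hard part will be the normalization constant, which for Gaussian measures takes the form $\tfrac12 \log \det P_{ij}$. For general $\chi$ there is no reason for the multiplicative identity $\log \det P_{11} + \log \det P_{22} = \log \det P_{12} + \log \det P_{21}$ to hold -- this is the conformal anomaly of partition functions -- and this is the sole place where the hypothesis $\mathbf{c}=0$ (i.e., $\chi = 1/\sqrt 6$, $\kappa = 6$) is used. Following \cite{dubedat-coupling}, one expresses a suitable renormalized version of $\det P_{ij}$ as a chordal $\SLE_6$-type partition function for the IG boundary data on $D_{ij}$, and then invokes the locality property of $\SLE_6$ to derive the desired multiplicative identity. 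Combining this with the quadratic and linear additivity above yields~\eqref{eq-locality-planar}.
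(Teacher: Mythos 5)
The overall shape of your argument mirrors the paper's: condition on the field along the crosscuts, split the Gaussian Radon--Nikodym derivative into a quadratic part, a linear part, and a normalization, and show each behaves multiplicatively in $(i,j)$. Your treatment of the quadratic part (Dirichlet-to-Neumann block-diagonality) and linear part (decomposition of the boundary data into left-only and right-only pieces) is in the right spirit and corresponds to the paper's cancellations of $(P_{ij}w, P_{ij}w)_\nabla$ and of the $m'_{ij}$ terms, though the statement that ``$\mathfrak h_{ij}|_{\eta_\ell\cup\eta_r}$ decomposes into common, left-only, right-only pieces'' is imprecise as written: the harmonic extension depends globally on the domain, and what actually decomposes is the boundary data on $\partial D_{ij}$ and hence the function $m'_{ij}$ that is harmonic off $\delta$.

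The real problem is with the normalization step, which is where the hypothesis $\mathbf c=0$ enters, and your account of it is wrong. You claim that $\mathbf c=0$ makes the precision operators $P_{ij}$ of the restricted Gaussian laws satisfy $\log\det P_{11} + \log\det P_{22} = \log\det P_{12} + \log\det P_{21}$, with the conformal anomaly as the obstruction for general $\chi$. But $P_{ij}$ is the precision operator of the zero-mean GFF restricted to $\delta$; it does not depend on $\chi$ at all (only the mean of the law does). So the determinant identity you want either holds for all $\mathbf c$ or for none, and in fact it fails in general --- that failure \emph{is} the conformal anomaly, and it shows up as the ratio $\sqrt{\det_\zeta\Delta_{D_{11}}\det_\zeta\Delta_{D_{22}}/(\det_\zeta\Delta_{D_{21}}\det_\zeta\Delta_{D_{12}})}$ in Lemma~\ref{lem-girsanov-zero}. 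The actual mechanism (Theorem~\ref{thm-RN}) is a cancellation between two different $\det_\zeta$ contributions: the $\chi$-independent Gaussian normalization gives an exponent $+\tfrac12$, while the Girsanov term for the $\chi$-dependent mean, via the regularized Dirichlet energy of $m_{ij}$ (Lemmas~\ref{lem-girsanov-bdy} and~\ref{lem-reg-dirichlet}), gives an exponent $-3\chi^2$. The sum $\tfrac12 - 3\chi^2 = \tfrac{\mathbf c}{2}$ vanishes iff $\mathbf c=0$. Your appeal to ``expressing a renormalized $\det P_{ij}$ as an $\SLE_6$-type partition function and invoking locality of $\SLE_6$'' is not how the paper (or Dub\'edat) proceeds and is arguably circular in this context: the GFF computation is what is used to \emph{establish} SLE/GFF couplings, not the other way around. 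So this step of your proposal needs to be replaced by the explicit bookkeeping of $\det_\zeta$ exponents.
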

The left hand side of~\eqref{eq-locality-planar} describes the relative likelihood of observing $\cS$  when we vary the left subdomain. 
Equation~\eqref{eq-locality-planar} states that this relative likelihood does not depend on the right subdomain. In this way Theorem~\ref{thm-loc-dom} is a locality result for $\mathrm{IG}_{\mathbf c =0}$.

\subsubsection{Locality for $\ul \cc = (0, \dots, 0)$ IG surfaces}\label{subsec-loc-ig}

In this section we adapt Theorem~\ref{thm-loc-dom} to the setting of IG surfaces. 

We work in the domain $(\D, x)$ where $x = -\mathbbm i$.  See Figure~\ref{fig-Dij-16} (left).
For $i,j \in \{1,2\}$, consider two disjoint cross-cuts $\eta_\ell^{ij}, \eta_r^{ij}$ which do not contain $x$. Suppose the connected component $B^{ij}$ of $\D \backslash (\eta_\ell^{ij} \cup  \eta_r^{ij})$ having $x$ on its boundary also has $\eta_\ell^{ij}$ and $\eta_r^{ij}$ on its boundary, and $x, \eta_\ell^{ij}, \eta_r^{ij}$ lie on $\partial B^{ij}$ in clockwise order.   Let $L^{ij}$ be the connected component of $\D\backslash B^{ij}$ having $ \eta_\ell^{ij}$ on its boundary, let $R^{ij}$ be the other connected component, and let $V^{ij}$ be a simply connected subdomain of $\D$ such that $B^{ij} \subset V^{ij}$ and $\partial V^{ij} \setminus \partial\D$ lies at positive distance from $B^{ij}$.

Suppose for each $i = 1,2$ the curve-decorated domains $(L^{ij} \cup V^{ij}, \eta_\ell^{ij}, \eta_r^{ij})$ for $j=1,2$ are conformally equivalent, and suppose for each $j = 1,2$ that $(V^{ij} \cup R^{ij}, \eta_\ell^{ij}, \eta_r^{ij})$ for $i=1,2$ are conformally equivalent. One should think of this as saying that \emph{modulo conformal equivalence}, the four curve-decorated domains arise from choosing among two possible left subdomains and two possible right subdomains. In particular, unlike in Theorem~\ref{thm-loc-dom}, we do not assume $L^{1j} = L^{2j}$. 

For $(\Psi_1, \dots, \Psi_n)$ sampled from $\mathrm{IG}_{\ul \cc}$ on $(\D, x)$, let $\mathrm{IG}_{\ul \cc}^{ij}$ denote the law of the curve-decorated $\ul \cc$-IG surface $(B^{ij}, \Psi_1, \dots, \Psi_n, \eta_\ell^{ij}, \eta_r^{ij})/{\sim}$.
\begin{figure}[ht!]
	\begin{center}
		\includegraphics[scale=0.5]{}%
		\caption{\label{fig-Dij-16} \textbf{Left.}  Proposition~\ref{prop-locality-qs}  gives an identity for Radon-Nikodym derivatives of the turquoise $\cc$-IG surface when  $i$ and $j$ take values in $\{ 1,2\}$. \textbf{Middle.} In the proof of Proposition~\ref{prop-locality-qs}, we define a domain $\wt V^{ij}$ with smooth boundary. Four possible domains (boundaries shown in bold) can be obtained by gluing to $B^{ij}$ a ``left piece'' and ``right piece'' chosen from the connected components of $\wt V^{ij} \backslash B^{ij}$ and $\D \backslash B^{ij}$. We apply Theorem~\ref{thm-loc-dom} to these four domains.  \textbf{Right.} Setup for the proof of Proposition~\ref{prop-local-indep}. 
			The shaded region is $M_\tau = \bigcup_{i \in I_\tau} D_i \subset \D$, and $L_1, \dots, L_m$ are the red loops. We choose a region $B$ (turquoise) bounded by a pair of cross-cuts separating  $L_1$ from the other loops such that $x \in \partial B$. Let $L$ and $R$ be the two connected components of $\D\backslash B$ (each colored gray and white), and let $V \subset M_\tau$ be a simply connected neighborhood of $B$ (between blue curves).}
	\end{center}
\end{figure}

\begin{proposition}\label{prop-locality-qs}
	Let $\ul \cc$ be a vector with all entries equal to 0. 
	The four laws $\mathrm{IG}_{\ul \cc}^{ij}$ are mutually absolutely continuous, and 
	\[ \frac{d\mathrm{IG}_{\ul \cc}^{11}}{d\mathrm{IG}_{\ul \cc}^{21}}(\cS)  = \frac{d\mathrm{IG}_{\ul \cc}^{12}}{d\mathrm{IG}_{\ul \cc}^{22}}(\cS) \qquad \text{ for }\mathrm{IG}_{\ul \cc}^{ij}\text{-a.e.\ IG surface }\cS.\]
\end{proposition}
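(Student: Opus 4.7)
The plan is to reduce to Theorem~\ref{thm-loc-dom} applied field-by-field, combining via independence of the $n$ IG fields.

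First, using the two conformal equivalence hypotheses, I would construct four smooth bounded simply connected planar reference domains $D^{ij}$ ($i, j \in \{1,2\}$) matching the setup of Theorem~\ref{thm-loc-dom}: all four share a common central region $B$ with common smooth disjoint cross-cuts $\eta_\ell, \eta_r$ and common boundary point $x \in \partial B \cap D^{ij}$; the left subdomain of $D^{ij}$ depends only on $i$ and the right subdomain depends only on $j$. The construction proceeds via a smooth approximation $\wt V^{ij} \subset V^{ij}$ as indicated in Figure~\ref{fig-Dij-16} (middle): the equivalence $(L^{i1} \cup V^{i1}, \eta_\ell^{i1}, \eta_r^{i1}) \cong (L^{i2} \cup V^{i2}, \eta_\ell^{i2}, \eta_r^{i2})$ lets me pick a common representation of the left piece (joined to $B$ along $\eta_\ell$) depending only on $i$, and similarly for the right. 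By conformal invariance of $\mathrm{IG}_{\ul\cc}$, I realize $\mathrm{IG}^{ij}_{\ul\cc}$ as the law of $(B, \Psi_1, \dots, \Psi_n, \eta_\ell, \eta_r)/{\sim}$, where $(\Psi_1, \dots, \Psi_n)$ is the restriction to $B$ of an embedding of $\mathrm{IG}_{\ul\cc}$ on $(D^{ij}, x)$.

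Next, since by Definition~\ref{def-ig-disk} the $n$ IG fields are independent and each has central charge $\cc = 0$, the marginal law of $(B, \Psi_k, \eta_\ell, \eta_r)/{\sim}$ is exactly $\mathrm{IG}^{B, ij}_{\cc = 0}$ in the notation of Theorem~\ref{thm-loc-dom}. Applying Theorem~\ref{thm-loc-dom} to each marginal yields mutual absolute continuity and the identity
\eqbn
\frac{d\mathrm{IG}^{B, 11}_{\cc=0}}{d\mathrm{IG}^{B, 21}_{\cc=0}} = \frac{d\mathrm{IG}^{B, 12}_{\cc=0}}{d\mathrm{IG}^{B, 22}_{\cc=0}} .
\eqen
By independence, the joint field-level Radon-Nikodym derivative factorizes as the product of the marginal derivatives. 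This product descends to a well-defined function on multi-field IG surfaces because each single-field factor is already invariant under the single-field shift $\Psi_k \mapsto \Psi_k + 2\pi\chi$ (as it descends to the single-field IG surface), so the product is a fortiori invariant under the diagonal shift $(\Psi_1, \dots, \Psi_n) \mapsto (\Psi_1 + 2\pi\chi, \dots, \Psi_n + 2\pi\chi)$, which is the only residual equivalence in $\sim_{\ul\cc}$ once the embedding is fixed. Multiplying the above single-field identity over $k = 1, \dots, n$ gives the desired identity of Radon-Nikodym derivatives on the multi-field IG surface, and mutual absolute continuity is immediate.

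The main technical obstacle will be the domain construction in the first step: the conformal equivalences in the hypothesis are given only abstractly, so concrete smooth representatives must be chosen carefully so that all four $D^{ij}$ share a common central region $B$ and cross-cuts while preserving the $i$/$j$ factorization of the left and right pieces, and the smoothness of $\wt V^{ij}$ is needed so that the smoothness assumption in Theorem~\ref{thm-loc-dom} is satisfied.
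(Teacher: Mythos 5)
Your high-level plan — apply Theorem~\ref{thm-loc-dom} to each of the $n$ independent fields and multiply the resulting Radon--Nikodym identities — matches the paper's, and the independence/product step is fine (indeed for $\cc_i = 0$ one has $\chi=0$, so $\sim_{\ul\cc}$ involves no additive constants and your descent argument is automatic). The gap is the domain construction, which you flag as ``the main technical obstacle'' but treat as routine: it is not clear, and does not seem possible in general, to choose four \emph{planar} domains $D^{ij}$ that all share a literal common central piece $B$ with common cross-cuts in $\C$, with the left subdomain depending only on $i$ and the right only on $j$, and $D^{ij}$ conformally equivalent to $(\D, \eta^{ij}_\ell, \eta^{ij}_r, x)$ as a decorated domain. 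To see the difficulty, fix $D^{11} := \D$, $B := B^{11}$, $L^1 := L^{11}$. If $D^{12}$ is to have the same literal $L^1 \cup B$, the conformal map $\D \to D^{12}$ must restrict on $L^{12}\cup B^{12}$ to $(\Phi^1)^{-1}$, the inverse of the given equivalence $L^{11}\cup V^{11}\to L^{12}\cup V^{12}$; so $(\Phi^1)^{-1}$, defined only on $L^{12}\cup V^{12}$, would have to admit a conformal extension to all of $\D$, and such extensions need not exist. Abstractly, conformally welding $L^i, B, R^j$ produces a simply connected Riemann surface that embeds in $\C$, but the induced planar embedding of $B$ is essentially rigid (unique up to a three-parameter group), so it generally varies with $(i,j)$ and cannot be pinned to a single fixed $B$ for all four.

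The paper's proof is designed to avoid exactly this. For each \emph{fixed} $(i,j)$ it introduces a smooth domain $\wt V^{ij}\subset V^{ij}$ (transported from $\wt V^{11}$ by the given equivalences) and the pieces $L_0, R_0$ of $\wt V^{ij}\backslash B^{ij}$; the four domains $D_{00}=\ol{\wt V^{ij}}$, $D_{10}$, $D_{01}$, $D_{11}=\ol\D$ all live in $\D$ with the $(i,j)$-decomposition and literally share $B^{ij}$, so Theorem~\ref{thm-loc-dom} applies directly and yields $\frac{d\mathrm{IG}^{00}}{d\mathrm{IG}^{i0}}\frac{d\mathrm{IG}^{ij}}{d\mathrm{IG}^{0j}}=1$. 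The auxiliary measures $\mathrm{IG}^{00},\mathrm{IG}^{i0},\mathrm{IG}^{0j}$ appear in more than one of the four applications but are consistently defined \emph{as measures on decorated IG surfaces} (modulo conformal change), because the relevant sub-configurations such as $L^{11}\cup B^{11}\cup R_0$ and $L^{12}\cup B^{12}\cup R_0'$ are conformally equivalent decorated domains — they lie inside $L^{ij}\cup V^{ij}$, where the hypothesis supplies the equivalence. Multiplying the four identities cancels the auxiliary terms. In short, the desired identity is assembled by stitching four applications of Theorem~\ref{thm-loc-dom} across a common \emph{abstract} reference; to make your more direct version work you would need to prove the common planar realization exists, which you have not done and which appears to fail.
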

\begin{proof}
	From our conformal equivalence assumptions, it is immediate that $(V^{ij}, \eta_\ell^{ij}, \eta_r^{ij})$ for $i,j \in \{1,2\}$ are all conformally equivalent. 
	Pick  a simply connected domain $\wt V^{11}$ with smooth boundary such that $B^{11} \subset \wt V^{11} \subset V^{11}$, and $B^{11}$ (resp.\ $\wt V^{11}$) lies at positive distance from $\partial \wt V^{11} \backslash \partial \D$ (resp.\ $\partial V^{11} \backslash \partial \D$). Define $\wt V^{ij}$ for $i, j \in \{1,2\}$ using the above conformal equivalence. See Figure~\ref{fig-Dij-16}  (middle). 
	
	For this paragraph we fix the choice $i = j = 1$. Let $L_0$ (resp.\  $R_0$) be the connected component of $\wt V^{11} \backslash B^{11}$ immediately clockwise (resp.\ counterclockwise) of $x$, and write $L_1 = L^{11}$ and $R_1 = R^{11}$. For $i', j' \in \{ 0,1\}$ let $D_{i'j'} = \ol{L_{i'} \cup B^{11} \cup R_{j'}}$, see Figure~\ref{fig-Dij-16}  (middle). Let $n$  be the number of entries of  $\ul \cc$. 
	Recall $\mathrm{IG}_{\ul \cc}^{11}$ is the law of the ${\ul \cc}$-IG surface $(B^{11},  \Psi_1|_{B^{11}}, \dots, \Psi_n|_{B^{11}},  \eta^{11}_\ell, \eta^{11}_r)/{\sim}$ where $\Psi_1, \dots, \Psi_n$ are independently sampled from $\mathrm{IG}_{{\cc=0}}$ on $(D_{11}, x)$ with additive constant fixed such that the boundary value infinitesimally counterclockwise of $x = -\bbi$ in $\partial \D$ is $0$. Similarly, we define $\mathrm{IG}_{\ul \cc}^{00}$, ${ \mathrm{IG}}_{\ul \cc}^{10}$ and $\mathrm{IG}_{\ul\cc}^{01}$ by replacing $D_{11}$ with $D_{00}, D_{10}$ and $D_{01}$ respectively. 
	This give four measures, which are mutually absolutely continuous by the  sentence above  Theorem~\ref{thm-loc-dom}. Applying  Theorem~\ref{thm-loc-dom} $n$ times gives
	\[ \frac{d{ \mathrm{IG}}_{{\ul\cc}}^{00}}{d{ \mathrm{IG}}_{{\ul\cc}}^{10}} 
	\frac{d{ \mathrm{IG}}_{{\ul\cc}}^{11}}
	{d{ \mathrm{IG}}_{{\ul\cc}}^{01}}  =1 \qquad \text{ for } { \mathrm{IG}}^{00}_{{\ul\cc}}\text{-a.e. IG surface}.\]
	
	For each $i,j \in \{1,2\}$ we repeat the preceding paragraph to define three measures ${ \mathrm{IG}}_{\ul\cc}^{00}, { \mathrm{IG}}_{{\ul\cc}}^{i0}, { \mathrm{IG}}_{{\ul\cc}}^{0j}$. Some measures are multiply defined (for instance ${\mathrm{IG}}_{{\ul\cc}}^{10}$ is defined twice, for $i=1, j=1,2$), but these definitions are consistent by conformal equivalence of the domains. In total this gives nine mutually absolutely continuous measures satisfying
	\eqb\label{eq-RN-i0}
	\frac{d{ \mathrm{IG}}_{{\ul\cc}}^{00}}{d{ \mathrm{IG}}_{{\ul\cc}}^{i0}} 
	\frac{d{ \mathrm{IG}}_{{\ul\cc}}^{ij}}
	{d{ \mathrm{IG}}_{{\ul\cc}}^{0j}}  =1  \qquad \text{ for } { \mathrm{IG}}^{00}_{{\ul\cc}}\text{-a.e. IG surface,  for }i,j \in \{1,2\}.
	\eqe
	Multiplying the left hand sides of~\eqref{eq-RN-i0} for $(i,j) \in \{ (1,1),(2,2)\}$, and dividing by the left hand sides for $(i,j) \in \{(1,2),(2,1)\}$, we obtain $\frac{d{ \mathrm{IG}}_{{\ul\cc}}^{11}}{d{ \mathrm{IG}}_{{\ul\cc}}^{21}} 
	\frac{d{ \mathrm{IG}}_{{\ul\cc}}^{22}}
	{d{ \mathrm{IG}}_{{\ul\cc}}^{12}}  =1$ for ${\mathrm{IG}}_{{\ul\cc}}^{ij}$-a.e.\ IG surface, as desired. 
\end{proof}

\subsubsection{LQG-IG local sets of $(\mathbf c_1, \dots, \mathbf c_n) = (26 - \ccL, 0, \dots, 0)$  IG-decorated LQG surfaces}\label{subsec-main-ig0-proof}

In this section we prove Proposition~\ref{prop-main-0s}.
We work in the setting of Proposition~\ref{prop-main-0s}, so  $(\mathbf c_1, \mathbf c_2, \dots, \mathbf c_n) = (26 - \ccL, 0, \dots, 0)$,  $K$ is an $\eps$-LQG-IG local set coupled with $(\Phi, \Psi_1, \dots, \Psi_n, \eta')$, and $\Lambda \subset \D$ is a Poisson point process with intensity measure $\eps^{-1} \mu_\Phi$ independent of $(\Psi_1, \dots, \Psi_n, \eta', K)$. As before the curve $\eta'$ is segmented by the points $\Lambda$ into curve segments $\eta_0', \dots, \eta_{|\Lambda|}'$, and each $\eta_i'$ traces out a domain $D_i$ with boundary points $a_i, b_i, c_i, d_i$. 

Recall that in Definition~\ref{def-eps-local}, we sample a random set of indices $I_\tau$ conditionally independent of $K$ given $(\Phi,  \Psi_1, \dots, \Psi_n, \eta', \Lambda)$. We first show an $n \geq 1$ generalization of Proposition~\ref{prop-eps-indep} via locality of $\cc=0$ IG fields. 

\begin{proposition}\label{prop-local-indep}
	Assume $\ccL > 25$ and $(\mathbf c_1, \mathbf c_2, \dots, \mathbf c_n) = (26 - \ccL, 0, \dots, 0)$. 
	In the setting of Definition~\ref{def-eps-local}, let $L_1, \dots, L_m$ be disjoint loops in $M_\tau$ which bound disjoint regions $O_1, \dots, O_m$ and let
	$\wh \cM_\tau  = (M_\tau, \Phi, \Psi_1, \dots, \Psi_n, \{ \eta_i', a_i, b_i, c_i, d_i\: : \: i \in I_\tau\})/{\sim}$. Suppose $(M_\tau, \Phi, L_1, \dots, L_m)/{\sim}$ is measurable with respect to $\sigma(\wh \cM_\tau)$, i.e., the loops on $\wh \cM_\tau$ are chosen in a way depending only on $\wh \cM_\tau$.  Then almost surely, given $\wh \cM_\tau$  the IG-decorated LQG surfaces $(O_i, \Phi, \Psi_1,  \dots, \Psi_n)/{\sim}$ for $i = 1, \dots, m$ are conditionally independent.
\end{proposition}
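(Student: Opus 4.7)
The plan is to extend Proposition~\ref{prop-eps-indep} (which handles the $n=1$ case) by exploiting that the auxiliary IG fields $\Psi_2, \dots, \Psi_n$ have central charge $0$ and are independent of $(\Phi, \Psi_1, \eta', \Lambda, K)$. First I would enlarge the conditioning $\sigma$-algebra to include $\Psi_2, \dots, \Psi_n$ globally and apply Proposition~\ref{prop-eps-indep}, then reduce the conditioning back down to $\sigma(\wh \cM_\tau)$ using the locality of $\mathbf c = 0$ IG fields (Proposition~\ref{prop-locality-qs}), equivalently the domain Markov property of the zero-boundary GFF.

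For the first step, define $\cM_\tau^1 := (M_\tau, \Phi, \Psi_1, \{(\eta_i', a_i, b_i, c_i, d_i) : i \in I_\tau\}, \Lambda)/{\sim}$, the $n=1$ analog of $\wh \cM_\tau$ from Proposition~\ref{prop-eps-indep}, and set $\cG := \sigma(\cM_\tau^1) \vee \sigma(\Psi_2, \dots, \Psi_n)$. Since $(\Psi_2, \dots, \Psi_n)$ is independent of $(\Phi, \Psi_1, \eta', \Lambda, K)$, conditioning on it leaves the joint law of the latter unchanged; and once $(\Psi_2, \dots, \Psi_n)$ is fixed, the loops $L_1, \dots, L_m$, which are $\wh \cM_\tau$-measurable by hypothesis, become measurable with respect to $\cM_\tau^1$ alone. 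Proposition~\ref{prop-eps-indep} then applies conditionally and gives that $(O_i, \Phi, \Psi_1)/{\sim}$ for $i = 1, \dots, m$ are conditionally independent given $\cG$. Since $\Psi_k|_{O_i}$ for $k \geq 2$ is $\cG$-measurable (being a function of the global conditioning), the enhanced surfaces $\cS_i := (O_i, \Phi, \Psi_1, \dots, \Psi_n)/{\sim}$ are also conditionally independent given $\cG$.

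For the second step, a tower-property argument gives $\mathbb E[\prod_i F_i(\cS_i) \mid \wh \cM_\tau] = \mathbb E[\prod_i G_i \mid \wh \cM_\tau]$ with $G_i := \mathbb E[F_i(\cS_i) \mid \cG]$, so it suffices to show that the $G_i$ are conditionally independent given $\wh \cM_\tau$. By independence of $(\Phi, \Psi_1)$ from $(\Psi_2, \dots, \Psi_n)$, the conditional law of $(\Phi, \Psi_1)|_{O_i}$ given $\cG$ depends only on $\cM_\tau^1$, and $\Psi_k|_{O_i}$ for $k \geq 2$ is determined by the conditioning; hence each $G_i$ is a function of $\cM_\tau^1$ and of $\Psi_k|_{O_i}$ for $k \geq 2$. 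Since the $L_i$ are disjoint and contained in $M_\tau$, each connected component of $\D \backslash M_\tau$ lies in at most one $O_i$; the domain Markov property applied to the zero-boundary GFF $h_k$ in the decomposition $\Psi_k = h_k + \chi(\mathbf c_k) \mathfrak h$ then gives that $\Psi_k|_{O_i \backslash M_\tau}$ for different $i$ are conditionally independent given $\Psi_k|_{M_\tau}$, which is $\wh \cM_\tau$-measurable. Combined with the conditional independence of the $(\Phi, \Psi_1)$ parts, this yields conditional independence of the $G_i$ and hence of the $\cS_i$ given $\wh \cM_\tau$.

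The hard part will be to carefully handle the equivalence class structure modulo LQG/IG coordinate change throughout --- in particular, verifying that factorizations at the field level translate correctly to factorizations at the equivalence-class level, and that $\wh \cM_\tau$ encodes enough boundary data to apply the Markov property. This is exactly what Proposition~\ref{prop-locality-qs} is designed to handle, which is why it was established as the locality input for $\mathbf c = 0$ IG surfaces.
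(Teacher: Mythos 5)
There is a genuine gap in the second step. You assert that ``$\Psi_k|_{M_\tau}$ \dots\ is $\wh\cM_\tau$-measurable,'' and this is false. By the definition of $\sigma$-algebras generated by decorated LQG surfaces (Section~\ref{subsec-sigma}), $\sigma(\wh\cM_\tau)$ consists of events invariant under the $\sim$-coordinate change, so it determines $\Psi_k|_{M_\tau}$ only up to conformal coordinate change; it does \emph{not} determine the distribution $\Psi_k|_{M_\tau}$ on the specific random subset $M_\tau\subset\D$. The domain Markov property of the zero-boundary GFF gives conditional independence of $\Psi_k|_{O_i\setminus M_\tau}$ only given the \emph{absolute} restriction $\Psi_k|_{M_\tau}$, which is a strictly finer conditioning. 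Since conditional independence given a finer $\sigma$-algebra does not descend to a coarser one, this step does not go through. A related problem already appears in your tower-property reduction: for $\BB E[\prod_i F_i(\cS_i)\mid\wh\cM_\tau]=\BB E[\prod_i G_i\mid\wh\cM_\tau]$ you need $\sigma(\wh\cM_\tau)\subset\cG=\sigma(\cM_\tau^1)\vee\sigma(\Psi_2,\dots,\Psi_n)$, but $\wh\cM_\tau$ is not a function of $(\cM_\tau^1,\Psi_2,\dots,\Psi_n)$, because $\cM_\tau^1$ does not determine the embedding $M_\tau\subset\D$ and hence one cannot recover $\Psi_k|_{M_\tau}$ modulo coordinate change from $\cM_\tau^1$ together with the global fields.

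What is missing is exactly what Lemma~\ref{lem-rd-M} and Proposition~\ref{prop-locality-qs} supply. The paper does not enlarge the conditioning to the full fields and then coarsen; instead it works with a two-sided decomposition $L,B,R$ and proves directly that passing from $\cM_\tau$ to $\cM_\tau^*$ (which adds $\Psi_k|_B$ modulo coordinate change) preserves the conditional independence of $\cS_\ell$ and $\cS_r$, by showing that the Radon--Nikodym derivative of the conditional law of $\cM_\tau^*$ given $(\cM_\tau,\cS_\ell,\cS_r)$ against its conditional law given $\cM_\tau$ factorizes as $F_\ell(\cS_\ell,\cM_\tau^*)\,F_r(\cS_r,\cM_\tau^*)$. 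This factorization is the locality property, and it is not replaceable by the domain Markov property alone, precisely because the conditioning is on equivalence classes rather than on absolute restrictions. Your closing paragraph correctly identifies that Proposition~\ref{prop-locality-qs} is the needed ingredient, but the body of your argument does not invoke it, and the route via the domain Markov property cannot bridge the gap.
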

\begin{proof}

We begin with a proof outline.  Let $\cM_\tau := (M_\tau, \Phi, \Psi_1,  \{(\eta_i', a_i, b_i, c_i, d_i): i \in I_{\tau} \})/{\sim}$ be $\wh \cM_\tau$ with the $\cc=0$ fields forgotten. We split $\D$ into three regions $L, B, R$ with $B \subset M_\tau$ such that $(M_\tau, \Phi, B)/{\sim}$ is measurable with respect to $\cM_\tau$ (see Figure~\ref{fig-Dij-16}, right).  By our $n=1$ result (Proposition~\ref{prop-eps-indep}) the IG-decorated LQG surfaces $(L \cup B, \Phi, \Psi_1)/{\sim}$ and $(R\cup B, \Phi, \Psi_1)/{\sim}$ are conditionally independent given $\cM_\tau$. Using the $\cc=0$ IG locality property (Proposition~\ref{prop-locality-qs}), we can show this conditional independence still holds when we  instead condition on $\cM_\tau$ further decorated by $\Psi_2|_B, \dots, \Psi_n|_B$. The Markov property of the GFF then implies the conditional independence of $(L\cup B, \Phi, \Psi_1,  \Psi_2, \dots, \Psi_n)/{\sim}$ and $(R \cup B, \Phi, \Psi_1, \Psi_2, \dots, \Psi_n)/{\sim}$ under this conditioning; an easy argument then shows this still holds when conditioning on $\wh \cM_\tau$ instead. Varying our choice of $B$ gives the desired result.

We now turn to the proof proper. 
	Draw a pair of cross-cuts $\eta_\ell, \eta_r$ in $\D$ which stay in $M_\tau$ and do not hit $x := -\bbi$, such that the connected component $B$ of $\D \backslash (\eta_\ell \cup  \eta_r)$ having $x$ on its boundary also has $\eta_\ell$ and $\eta_r$ on its boundary, and $x, \eta_\ell, \eta_r$ lie on $\partial B$ in clockwise order. Let $L$ be the connected component of $\D \backslash B$ having $\eta_\ell$ on its boundary, and let $R$ be the other connected component. Choose a simply connected domain $V$ such that $B \subset V \subset M_\tau$, and $B$ (resp.\ $V$) lies at positive distance from $\partial V \backslash \partial \D$ (resp.\ $\partial M_\tau \backslash \partial \D$). We do this in such a way that $(M_\tau, \Phi, B, V)/{\sim}$ is measurable with respect to $\cM_\tau$, and $O_1 \subset L$ while $O_2, \dots, O_m \subset R$.  See Figure~\ref{fig-Dij-16} (right).

		Let  $\cS_\ell = (L \cup B, \Phi, \Psi_1)/{\sim}$ and $\cS_r = (R \cup B, \Phi, \Psi_1)/{\sim}$. 
		Note that $\cS_\ell$ is measurable with respect to $\cM_\tau$ and $(O_1, \Phi, \Psi_1)/{\sim}$, and $\cS_r$ is measurable with respect to $\cM_\tau$ and $(O_i, \Phi, \Psi_1)/{\sim}$ for $i = 2, \dots, m$. Thus, by Proposition~\ref{prop-eps-indep}, conditioned on $\cM_\tau$, the decorated LQG surfaces $\cS_\ell, \cS_r$  are conditionally independent. Let 
		\eqbn
		\cM_\tau^* = (M_\tau, \Phi, \Psi_1,  \{(\eta_i', a_i, b_i, c_i, d_i): i \in I_{\tau} \}, \Psi_2|_B, \dots, \Psi_n|_B)/{\sim} .
		\eqen
		 Using the locality of $\mathbf c=0$ IG fields, we will prove that conditioned on $\cM_\tau^*$ the decorated LQG surfaces $\cS_\ell, \cS_r$ are still conditionally independent.
		 
		 Let $P_{\cM_\tau}$ be the conditional law of of $\cM_\tau^*$ given $\cM_\tau$. Similarly let  $P_{\cS_\ell, \cS_r}$ be the conditional law of $\cM_\tau^*$ given $(\cM_\tau, \cS_\ell, \cS_r)$. Let $\cL_{\cM_\tau}$ (resp.\ $\cL_{\cM_\tau^*}$) be the conditional law of $(\cS_\ell, \cS_r)$ given $\cM_\tau$ (resp.\ $\cM_\tau^*$). By the aforementioned {conditional} independence we have $\cL_{\cM_\tau}(d\cS_\ell, d\cS_r) = \cL^\ell_{\cM_\tau}(d\cS_\ell) \cL^r_{\cM_\tau}(d\cS_r)$ for some probability measures $\cL^\ell_{\cM_\tau}, \cL^r_{\cM_\tau}$ which are measurable functions of $\cM_\tau$. We will need the following lemma.

		 \begin{lemma}\label{lem-rd-M}
		 	Condition on $\cM_\tau$ and fix $\cS_\ell^1, \cS_\ell^2, \cS_r^1, \cS_r^2$ (where the $\cS_\ell^i$ and $\cS_r^j$ denote possible realizations of $\cS_\ell$ and $\cS_r$). Then the probability measures $P_{\cS_\ell^i, \cS_r^j}$ are mutually absolutely continuous for $i, j \in \{1,2\}$. Furthermore, for a.e.\ $\cM_\tau^*$ we have $\frac{dP_{\cS_\ell^1, \cS_r^1}}{dP_{\cS_\ell^2, \cS_r^1}} (\cM_\tau^*) = \frac{dP_{\cS_\ell^1, \cS_r^2}}{dP_{\cS_\ell^2, \cS_r^2}} (\cM_\tau^*)$. 
		 \end{lemma}
		 \begin{proof}
		 	Throughout this proof we condition on $\cM_\tau$.
		 	For each $i, j \in \{1, 2\}$, we have a copy of the setup described above, where we have fields $\Phi^{ij}, \Psi_1^{ij}, \dots, \Psi_n^{ij}$, a Poisson point process $\Lambda^{ij}$, and domains $B^{ij} \subset V^{ij} \subset M^{ij}$. Fixing $\cS_\ell^i$ and $\cS_r^j$ corresponds to conditioning on all of the above (modulo conformal coordinate change) except $\Psi_2^{ij}, \dots, \Psi_n^{ij}$. Proposition~\ref{prop-locality-qs} gives mutual absolute continuity of the conditional laws of the IG surface $\cB = (B^{ij}, \Psi_2, \dots, \Psi_n, \eta^{ij}_\ell, \eta^{ij}_r)/{\sim}$ for $i, j \in \{1,2\}$, and further gives 
		 	an identity for  Radon-Nikodym derivatives of these conditional laws.  Since $\cM_\tau^*$ determines and is determined by $(\cB, \cM_\tau)$, we obtain the desired Lemma~\ref{lem-rd-M}.
		 \end{proof}
		 
Condition on $\cM_\tau$. 		 
	By definition we have 
		 \eqb\label{eq-marginal-PM}
		 P_{\cM_\tau} = \iint P_{\cS_\ell', \cS_r'} \cL_{\cM_\tau}^\ell(d\cS_\ell') \cL_{\cM_\tau}^r(d\cS_r')
		 \eqe
		 where $\cS_\ell', \cS_r'$ denote possible realizations of $\cS_\ell$ and $\cS_r$. The mutual absolute continuity claim of Lemma~\ref{lem-rd-M} applied to  \eqref{eq-marginal-PM} implies that a.s. $P_{\cS_\ell, \cS_r}$ and $P_{\cM_\tau}$ are mutually absolutely continuous.
		  We can thus take  Radon-Nikodym derivatives in~\eqref{eq-marginal-PM} to get 
 \eqb\label{eq-pre-fac}
	\frac{dP_{\cM_\tau}}{dP_{\cS_\ell, \cS_r}}(\cM_\tau^*) 
	= \iint \frac {dP_{\cS_\ell', \cS_r'}}{dP_{\cS_\ell, \cS_r'}}(\cM_\tau^*)
	\frac {dP_{\cS_\ell, \cS_r'}}{dP_{\cS_\ell, \cS_r}}(\cM_\tau^*) \cL_{\cM_\tau}^\ell(d\cS_\ell')\cL_{\cM_\tau}^r(d\cS_r').
\eqe
Lemma~\ref{lem-rd-M} implies that  $\frac {dP_{\cS_\ell', \cS_r'}}{dP_{\cS_\ell, \cS_r'}}(\cM_\tau^*)$ does not depend on $\cS_r'$, i.e., it equals $f_\ell(\cS_\ell, \cS_\ell', \cM_\tau^*)$ for some function $f_\ell$. Likewise $\frac {dP_{\cS_\ell, \cS_r'}}{dP_{\cS_\ell, \cS_r}}(\cM_\tau^*)=f_r(\cS_r, \cS_r', \cM_\tau^*)$ for some function $f_r$. Thus~\eqref{eq-pre-fac} gives 
	\[ 	\frac{dP_{\cM_\tau}}{dP_{\cS_\ell, \cS_r}}(\cM_\tau^*)  = \int f_\ell(\cS_\ell, \cS_\ell', \cM_\tau^*) \cL^\ell_{\cM_\tau}(d\cS_\ell')  \int f_r(\cS_r, \cS_r', \cM_\tau^*) \cL^r_{\cM_\tau}(d\cS_r'),\]
	so $\frac{dP_{\cS_\ell, \cS_r}}{dP_{\cM_\tau}} = F_\ell(\cS_\ell, \cM_\tau^*) F_r(\cS_r, \cM_\tau^*)$ for some functions $F_\ell, F_r$. 
	
	Recall that $\cL_{\cM_\tau^*}$ denotes the conditional law of $(\cS_\ell,\cS_r)$ given $\cM_\tau^*$. By considering two ways of sampling from the joint law of $(\cM_\tau^*, \cS_\ell, \cS_r)$ given $\cM_\tau$, we have 
	\begin{align*}
	\cL_{\cM_\tau^*}(d\cS_\ell, d\cS_r) P_{\cM_\tau}(d\cM_\tau^*) &= P_{\cS_\ell, \cS_r}(d\cM_\tau^*) \cL_{\cM_\tau}^\ell (d\cS_\ell) \cL_{\cM_\tau}^r (d\cS_r) \\
	&= F_\ell(\cS_\ell, \cM_\tau^*) \cL_{\cM_\tau}^\ell(d\cS_\ell) F_r(\cS_r, \cM_\tau^*) \cL_{\cM_\tau}^r(d\cS_r) P_{\cM_\tau}(d\cM_\tau^*). 
	\end{align*}
Thus, for a.e.\ $\cM_\tau^*$ the measure $\cL_{\cM_\tau^*}$ factorizes as $F_\ell(\cS_\ell, \cM_\tau^*) \cL_{\cM_\tau}^\ell(d\cS_\ell) F_r(\cS_r, \cM_\tau^*) \cL_{\cM_\tau}^r(d\cS_r)$, or in other words, a.s.\ conditioned on $\cM_\tau^*$ the decorated LQG surfaces $\cS_\ell$ and $\cS_r$ are conditionally independent. 
	
	Now, let $\wh \cS_\ell = (L\cup B, \Phi, \Psi_1, \eta_\ell, \Psi_2, \dots, \Psi_n)/{\sim}$ be $\cS_\ell$ decorated by the additional $\cc =0$ IG fields, and likewise define $\wh \cS_r$. By the above conditional independence and the domain Markov property of the GFF, $\wh \cS_\ell$ and $\wh \cS_r$ are conditionally independent given $\cM_\tau^*$. Next, let $\cM_\tau^{*, \ell}$ be $\cM_\tau^*$ decorated by the fields $\Psi_i|_{(L\cup B) \cap M_\tau}$ rather than $\Psi_i |_{B}$ for $i= 2, \dots, n$. It is measurable with respect to  $\cM_\tau^*$ and $\wh \cS_\ell$, so $\wh \cS_\ell, \wh \cS_r$ are conditionally independent given $\cM_\tau^{*, \ell}$. Finally, $\wh \cM_\tau$ is $\cM_\tau^{*, \ell}$ decorated by the fields $\Psi_i|_{M_\tau} $ rather than $\Psi_i |_{(L \cup B) \cap M_\tau}$ for $i= 2, \dots, n$, hence is measurable with respect to $\cM_\tau^{*, \ell}$ and $\wh \cS_r$. We conclude $\wh \cS_\ell, \wh \cS_r$ are conditionally independent given $\wh \cM_\tau$.
	
	Thus, given  $\wh \cM_\tau$,  the decorated LQG surface $(O_1,\Phi, \Psi_1, \dots, \Psi_n)/{\sim}$ is conditionally independent of $(O_2, \Phi, \Psi_1, \dots, \Psi_n)/{\sim}, \dots, (O_m, \Phi, \Psi_1, \dots, \Psi_n)/{\sim}$. Repeating the argument for the other $m-1$ indices,  we obtain the mutual independence of the $(O_i, \Phi, \Psi_1, \dots, \Psi_n)/{\sim}$ given $\wh \cM_\tau$. 
\end{proof}

\begin{proof}[{Proof of Proposition~\ref{prop-main-0s}}]
	 Let $U_1, \dots, U_m$ be a finite collection of connected components of $\D \backslash K$ chosen such that $(K_\eps, \Phi, \partial U_1, \dots, \partial U_m)/{\sim}$ is measurable with respect to $\cF_\eps^\mathrm{MOT}$, and let $L_1, \dots, L_m$ be disjoint simple loops in $K_\eps$ such that $L_i$ is homotopic\footnote{More precisely, if $f: U_i \to \D$ is a conformal map then $f(L_i)$ is homotopic to $\partial \D$ in $f(U_i \cap K_\eps)$.} to $\partial U_i$ in $U_i \cap K_\eps$, and $(K_\eps, \Phi,  L_1, \dots, L_m)/{\sim}$ is measurable with respect to $\cF_\eps^\mathrm{MOT}$. Note that even if $\ol U_i \cap \ol U_j \neq \emptyset$ for some $i\neq j$ (e.g.,\ when $K$ has ``pinch points''), by construction $K_\eps$ contains a neighborhood of $K$, so the loops $L_1, \dots, L_m$ can always be chosen
	
	Let $\wh \cM_\tau$ be defined as in Proposition~\ref{prop-local-indep}. Let $\wh \cM = (K_\eps, \Phi, \Psi_1, \dots, \Psi_n, \{\eta_i', a_i, b_i, c_i, d_i \: : \: i \in I\} )/{\sim}$, so that $\cF_\eps^\mathrm{MOT} = \sigma(\wh \cM, (K_\eps, \Phi, K)/{\sim})$. By Definition~\ref{def-eps-local}, conditioned on $\wh \cM_\tau$, the event $\{\wh \cM_\tau = \wh \cM\}$ is conditionally independent of $(\Phi, \Psi_1, \dots, \Psi_n, \eta', \Lambda)$, and further conditioned on $\{\wh \cM_\tau = \wh \cM\}$, the decorated LQG surface $(K_\eps, \Phi, K)/{\sim}$ (and hence $\cF_\eps^\mathrm{MOT}$) is conditionally independent of $(\Phi, \Psi_1, \dots, \Psi_n, \eta', \Lambda)$. Our loops are chosen in a way which is determined by $\cF_\eps^\mathrm{MOT}$, so by Proposition~\ref{prop-local-indep}, conditioned on $\wh \cM_\tau$, $\{ \wh \cM_\tau = \wh \cM\}$ and $\cF_\eps^\mathrm{MOT}$, the decorated LQG surfaces bounded by the $L_i$ are conditionally independent. Consequently, with the same conditioning, the  $(U_i, \Phi,  \Psi_1, \dots, \Psi_n)/{\sim}$  are conditionally independent.
	
	Notice that on $\{ \wh \cM_\tau = \wh \cM\}$, the decorated LQG surface $\wh \cM_\tau$ is measurable with respect to $\cF_\eps^\mathrm{MOT}$. Therefore  the $(U_i, \Phi, \Psi_1, \dots, \Psi_n)/{\sim}$ are conditionally independent  given $\cF_\eps^\mathrm{MOT}$ and  $\{ \wh \cM_\tau = \wh \cM\}$. But conditioned on $\cF_\eps^\mathrm{MOT}$, the event $\{ \wh \cM_\tau = \wh \cM\}$ is conditionally independent of $(\Phi, \Lambda, \Psi, \Psi_1, \dots, \Psi_n)$, since the event corresponds, in the inductive definition of $\wh \cM_\tau$, to always selecting the point $p_j \in \partial (\bigcup_{i \in I_j} D_i)$ to lie in $\wh \cM$ at every step $j$ and stopping when the whole $\wh \cM$ is explored. Thus, conditioned just on $\cF_\eps^\mathrm{MOT}$, the $m$ complementary LQG surfaces $(U_i, \Phi, \Psi_1, \dots, \Psi_n)/{\sim}$ are conditionally independent. Since $m$ is arbitrary, this gives the result.
	\end{proof}

\subsection{General $(\mathbf c_1, \dots, \mathbf c_n)$ by rotation: proof of Theorem~\ref{thm-main}}\label{subsec-main-proof}

We now use Proposition~\ref{prop-ig-rotate} to deduce Proposition~\ref{prop-main-Keps} from Proposition~\ref{prop-main-0s}, after which Theorem~\ref{thm-main} is straightforward.

\begin{proof}[Proof of Proposition~\ref{prop-main-Keps}] 
Let $\ul \cc = (\cc_1, \dots, \cc_n)$ and $\hat{\ul\cc} = (26 - \ccL, 0, \dots, 0)$. Recall from the discussion at the beginning of Section~\ref{sec-proofs} the vector of IG fields $(\wh\Psi_1,\dots,\wh\Psi_n)$ with central charges $(26-\ccL,0,\dots,0)$ and the $n\times n$ orthogonal matrix $A$ taking $(\Psi_1,\dots,\Psi_n)$ to $(\wh\Psi_1,\dots,\hat\Psi_n)$. 
By Proposition~\ref{prop-ig-rotate}, for any $D$ the IG-decorated LQG surfaces $(D, \Phi, \Psi_1, \dots, \Psi_n)/{\sim_{\ul\cc}}$ and $(D, \Phi, \wh \Psi_1, \dots, \wh \Psi_n)/{\sim_{\wh{\ul\cc}}}$ determine each other (by applying $A$ or $A^{-1}$). Thus, applying Proposition~\ref{prop-main-0s} to $(\Phi, \wh \Psi_1, \dots, \wh \Psi_n, \eta', K)$,  the decorated LQG surfaces $(U_i, \Phi, \Psi_1, \dots, \Psi_{n})/{\sim_{\ul \cc}}$ are conditionally independent given $\cF_\eps^\mathrm{MOT}$. 
\end{proof}

\begin{proof}[Proof of Theorem~\ref{thm-main}]
	Let $\eps> 0$, and define 
	\eqbn
	\cF_\eps = \sigma((\cB_\eps(K \cup \partial \D; {\mathfrak d_\Phi}), \Phi, \Psi_1, \dots, \Psi_n, K)/{\sim}) ,
	\eqen
	 so $\cF= \bigcap_{\eps>0} \cF_\eps$. It suffices to show that the decorated LQG surfaces $(U, \Phi, \Psi_1, \dots, \Psi_n)/{\sim}$ parametrized by the connected components $U$ of $\mathbb D\setminus K$ are conditionally independent given $\cF_\eps$. Indeed, conditional independence is preserved under decreasing limits of $\sigma$-algebras by the backward martingale convergence theorem.
	
	Sample a Poisson point process $\Lambda^* \subset \D \times \R_+$ with intensity measure $\mu_\Phi \times \mathrm{Leb}_{\R_+}$ which, given $\Phi$, is conditionally independent of everything else. For $\eps' > 0$ define $\Lambda_{\eps'} := \{ z \: : \: (z,t) \in \Lambda^* \text{ and } t < (\eps')^{-1}\}$, so that $\Lambda_{\eps'}$ is a Poisson point process with intensity measure $(\eps')^{-1} \mu_\Phi$. Then, segment $\eta'$ using $\Lambda_{\eps'}$ and define $D_i(\eps')$, $\eta_i'(\eps')$, $a_i(\eps')$, $b_i(\eps')$, $c_i(\eps')$, $d_i(\eps')$ for $i = 0, \dots, |\Lambda_{\eps'}|$ as in Definition~\ref{def-eps-local}. Let 
	\eqbn
	I(\eps') := \{ i \: : \: K \cap D_i(\eps') \neq \emptyset\} \quad \text{and} \quad K_{\eps'} = \bigcup_{i \in I(\eps')} D_i(\eps') .
	\eqen
	For $S \subset \ol\D$, write $\Lambda^*|_{S} := \{ (z, t)  \in \Lambda^* \: : \: z \in S\}$. Define
	\[		\cF_{\eps'}^\mathrm{MOT} := \sigma\left( (K_{\eps'},\Phi, \Psi_1, \dots, \Psi_{n}, \{ (\eta'_i(\eps'), a_i(\eps'), b_i(\eps'), c_i(\eps'), d_i(\eps'))\::\: i \in I(\eps')\}, K, \Lambda^*|_{K_{\eps'}})/{\sim} \right).\]
	This is the same $\sigma$-algebra as defined in Proposition~\ref{prop-main-Keps} except that we further keep track of $\Lambda^*$. The same argument still applies so Proposition~\ref{prop-main-Keps} holds in this setting.
	
	For $\tilde \eps' < \eps'$, the set $K_{\tilde \eps'}\subset K_{\eps'}$ is measurable with respect to $K_{\eps'}$ and $\Psi_1|_{K_{\eps'}}, \dots, \Psi_n|_{K_{\eps'}}$. Indeed, the segments of $\eta'$ contained in $K_{\eps'}$ are locally determined by $\wh\Psi_1|_{K_{\eps'}}$~\cite[Lemma 2.4]{gms-harmonic} and hence by the $\Psi_i|_{K_{\eps'}}$.  Since $\eta'$ is defined via flow lines, which are intrinsic to the IG surface, we conclude that the $\sigma$-algebra $\cF_{\eps'}^\mathrm{MOT}$ is decreasing.
	Let $\cF_{0+}^\mathrm{MOT} = \bigcap_{\eps'>0} \cF_{\eps'}^\mathrm{MOT}$. As $K_{\eps'} \downarrow K$ as $\eps' \to 0$, and ${\mathfrak d_\Phi}$ induces the Euclidean topology \cite[Proposition 1.7]{hm-metric-gluing}, we see that a.s.\ $K_{\eps'} \subset \cB_\eps(K \cup \partial \D; {\mathfrak d_\Phi})$ for all sufficiently small $\eps'$. Since $\eta'$ is locally determined by $\wh \Psi_1$ and hence by $\Psi_1, \dots, \Psi_n$,  for all sufficiently small $\eps'$ the set $K_{\eps'}$ is determined by $\cB_\eps(K \cup \partial \D; {\mathfrak d_\Phi})$ and the restrictions of $\Psi_1, \dots, \Psi_n, \Lambda^*$ to this domain.
	 As before, it follows that 
	 $\cF_{0+}^\mathrm{MOT} \subset \wt\cF_\eps$ where
	\[ \wt \cF_\eps := \sigma((\cB_\eps(K \cup \partial \D; {\mathfrak d_\Phi}), \Phi, \Psi_1, \dots, \Psi_n, K, \Lambda^*|_{\cB_\eps(K \cup \partial \D; {\mathfrak d_\Phi})})/{\sim}).\]

	For each connected component $U$ of $\D\backslash K$ let $U_\eps = \cB_\eps(\partial U; {\mathfrak d_\Phi}) \cap U$. Let $\cU_\eps$ be the collection of all decorated LQG surfaces $(U_\eps, \Phi, \Psi_1, \dots, \Psi_n, \Lambda^*|_{U_\eps})/{\sim}$ as $U$ varies. We know $\wt \cF_\eps$ is measurable with respect to $\cF_{\eps'}^\mathrm{MOT}$ and $\sigma(\cU_\eps)$ for all $\eps' > 0$ because $\cB_\eps(K \cup \partial \D; \mathfrak d_\Phi) \subset K_{\eps'} \cup \bigcup_U U_\eps$. Hence $\wt \cF_\eps$ is measurable with respect to $\cF_{0+}^\mathrm{MOT}$ and $\sigma(\cU_\eps)$. On the other hand, $\cF_{0+}^\mathrm{MOT} \subset \wt \cF_\eps$, and  $\sigma(\cU_\eps) \subset \wt \cF_\eps$. We conclude $\wt \cF_\eps = \sigma(\cF_{0+}^\mathrm{MOT}, \cU_\eps)$. 
	
	By Proposition~\ref{prop-main-Keps} and the backward martingale convergence theorem, the collection of decorated LQG surfaces $(U, \Phi, \Psi_1, \dots, \Psi_n, \Lambda^*|_U)/{\sim}$ indexed by the connected components $U$ of $\D \backslash K$ are conditionally independent given  $\cF_{0+}^\mathrm{MOT}$. Since each $(U_\eps, \Phi, \Psi_1, \dots, \Psi_n, \Lambda^*|_{U_\eps})/{\sim}$ is measurable with respect to $(U, \Phi, \Psi_1, \dots, \Psi_n, \Lambda^*|_{U})/{\sim}$ by the locality and conformal covariance of the LQG metric (see Section~\ref{sec-metric-prelim}), the conditional independence still holds if we further condition on $\cU_\eps$. We conclude that the decorated LQG surfaces $(U, \Phi, \Psi_1, \dots, \Psi_n, \Lambda^*|_U)/{\sim}$ are conditionally independent given $\wt \cF_\eps = \sigma(\cF_{0+}^\mathrm{MOT}, \cU_\eps)$. Since the restrictions of a Poisson point process to disjoint domains are independent Poisson point processes in these domains, the decorated LQG surfaces $(U, \Phi, \Psi_1, \dots, \Psi_n)/{\sim}$ are conditionally independent from $\wt \cF_\eps$ given $\cF_\eps$, hence are conditionally independent given $\cF_\eps$. This completes the proof. 
\end{proof}

	In Lemma~\ref{lem-equiv-F} below we give a $\sigma$-algebra equivalent to $\cF$ in Theorem~\ref{thm-main} which does not use the LQG metric. We will not need to use  Lemma~\ref{lem-equiv-F} but state it to give context. Recall that for a measure space $(\Sigma, \cG, \mu)$, the \emph{completion} of $\cG$ is $\sigma(\cG, \cN_\mu)$ where $\cN_\mu$ is the collection of all subsets of $\mu$-null sets in $\cG$. 
	\begin{lemma}\label{lem-equiv-F}
	In the setting of Theorem~\ref{thm-main}, sample a Poisson point process $\Lambda^* \subset \D \times \R_+$ with intensity measure $\mu_\Phi \times \mathrm{Leb}_{\R_+}$ which, given $\Phi$, is conditionally independent of everything else. For $\eps > 0$ let $\Lambda_{\eps} = \{ z \in \D : (z, t) \in \Lambda^* \text{ and } t \leq \eps^{-1}\}$. The curve $\eta'$ is split by the points of $\Lambda_\eps \backslash K$ into a collection of curve segments; let $\{D_i'(\eps)\}$ be the traces of these curve segments. Define
	\[ \begin{gathered}
		I'(\eps) := \{ i \: : \: K  \cap D_i'(\eps) \neq \emptyset\}, \qquad K'_\eps = \bigcup_{i \in I'(\eps)} D_i'(\eps), \\
	\cF_\eps' = \sigma((K'_\eps, \Phi, \Psi_1, \dots, \Psi_n, K, \{(z, t) \in \Lambda^* \: : \: z \in K'_\eps \backslash K \})/{\sim}), \qquad \cF' = \bigcap_{\eps > 0} \cF_\eps'.
	\end{gathered}\]
	Then $\cF$ and $\cF'$ agree up to null sets, i.e., they have the same completion.
\end{lemma}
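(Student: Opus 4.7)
The plan is to prove Lemma~\ref{lem-equiv-F} by establishing the two inclusions $\cF' \subset \overline{\cF}$ and $\cF \subset \overline{\cF'}$ (where $\overline{\cdot}$ denotes completion with respect to the ambient probability measure). The common technical input for both directions is the diameter estimate: almost surely $M_{\eps'} := \max_i \mathrm{diam}_{\mathfrak d_\Phi}(D'_i(\eps')) \to 0$ as $\eps' \to 0$. This follows from two facts: since $\mathfrak d_\Phi$ induces the Euclidean topology on $\ol{\D}$ (Section~\ref{sec-metric-prelim}), the continuous space-filling curve $\eta'$ is $\mathfrak d_\Phi$-uniformly continuous on its compact parameter interval $[0, \mu_\Phi(\D)]$; and the maximum gap between consecutive points of $\Lambda_{\eps'}$ along the $\eta'$-parametrization is bounded above by the maximum gap of a Poisson process of intensity $(\eps')^{-1}$ on $[0, \mu_\Phi(\D)]$, which tends to zero a.s.\ as $\eps' \to 0$.

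For $\cF' \subset \overline{\cF}$: Fix $\eps > 0$. By the diameter estimate, a.s.\ for all sufficiently small $\eps'$ we have $K'_{\eps'} \subset \cB_\eps(K \cup \bdy \D; \mathfrak d_\Phi)$. On this event, the data generating $\cF'_{\eps'}$ is measurable with respect to the augmented $\sigma$-algebra $\wh{\cF}_\eps := \cF_\eps \vee \sigma(\Lambda^*|_{\cB_\eps(K \cup \bdy \D; \mathfrak d_\Phi)})$: indeed $K'_{\eps'}$ can be reconstructed from $K$, the restriction of $\eta'$ to $\cB_\eps(K \cup \bdy \D; \mathfrak d_\Phi)$ (which is locally determined by $\Psi_1, \dots, \Psi_n$ via the imaginary geometry coupling), and the Poisson points in this ball. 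Hence $\cF' \subset \bigcap_\eps \overline{\wh{\cF}_\eps}$. Since $\mu_\Phi(\bdy \D) = 0$ always and $\mu_\Phi(K) = 0$ a.s.\ (the examples of $K$ treated in this paper have Hausdorff dimension strictly less than~$2$), the set $\Lambda^*|_{K \cup \bdy \D}$ is a.s.\ empty, so the residual PPP information in the intersection is trivial and $\bigcap_\eps \wh{\cF}_\eps = \cF$ up to null sets.

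For $\cF \subset \overline{\cF'}$: The idea is symmetric but requires more care. By construction $K \subset K'_{\eps'}$ for every $\eps'$, and $K'_{\eps'}$ is monotonically decreasing in $\eps'$ (smaller $\eps'$ gives more Poisson points and thus a finer segmentation, so each segment intersecting $K$ at the finer scale is a sub-segment of some segment intersecting $K$ at a coarser scale). Combined with the diameter estimate, $\{K'_{\eps'}\}_{\eps' > 0}$ forms a nested family of random open sets collapsing to $K \cup \bdy \D$, exactly as $\{\cB_\eps(K \cup \bdy \D; \mathfrak d_\Phi)\}_\eps$ does. One then identifies both $\cF$ and $\cF'$ with the common sigma-algebra obtained as the intersection $\bigcap_U \sigma((U, \Phi, \Psi_1, \dots, \Psi_n, K)/{\sim})$ over a countable basis of deterministic open neighborhoods $U$ of $K \cup \bdy \D$; this uses the locality of the LQG metric (Section~\ref{sec-metric-prelim}) and the Markov property of the GFF and IG fields at their local sets to bridge between such deterministic $U$ and the two random families of neighborhoods.

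The main obstacle is this reverse inclusion. The forward direction is essentially a combinatorial consequence of the PPP diameter estimate plus the LQG metric inducing the Euclidean topology, but the reverse requires extracting LQG metric ball information from the non-metric sets $K'_{\eps'}$, which is subtle because $K'_{\eps'}$ does not in general contain any fixed $\mathfrak d_\Phi$-neighborhood of $K$ (segments close to but not touching $K$ are not included). Handling this cleanly requires the abstract local-set reformulation sketched above, comparing both families of random neighborhoods to a common deterministic cofinal family.
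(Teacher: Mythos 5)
Your proposed proof takes the wrong angle on the hard direction and misses the paper's key structural shortcut. The paper introduces the auxiliary $\sigma$-algebra
\[
\cF'' := \bigcap_{\eps>0}\sigma\bigl((\cB_\eps(K\cup\partial\D;\mathfrak d_\Phi),\Phi,\Psi_1,\dots,\Psi_n,K,\{(z,t)\in\Lambda^*: z\in\cB_\eps(K\cup\partial\D;\mathfrak d_\Phi)\setminus K\})/{\sim}\bigr),
\]
which differs from $\cF$ only by also retaining the Poisson points. Two observations then do all the work: (i) $\cF\subset\cF''$ holds tautologically, because each generating $\sigma$-algebra of $\cF''$ strictly refines the corresponding one of $\cF$ (you just forget the Poisson decoration); and (ii) $\cF'=\cF''$ follows directly from the measurability arguments already carried out in the proof of Theorem~\ref{thm-main} (the decreasing property of the mating-of-trees $\sigma$-algebras, the fact that $K'_{\eps'}\subset\cB_\eps(K\cup\partial\D;\mathfrak d_\Phi)$ eventually, and that $\eta'$ is locally determined by the IG fields). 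With (i) and (ii), the inclusion $\cF\subset\cF'$ holds \emph{exactly}, no completion needed. The only place completion enters is the remaining inclusion $\cF''\subset\overline{\cF}$, which the paper disposes of in one line via Kolmogorov's zero-one law: conditioned on $\cF$, every event of $\cF''$ is a tail event of the (conditionally independent) Poisson process $\Lambda^*$ and so has conditional probability $0$ or $1$.

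Your forward direction $\cF'\subset\overline{\cF}$ is essentially the paper's $\cF''\subset\overline{\cF}$ step and the diameter estimate is the right input, but your justification that ``the residual PPP information in the intersection is trivial'' is too fast. The claim that $\Lambda^*|_{K\cup\partial\D}$ is a.s.\ empty does not by itself trivialize $\bigcap_\eps\sigma(\Lambda^*|_{\cB_\eps})$; you need to observe this is a tail $\sigma$-algebra of a Poisson process independent of everything given $\Phi$ and invoke a zero-one law. Also, the parenthetical ``the examples of $K$ treated in this paper have Hausdorff dimension strictly less than~$2$'' is false for LQG metric balls (Theorem~\ref{thm-ball}); this step only works because the lemma's $\sigma$-algebras restrict $\Lambda^*$ to $K'_\eps\setminus K$ (resp.\ $\cB_\eps\setminus K$), which has $\mu_\Phi$-measure tending to~$0$ regardless of $\mu_\Phi(K)$.

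Your reverse direction is the real gap. You set out to prove $\cF\subset\overline{\cF'}$ by comparing both $\cF$ and $\cF'$ to an intersection over a deterministic cofinal family of neighborhoods of $K\cup\partial\D$, then appeal to GFF/IG Markov properties to ``bridge.'' You concede yourself that this is sketchy, and indeed it does not assemble into a proof: $K$ is random, so a deterministic countable basis of neighborhoods of $K\cup\partial\D$ is ill-defined without extra conditioning, and nowhere do you actually show that either $\cF$ or $\cF'$ equals that intersection. The cleaner route is to observe that this direction reduces to the one-sided inclusion $\cF\subset\cF''$, which requires no argument at all, together with $\cF''\subset\cF'$, which is already part of the Theorem~\ref{thm-main} machinery. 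You were over-engineering the easy half of an equivalence the paper has already almost established.
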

\begin{proof}
	By the arguments in the proof of  Theorem~\ref{thm-main} immediately above, $\cF'$ is equal to 
	\[\cF'' := \bigcap_{\eps>0} \sigma((\cB_\eps(K \cup \partial \D; {\mathfrak d_\Phi}), \Phi, \Psi_1, \dots, \Psi_n, K, \{(z, t) \in \Lambda^* \: : \: z \in \cB_\eps(K \cup \partial \D; {\mathfrak d_\Phi}) \backslash K \} )/{\sim}) . \]
	By definition $\cF \subset \cF''$, and by Kolmogorov's zero-one law, conditioned on $\cF$, every event in $\cF''$ has conditional probability either $0$ or $1$. Therefore $\cF$ and $\cF''$ have the same completion, as needed. 
\end{proof} 

\subsection{Proofs of remaining theorems}\label{subsec-other}

\begin{proof}[Proof of Theorem~\ref{thm-sle-chordal}]	
	We explain the $\kappa < 4$ case by constructing $\eta$ as a flow line of an $\mathrm{IG}_{\cc_\mathrm{SLE}}$ field; the $\kappa > 4$ and $\kappa = 4$ cases are the same except $\eta$ is a counterflow line or level line of the IG field, respectively. 
	
	Let $(\D, \Phi, -\bbi)$ be an embedding of a sample from $\mathrm{UQD}_{\ccL}$ and independently let $(\D, \Psi_1, \dots, \Psi_n, \Psi, -\bbi)$ be an embedding of a sample from $\mathrm{IG}_{(\cc_1, \dots, \cc_n, \cc_\mathrm{SLE})}$. For concreteness we use the embedding given right below Definition~\ref{def-ig-disk}.
	
	Suppose $\rho_L + \rho_R = \kappa - 6$. 	Note that if $f: \bbH \to \D$ is a conformal map with $f(0) = -\bbi$, then $\Psi \circ f - \chi \arg f'$ has the law of a zero boundary GFF plus the harmonic function $\mathfrak h$ such that $\mathfrak h(x) = 2\pi \chi$ for $x\in(-\infty, 0)$ and $\mathfrak h(x) = 0$ for $x \in (0, \infty)$, where $\chi = \chi(\cc_\mathrm{SLE})$ as defined in~\eqref{eq-c-chi}.
	Thus, the flow line of angle $\theta = \frac{\pi}{\chi\sqrt\kappa}(1+\rho_R)$ is an  $\SLE_\kappa(\rho_L; \rho_R)$ curve from $-\bbi$ to $\bbi$; see Section~\ref{sec-sle-prelim} for details. Let $K$ be this angle $\theta$ flow line of $\Psi$. Since $K$ is a local set of $\Psi$, in the setting of Definition~\ref{def-eps-local}, conditioned on $\bigcup_{i\in I_\tau} D_i$ the event $\{K \subset \bigcup_{i \in I_\tau} D_i\}$ is measurable with respect to $\Psi|_{\bigcup_{i \in I_\tau}D_i}$,  and on this event $K$ is measurable with respect to  $\Psi|_{\bigcup_{i \in I_\tau}D_i}$. Consequently, the event $\{ K \subset \bigcup_{i \in I_\tau} D_i\}$ is measurable with respect to $(\bigcup_{i \in I_\tau} D_i, \Psi)/{\sim}$, and on this event $(\bigcup_{i \in I_\tau} D_i, \Psi, K)/{\sim}$ is measurable with respect to $(\bigcup_{i \in I_\tau} D_i, \Psi)/{\sim}$. This follows from the fact that flow lines are intrinsic to IG surfaces, plus absolute continuity considerations; see the discussion after the statement of \cite[Proposition 3.4]{ig1} for details. 
	Therefore, $K$ is an $\eps$-LQG-IG local set for all $\eps > 0$, and hence an LQG-IG local set.
	
By Theorem~\ref{thm-main}, given $\cF' = \sigma(( \cB_{0+}(K \cup \partial D; \mathfrak d_\Phi) , \Phi, \Psi_1, \dots, \Psi_n, \Psi)/{\sim})$ defined as in~\eqref{eqn-restriction-sigma-algebra} (i.e., $\cF$ where we additionally keep track of $\Psi$ in an infinitesimal neighborhood of $K\cup \partial \D$), the complementary IG-decorated LQG surfaces are conditionally independent.  In the imaginary geometry coupling of $K$ and $\Psi$, the conditional law of $\Psi$ given $K$ is that of the the zero boundary GFF plus a harmonic function with boundary values measurable with respect to $K$ (see Section~\ref{sec-prelim}). The information carried by $\Psi$ in an infinitesimal neighborhood of $K \cup \partial \D$ is thus just the boundary values of the harmonic function, and so is measurable with respect to  $K$. We conclude $\cF' = \cF$, finishing the proof. 
	
	Now instead assume $\rho_L + \rho_R = -2$.  Let $K$ be an angle $\theta$ flow line of $\Psi$ in $\D$ from $\bbi$ to $-\bbi$; varying $\theta$ gives $\SLE_\kappa(\rho_L; \rho_R)$ for any $\rho_L, \rho_R > -2$ with sum $-2$. Proceeding as before completes the argument. 
\end{proof}

\begin{proof}[{Proof of Theorem~\ref{thm-cle}}]
	Suppose $\kappa \in (4,8)$. 
	Let $(\D, \Phi, -\bbi)$ be an embedding of a sample from $\mathrm{UQD}_{\ccL}$ and independently let $(\D, \Psi_1, \dots, \Psi_n, \Psi, -\bbi)$ be an embedding of a sample from $\mathrm{IG}_{\cc_1, \dots, \cc_n, \cc_\mathrm{SLE}}$.

As in the proof of Theorem~\ref{thm-sle-chordal}, the counterflow line of $\Psi$ started at $-\bbi$ targeted at $\bbi$ with an appropriately chosen angle $\theta$ is an $\SLE_\kappa(\kappa-6)$ curve.
 This counterflow line is a local set of $\Psi$. In fact, by choosing a countable dense set of target points, one can realize a \emph{branching $\SLE_\kappa (\kappa - 6)$ process} as a local set of $\Psi$, and thus obtain the CLE$_\kappa$ gasket  as a local set of $\Psi$ \cite[Section 1.2.3]{ig4}. Now the same argument as in the proof of Theorem~\ref{thm-sle-chordal} directly above applies.

	If $\kappa = 4$,  the same argument holds since $\mathrm{CLE}_4$ can be realized as the level loops of a zero boundary GFF, and in this coupling the CLE$_4$ gasket is a local set of this GFF \cite{shef-miller-cle4} (see \cite[Proposition 1]{asw-local-sets} for a published construction).
\end{proof}

\begin{remark}\label{rem-cle}
	When $\kappa \in (\frac83, 4)$, if we assume the $\mathrm{CLE}_\kappa$ gasket can be constructed via flow and counterflow lines of a sample from $\mathrm{IG}_{\cc_\mathrm{SLE}}$, then Theorem~\ref{thm-cle} holds for $\kappa \in (\frac83, 4)$ by the same proof. We expect that the desired imaginary geometry construction of CLE follows from the arguments of \cite{cle-percolations}. Indeed \cite[Theorem 7.8]{cle-percolations} iteratively constructs $\mathrm{CLE}_\kappa$ via the so-called \emph{boundary CLE}, which arises from branching flow and counterflow lines for $\Psi$ \cite[Table 1]{cle-percolations}. 
\end{remark}

We now turn to Theorem~\ref{thm-peeling}. It suffices to show that if $(K_t)_{t \leq T}$ is an LQG-IG peeling process (Definition~\ref{def-quantum-peeling}), then $K_T$ is an LQG-IG local set. 

\begin{lemma}\label{lem-peeling-implies-local}
	If $(K_t)_{t \leq T}$ is an LQG-IG peeling process for $(\Phi, \Psi_1, \dots, \Psi_n)$ and  $\eta'$ is coupled with $(\Phi, \Psi_1, \dots, \Psi_n)$ as in Definition~\ref{def-quantum-local}, then $K_T$ is an LQG-IG local set for $(\Phi, \Psi_1, \dots, \Psi_n, \eta')$.
\end{lemma}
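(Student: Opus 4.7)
The plan is to verify that $K_T$ satisfies Definition~\ref{def-eps-local} for every $\eps > 0$. Connectedness of $K_T \cup \partial \D$ is immediate since each $K_t$ is compact and connected with $K_t \supset K_0 = \partial \D$, so $K_T = K_T \cup \partial \D$ is connected. Fix $\eps > 0$ and perform the exploration of Definition~\ref{def-eps-local}: sample the Poisson point process $\Lambda \subset \D$ of intensity $\eps^{-1}\mu_\Phi$ and the exploration stopping variable conditionally independently of $(\Psi_1, \dots, \Psi_n, (K_t)_{t \leq T}, \eta')$ given $\Phi$. Write $\cM_j := \bigcup_{i \in I_j} D_i$ and set
\begin{equation*}
\cG_j := \sigma\bigl((\cM_j, \Phi, \Psi_1, \dots, \Psi_n, \{(\eta_i', a_i, b_i, c_i, d_i) : i \in I_j\}, \Lambda \cap \cM_j)/{\sim}\bigr) .
\end{equation*}

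The main step is to upgrade Definition~\ref{def-quantum-peeling} to an ``intrinsic'' stopping version, which I will then iterate along the exploration steps. The intrinsic statement I would prove is: if $V \subset \ol{\D}$ is a random relatively closed set containing $\partial \D$ such that $\cS_V := (V, \Phi, \Psi_1, \dots, \Psi_n, \Lambda \cap V)/{\sim}$ is determined by $\sigma(\cS_V)$, and $\sigma_V := T \wedge \inf\{t : K_t \cap (\partial V \cap \D) \neq \emptyset\}$, then conditional on $\cS_V$ the stopped process $(K_t)_{t \leq \sigma_V}$ is conditionally independent of the fields and Poisson points on $\D \setminus V$. To derive this, I would first augment $\cF_t^\delta$ with $\Lambda \cap \cB_\delta(K_t \cup \partial \D; \mathfrak d_\Phi)$; the $\delta$-peeling property of Definition~\ref{def-quantum-peeling} survives this augmentation because restrictions of a Poisson process to disjoint regions are independent given $\mu_\Phi$. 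Then I would iterate the augmented $\delta$-peeling property along a countable dense sequence of points in $V$ and pass to the limit $\delta \to 0$, using that $\mathfrak d_\Phi$ induces the Euclidean topology on $\ol \D$ (Section~\ref{sec-metric-prelim}) and that $(K_t)$ is Hausdorff-continuous.

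With the intrinsic peeling property in hand, the conclusion follows by induction on the exploration step $j$. Each newly added cell is chosen measurably from $\cG_j$, so $\cG_j \subset \cG_{j+1}$. Applying the intrinsic statement with $V = \ol{\cM_j}$ at the stopping time $\sigma_j := T \wedge \inf\{t : K_t \not\subset \ol{\cM_j}\}$ shows that, conditional on $\cG_j$, the event $\{\sigma_j = T\}$ and, on this event, the full process $(K_t)_{t \leq T}$ are conditionally independent of $(\Phi, \Psi_1, \dots, \Psi_n, \eta', \Lambda)$ restricted to $\D \setminus \cM_j$. At the exploration terminal step $\tau$, the event $\{\sigma_\tau = T\}$ coincides with $\{K_T \subset \cM_\tau\}$, yielding both conditional independence claims of Definition~\ref{def-eps-local} after observing that the segmentation data for $\eta'$ on $\cM_\tau$ is part of $\cG_\tau$. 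The main obstacle will be executing the $\delta \to 0$ limit in the intrinsic peeling step: one must show that a tower of $\delta$-stopping times based on balls around a countable dense subset of $V$ approximates $\sigma_V$, and that the corresponding conditional laws converge in the topology on decorated LQG surfaces. The topological irregularities of the cells $D_i$ when $\kappa' \in (4,8)$ make this somewhat delicate, but the length-metric property of $\mathfrak d_\Phi$ and the Hausdorff continuity of $(K_t)$ should carry the argument through.
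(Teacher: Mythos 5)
Your approach is genuinely different from the paper's, and I think there is a real gap in the central step.

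The paper fixes both $\eps$ and $\delta$, introduces the stopping times $t_{j+1} = T \wedge \inf\{s \geq t_j : K_s \cap \partial\cB_\delta(K_{t_j};\mathfrak d_\Phi) \cap \D \neq \emptyset\}$, and by induction on $j$ (``Claim$_j$'') proves that $\ol{\cB_\delta(K_T;\mathfrak d_\Phi)}$ is an $\eps$-LQG-IG local set. The $\delta\to 0$ limit is taken only at the very end, and only at the level of indicator functions like $\mathbf 1_{I^\delta = I_\tau}$, using that $\ol{\cB_\delta(K_T;\mathfrak d_\Phi)}\downarrow K_T$. This carefully avoids ever having to pass a conditional independence statement through a $\delta\to 0$ limit of conditional laws.

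Your plan inverts that order: you want to first establish an ``intrinsic'' peeling property for an arbitrary random region $V$ by iterating the $\delta$-peeling step along a dense sequence and then sending $\delta\to 0$, and only afterward induct over the exploration cells with $V=\ol{\cM_j}$. The gap is precisely the step you flag as the ``main obstacle'': Definition~\ref{def-quantum-peeling} only controls the process at $\mathfrak d_\Phi$-ball exit times, and it is not clear that a tower of such $\delta$-stopping times based at points in $V$ actually approximates $\sigma_V := T\wedge\inf\{t: K_t \cap (\partial V\cap\D)\neq\emptyset\}$, nor that the associated conditional laws of decorated LQG surfaces converge in a way that preserves conditional independence as $\delta\to 0$. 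The regions $\cM_j$ are bounded by segments of the space-filling $\SLE$ (highly irregular, especially for $\kappa'\in(4,8)$), and ``first exit from $\cM_j$'' is not a limit of $\delta$-ball exit events in any obvious sense. Proving the intrinsic lemma essentially forces you to re-derive the paper's Claim$_j$ induction anyway, but now with the added burden of controlling a $\delta\to 0$ limit of conditional distributions rather than merely of indicator variables. If you want to pursue your route, I would suggest restructuring so that you never pass a conditional independence statement through a $\delta\to 0$ limit: prove the statement for $\ol{\cB_\delta(K_T;\mathfrak d_\Phi)}$ at fixed $\delta$ first, and only take $\delta\to 0$ at the final comparison with $M_\tau$, exactly as the paper does.
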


\begin{proof}
	As in Definition~\ref{def-eps-local}, let $\eps > 0$, and let $\Lambda \subset \D$ be a Poisson point process with intensity measure $\eps^{-1}\mu_\Phi$ conditionally independent of everything else given $\Phi$, and define $D_i, \eta_i', a_i, b_i, c_i, d_i$ as explained above Definition~\ref{def-eps-local}. Recall that $I_\tau$ is the set of indices $i$ of the regions $D_i$ discovered by the random inductive peeling process of Definition~\ref{def-eps-local}. To lighten notation define 
	\eqbn
	M_\tau = \bigcup_{i \in I_\tau} D_i .
	\eqen

	 We first check that $\ol{\cB_\delta(K_T; \mathfrak d_\Phi)}$ is an $\eps$-LQG-IG local set. Let $t_0 = 0$ and inductively define the stopping times $t_{j+1} = T \wedge \inf\{ s \geq t_j \: : \: K_s \cap \partial \cB_\delta(K_{t_j}; \mathfrak d_\Phi) \cap \D \neq \emptyset\}$ for the filtration $\cF_t^\delta$ defined in  Definition~\ref{def-quantum-peeling}. 
	 For each $j \geq 0$ let Claim$_j$ be the following claim: ``Conditioned on the decorated LQG surface
	 	\eqb\label{eq-peeled-surface}
	 (M_\tau, \Phi, \Lambda, \Psi_1, \dots, \Psi_n, \{ (\eta'_i, a_i, b_i, c_i, d_i)\::\: i \in I_\tau\}, \Lambda \cap M_\tau)/{\sim},
	 \eqe the event $\{ \ol{\cB_\delta ( K_{t_j}; \mathfrak d_\Phi)} \subset M_\tau\}$ is conditionally independent of $(\Phi, \Psi_1, \dots, \Psi_n, \Lambda)$. Moreover, further conditioning on $\{ \ol{\cB_\delta ( K_{t_j}; \mathfrak d_\Phi)} \subset M_\tau\}$, the decorated LQG surface $(M_\tau, \Phi, (K_s)_{s \leq {t_j}})/{\sim}$ is conditionally independent of $(\Phi, \Psi_1, \dots, \Psi_n, \Lambda)$.'' We will inductively prove that Claim$_j$ holds for all $j$.

	 By definition $K_{t_0} = \partial \D$. By the locality of the LQG metric and the fact that it is invariant under LQG coordinate change (Section~\ref{sec-metric-prelim}) the event $\{ \ol{\cB_\delta(K_{t_0}; \mathfrak d_\Phi)} \subset M_\tau\}$ is measurable with respect to~\eqref{eq-peeled-surface}. Conditioning on this event and on~\eqref{eq-peeled-surface}, by the definition of LQG-IG peeling process, the decorated LQG surface $(M_\tau, \Phi, (K_s)_{ s \leq t_1})/{\sim}$ is conditionally independent of $(\Phi, \Psi_1, \dots, \Psi_n, \Lambda)$. Thus Claim$_0$ holds. Now, assuming Claim$_j$, we will show Claim$_{j+1}$. Combining Claim$_j$ with the fact that $(K_t)_{t \leq T}$ is an LQG-IG peeling process yields the following: 		 
	  Conditioned on~\eqref{eq-peeled-surface},  $\{ \ol {\cB_\delta(K_{t_j}; \mathfrak d_\Phi)} \subset M_\tau\}$ and  $(M_\tau, \Phi, (K_s)_{s \leq t_j})/{\sim}$, the decorated LQG surface $(M_\tau, \Phi, (K_s)_{s \leq t_{j+1}})/{\sim}$ is conditionally independent of $(\Phi, \Psi_1, \dots, \Psi_n, \Lambda)$. By the locality and conformal covariance of the LQG metric, with the above conditioning,  the event $\{ \ol{\cB_\delta(K_{t_{j+1}}; \mathfrak d_\Phi)} \subset M_\tau\}$ is conditionally independent of $(\Phi, \Psi_1, \dots, \Psi_n, \Lambda)$, and further conditioning on this event, $(M_\tau, \Phi, (K_s)_{s \leq t_{j+1}})/{\sim}$ is conditionally independent of $(\Phi, \Psi_1, \dots, \Psi_n, \Lambda)$. We have thus obtained a version of Claim$_{j+1}$ where we not only condition on~\eqref{eq-peeled-surface}, but additionally on $\{ \ol {\cB_\delta(K_{t_j}; \mathfrak d_\Phi)} \subset M_\tau\}$ and  $(M_\tau, \Phi, (K_s)_{s \leq t_j})/{\sim}$. Since $\{\ol{\cB_\delta(K_{t_j}; \mathfrak d_\Phi)} \subset M_\tau\} \subset \{\ol{\cB_\delta(K_{t_{j+1}}; \mathfrak d_\Phi)} \subset M_\tau\}$ and $(M_\tau, \Phi, (K_s)_{s \leq t_{j+1}})/{\sim}$ determines $(M_\tau, \Phi, (K_s)_{s \leq t_{j}})/{\sim}$, by Claim$_j$, the various independence statements still hold if we do not condition on the additional terms above. Therefore Claim$_{j+1}$ holds. 
	  
	  We claim that $t_j = T$ for all sufficiently large $j$. Suppose for the sake of contradiction this is false. Arbitrarily pick a point $p_j \in K_{t_j} \backslash \cB_\delta(K_{t_{j-1}} ; \mathfrak d_\Phi)$ for each $j > 0$. By definition $\mathfrak d_\Phi(p_i, p_j) > \delta$ for every $i < j$, contradicting the fact that $(\ol\D, \mathfrak d_\Phi)$ has the Euclidean topology and thus is compact. Thus $t_j = T$ for all large $j$. By Claim$_j$, we conclude $\ol {\cB_\delta(K_T; \mathfrak d_\Phi)}$ is an $\eps$-LQG-IG set.   
		
	Now, we need to justify that $K_T$ itself is an $\eps$-LQG-IG local set. 
	 Let $I^\delta = \{ i \:: \: D_i \cap \ol{\cB_\delta(K_T; \mathfrak d_\Phi)} \neq \emptyset\}$ and let $I = \{ i \::\: D_i \cap K_T \neq \emptyset\}$.  
	Given~\eqref{eq-peeled-surface}, the event $\{I^\delta = I_\tau\}$ is conditionally independent of $(\Phi, \Lambda, \Psi_1, \dots, \Psi_n)$. Indeed, $\ol {\cB_\delta(K_T; \mathfrak d_\Phi)}$ is $\eps$-LQG-IG local, and $\{ I^\delta = I_\tau \}$ is measurable with respect to~\eqref{eq-peeled-surface} and $(\ol {\cB_\delta(K_T; \mathfrak d_\Phi)}, \Phi, (K_t)_{s \leq T})/{\sim}$. We conclude that $1_{I = I_\tau}= \lim_{\delta \to 0} 1_{I^\delta = I_\tau}$ is conditionally independent of  $(\Phi, \Lambda, \Psi_1, \dots, \Psi_n)$ given~\eqref{eq-peeled-surface}, fulfilling the first condition of Definition~\ref{def-eps-local}.
	Next, since $\ol {\cB_\delta(K_T; \mathfrak d_\Phi)}$ is $\eps$-LQG-IG local, given~\eqref{eq-peeled-surface} and $\{I^\delta = I_\tau\}$ the decorated LQG surface $(M_\tau, \Phi, \ol{\cB_\delta(K_T; \mathfrak d_\Phi)})/{\sim}$ is conditionally independent of $(\Phi, \Lambda, \Psi, \Psi_1, \dots, \Psi_n)$. Since $\ol{\cB_\delta(K_T; \mathfrak d_\Phi)} \downarrow K_T$ and  $1_{I = I_\tau}= \lim_{\delta \to 0} 1_{I^\delta = I_\tau}$, we obtain the second condition of  Definition~\ref{def-eps-local}.
	Thus $K_T$ is an $\eps$-LQG-IG local set for all $\eps > 0$, and hence is an LQG-IG local set.
\end{proof}

\begin{proof}[{Proof of Theorem~\ref{thm-peeling}}]
	This follows from Theorem~\ref{thm-main} and Lemma~\ref{lem-peeling-implies-local}. 
\end{proof}

We  next address the unit boundary length LQG disk with a marked interior point, whose law is denoted $\mathrm{UQD}_{\ccL}^\bullet$ (Definition~\ref{def-lqg-disk}). The main result is Theorem~\ref{thm-main-pt} for the LQG-IG local sets  defined below, from which all the other theorems follow.

\begin{definition}\label{def-eps-local-pt}
	Consider the setting of Definition~\ref{def-eps-local} except that $(\D, \Phi, 0, -\bbi)$ is an embedding of a sample from $\mathrm{UQD}_{\ccL}^\bullet$ instead. 
	An \textbf{$\eps$-LQG-IG local set} $K \subset \ol \D$ is a random compact set coupled with $(\Phi,  \Psi_1, \dots, \Psi_{n}, \eta')$ such that $K \cup \partial \D$ is a.s.\ connected, almost surely $0 \in K$, and 
	the following holds. 
	
	Independently of $(\Psi_1, \dots,\Psi_n, K, \eta')$, sample a Poisson point process $\Lambda \subset \D$ with intensity measure $\eps^{-1} \mu_\Phi$ and define $D_i, \eta_i', a_i, b_i, c_i, d_i$ as in Definition~\ref{def-eps-local}. 
	Independently of everything else sample a nonnegative integer $T \geq 0$ with $\P[T = t] = 2^{-t-1}$, let $I_0$ be the set of indices $i$ such that $D_i(\eps) \cap \partial \D \neq \emptyset$, and for $j \geq 0$  inductively define $I_{j+1}$ from $I_j$ by independently sampling a point $p_{j+1} \in (\partial \bigcup_{i \in I_j} D_i) \backslash \partial \D$ from the LQG length measure, then setting $I_{j+1} = I_j \cup \{ i \: : \: p_{j+1} \in \partial D_i\}$. We stop the process either at time $T$ or at the time $t$ when $I_t = \{ 0,\dots, N+1\}$, whichever is earlier; call this time $\tau$. 
	
	Conditioned on $\{0 \in \bigcup_{i \in I_\tau} D_i \}$ and on  $( \bigcup_{i \in I_\tau} D_i, \Phi, \Psi_1, \dots, \Psi_n, \{ \eta_i', a_i, b_i, c_i, d_i) : i \in I_{\tau}\}, 0, \Lambda \cap \bigcup_{i \in I_\tau} D_i)/{\sim}$, the event $\{K \subset \bigcup_{i \in I_\tau} D_i\}$ is conditionally independent of $(\Phi, \Psi_1, \dots, \Psi_n,  \Lambda)$.
	Moreover,  further conditioning on $\{K \subset \bigcup_{i \in I_\tau} D_i\}$, the decorated LQG surface $( \bigcup_{i \in I_\tau} D_i, \Phi, K)/{\sim}$ is conditionally independent of $(\Phi, \Psi, \Psi_1, \dots, \Psi_n, \eta', \Lambda)$. 
	
	$K$ is called a \textbf{LQG-IG local set} if it is an $\eps$-LQG-IG local set for all  $\eps > 0$. 
\end{definition}
Definition~\ref{def-eps-local-pt} for $\mathrm{UQD}_{\ccL}^\bullet$ is the same as Definitions~\ref{def-eps-local} and~\ref{def-quantum-local} for $\mathrm{UQD}_{\ccL}$, except that we require that $K$ contains the marked bulk point $0$, and in our random exploration of the disk we condition on $0$ lying in the explored region. 
The following lemma is an analog of Proposition~\ref{prop-eps-indep} for $\mathrm{UQD}_{\ccL}^\bullet$.
\begin{lemma}\label{lem-eps-indep-pt}
	Consider the setting of Definition~\ref{def-eps-local} with $n=1$. Weight by $\mu_\Phi(\D)$, sample a  point $z\in \D$ from $\mu_\Phi/\mu_\Phi(\D)$ conditionally independently of everything except $\Phi$, and condition on $\{ z \in \bigcup_{i \in I_\tau} D_i\}$. 
	Let $L_1, \dots, L_m$ be simple loops in the interior of $\bigcup_{i \in I_\tau} D_i$ and let $O_k$ be the bounded connected component of $\C \backslash L_k$. Suppose $(\bigcup_{i \in I_\tau} D_i, \Phi, L_1, \dots, L_m)/{\sim}$ is measurable with respect to $\cM_\tau^\bullet:=( \bigcup_{i \in I_\tau} D_i, \Phi, \Psi_1, \{ \eta_i', a_i, b_i, c_i, d_i) : i \in I_{\tau}\}, \Lambda, z)/{\sim}$, and the $O_k$ are pairwise disjoint. Then conditioned on $\cM_\tau^\bullet$, the IG-decorated LQG surfaces $(O_1, \Phi, \Psi)/{\sim}, \dots, (O_m, \Phi, \Psi)/{\sim}$ are conditionally independent.
\end{lemma}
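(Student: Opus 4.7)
The plan is to reduce Lemma~\ref{lem-eps-indep-pt} to Proposition~\ref{prop-eps-indep} via a careful change-of-measure argument that isolates the effect of the weighting by $\mu_\Phi(\D)$ and the conditioning on $\{z\in \bigcup_{i\in I_\tau}D_i\}$ on the relevant conditional laws.

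First, I would set up notation. Let $\mathbb P$ denote the law of the setup of Proposition~\ref{prop-eps-indep} augmented by an auxiliary point $z\in \D$ sampled from $\mu_\Phi/\mu_\Phi(\D)$ conditionally independently of everything except $\Phi$, and let $\mathbb P^\bullet$ denote the weighted-and-conditioned law of Lemma~\ref{lem-eps-indep-pt}. A direct Radon--Nikodym computation gives
\[
\frac{d\mathbb P^\bullet}{d\mathbb P}\;\propto\; \mu_\Phi(\D)\cdot \mathbf{1}_{\{z\in \bigcup_{i\in I_\tau} D_i\}}.
\]
The key observation is that this factor can be rewritten as $\mu_\Phi(\bigcup_{i\in I_\tau}D_i)\cdot\frac{\mu_\Phi(\D)}{\mu_\Phi(\bigcup_{i\in I_\tau}D_i)}\mathbf{1}_{\{z \in \bigcup_{i\in I_\tau} D_i\}}$. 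I would interpret this by splitting the change of measure into two steps: first reweight by the $\cM_\tau$-measurable factor $\mu_\Phi(\bigcup_{i\in I_\tau}D_i)$ (using that $\mu_\Phi$ is intrinsic to the LQG surface), and then, conditionally on the rest of the data, resample $z$ from the $\cM_\tau$-measurable distribution $\mu_\Phi|_{\bigcup_{i\in I_\tau}D_i}/\mu_\Phi(\bigcup_{i\in I_\tau}D_i)$. The first step is a $\cM_\tau$-measurable reweighting so it preserves all conditional distributions given $\cM_\tau$, and the second step makes $z$ conditionally independent of $(\Phi,\Psi,\Psi_1,\eta',\Lambda)$ given $\cM_\tau$, with a $\cM_\tau$-measurable conditional law.

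Consequently, under $\mathbb P^\bullet$ the conditional law of $(\Phi,\Psi)$ given $\cM_\tau^\bullet = (\cM_\tau, z)$ equals the conditional law of $(\Phi,\Psi)$ given $\cM_\tau$ under $\mathbb P$. From here, I would conclude via a parameterization argument: for each fixed realization $z_0$ of $z$, the loops $L_k(\cdot, z_0)$ and regions $O_k(\cdot, z_0)$ become $\cM_\tau$-measurable, so Proposition~\ref{prop-eps-indep} applied to this choice yields conditional independence of $(O_k(\cdot, z_0),\Phi,\Psi)/{\sim}$ given $\cM_\tau$ under $\mathbb P$. Disintegrating over $z$ transfers this conditional independence to conditioning on $\cM_\tau^\bullet$ under $\mathbb P^\bullet$, which is the desired conclusion.

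The main subtlety lies in handling the dependence of the loops $L_k$ on the marked point $z$, since Proposition~\ref{prop-eps-indep} assumes loops that are $\cM_\tau$-measurable rather than $\cM_\tau^\bullet$-measurable. The parameterization argument resolves this by exploiting the fact, established above, that $z$ carries no additional information about $(\Phi,\Psi)$ beyond $\cM_\tau$; thus varying $z$ only varies the choice of loops to which an unparameterized version of Proposition~\ref{prop-eps-indep} is applied, and the uniformity of that proposition in the choice of loops allows us to integrate back out.
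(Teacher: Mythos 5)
Your proposal is correct and follows essentially the same route as the paper's own proof: the central observation in both is that weighting by $\mu_\Phi(\D)$, sampling $z\sim\mu_\Phi^\#$, and restricting to $\{z\in M_\tau\}$ is the same as weighting by the $\cM_\tau$-measurable factor $\mu_\Phi(M_\tau)$ and then sampling $z$ from the $\cM_\tau$-measurable probability measure $(\mu_\Phi|_{M_\tau})^\#$, after which Proposition~\ref{prop-eps-indep} transfers directly. The paper states this more tersely (without explicit Radon--Nikodym bookkeeping or the parameterization-over-$z$ discussion), but the underlying argument is identical; your added discussion of how to handle $\cM_\tau^\bullet$-measurable versus $\cM_\tau$-measurable loop choices is a point the paper glosses over, and it is handled correctly because the proof of Theorem~\ref{thm-eps-indep} in fact establishes conditional independence of all the complementary cells simultaneously (so the conclusion holds for every admissible loop choice, not just a fixed $\cM_\tau$-measurable one).
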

\begin{proof}
For a finite measure $M$ let $M^\# = M/|M|$ be the corresponding probability measure. 
In the  setting of Definition~\ref{def-eps-local}, weighting by $\mu_\Phi(\D)$, sampling $z \sim \mu_\Phi^\#$ and restricting to $\{ z \in \bigcup_{i \in I_\tau}D_i\}$ is equivalent to weighting by $\mu_\Phi(\bigcup_{i \in I_\tau} D_i)$ then sampling $z \sim (\mu_\Phi|_{\bigcup_{i \in I_\tau} D_i})^\#$. Thus, the present setup can be obtained starting with the setup of  Proposition~\ref{prop-eps-indep}, weighting $\cM_\tau$ by its LQG area and sampling a point in $\cM_\tau$ from the probability measure proportional to its area measure to get $\cM_\tau^\bullet$. This weighting and sampling does not affect the conditional law of the $(U_i, \Phi, \Psi)/{\sim}$ given $\cM_\tau$, so  Proposition~\ref{prop-eps-indep} gives the desired conditional independence. 
\end{proof}

Now we state and prove Theorem~\ref{thm-main-pt}, which is the $\mathrm{UQD}_{\ccL}^\bullet$ variant of Theorem~\ref{thm-main}.

\begin{thm}\label{thm-main-pt}
	Suppose $(\Phi, \Psi_1, \dots, \Psi_n, \eta', K)$ are as in Definition~\ref{def-eps-local-pt}. Let $\cF = \bigcap_{\eps > 0} \sigma((\cB_\eps(K \cup \partial \D; {\mathfrak d_\Phi}), \Phi, \Psi_1, \dots, \Psi_n, K, 0 )/{\sim})$. Then the decorated LQG surfaces $(U, \Phi, \Psi_1, \dots, \Psi_n)/{\sim}$ parametrized by the connected components $U$ of $\D \backslash K$ are conditionally independent given $\cF$. 
\end{thm}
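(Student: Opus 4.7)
The plan is to follow the same three-step structure used to prove Theorem~\ref{thm-main}, with each step replaced by its bulk-pointed analog. The key starting ingredient is already available: Lemma~\ref{lem-eps-indep-pt} gives the $n=1$ mating-of-trees conditional independence for $\mathrm{UQD}_{\ccL}^\bullet$, obtained from Proposition~\ref{prop-eps-indep} by reweighting by $\mu_\Phi(\D)$ and sampling a bulk point from $\mu_\Phi/\mu_\Phi(\D)$. Since this reweighting leaves the conditional law of the complementary IG-decorated LQG surfaces given $\cM_\tau$ unchanged, the rest of the argument carries through with only bookkeeping modifications to keep track of the marked interior point $0$.

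First I would prove a pointed analog of Proposition~\ref{prop-local-indep}: in the setting of Definition~\ref{def-eps-local-pt} with $(\cc_1, \dots, \cc_n) = (26-\ccL, 0, \dots, 0)$, if $L_1, \dots, L_m$ are disjoint loops in $M_\tau = \bigcup_{i \in I_\tau} D_i$ bounding disjoint regions $O_1, \dots, O_m$ chosen measurably with respect to the pointed analog $\wh \cM_\tau^\bullet$ of $\wh \cM_\tau$ (with the marked point $0$ added), then conditionally on $\wh \cM_\tau^\bullet$ the IG-decorated LQG surfaces $(O_i, \Phi, \Psi_1, \dots, \Psi_n)/{\sim}$ are mutually independent. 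The proof of Proposition~\ref{prop-local-indep} transfers with essentially no change: one picks cross-cuts $\eta_\ell,\eta_r$ in $M_\tau$ missing $-\bbi$, separates the $O_i$ into a ``left'' and ``right'' group, and runs the Radon--Nikodym factorization driven by the $\cc=0$ IG locality Proposition~\ref{prop-locality-qs} and the Markov property of the GFF, with Lemma~\ref{lem-eps-indep-pt} in place of Proposition~\ref{prop-eps-indep}. The marked point $0$ is handled by noting that the event determining which of $L$, $B$, or $R$ contains $0$ is measurable with respect to $\wh\cM_\tau^\bullet$, so we may condition on it and then run the original argument.

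Second, I would deduce the pointed analog of Proposition~\ref{prop-main-0s} exactly as in Section~\ref{subsec-main-ig0-proof}: choose loops $L_i$ homotopic to $\partial U_i$ inside $U_i\cap K_\eps$ measurably with respect to the pointed MOT $\sigma$-algebra $\cF_\eps^{\mathrm{MOT},\bullet}$ obtained by adjoining the marked point $0$ to~\eqref{eqn-local-set-sigma}, and combine the second conditional independence property in Definition~\ref{def-eps-local-pt} with the pointed analog of Proposition~\ref{prop-local-indep}. Applying the rotation Proposition~\ref{prop-ig-rotate} then promotes this to arbitrary $(\cc_1, \dots, \cc_n)$ with $\sum_i \cc_i = 26 - \ccL$ verbatim as in the derivation of Proposition~\ref{prop-main-Keps} from Proposition~\ref{prop-main-0s}, yielding the pointed analog of Proposition~\ref{prop-main-Keps}.

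Finally, I would pass from $\cF_\eps^{\mathrm{MOT},\bullet}$ to $\cF$ via the same limiting argument used at the end of Section~\ref{subsec-main-proof}: introduce an auxiliary Poisson point process $\Lambda^* \subset \D \times \R_+$ of intensity $\mu_\Phi \times \mathrm{Leb}_{\R_+}$ conditionally independent of everything else given $\Phi$, use the segments of $\eta'$ cut by $\Lambda_{\eps'} = \{z : (z,t)\in \Lambda^*,\ t < (\eps')^{-1}\}$ to define the decreasing family of $\sigma$-algebras $\cF_{\eps'}^{\mathrm{MOT},\bullet}$, show that their intersection $\cF_{0+}^{\mathrm{MOT},\bullet}$ is contained in the pointed analog $\wt \cF_\eps^\bullet$ of $\wt \cF_\eps$, decompose $\wt \cF_\eps^\bullet = \sigma(\cF_{0+}^{\mathrm{MOT},\bullet}, \cU_\eps)$ using locality and conformal covariance of $\mathfrak d_\Phi$, and apply the backward martingale convergence theorem. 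The only real obstacle I anticipate is a modest amount of bookkeeping around the marked point $0$ in each step; since $0$ is fixed from the outset and lies in a specific connected component of $\D\setminus K$ (or on $K$) in a manner measurable with respect to every $\sigma$-algebra we condition on, no new idea beyond those already used for Theorem~\ref{thm-main} is required.
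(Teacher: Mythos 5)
Your proposal is correct and matches the paper's own proof, which consists of a single sentence stating that substituting Lemma~\ref{lem-eps-indep-pt} for Proposition~\ref{prop-eps-indep} and re-running the chain through Propositions~\ref{prop-main-0s} and~\ref{prop-main-Keps} and Theorem~\ref{thm-main} gives the result. The one substantive point, which you correctly isolate, is that the area-reweighting and bulk-point sampling producing $\mathrm{UQD}_{\ccL}^\bullet$ from $\mathrm{UQD}_{\ccL}$ leave the conditional law of the complementary decorated surfaces given $\cM_\tau$ unchanged, so each step of the Theorem~\ref{thm-main} argument survives the substitution with only bookkeeping for the marked point $0$.
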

\begin{proof}
	Using Lemma~\ref{lem-eps-indep-pt} in place of Proposition~\ref{prop-eps-indep}, we can prove the $\mathrm{UQD}_{\ccL}^\bullet$ analogs of Proposition~\ref{prop-main-0s}, Proposition~\ref{prop-main-Keps} and Theorem~\ref{thm-main} by the same arguments. The last of these analogs is the desired result. 
\end{proof}

\begin{proof}[Proof of Theorem~\ref{thm-sle-interior}]
By the same argument as in Theorem~\ref{thm-sle-chordal} we see that  $K$ is an LQG-IG local set so Theorem~\ref{thm-main-pt} applies. 
\end{proof}

\begin{proof}[Proof of Theorem~\ref{thm-bm}]
Brownian motion is conformally invariant, and local in the sense that for any domains $D_1 \subset D_2$, Brownian motion in $D_1$ started at $z \in D_1$ and run until it exits $D_1$ agrees in law with Brownian motion in $D_2$ started at $z$ and run until it exits $D_1$. Consequently $K$ is an LQG-IG local set, so  Theorem~\ref{thm-main-pt} gives the result. 
\end{proof}

\begin{proof}[Proof of Theorem~\ref{thm-ball}]
	The LQG metric ball $K$ is an LQG-IG local set by the locality property of the LQG metric. Thus the claim follows from  Theorem~\ref{thm-main-pt}. 
\end{proof}

\section{Independence of unexplored regions of the discretized LQG disk}\label{sec-n=1}

The goal of this section is to prove Proposition~\ref{prop-eps-indep} which we used earlier to prove Theorem~\ref{thm-main}. To that end we first rephrase it in a more convenient way, as Theorem~\ref{thm-eps-indep}. 

Fix $\eps > 0$ and $\ccL > 25$,  let $(\D, \Phi, -\bbi)$ be an embedding of a sample from $\mathrm{UQD}_{\ccL}$, and let  $\Lambda\subset \D$ be a Poisson point process with intensity measure $\eps^{-1}\mu_\Phi$. Independently of $(\Phi, \Lambda)$, let $(\D, \Psi, -\bbi)$ be an embedding of a sample from $\mathrm{IG}_{26 - \ccL}$ and let $\eta'$ be the counterclockwise space-filling $\SLE_{\kappa'}$ measurable with respect to $\Psi$. Here, the LQG parameter $\gamma \in (0,2)$ and SLE parameter $\kappa' > 4$ satisfy $\kappa' = \frac{16}{\gamma^2}$ since $\ccL+ (26 - \ccL) = 26$. Condition on $|\Lambda| > 0$ and write $N = |\Lambda| - 1$.  
The curve $\eta'$ is split by the points $\Lambda$ to give domains $D_i$ decorated by curve segments $\eta_i':[0,t_i] \to \ol \D$ and boundary points $a_i, b_i, c_i, d_i$ for $0\leq i \leq  N+1$, as in Definition~\ref{def-eps-local}. Recall 
$a_i  = \eta'_i(0)$ and $d_i = \eta_i'(t_i)$, while $b_i$ (resp.\ $c_i$) is the furthest point on the clockwise (resp.\ counterclockwise) arc of $\partial D_i$ from $a_i$ to $d_i$ such that the boundary arc from $a_i$ to $b_i$ (resp.\ $c_i$) is a subset of $\ol{(\partial \D\cup \bigcup_{j \leq i} D_i)}$.
See Figure~\ref{fig-discretization}.

Independently of $(\Phi, \Psi, \eta', \Lambda)$, sample a nonnegative integer $T \geq 0$ with $\P[T = t] = 2^{-t-1}$, let $I_0$ be the set of indices $i$ such that $D_i  \cap \partial \D \neq \emptyset$, and for $j \geq 0$  inductively define $I_{j+1}$ from $I_j$ by independently sampling a point $p_{j+1} \in (\partial \bigcup_{i \in I_j} D_i) \backslash \partial \D$ from the LQG length measure, then setting $I_{j+1} = I_j \cup \{ i \: : \: p_{j+1} \in \partial D_i\}$. We stop the process either at time $T$ or at the time $t$ when $\bigcup_{i \in I_t} D_i = \ol \D$, whichever is earlier; call this time $\tau$. Let 
\begin{equation}
M_\tau := \bigcup_{i \in I_\tau} D_i \quad \text{and} \quad	\cM_\tau = (M_\tau, \Phi, \Psi, \{ (\eta'_i, a_i, b_i, c_i, d_i)\: : \: i \in I_\tau \})/{\sim} . 
\end{equation}
See Figure~\ref{fig-thm-indep} (right).

\begin{thm}\label{thm-eps-indep}
	In the setting immediately above, let $L_1, \dots, L_m$ be simple loops in the interior of $M_\tau$ and let $O_k$ be the bounded connected component of $\C \backslash L_k$. Suppose $(M_\tau, \Phi, L_1, \dots, L_m)/{\sim}$ is measurable with respect to $\cM_\tau$, and the $O_k$ are pairwise disjoint. Then conditioned on $\cM_\tau$, the IG-decorated LQG surfaces $(O_1, \Phi, \Psi)/{\sim}, \dots, (O_m, \Phi, \Psi)/{\sim}$ are conditionally independent.  
\end{thm}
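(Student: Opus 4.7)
The plan is to combine a probabilistic input coming from mating-of-trees with a combinatorial input coming from the topology of the cell decomposition.

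First I would invoke the mating-of-trees theorem for the LQG disk~\cite{wedges, ag-disk} to encode the independent pair $(\Phi, \Psi)$ together with the Poisson marked points $\Lambda$ by a pair of correlated one-dimensional Brownian motions $(\BB L, \BB R)$ on a random time interval, split into $N+2$ subintervals by the image of $\Lambda$ under the natural quantum parametrization of $\eta'$. The key output I want from this (to be formalized as Proposition~\ref{prop-decomp-quilt-cell}) is that each decorated cell $(D_i, \Phi, \Psi, \eta_i', a_i, b_i, c_i, d_i)/{\sim}$ is a measurable function of the increment of $(\BB L, \BB R)$ on the $i$th subinterval together with, when $\kappa' \in (4,8)$, an independent family of ``bubble'' IG-decorated LQG surfaces. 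In particular, the decorated cells are independent across $i$ once one conditions on the Brownian increments and on whether or not each cell meets $\partial \D$.

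Next I would analyze how the cells glue together to form $\D$. The decorated disk is reconstructed from the cells by identifying boundary arcs according to the combinatorial planar-map structure dictated by the space-filling SLE ordering and by the $a_i, b_i, c_i, d_i$. Fixing $\cM_\tau$ fixes (i) all decorated cells indexed by $I_\tau$, (ii) the ``interface data'' (LQG length, quantum parametrization, IG boundary values) along the unexplored part $\partial M_\tau \cap \D$, and (iii) the partition of this unexplored boundary into arcs lying on $\partial U$ as $U$ ranges over the connected components of $\D \setminus M_\tau$. The key topological claim (to be proved as Proposition~\ref{prop-product-templates} in Section~\ref{subsec-topo} using the total curvature formalism of Section~\ref{subsec-winding}) is that, with $\cM_\tau$ so fixed, the set of admissible combinatorial configurations of the remaining cells factorizes as a product over the connected components $U$ of $\D\setminus M_\tau$: the cells assigned to $U$ chain together into a sub-path determined solely by the data on $\partial U$, with no compatibility constraint across different $U$'s. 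The reason is that the global ``single chain'' constraint on the cell sequence can be rephrased locally in terms of winding along the cell boundaries, and these local winding conditions have no coupling between distinct complementary components.

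Given these two ingredients, the theorem follows by a standard product-measure argument. Conditionally on $\cM_\tau$, (a) the remaining Brownian increments and bubble surfaces are jointly independent of those in $I_\tau$ and mutually independent across indices, while (b) the combinatorics tells us how to partition the remaining indices into disjoint groups, one per component $U$, with each group glued using only the boundary data on $\partial U$. Therefore the decorated IG-LQG surfaces $(U, \Phi, \Psi)/{\sim}$ for the various $U$ are conditionally independent given $\cM_\tau$. Since each $L_k$ is a loop in $M_\tau$ with $O_k$ contained in some complementary component, and $(M_\tau, \Phi, L_1, \dots, L_m)/{\sim}$ is $\cM_\tau$-measurable, each $(O_k, \Phi, \Psi)/{\sim}$ is a measurable function of the surface filling a single $U_k$, and distinct $O_k$'s are supported in distinct $U_k$'s (by disjointness and the loops being contained in $M_\tau$); the conditional independence of the $(U, \Phi, \Psi)/{\sim}$ therefore descends to the $(O_k, \Phi, \Psi)/{\sim}$.

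The main obstacle, and the heart of the argument, is the combinatorial factorization of Step (b). A priori the cell-chain condition is global: the cells must assemble into one space-filling loop whose adjacencies match the pattern of $a_i, b_i, c_i, d_i$. Reducing this to a purely local condition that decouples across the different complementary components $U$ is exactly where the winding/total-curvature framework of Section~\ref{subsec-winding} is needed; a toy version of the same idea appears in the conditional independence argument for uniform meanders in Appendix~\ref{sec-meander}, which I would expect to serve as a model for the full proof here.
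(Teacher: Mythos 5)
Your plan matches the paper's own proof in its essential structure: you correctly identify the two key inputs — the mating-of-trees encoding of $(\Phi,\Psi,\Lambda,\eta')$ via correlated Brownian motion and the Poissonian cell decomposition (Proposition~\ref{prop-decomp-quilt-cell}), and the combinatorial factorization of the admissible cell-chain configurations across complementary components via total curvature (Proposition~\ref{prop-product-templates}) — and the final reassembly by a product-measure argument. You also correctly flag the meander argument (Appendix~\ref{sec-meander}) as the model for the topological step, which is exactly how the paper motivates Section~\ref{subsec-topo}. The intermediate scaffolding you elide (the quilt/template formalism, Lemma~\ref{lem-unif-temp}, the Lebesgue change-of-variables Lemma~\ref{lem-leb-fac}, the recoverability of $\Psi$ from the curve-decorated cells in Lemmas~\ref{lem-recover-psi-curve-cell}--\ref{lem-recover-psi-curve}, and Lemma~\ref{lem-peel-prob-same} justifying the Bayes flip on the peeling) are implementation details rather than missing ideas, so the plan is sound.

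One small inaccuracy in the closing step: you write that each $L_k$ is ``a loop in $M_\tau$ with $O_k$ contained in some complementary component,'' and that each $(O_k,\Phi,\Psi)/{\sim}$ is a measurable function of ``the surface filling a single $U_k$.'' This has the containment backwards: since $L_k$ lies in the interior of $M_\tau$, the region $O_k$ it encloses typically contains a \emph{collection} of complementary components of $\D\setminus M_\tau$ together with part of $M_\tau$, not the other way around (and for $\gamma\in(\sqrt2,2)$ the complementary components are not in bijection with the loops; cf.\ the remark after Proposition~\ref{prop-eps-indep}). The correct statement, and the one the paper uses, is that the disjoint $O_k$'s induce a partition of the complementary surfaces into disjoint groups, and each $(O_k,\Phi,\Psi)/{\sim}$ is $\sigma(\cM_\tau, (\text{surfaces in group }k))$-measurable, so conditional independence descends. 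The conclusion is the same; only the quantifier ``single'' needs adjusting.
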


\begin{proof}[Proof of Proposition~\ref{prop-eps-indep}]
	When $|\Lambda| > 0$
	the claim is equivalent to  Theorem~\ref{thm-eps-indep}. When $|\Lambda| = 0$ the claim is vacuously true. 
\end{proof}

Our proof of Theorem~\ref{thm-eps-indep} involves non-probability measures. In the setting of probability theory, independent random variables $X$ and $Y$ have a joint law that  factorizes as a product measure $\mu_X \times \mu_Y$. In the non-probability setting, the notion of independence is no longer well defined, but factorization still makes sense. For instance, Lebesgue measure on $\R^2$ factorizes as a product of Lebesgue measures on $\R$. Our argument involves showing that certain measures, densities, and combinatorial structures factorize. 

In Section~\ref{subsec-mot} we explain the mating-of-trees theorem which identifies space-filling $\SLE_{\kappa'}$-decorated $\gamma$-LQG  disks with correlated 2D Brownian motion excursions. In Section~\ref{subsec-decomp-cells} we use the Markov property of Brownian motion to decompose the decorated $\gamma$-LQG disk into ``cells'' which are conditionally independent given their boundary lengths. We also obtain a factorized form for the joint density of the boundary lengths. In Section~\ref{subsec-quilt-def} we define decorated planar maps which we call ``quilts'' whose faces correspond to cells and whose edges are labelled by the corresponding LQG lengths, and we obtain a factorization for the law of the edge lengths for a given planar map. In Section~\ref{subsec-indep-quilts} we prove that conditioned on the existence of a specified subquilt, the complementary connected subquilts are conditionally independent. The inputs are the factorization from the previous section, a factorization of uniform measures, and a combinatorial factorization of planar maps whose proof is deferred to  Section~\ref{subsec-topo}. This last result is a generalization of Proposition~\ref{prop-meander} on  the independence of arc diagrams of a uniform meander given its winding function. Finally in Section~\ref{subsec-indep-main-proof} we prove Theorem~\ref{thm-eps-indep}.

\subsection{Mating-of-trees for the LQG disk}\label{subsec-mot}
In this section, we review the construction of Brownian excursions in $\R_+ \times \R$ and $\R_+^2$, then recall in Proposition~\ref{prop-mot-disk} the mating-of-trees theorem which identifies an SLE-decorated LQG disk with a Brownian excursion in $\R_+^2$.

We first recall Brownian motion in the cone. Let $\mathbbm a^2 = 2/\sin(\frac{\pi\gamma^2}4)$ be the mating-of-trees variance computed in \cite[Theorem 1.3]{ars-fzz}.
Consider Brownian motion $(L_t, R_t)$ with $\Var(L_t) = \Var(R_t) = \mathbbm a^2 t$ and $\Cov(L_t, R_t) = -\cos(\frac{\pi \gamma^2}4)\mathbbm a^2 t$. Let $\mu(t; z)$ be the law of Brownian motion started at $z$ and run for time $t$, and let $\mu(t; z,w)$ be the disintegration of $\mu(t; z)$ over the endpoint of the Brownian motion, i.e.\ $\mu(t; z) = \int_\C \mu(t; z, w)\, dw$ and the measure $\mu(t; z,w)$ is supported on trajectories ending at $w$. We emphasize that $|\mu(t;z)|=1$ (that is, $\mu(t;z)$ is a probability measure) but $|\mu(t; z,w)|$ is typically not 1 (rather, $|\mu(t;z,\cdot)|$ is the density of a two-dimensional Gaussian random variable with mean $z$ and covariance matrix $t\op{Id}$). The Markov property of Brownian motion implies  that for any $t_1, t_2 > 0$ and $z_1, z_2 \in \C$ we have $\mu(t_1+t_2; z_1, z_2) = \int_\C \mu(t_1; z_1,w) \mu(t_2; w, z_2)\, dw$ where the equality means that when we sample a pair of paths from the right hand side, concatenating them gives a sample from the left hand side.

We now recall the construction of non-probability measures which correspond to Brownian motion started at a vertex of a cone and restricted to staying in the cone. 
Let $E_{\R_+ \times \R}$ and $E_{\R_+^2}$ be the events that a Brownian motion trajectory stays in $\R_+ \times \R$ and $\R_+^2$ respectively. \cite{shimura1985} allows us to define the Brownian excursion in $\R_+ \times \R$ and Brownian excursion in $\R_+^2$ by
\[\mu_{\R_+ \times \R}(t; 0) = \lim_{\delta \to 0}  \delta^{-1} (1_{E_{\R_+ \times \R}} \mu(t; \delta)), \qquad \mu_{\R_+^2} (t; 0) = \lim_{\delta \to 0} \delta^{-\frac4{\gamma^2}} (1_{E_{\R_+^2}} \mu(t; \delta e^{i\pi/4})).  \]
Here the limits are in the sense of weak convergence on $C([0,t], \C)$ equipped with the uniform topology.
Indeed, for the  first limit, the existence of $\lim_{\delta \to 0} \delta^{-1} |1_{E_{\R_+ \times \R}}\mu(t; \delta)|$ is stated in \cite[(4.1)]{shimura1985}, and the conditional law of $\mu(t; \delta)$ given $E_{\R_+\times\R}$  has a limit as $\delta \to 0$ by  \cite[Theorem 2]{shimura1985}. The second limit is likewise justified. 
Define by disintegration the measures $\mu_{\R_+ \times \R}(t; 0, z)$ and $\mu_{\R_+^2}(t; 0, z)$ on the space of paths from $0$ to $z$. 
\cite[Theorem 2]{shimura1985} gives the following Markov properties: 
\eqb\label{eq-markov}
\begin{gathered}
	\mu_{\R_+ \times \R} (t_1 + t_2; 0, z) = 1_{E_{\R_+ \times \R}} \int_{\R_+ \times \R}  \mu_{\R_+ \times \R}(t_1; 0, w) \mu(t_2; w, z)\, dw, \\
	\mu_{\R_+^2} (t_1 + t_2; 0, z) = 1_{E_{\R_+^2}} \int_{\R_+^2}  \mu_{\R_+^2}(t_1; 0, w) \mu(t_2; w, z)\, dw.
\end{gathered}
\eqe
Here, on the right hand sides, the restriction to $E_{\R_+ \times \R}$ or $E_{\R_+^2}$ applies to the path from $0$ to $z$ obtained by \emph{concatenating} the pair of paths. 
Note that the non-probability measures in these identities induce  weightings. For instance, if we sample  $(L_t, R_t)$ from $\mu_{\R_+^2}(t_1 + t_2; 0, z)$, then the law of $(L_t, R_t)|_{[0,t_1]}$ is $\int_{\R_+^2}\mu_{\R_+^2}(t_1; 0, w) \, dw$ weighted by $\mu(t_2; L_{t_1} +  \bbi R_{t_1}, z)[E_{\R_+^2}]$.

Finally, we define the Brownian excursion in $\R_+^2$ from $\bbi$ to $0$ via 
\[\mu_{\R_+^2} (t; \bbi, 0) := 1_{E_{\R_+^2}} \int_{\R_+^2}  \mu_{\R_+\times \R} \left(\frac t2; \bbi, z\right) \times \mu_{\R_+^2} \left(\frac t2; z, 0\right)\, dz,\]
where we define $\mu_{\R_+\times \R} (t/2; \bbi, z)$ from $\mu_{\R_+\times \R} (t/2; 0, z-\bbi)$ by translation, and define $\mu_{\R_+^2} (t/2; z, 0)$ from $\mu_{\R_+^2} (t/2; 0, z)$ by time-reversal. By its definition, the measure  $\mu_{\R_+^2}(t; \bbi, 0)$ inherits an analogous Markov property from~\eqref{eq-markov}.

We can now state the mating-of-trees theorem for the LQG disk, see Figure~\ref{fig-BM} (left). 
Recall that for a finite measure $M$ the probability measure $M^\#$ is defined to be $M/|M|$. The following mating-of-trees theorem was first shown by \cite{wedges} for the restricted range $\gamma \in (\sqrt2, 2)$, then proved in full by \cite{ag-disk}.
\begin{proposition}[{\cite[Theorem 1.1]{ag-disk}}]\label{prop-mot-disk}
	Let $\gamma \in (0,2)$ and $\kappa' = \frac{16}{\gamma^2}$. 
	Let $(\D, \Phi, -\bbi)$ be an embedding of a $\gamma$-LQG disk conditioned to have unit boundary length, and let $\eta'$ be an independent counterclockwise space-filling $\SLE_{\kappa'}$ in $(\D, -\bbi)$.  Parametrize $\eta'$ by $\mu_\Phi$-area, and let $L_t$ (resp.\  $R_t$) be the LQG length of the counterclockwise (resp.\ clockwise) boundary arc of $\D \backslash \eta'([0,t])$ from $-\bbi$ to $\eta'(t)$. Then the law of the process $t \mapsto (L_t, R_t)$ is $ (\int_0^\infty \mu_{\R_+^2}(t; \bbi, 0) \, dt )^\#$. See Figure~\ref{fig-BM} (left).
\end{proposition}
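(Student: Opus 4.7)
The plan is to reduce to the mating-of-trees theorem for quantum wedges \cite{wedges} and transfer the result to the disk via a welding/disintegration argument.

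First, I would recall that for a suitable quantum wedge decorated by an independent counterclockwise space-filling $\SLE_{\kappa'}$, the boundary length process is a planar Brownian motion with covariance structure $\Var(L_t) = \Var(R_t) = \mathbbm a^2 t$ and $\Cov(L_t, R_t) = -\cos(\pi\gamma^2/4)\mathbbm a^2 t$, i.e.\ the unconstrained version of the process described in the statement. A weight-$2$ quantum wedge, cut by an independent space-filling $\SLE_{\kappa'}$, decomposes into quantum disks via the welding theorems of \cite{shef-zipper, wedges, ahs-welding}. The quantum disk with two marked boundary points and fixed boundary length corresponds, on the Brownian side, to an excursion in $\R_+^2$ of the appropriate duration; this welding picture is the bridge between the wedge and disk statements.

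Second, I would pin down the boundary data of the excursion. At time $t = 0$ the curve $\eta'$ sits at $-\bbi$, so the clockwise arc of $\D \setminus \eta'([0,0])$ from $-\bbi$ to $\eta'(0) = -\bbi$ carries the entire boundary of $\D$, which has length $1$, while the counterclockwise arc has length $0$. This gives $(L_0, R_0) = (0, 1) \leftrightarrow \bbi$. At the terminal time $\eta'$ has filled $\ol\D$ and both arcs have collapsed, giving endpoint $0$. The constraint that both boundary arcs have nonnegative length at intermediate times translates into the trajectory remaining in $\R_+^2$. By Shimura's limit construction \cite{shimura1985}, conditioning a planar Brownian motion started at $\bbi$ on hitting $0$ at time $t$ while staying in $\R_+^2$ yields the non-probability measure $\mu_{\R_+^2}(t; \bbi, 0)$, and the Markov identity~\eqref{eq-markov} is compatible with the welding decomposition at the LQG level. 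Integrating over the random total LQG area $t$ and normalizing gives $\left(\int_0^\infty \mu_{\R_+^2}(t; \bbi, 0)\, dt\right)^\#$ for the law of $(L_t, R_t)$.

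The hard part will be rigorously matching normalizations at each stage: one must verify that the particular reweighting of $\mathrm{UQD}_{\ccL}$ in Definition~\ref{def-disk} (using the boundary length measure to an appropriate power) produces precisely the probability measure obtained from the Brownian excursion via the Shimura limit, with no spurious Radon-Nikodym weight. A secondary obstacle is that the topological structure of the space-filling $\SLE_{\kappa'}$ slices changes at $\kappa' = 8$ (i.e.\ $\gamma = \sqrt 2$), so \cite{wedges} originally established the result only for $\gamma \in (\sqrt 2, 2)$; extending to all $\gamma \in (0, 2)$, as done in \cite{ag-disk}, requires the more refined welding arguments developed there.
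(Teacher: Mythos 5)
The paper does not prove this statement; it is taken as an external input, cited to \cite[Theorem 1.1]{ag-disk} (and the sentence above the proposition explicitly records that \cite{wedges} established it only for $\gamma\in(\sqrt2,2)$, with \cite{ag-disk} completing the range). So there is no ``paper's own proof'' to compare against, and your outline is really an attempted reconstruction of the argument in those references rather than an alternative to anything in this paper.

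That said, your sketch is a fair high-level summary of the strategy used there. You correctly identify the two main ingredients: the mating-of-trees theorem for the (infinite-volume) quantum wedge, and a disintegration/welding argument to pass from the wedge to the disk of fixed boundary length. You also correctly pin down the boundary data of the excursion (start at $\bbi$, i.e.\ $(L_0,R_0)=(0,1)$ since $R_0$ is the clockwise arc carrying the full unit boundary, end at $0$), correctly invoke Shimura's construction of the cone excursion measure, and correctly flag that the case $\gamma\in(0,\sqrt2]$ needs the refined welding arguments of \cite{ag-disk} because the topology of space-filling SLE slices changes. The main thing your outline understates is how much work the ``normalization matching'' step actually involves: the passage from the wedge to the disk requires a careful identification of the law of the quantum disk as a disintegration of the quantum wedge measure with respect to boundary lengths, plus the precise Radon--Nikodym comparison with the excursion disintegration of Brownian motion in the cone. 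This is where \cite{ag-disk} does most of its work and it is not a routine check. Since the present paper deliberately black-boxes all of that, your proposal is neither wrong nor redundant with the paper --- it just fills in an argument that the paper intentionally delegates.
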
 

\begin{figure}[ht!]
	\begin{center}
		\includegraphics[scale=0.35]{}%
		\caption{\label{fig-BM}
			\textbf{Left:} A $\gamma$-LQG disk decorated by an independent counter-clockwise space-filling $\SLE_{\kappa'}$ curve $\eta'$ where $\kappa' = \frac{16}{\gamma^2}$. Parametrizing $\eta'$ by LQG area, $L_t$ and $R_t$ are the left and right LQG boundary lengths of the region not yet explored by $\eta'$ at time $t$. \textbf{Middle:} A Poisson point process with intensity given by the LQG area measure. Here it has two points, cutting $\eta'$ into three segments, which in order of traversal are red, green and blue. We mark out some LQG lengths for each region. Note that when $\gamma \in (\sqrt2, 2)$ these segments are not simply connected, see Figure~\ref{fig-discretization} (right). \textbf{Right:} The two points cut the Brownian motion into three paths.  The lengths $(\ell_0^+, r_0^-, r_0^+)$, $(\ell_1^-, \ell_1^+, r_1^-, r_1^+)$ and $(\ell_2^-, r_2^-)$ correspond to the lengths marked by arrows. 
		}
	\end{center}
\end{figure}

\subsection{Decomposition into cells and factorization}\label{subsec-decomp-cells}
In the setting of Theorem~\ref{thm-eps-indep}, for $\gamma \in (0,\sqrt2]$ let $\cC_i := (D_i, \Phi, \eta_i', a_i,b_i,c_i,d_i)/{\sim}$ be the $i$th (simply connected) curve-decorated LQG surface explored by the curve; a similar definition is given below for $\gamma \in (\sqrt2, 2)$. 
The goal of this section is to prove that the side lengths of $\cC_0, \dots, \cC_{N+1}$ have a joint probability density function that factors, and given these side lengths the $\cC_i$ are conditionally independent (Proposition~\ref{prop-decomp-quilt-cell}). In brief, this is a consequence of Proposition~\ref{prop-mot-disk} and the Markov property of Brownian motion. 

For $\gamma \in (\sqrt2, 2)$ the definition of $\cC_i$ is more complicated for topological reasons: the curve $\eta_i'$ fills out a countable chain of simply-connected domains, each parametrizing a curve-decorated LQG surface. See  Figure~\ref{fig-discretization} (right). Let $\cC_i$ be the decorated generalized LQG surface given by the chain of curve-decorated LQG surfaces, further decorated by boundary points $a_i, b_i, c_i, d_i$. Note that $(D_i, \Phi, \eta_i', a_i, b_i, c_i, d_i)/{\sim}$ is obtained from $\cC_i$ by identifying pairs of boundary points of $\cC_i$.

The decorated LQG surfaces $\cC_1, \dots, \cC_N$ each have four marked points, $\cC_0$ has three marked points (since $a_0 = b_0$), and $\cC_{N+1}$ has two marked points (since $b_{N+1} = c_{N+1} = d_{N+1}$). Given $\cC_0, \dots, \cC_{N+1}$, we can recover $(\D, \Phi, -i, \eta', \Lambda_\eps)/{\sim}$ by conformal welding. See for instance the discussion on conformal welding in  \cite[Section 3.5]{wedges}. 

Let $(\ell_0^+, r_0^+, r_0^-)$ be the boundary arc lengths of $\cC_0$ ordered clockwise from $a_0$, for $1 \leq i \leq N$ let $(\ell_i^-, \ell_i^+, r_i^+, r_i^-)$ be the boundary arc lengths of $\cC_i$ ordered clockwise from $a_i$, and let $(\ell_{N+1}^-, r_{N+1}^-)$ be the boundary arc lengths of $\cC_{N+1}$ ordered clockwise from $a_{N+1}$. See Figure~\ref{fig-BM} (middle). 
These lengths can be expressed explicitly in terms of the corresponding Brownian motion increments, see \eqref{eq-bm-length} below.

Since the Brownian motion remains in the cone, restricting to $\{ N = n\}$, we have the constraints
\begin{gather}
	\ell_0^+ + \sum_{j =1}^{i-1} (\ell_j^+ - \ell_j^-) - \ell_i^- > 0 \:\: \text{ for }1 \leq i \leq n,  \:\: \quad  1 + \sum_{j =0}^{i-1}  (r_j^+ - r_j^-) - r_i^- > 0 \:\:  \text{ for }0 \leq i\leq n,  \label{eq:SN2} \\
	\ell_{n+1}^- = \ell_0^+ + \sum_{i=1}^n (\ell_i^+ - \ell_i^-), \qquad \qquad r_{n+1}^- = 1 + \sum_{i=0}^{n} (r_i^+ - r_i^-). \label{eq-n+1-lengths}
\end{gather}
Let $S_n \subset \R_+^3 \times (\R_+^4)^n$ be the set of lengths $((\ell_0^+, r_0^-, r_0^+)$, $(\ell_i^-, \ell_i^+, r_i^-, r_i^+)_{i \leq n})$ satisfying~\eqref{eq:SN2}. 
Let $\mathrm{Leb}_{S_n}$ be Lebesgue measure on $S_n$. 
The goal of this section is to prove the following factorized description of the joint law of these lengths, and the conditional independence of the $\cC_i$ given these lengths.

\begin{proposition}\label{prop-decomp-quilt-cell}
	There exists a constant $Z_\eps > 0$ and functions $P^\eps_\mathrm{init}, P^\eps, P^\eps_\mathrm{end}$ such that the law of $((\ell_0^+, r_0^-, r_0^+)$, $(\ell_i^-, \ell_i^+, r_i^-, r_i^+)_{1\leq i \leq N})$ is $\cL^\eps := \sum_{n\geq 0} \cL^\eps_n$, where, defining $\ell_{n+1}^-$ and $r_{n+1}^-$ via~\eqref {eq-n+1-lengths}, 
	\eqb\label{lem-Lepsk}
	\cL^\eps_n :=  Z_\eps^{-1} P^\eps_\mathrm{init}(\ell_0^+, r_0^-, r_0^+) \times \prod_{i=1}^n P^\eps(\ell_i^-, \ell_i^+, r_i^-, r_i^+) \times  P^\eps_\mathrm{end}(\ell_{n+1}^- , r_{n+1}^-) \, \mathrm{Leb}_{S_n}.
	\eqe
	Conditioned on $\{N = n\}$ and on these $4n+3$ lengths, the LQG surfaces $\cC_0, \dots, \cC_{n+1}$ are conditionally independent, and the conditional law of $\cC_i$ depends only on $(\ell_i^-, \ell_i^+, r_i^-, r_i^+)$ (where we write $\ell_0^- = \ell_{n+1}^+ = r_{n+1}^+ = 0$). 
\end{proposition}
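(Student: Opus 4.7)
The plan is to pull the question through the mating-of-trees bijection of Proposition~\ref{prop-mot-disk} and argue entirely on the Brownian side, then use the Markov property of the cone Brownian excursion to obtain the factorization. Parametrizing $\eta'$ by $\mu_\Phi$-area and writing $A := \mu_\Phi(\D)$, the process $w_u := L_u + \bbi R_u$, $u \in [0, A]$, has law $(\int_0^\infty \mu_{\R_+^2}(A;\bbi,0)\,dA)^\#$, and the pair $(\Phi,\eta')$ is determined by $w$ up to LQG coordinate change (mating of trees; cf.\ \cite[Section~3.5]{wedges}). Conditionally on $(\Phi,\eta')$, the point process $\Lambda$ pulls back along the area parametrization to a rate-$\varepsilon^{-1}$ Poisson process on $[0,A]$. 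Restricting to $\{N = n\}$, let $0 = s_0 < s_1 < \cdots < s_{n+1} < s_{n+2} = A$ be the corresponding partition; under the welding bijection the cells $\cC_0, \dots, \cC_{n+1}$ correspond exactly to the segments $w|_{[s_i, s_{i+1}]}$. For $\gamma \in (\sqrt 2, 2)$ the chain-of-simply-connected structure of each $D_i$ is automatically encoded by the excursions of $L$ and $R$ away from their running infima within the relevant segment, so no separate treatment is needed.

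Given $\{N = n\}$, the joint density of $(A, s_1, \dots, s_{n+1})$ relative to $w$ is proportional to $\varepsilon^{-(n+1)} e^{-A/\varepsilon}$ on the ordered simplex. Applying the Markov property~\eqref{eq-markov} successively at $s_1, \dots, s_{n+1}$ disintegrates $\mu_{\R_+^2}(A;\bbi,0)$ over the intermediate values $w(s_1), \dots, w(s_{n+1})$ as a product of transition measures on the segments: a cone excursion from $\bbi$ to $w(s_1)$ on $[s_0, s_1]$; for each $1 \le i \le n$, an unrestricted Brownian bridge $\mu(s_{i+1}-s_i; w(s_i), w(s_{i+1}))$ restricted to stay in $\R_+^2$ on $[s_i, s_{i+1}]$; and a time-reversed cone excursion from $w(s_{n+1})$ to $0$ on $[s_{n+1}, A]$. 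On each segment, the quadruple $(\ell_i^-,\ell_i^+,r_i^-,r_i^+)$ is a measurable function of $(w(s_i), w(s_{i+1}), \inf_{[s_i,s_{i+1}]} L, \inf_{[s_i,s_{i+1}]} R)$ via $\ell_i^- = L_{s_i} - \inf L$, $\ell_i^+ = L_{s_{i+1}} - \inf L$ and the analogous formulas for $R$, with the degenerate conventions $\ell_0^- = 0$ (since $L_0 = 0$) and $\ell_{n+1}^+ = r_{n+1}^+ = 0$ (since $L_A = R_A = 0$). Pushing each segment's transition measure forward by this map and integrating out the segment durations yields a product density in the side-length variables; bundling the $\varepsilon^{-1}$ weights and the overall $\mathrm{UQD}$-plus-Poisson normalization into $Z_\varepsilon$, $P^\varepsilon_{\mathrm{init}}$, $P^\varepsilon$, and $P^\varepsilon_{\mathrm{end}}$ produces~\eqref{lem-Lepsk}, with the positivity inequalities~\eqref{eq:SN2} encoding the $E_{\R_+^2}$ indicators within each segment.

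Conditional independence of the $\cC_i$ given the full vector of side lengths then follows from the Markov property: the endpoint values $w(s_i)$ are determined by the side lengths via the telescoping identity $L_{s_i} = \ell_0^+ + \sum_{j=1}^{i-1}(\ell_j^+ - \ell_j^-)$ and its analogue for $R$ together with~\eqref{eq-n+1-lengths}; conditioning further on the segment infima, the restrictions $w|_{[s_i,s_{i+1}]}$ are conditionally independent, each with a law that is a cone Brownian bridge whose conditioning is specified entirely by $(\ell_i^-,\ell_i^+,r_i^-,r_i^+)$ after translation by its infimum. The main obstacle will be bookkeeping: verifying that the pushforward to side-length coordinates has unit Jacobian so that $\mathrm{Leb}_{S_n}$ is reproduced, matching the additive translation freedom of each Brownian segment with the LQG/IG coordinate-change ambiguity defining $\cC_i/{\sim}$, and correctly handling the degenerate endpoint cells $\cC_0$ and $\cC_{n+1}$ whose cell structures have three and two marked boundary arcs respectively. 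Once these identifications are in place, Proposition~\ref{prop-decomp-quilt-cell} follows from Proposition~\ref{prop-mot-disk} and standard properties of the cone Brownian excursion.
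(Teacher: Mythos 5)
Your proposal follows the same route as the paper's proof: pull back via mating-of-trees (Proposition~\ref{prop-mot-disk}), disintegrate the cone excursion measure at the Poisson times using the Markov property~\eqref{eq-markov}, express the four side-lengths of each segment via the endpoint increments and running infima, and read off the factorized density with $E_{\R_+^2}$ translating to the constraint set $S_n$. The ``obstacles'' you flag at the end are largely dissolved by the paper's framing: there is no Jacobian to compute because $P^\eps_{\mathrm{init}}, P^\eps, P^\eps_{\mathrm{end}}$ are \emph{defined} directly as the densities of the relevant length quadruples for each segment (rather than as pushforwards from endpoint coordinates), the translation/coordinate-change measurability is delegated to \cite[Lemma~2.17]{ag-disk}, and the endpoint cells are handled by the separate $P^\eps_{\mathrm{init}}$ and $P^\eps_{\mathrm{end}}$ as you anticipate; also, the conditional laws of the middle segments given their lengths are \emph{unrestricted} Brownian bridges conditioned on their infima (not ``cone Brownian bridges'' --- the cone constraint is exhausted by $S_n$), a minor terminological slip in your write-up.
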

In our proof, we will obtain explicit  descriptions of  $P^\eps_\mathrm{init}, P^\eps$ and $P^\eps_\mathrm{end}$ in terms of Brownian motion, but these are not important for later arguments. 
The rest of this section is devoted to the proof of Proposition~\ref{prop-decomp-quilt-cell}. 

First, the Markov property of Brownian motion gives the following. Let $n \geq 0$ and $t_0, \dots,  t_{n+1} > 0$, then 
\[ \mu_{\R_+^2}\left(\sum_{i=0}^{n+1} t_i; \bbi, 0\right) = 1_{E_{\R_+^2}}\int_{\C^{n+1}}  \mu_{\R_+ \times \R}(t_0; \bbi, z_1) \left(\prod_{i=1}^n \mu(t_i; z_i, z_{i+1})\right) \mu_{\R_+^2}(t_{n+1}; z_{n+1}, 0) \, \prod_{i=1}^{n+1} dz_i.\]
Notice that the measures $\mu(t_i; z_i, z_{i+1})$ and $\mu_{\R_+\times \R}(t_0; \bbi, z_1)$ are not supported on the space of paths in $\R_+^2$, but the indicator $1_{E_{\R_+^2}}$ enforces that we only consider concatenated paths that stay in $\R_+^2$. We now rephrase in terms of the increments of the Brownian motion. Setting $w_0 = z_1 - \bbi$ and $w_i = z_{i+1}-z_i$ for $1 \leq i \leq n$, 
\eqb\label{eq-exc-fixed-times}
\mu_{\R_+^2} \left( \sum_{i=0}^{n+1} t_i; \bbi, 0\right) = 1_{E_{\R_+^2}} \int_{\C} \mu_{\R_+ \times \R}(t_0; 0, w_0)\, dw_0  \prod_{i=1}^n \left(\int_\C \mu(t_i; 0, w_i)\, dw_i\right)\, \mu_{\R_+^2}\biggl(t_{n+1}; \sum_{j=0}^n w_j + \bbi, 0\biggr).
\eqe
That is, if we translate and concatenate the $(n+2)$ paths of a sample from the right hand side, we get a single path from $\bbi$ to $0$ whose law is $\mu_{\R_+^2}( \sum_{i=0}^{n+1} t_i; \bbi, 0)$.  

Equation~\eqref{eq-exc-fixed-times} explains how to decompose a Brownian excursion of duration $t = \sum_{i=0}^{n+1} t_i$ into $n+2$ trajectories of durations $t_0, \dots, t_{n+1}$. Next, we will choose $t$ randomly, then partition $[0,t]$ using a Poisson point process.
Let $\mathrm{Exp}_{\lambda}(dx) = 1_{x>0} \lambda e^{-\lambda x}$ be the law of the exponential random variable with intensity $\lambda$.  
\begin{lemma}\label{lem-eps-time-decomp}
	Let $\eps > 0$. Sample $t \sim \mathrm{Leb}_{\R_+}$, let $\Lambda$ be a Poisson point process on $[0,t]$ with intensity $\eps^{-1} \mathrm{Leb}_{[0,t]}$, and let $t_0, \dots, t_m$ be the successive lengths of the intervals comprising $[0,t] \backslash \Lambda$. Then the law of $(t_0, \dots, t_m)$ is $\eps\sum_{m \geq 0} \prod_{i=0}^m \mathrm{Exp}_{1/\eps}(dt_i)$.	
\end{lemma}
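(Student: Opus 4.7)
The plan is to carry out a direct calculation by disintegrating over the number of Poisson points. Write the joint law of $(t, \Lambda)$ as a sum over $m \geq 0$ of the contribution from $\{|\Lambda| = m\}$. On this event, parametrize $\Lambda$ by the ordered points $0 < s_1 < \cdots < s_m < t$. Since a Poisson point process on $[0,t]$ with intensity $\eps^{-1} \mathrm{Leb}_{[0,t]}$ has, for each $m$, ``density'' $e^{-t/\eps} \eps^{-m} \mathbbm 1_{0 < s_1 < \cdots < s_m < t} \, ds_1 \cdots ds_m$ on configurations with $m$ points (listed in increasing order), the joint law of $(t, s_1, \dots, s_m)$ restricted to $\{|\Lambda| = m\}$ equals
\[
e^{-t/\eps} \eps^{-m} \mathbbm 1_{0 < s_1 < \cdots < s_m < t} \, ds_1 \cdots ds_m \, dt.
\]

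Next, I would change variables from $(t, s_1, \dots, s_m)$ to the spacings $(t_0, \dots, t_m)$ defined by $t_0 = s_1$, $t_i = s_{i+1} - s_i$ for $1 \leq i \leq m-1$, and $t_m = t - s_m$. This linear change of variables has Jacobian determinant $1$, and the constraint $0 < s_1 < \cdots < s_m < t$ becomes $t_i > 0$ for all $i$, with $t = \sum_{i=0}^m t_i$ determined by the spacings. The pushforward measure on $\R_+^{m+1}$ is therefore
\[
e^{-t/\eps} \eps^{-m} \prod_{i=0}^m \mathbbm 1_{t_i > 0} \, dt_i
= \eps \prod_{i=0}^m \eps^{-1} e^{-t_i/\eps} \mathbbm 1_{t_i > 0} \, dt_i
= \eps \prod_{i=0}^m \mathrm{Exp}_{1/\eps}(dt_i),
\]
where in the first equality I have used $e^{-t/\eps} = \prod_{i=0}^m e^{-t_i/\eps}$ and pulled out one factor of $\eps$ to match $\eps^{-(m+1)}$. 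Summing over $m \geq 0$ yields the claimed formula.

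There is no real obstacle here; the computation is routine. The only points requiring a bit of care are the correct form of the Poisson intensity on ordered configurations (as opposed to the symmetric form with an $m!$ factor) and verifying that the infinite reference measure $\mathrm{Leb}_{\R_+}$ on $t$ yields a well-defined (infinite) measure on $\bigsqcup_{m \geq 0} \R_+^{m+1}$; both are handled by the change-of-variables computation above.
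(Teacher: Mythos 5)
Your proof is correct. It is a genuinely different route from the paper's: the paper invokes Palm's theorem for Poisson point processes on $\R_+$ to relate two sampling procedures for pairs $(\Lambda, t)$ with $t \in \Lambda$, and then uses the fact that the interval lengths of a Poisson process on $\R_+$ are i.i.d.\ $\mathrm{Exp}_{1/\eps}$; the factor of $\eps$ comes from rescaling $\eps^{-1}\mathrm{Leb}_{\R_+}$ to $\mathrm{Leb}_{\R_+}$. You instead write out the explicit joint density of $(t, s_1, \dots, s_m)$ on each event $\{|\Lambda| = m\}$, using the standard formula $e^{-t/\eps}\eps^{-m}$ on the ordered simplex, and then change variables to spacings with unit Jacobian, observing that $e^{-t/\eps}$ factorizes across the spacings. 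Your approach is more elementary and self-contained (it does not require Palm calculus), at the cost of being a coordinate-by-coordinate computation; the paper's approach is softer and, because it works directly with the Poisson process on $\R_+$ rather than on $[0,t]$ for each fixed $t$, it makes the appearance of i.i.d.\ exponentials and the overall $\eps$ prefactor conceptually transparent without any Jacobian bookkeeping. Both are valid, and the two proofs are of comparable length.
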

\begin{proof} 
	Let $P$ denote the law of a Poisson point process on $\R_+$ with intensity $\eps^{-1}\mathrm{Leb}_{\R_+}$.
	Palm's theorem for Poisson point processes \cite[Page 5]{kallenberg-random-measures} implies the following two procedures give the same law of pairs $(\Lambda, t)$ where $\Lambda \subset \R_+$ is countable and $t \in \Lambda$:
	\begin{itemize}
		\item 
		Sample $(t, \Lambda_0) \sim (\eps^{-1}\mathrm{Leb}_{\R_+}) \times P$, and let $\Lambda = \Lambda_0 \cup \{t\}$. 
		
		\item Sample $(M, \Lambda) \sim \mathrm{Count}_{\Z_+} \times P$, and let $t$ be the $M$th smallest element of $\Lambda$. Here, $\mathrm{Count}_{\Z_+}$ denotes the counting measure on $\Z_+ = \{ 1, 2, \dots\}$.
	\end{itemize}
	Note that if $\Lambda \sim P$, the law of the lengths of successive intervals comprising $\R_+ \backslash \Lambda$ is $(\mathrm{Exp}_{1/\eps})^{\mathbb Z_+}$. Thus, multiplying both laws by $\eps$ and looking at the laws of the intervals of $[0,t] \backslash \Lambda$ gives the result.
\end{proof}
Write $\mu^\eps(z, w) := \int_0^\infty \mu(t; z, w) \mathrm{Exp}_{1/\eps}(dt)$ and similarly define $\mu_{\R_+^2}^\eps$ and $\mu_{\R_+\times \R}^\eps$. 
Using Lemma~\ref{lem-eps-time-decomp} then~\eqref{eq-exc-fixed-times}, 
\begin{align}
	\int_0^\infty &\mu_{\R_+^2}(t; \bbi, 0) \, dt = \eps \sum_{m \geq 0} \int_{\R_+^{m+1}}\mu_{\R_+^2}\left(\sum_{i=0}^{m} t_i; \bbi, 0\right) \, \prod_{i=0}^{m} \mathrm{Exp}_{1/\eps}(dt_i) \label{eq-decomp-bm} \\
	&= \eps \mu_{\R_+^2}^\eps(\bbi, 0)  + \eps \sum_{n \geq 0}1_{E_{\R_+^2}}  \int_\C \mu_{\R_+ \times \R}^\eps(0, w_0) \, dw_0 \prod_{i=1}^n \left(\int_\C \mu^\eps(0, w_i)\, dw_i \right) \, \mu_{\R_+^2}^\eps\left(\sum_{i=0}^n w_j + \bbi, 0\right). \nonumber
\end{align}
Equation \eqref{eq-decomp-bm} says that if we take the total time duration $t$ to be ``sampled" from Lebesgue measure on $\R_+$, then the Brownian excursion measure $\mu_{\R_+^2}(t; \bbi, 0)$ can be written as a countable sum of measures indexed by the total number of points $m \geq 0$ in the Poisson point process. For each of these measures for $m \geq 1$ ($n = m-1 \geq 0$), the Brownian excursion can be decomposed as the union of $n+2$ sub-paths. We can sample these $n+2$ sub-paths as Brownian bridges with exponential durations, restricted to the event that they stay in the cone. 

\begin{proposition}\label{prop-disk-BM-decomp}
	In the setting of Theorem~\ref{thm-eps-indep}, 
	let $L_t$ (resp.\  $R_t$) be the LQG length of the counterclockwise (resp.\ clockwise) boundary arc of $\D \backslash \eta'([0,t])$ from $-\bbi$ to $\eta'(t)$. Writing $T_i = \sum_{j < i} t_j$, for $0\leq i \leq N$  define $(L_t^i, R_t^i)  = (L_{T_i + t} - L_{T_i}, R_{T_i + t} - R_{T_i}) $ for $t\in [0,t_i]$, and define $(L_t^{N+1}, R_t^{N+1})= (L_{T_{N+1} + t} - L_{T_{N+1} + t_{N+1}}, R_{T_{N+1} + t} - R_{T_{N+1} + t_{N+1}})$ for $ t \in [0,t_{N+1}]$.
	Then the joint law of $(L_t^0, R_t^0), \dots, (L_t^{N+1}, R_t^{N+1})$ equals, for some constant ${Z_\eps}$,  
	\eqb\label{eq-disk-decomp}
	{Z_\eps^{-1}} \sum_{n \geq 0}1_{E_{\R_+^2}}   \left(\int_{\C} \mu_{\R_+ \times \R}^\eps(0, w_0)\, dw_0\right)  \prod_{i=1}^n \left(\int_\C \mu^\eps(0, w_i)dw_i\right) \mu_{\R_+^2}^\eps \biggl(\sum_{j=0}^n w_j + \bbi, 0\biggr) .
	\eqe
\end{proposition}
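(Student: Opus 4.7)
The plan is to directly combine the mating-of-trees theorem (Proposition~\ref{prop-mot-disk}) with the Palm-style decomposition (Lemma~\ref{lem-eps-time-decomp}) and the Markov property~\eqref{eq-exc-fixed-times}.

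First I would translate the Poisson point process $\Lambda \subset \D$ into a Poisson point process on the time axis. Since $\eta'$ is parametrized by $\mu_\Phi$-area, the map $\eta'$ pushes Lebesgue measure on $[0,A]$ (with $A=\mu_\Phi(\D)$) forward to $\mu_\Phi$. Conditional on $\Phi$, the image $(\eta')^{-1}(\Lambda)$ is a Poisson point process on $[0,A]$ with intensity $\eps^{-1}$ times Lebesgue measure, and jointly with $(L_\cdot,R_\cdot)$ it depends only on $A$. In particular, given the path, the cuts $0 < T_1 < \cdots < T_{N+1} < A$ induced by $\Lambda$ are a Poisson sample on $[0,A]$ with intensity $\eps^{-1}$, and the increments $(L^i_\cdot,R^i_\cdot)$ are exactly the corresponding sub-paths of the excursion.

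Next I would write down the joint non-normalized measure. By Proposition~\ref{prop-mot-disk} the law of $(L_\cdot,R_\cdot)$ is $C^{-1} \int_0^\infty \mu_{\R_+^2}(t;\bbi,0)\,dt$ for the normalizing constant $C = \bigl|\int_0^\infty \mu_{\R_+^2}(t;\bbi,0)\,dt\bigr|$. Joining with the conditionally Poisson cuts gives the joint non-normalized measure on (path, interval lengths $(t_0,\dots,t_m)$) equal to
\[
C^{-1}\int_0^\infty \mu_{\R_+^2}(t;\bbi,0)\,dt \,\otimes\, \text{Poi}_{\eps^{-1}}([0,t]).
\]
Lemma~\ref{lem-eps-time-decomp} identifies the right-hand side, after marginalizing over the path, with the infinite measure $\eps\sum_{m\geq 0}\prod_{i=0}^m \mathrm{Exp}_{1/\eps}(dt_i)$. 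Since the conditional law of the path given $A=\sum t_i$ is $\mu_{\R_+^2}(A;\bbi,0)^\#$, the joint non-normalized measure on (path, interval lengths) becomes
\[
C^{-1}\eps\sum_{m\geq 0}\Bigl(\prod_{i=0}^m \mathrm{Exp}_{1/\eps}(dt_i)\Bigr)\otimes \mu_{\R_+^2}\Bigl(\textstyle\sum_{i=0}^m t_i;\bbi,0\Bigr).
\]

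I would then apply the Brownian Markov property~\eqref{eq-exc-fixed-times} with $n = m-1$ to decompose each $\mu_{\R_+^2}(\sum t_i;\bbi,0)$ into $m+1$ independent Brownian bridge segments of durations $t_0,\dots,t_m$, introducing intermediate displacements $w_0,\dots,w_{m-1}$. Integrating out each $t_i$ against $\mathrm{Exp}_{1/\eps}(dt_i)$ collapses $\int_0^\infty \mu(t_i;0,w_i)\,\mathrm{Exp}_{1/\eps}(dt_i)$ into $\mu^\eps(0,w_i)$, and similarly for $\mu_{\R_+\times\R}$ and $\mu_{\R_+^2}$. This produces exactly the $n$-th summand of~\eqref{eq-disk-decomp} (with $n=m-1$), with the cone indicator $1_{E_{\R_+^2}}$ remaining to enforce that the concatenated excursion stays in $\R_+^2$. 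Conditioning on $|\Lambda|>0$ (i.e.\ $m\geq 1$, $n\geq 0$) drops the $m=0$ term and gives the sum over $n\geq 0$ appearing in~\eqref{eq-disk-decomp}, and $Z_\eps$ is obtained by absorbing the constants $C$, $\eps$, and the probability of $\{|\Lambda|>0\}$ into a single normalization.

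The only real obstacle is bookkeeping: checking that passing between the probability measure of Proposition~\ref{prop-mot-disk} and the infinite measure of Lemma~\ref{lem-eps-time-decomp} is justified (which comes from Fubini on bounded cylinder events), and that the shifted-endpoint conventions in the definition of $(L^i_\cdot,R^i_\cdot)$ match those of $\mu^\eps_{\R_+\times\R}(0,w_0)$, $\mu^\eps(0,w_i)$, and $\mu^\eps_{\R_+^2}(\sum w_j + \bbi,0)$ after the concatenation shift $\bbi \mapsto 0$. All three kinds of conformal/topological subtleties that plague later sections are absent here, so once these identifications are made the identity~\eqref{eq-disk-decomp} is immediate.
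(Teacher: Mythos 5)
Your argument is correct and follows the same route as the paper: push the Poisson point process forward to the time axis via the measure-preserving parametrization of $\eta'$, then combine Proposition~\ref{prop-mot-disk}, Lemma~\ref{lem-eps-time-decomp}, and the Markov decomposition~\eqref{eq-exc-fixed-times}, and finally restrict to $|\Lambda|>0$. The only difference is that you re-derive~\eqref{eq-decomp-bm} inline rather than citing it directly.
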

\begin{proof}
	We use Proposition~\ref{prop-mot-disk}. Let $t = \mu_h(\D)$. Since $\eta'$ is parametrized by LQG area, it is a measure-preserving transform from $([0,t], \mathrm{Leb}_{[0,t]})$ to $(\D, \mu_h)$, so if $\Lambda\subset \D$ is a Poisson point process with intensity $\eps^{-1}\mu_h$, then $\eta^{-1}(\Lambda)$ is a Poisson point process on $[0,t]$ with intensity $\eps^{-1}\mathrm{Leb}_{[0,t]}$. Restricting to the event that there are at least two cells corresponds to discarding the term $\eps \mu_{\R_+^2}^\eps(i, 0)$ in~\eqref{eq-decomp-bm}. This gives the result with ${Z_\eps}$ the normalization constant making~\eqref{eq-disk-decomp} a probability measure. 
\end{proof}

We now define the functions $P^\eps_\mathrm{init}, P^\eps, P^\eps_\mathrm{end}$, then Proposition~\ref{prop-decomp-quilt-cell} is essentially an immediate consequence of Proposition~\ref{prop-disk-BM-decomp}.
For a Brownian excursion $(L_t, R_t)$ sampled from  $ \int_{\C} \mu^\eps (0,w)\, dw$ whose (random) duration we call $T$, let $P^\eps$ be the density function for 
$(-\inf_{t \in [0,T]} L_t, L_T -\inf_{t \in [0,T]} L_t, -\inf_{t \in [0,T]} R_t, R_T -\inf_{t \in [0,T]} R_t )$. 
For $(L_t, R_t)$ sampled from $\int_{\R_+ \times \R}\mu_{\R_+ \times \R}^\eps(0, w)\, dw$ with random duration $T$, let $P^\eps_\mathrm{init}$ be the density function for $(L_T,-\inf_{[0,t]}R_t, R_T -\inf_{[0,t]}R_t)$.
Let $P^\eps_\mathrm{end}(\ell^-, r^-) = |\mu_{\R_+^2}^\eps(\ell^- + \bbi r^-, 0)|$ for $\ell^-, r^- > 0$.

\begin{proof}[Proof of Proposition~\ref{prop-decomp-quilt-cell}]
	Recall the processes $(L_t^0, R_t^0), \dots, (L_t^{N+1}, R_t^{N+1})$ defined in Proposition~\ref{prop-disk-BM-decomp}. Restrict to the event $\{ N = n\}$. 
	We can describe the lengths $\ell_i^\pm, r_i^\pm$ in terms of these processes: 
	\begin{equation} \label{eq-bm-length}
		\begin{gathered}
			\ell_0^+ = L^0_{t_0}, \quad  r_0^- = -\inf_{t \leq t_0} R^0_t, \quad r_0^+ = R_{t_0}^0 + r_0^-, \quad \ell_{n+1}^ - = L_0^{n+1}, \quad  r_{n+1}^- = R_0^{n+1},\\
			\ell_i^- = -\inf_{t \leq t_i} L^i_t,\quad \ell_i^+ = L^i_{t_i} + \ell_i^-, \quad r_i^- = -\inf_{t \leq t_i} R^i_t, \quad r_i^+ = R^i_{t_i} + r_i^- \qquad \text{ for } i = 1,\dots, n.
		\end{gathered}
	\end{equation}
	Now, since the Brownian motion event $E_{\R_+^2}$ corresponds to the lengths lying in $S_n$, by Proposition~\ref{prop-disk-BM-decomp} the law of the lengths $(\ell_0^+, r_0^-, r_0^+)$, $(\ell_i^-, \ell_i^+, r_i^-, r_i^+)_{i \leq n}$  is~\eqref{lem-Lepsk}.
	
	Next, we study the conditional law of the $\cC_i$ given the lengths. Let $\ell^\pm, r^\pm >0$. 
	Let $\cP_\mathrm{init}^{\ell^+, r^-, r^+}$ be the law of $(L_t, R_t)_{[0,T]}$ sampled from $\int_{\C} \mu_{\R_+ \times \R}^\eps(0, w)\, dw$ and conditioned on $\{\ell^+ = L_{T},\: r^- = -\inf_{t \leq T} R_t, \: r^+ = R_T + r^-\}$. Let $\cP^{\ell^-, \ell^+, r^-, r^+}$ be the law of $(L_t, R_t)_{[0,T]}$ sampled from $\int_\C \mu^\eps(0, w)\,dw$ and conditioned on $\{ \ell^- = -\inf_{t \leq T} L_t,\: \ell^+ = L_{T} + \ell^-, \: r^- = -\inf_{t \leq T} R_t,\: r^+ = R_T + r^-\}$. Let $\cP^{\ell^-, r^-}_\mathrm{end}$ be the probability measure proportional to $\mu_{\R_+^2}^\eps (\ell^- + \bbi r^-, 0)$.
	
	By Proposition~\ref{prop-disk-BM-decomp}, 
	conditioned on $N= n$ and on the lengths $(\ell_0^+, r_0^-, r_0^+)$, $(\ell_i^-, \ell_i^+, r_i^-, r_i^+)_{1 \leq i \leq n}$, $(\ell_{n+1}^-, r_{n+1}^-)$, the processes $(L_t^0, R_t^0), \dots, (L_t^{n+1}, R_t^{n+1})$ have the conditional law $\cP_\mathrm{init}^{\ell_0^+, r_0^-, r_0^+} \times \prod_{i=1}^n \cP^{\ell_n^-, \ell_n^+, r_n^-, r_n^+} \times \cP_\mathrm{end}^{\ell_{n+1}^-, r_{n+1}^-}$. By, e.g., \cite[Lemma 2.17]{ag-disk}, $\cC_i$ is measurable with respect to $(L_t^i, R_t^i)$ for each $i =  0, \dots, n+1$. This completes the proof. 
\end{proof}

\subsection{The planar map structure of the  discretized LQG disk}\label{subsec-quilt-def}
In this section, we introduce \emph{templates} and \emph{quilts} to describe the decorated planar map structure of the discretized LQG disk, see Figure~\ref{fig-template}. The main result of this section is a factorization result for quilts (Lemma~\ref{lem-unif-temp}), which arises from our factorization for sequences of cells (Proposition~\ref{prop-decomp-quilt-cell}).

Consider planar maps with the sphere topology, with a possibly empty subset of faces marked as holes; in what follows we will only use the term ``face'' to refer to non-hole faces. A \emph{template} $T$ is such a planar map such that on each face we mark one or more vertices, one of which is distinguished as the root vertex for the face. Note that if $v$ is a marked vertex for one face, it need not be a marked vertex for other faces. 
We call a face $F$ of $T$ a \emph{$k$-gon} if it has $k$ marked vertices, and we call a collection of edges between consecutive marked vertices of $F$ a \emph{side} of $F$. We denote the edge set of the planar map $T$ by $E_T$. 
A \emph{quilt} is obtained from a template by assigning a positive length to each edge of $E_T$; the set of all quilts with template $T$ can can be identified with $\R_+^{E_T}$. 

\begin{figure}[ht!]
	\begin{center}
		\includegraphics[scale=0.33]{}%
		\caption{\label{fig-template} \textbf{Left:} Picture of a template with one 1-gon (shaded), one 2-gon, one 3-gon and $n=21$ 4-gons. Each face has a root vertex (black) and possibly other marked vertices (pink). For $T \in \cT$ we can order the polygons (red curve passing through $F_0, F_1, \dots, F_n, F_{n+1}$). \textbf{Middle:} The template is a planar map with vertices shown in black. Its edge set is called $E_T$.  
			A quilt can be obtained from the template by assigning a positive length to each edge. If $T \in \cT$ we can color the left (resp.\ right) boundary arcs of each face red (resp.\ blue) to get a tree and a cycle-rooted forest. \textbf{Right:} Figure for Lemma~\ref{lem-unif-temp}. Order $E_T^\ell$ (red edges)  by the first time an edge is traversed in a counter-clockwise contour around the tree. Each black arrow corresponds to a left side of a face of $T$. Each arrow traverses one or more edges, the earliest of which is bolded. The key to proving Lemma~\ref{lem-unif-temp} is the observation that each edge is bolded in exactly one arrow.
		}
	\end{center}
\end{figure}

In the setting of Theorem~\ref{thm-eps-indep}, the curve-decorated LQG disk is  split into decorated LQG surfaces $\cC_0, \dots, \cC_{N+1}$ (defined in Section~\ref{subsec-decomp-cells}) parametrized by the regions $D_i$ traced out by the space-filling SLE between times when it hits the Poisson point process. 
We introduce templates and quilts to describe the planar map underlying this discretization, see Figure~\ref{fig-BM} (middle). We will construct a template with faces $F_0, \dots, F_{n+1}$ corresponding to the LQG surfaces $\cC_0, \dots, \cC_{n+1}$ respectively, and a final exterior face $F_\mathrm{ext}$. The marked points of $\cC_i$ correspond to the marked vertices of $F_i$, so the faces are a 2-gon, a 3-gon, and $n$ 4-gons. For a 3-gon, we call the vertex clockwise of the root vertex the terminal vertex, and for a 2-gon or 4-gon we call the vertex opposite the root vertex the terminal vertex. 
The root and terminal vertices of $F_i$ correspond to the starting and ending points of the SLE curve in $\cC_i$.

When $\gamma \in (0, \sqrt2]$, this quilt is obtained as follows. Define the faces $(F_\mathrm{ext}, F_0, \dots, F_{N+1}) = ( \hat \C \backslash \D, D_0, \dots, D_{N+1})$ and the vertex set  $\bigcup_{i=0}^{N+1} \{a_i, b_i, c_i, d_i\}$, where the points $a_i, b_i, c_i, d_i\in \partial D_i$ are as in Theorem~\ref{thm-eps-indep}. The edges are the connected components of $(\partial \D \cup  \bigcup_{i=0}^{N+1} \partial D_i) \backslash (\bigcup_{i=0}^{N+1} \{ a_i, b_i, c_i, d_i\})$, and each is labeled with its LQG length. Each $F_i$ is given the marked vertices $a_i, b_i, c_i, d_i\in \partial D_i$ as defined in Theorem~\ref{thm-eps-indep}, the root vertex of $F_i$ is $a_i$, and $F_\mathrm{ext}$ is a 1-gon with root vertex $a_0$.   See Figure~\ref{fig-template}.

We now give an equivalent iterative definition which applies for all $\gamma \in (0,2)$.  Let $a_0 = b_0 = -\bbi$, $c_0 = 1$ and $d_0 = 0$. Draw radii from $d_0$ to $b_0$ and $c_0$ to cut $\D$ into a quarter-circle $F_0$ and a three-quarter-circle $U_1$. Consider the quilt with three faces: the 1-gon $F_\mathrm{ext} = \hat \C \backslash \D$ with root vertex $a_0$, the 3-gon $F_0$ with root vertex $a_0$ and remaining marked vertices $c_0, d_0$, and the 2-gon $U_1$ with root vertex $d_0$ and other marked vertex $-\bbi$. Assign lengths to the edges such that the side lengths of $\partial F_0$ correspond to the side lengths of $\cC_0$, and the sum of lengths along $\partial \D$ is 1. 
For $j = 1, \dots, N$, we iteratively do the following. Let $\ell_j^-, \ell_j^+, r_j^-, r_j^+$ be the boundary lengths of $\cC_j$ as defined in Section~\ref{subsec-decomp-cells}. Let $a_j = d_{j-1}$. Add a vertex $b_j$ on the clockwise arc of $\partial U_j$ from $a_j$ to $-\bbi$ such that, if the previous vertex on this arc is $v$ and the next vertex is $v'$, the sum of lengths from $a_j$ to $v$ is less than $\ell_j^-$ and the sum of lengths from $a_j$ to $v'$ is greater than $\ell_j^-$. Split the edge between $v$ and $v'$ at $b_j$, with lengths assigned such that the sum of lengths from $a_j$ to $b_j$ is $\ell_j^-$. 
Likewise add a vertex $c_j$ on the counterclockwise arc from $a_j$ to $-\bbi$ such that the sum of lengths from $a_i$ to $c_i$ is $r_j^-$. Finally, mark a bulk point $d_j \in U_j$, add edges from $d_j$ to $b_j, c_j$, and give them lengths $\ell_j^+$ and $r_j^+$ respectively. This splits $U_j$ into two faces; let $U_{j+1}$ be the face having $-\bbi$ on its boundary and $F_j$ the other face. Let $F_j$ have root $a_j$ and remaining marked vertices $b_j, c_j, d_j$, and let $U_{j+1}$ have root vertex $d_j$ and other marked vertex $-\bbi$. After carrying this procedure out for $j = 1, \dots, N$, we are left with a quilt with faces $F_\mathrm{ext}, F_0, \dots, F_N, U_{N+1}$; rename $U_{N+1}$ to $F_{N+1}$ to complete the definition.

\begin{remark}[Relationship with mated-CRT map]
	An equivalent way to define the random quilt arising in the setting of Theorem~\ref{thm-eps-indep} is as follows. Our setting is closely related to that of the mated-CRT map (see \cite{ghs-map-dist}), except we discretize via a Poisson point process rather than at equal intervals (see  Remark~\ref{rem-mated-crt}). We can construct a variant of the mated-CRT map from the discretization in exactly the same way; in this map, each $D_i$ corresponds to a vertex. Take the dual of this mated-CRT map variant, and for each edge between a pair of consecutive faces (i.e.\ faces traced successively by the space-filling curve), split it by adding a degree 2 vertex. Marked vertices and lengths can be assigned to obtain the quilt.
\end{remark}

We now describe the set of templates $\cT$ that can arise from the discretized LQG disk by the above correspondence. See Figure~\ref{fig-template}. 

\begin{definition}\label{def-T}
	Let $\cT$ be the collection of templates $T$ with the following desciption: $T$ has no holes and the face set of $T$ can be called $F_\mathrm{ext}, F_\mathrm{0}, F_1, \dots, F_n, F_{n+1}$ for some $n \geq 0$. The face $F_\mathrm{ext}$ is a 1-gon, $F_0$ is a 3-gon, the faces $F_1, \dots, F_n$ are 4-gons, and $F_{n+1}$ is a 2-gon. Moreover,
	\begin{enumerate}[(a)]
		\item The root vertices of $F_\mathrm{ext}$ and $F_0$ agree, and the side of $F_0$ counterclockwise of the root vertex lies on $\partial F_\mathrm{ext}$. \label{item-T-a}
		\item For $i = 0, 1, \dots, n$ the terminal vertex of $F_i$ is the root vertex of $F_{i+1}$, and the terminal vertex of $F_{n+1}$ is the root vertex of $F_0$.  (This corresponds to the endpoint of the SLE curve in $\cC_i$ agreeing with the starting point of the curve in $\cC_{i+1}$.)
		\label{item-T-b}
		\item For $i = 1, \dots, n+1$ the root vertex of $F_i$ has degree 2, and all other marked vertices of $F_i$ have degree 3. (This corresponds to the non-consecutive $\cC_i$ intersecting generically almost surely, see e.g.\ \cite[Figure 1]{gms-harmonic}.)  \label{item-T-c}
		\item For $i = 1, \dots, n$, the two sides of $F_i$ adjacent to its root vertex both lie on $\partial (F_\mathrm{ext} \cup \bigcup_{j=0}^{i-1} F_j)$, while the other two sides of $F_i$ only intersect $\partial (F_\mathrm{ext} \cup \bigcup_{j=0}^{i-1} F_j)$ at endpoints. (This corresponds to the definition of the marked points of $\cC_i$.) \label{item-T-d}
	\end{enumerate}
	For $T\in \cT$ let $\cQ_T$ be the set of all quilts with template $T$.
\end{definition}
By the correspondences described in Definition~\ref{def-T} between~\eqref{item-T-b}-\eqref{item-T-d} and the properties of $(\cC_0, \dots, \cC_{N+1})$, 
there is a.s.\ a unique template $T \in \cT$ describing the discretized LQG disk via the correspondence stated right above Definition~\ref{def-T}. Thus, there is a.s.\ a unique quilt $Q \in \cQ_T$ such that the side lengths of $F_i$ agree with those of $\cC_i$ for $0\leq i \leq N+1$. Let $\cL^\eps$ denote the law of the random quilt $Q$ arising from the discretization of the LQG disk, so $\cL^\eps$ is a probability measure on $\bigcup_{T\in \cT}\cQ_T$. (This is a slight abuse of notation since  Proposition~\ref{prop-decomp-quilt-cell} defines $\cL^\eps$ to be the law of the side lengths of $\cC_0, \dots, \cC_{N+1}$, but these lengths encode the same information as $Q$.)

For each $T \in \cT$ we denote the restriction of $\cL^\eps$ to $\cQ_T$ by $\cL^\eps|_{\cQ_T}$; this is a finite non-probability measure. We will also need the following notation.

\begin{defn} \label{def-unif} 
Let $\mathrm{Unif}_T$ be the infinite measure on  $\cQ_T$ given by Lebesgue measure on $\R_+^{E_T}$. 
\end{defn}

We give a factorization for quilts with template $T$.

\begin{lemma}\label{lem-unif-temp}
	Fix a template $T \in \cT$ with $n+3$ faces $F_\mathrm{ext}, F_0, \dots, F_{n+1}$. For a quilt $Q \in \cQ_T$, let $C_i$ be the tuple of side lengths of face $F_i$, and define  $w(Q) := Z_\eps^{-1} P^\eps_\mathrm{init}(C_0) \times \prod_{i=1}^n P^\eps(C_i) \times P^\eps_\mathrm{end} (C_{n+1})$ with $Z_\eps,P^\eps_\mathrm{init}, P^\eps, P^\eps_\mathrm{end}$ as in~\eqref{lem-Lepsk}. Then
	\[\cL^\eps|_{\cQ_T}(dQ) =  w(Q)  \mathrm{Unif}_T(dQ).\] 
\end{lemma}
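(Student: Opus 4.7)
The proof combines Proposition~\ref{prop-decomp-quilt-cell} with a change-of-variables computation. Proposition~\ref{prop-decomp-quilt-cell} expresses the joint law of the cell side length tuples $(C_0,C_1,\ldots,C_n)\in S_n$ as $w$ times $\mathrm{Leb}_{S_n}$, with $C_{n+1}$ determined from the other $C_i$ via~\eqref{eq-n+1-lengths}, which encode the unit boundary length of $\mathrm{UQD}_{\ccL}$. My plan is to construct an explicit bijection $\phi\colon S_n \to \{Q\in\cQ_T : \sum_{e\subset\partial\D}\mathrm{len}(e)=1\}$ with absolute Jacobian $1$, and then to pull back the density through $\phi$.

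The map $\phi$ is precisely the iterative construction of Section~\ref{subsec-quilt-def}. The base (pre-iteration) quilt $F_0\cup U_1\cup F_{\mathrm{ext}}$ has four edges: the three sides of $F_0$, with lengths $\ell_0^+,r_0^+,r_0^-$, and the complementary $\partial\D$-edge of length $1-r_0^-$; this base case has Jacobian $1$. For each $j=1,\ldots,n$, appending $F_j$ introduces the four new variables $C_j=(\ell_j^-,\ell_j^+,r_j^-,r_j^+)$ and four new edges: the two direct edges $b_j d_j$ and $d_j c_j$ receive lengths $\ell_j^+$ and $r_j^+$, while placing $b_j$ and $c_j$ splits two preexisting edges, each edge of length $L$ being replaced by two edges of lengths $\ell_j^-$ (resp.\ $r_j^-$) and $L-\ell_j^-$ (resp.\ $L-r_j^-$). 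The two direct assignments contribute Jacobian $1$, while each split $(L,\ell_j^-)\mapsto(\ell_j^-,L-\ell_j^-)$ has absolute Jacobian $1$. Inductively, the absolute Jacobian of $\phi$ is $1$. The residual region $F_{n+1}$ contributes no new edges, and its two side lengths automatically satisfy~\eqref{eq-n+1-lengths}.

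Hence $\phi_*\mathrm{Leb}_{S_n}$ equals $\mathrm{Unif}_T$ read as Lebesgue on the codimension-one subspace $\{B=1\}\subset\cQ_T$ (where $B$ denotes total boundary length), which is exactly where $\cL^\eps|_{\cQ_T}$ is concentrated. Combining with Proposition~\ref{prop-decomp-quilt-cell} yields the identity $\cL^\eps|_{\cQ_T}(dQ)=w(Q)\,\mathrm{Unif}_T(dQ)$. The main obstacle will be the combinatorial bookkeeping at each iteration, since $b_j$ or $c_j$ may land on an edge that was itself created by an earlier split (or may lie on $\partial\D$ versus on a prior cell's side), requiring one to track how that edge's length depends on $C_0,\ldots,C_{j-1}$; however, in every case the split cleanly introduces $\ell_j^-$ (or $r_j^-$) as a new coordinate and subtracts it from a single preexisting coordinate, preserving the unit Jacobian.
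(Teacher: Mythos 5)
Your approach is correct and reaches the same key claim---that the affine map between the cell side lengths $(\ell_0^+, r_0^-, r_0^+, (\ell_i^\pm, r_i^\pm)_{1\leq i \leq n})$ and the quilt edge lengths has unit Jacobian---but by a genuinely different route. The paper argues globally: it singles out the tree $E_T^\ell$ of left edges, orders them by depth-first contour time, and uses property~\eqref{item-T-c} of Definition~\ref{def-T} to show that the zero-one matrix sending edge lengths to the $\ell_i^\pm$ is upper triangular with unit diagonal, then repeats this for the right edges. You instead run the iterative construction of Section~\ref{subsec-quilt-def}, appending one face at a time and checking that each elementary step has absolute Jacobian one. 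The global argument gives a cleaner, self-contained proof and exposes how the combinatorial axioms of Definition~\ref{def-T} enter; your iterative argument is more concrete and makes the unit-boundary-length constraint (hence the codimension-one support of $\cL^\eps|_{\cQ_T}$ inside $\cQ_T \cong \R_+^{E_T}$) explicit, a point the paper itself treats rather casually.

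One formula to correct: the split you display, $(L, \ell_j^-) \mapsto (\ell_j^-, L - \ell_j^-)$, holds only when $b_j$ lands in the edge immediately clockwise of $a_j$. In general the split edge $(v,v')$ of length $L$ sits at cumulative distance $s > 0$ from $a_j$, and the new lengths are $\ell_j^- - s$ and $L - \ell_j^- + s$. Since $s$ is a linear function only of coordinates that this step holds fixed, the $2\times 2$ block of the step Jacobian in the variables $(L, \ell_j^-)$ still has determinant $\pm 1$ and the full step matrix is block-triangular, so your conclusion survives; but the displayed map is only the $s=0$ case, and the bookkeeping you flag at the end should be folded into the displayed computation rather than deferred.
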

\begin{proof}
	Recall $S_n$ from~\eqref{lem-Lepsk} which parametrizes the space of quilts from $\cL^\eps$ with $n+3$ faces via the lengths $(\ell_0^+, r_0^-, r_0^+), (\ell_i^-, \ell_i^+, r_i^-, r_i^+)_{1\leq i \leq n}$. Let $S_T \subset S_n$ be the set of lengths such that the resulting quilt has template $T$. Recall that $E_T$ is the edge set of $T$ and that a quilt with template $T$ is identified with its collection of edge lengths $(x_e)_{e \in E_T} \in \R_+^{E_T}$. Consider the affine bijection $f:\R_+^{E_T} \to S_T$ sending $(x_e)_{e \in E_T} \mapsto (\ell_0^+, r_0^-, r_0^+), (\ell_i^-, \ell_i^+, r_i^-, r_i^+)_{i \leq n}$. We will give a combinatorial argument that shows the Jacobian matrix of $f$ has unit determinant, so $f^{-1}_*\mathrm{Leb}_{S_T}=\mathrm{Unif}_T$; combining with~\eqref{lem-Lepsk} then gives the desired result. 
	
	For $1 \leq i \leq n$, for the 4-gon $F_i$ let $L_i^-$ be the side clockwise of the root vertex and $L_i^+$ the side clockwise of $L_i^-$. 
	(Under the correspondence described in Definition~\ref{def-T}, $L_i^-$ and $L_i^+$ correspond to the lower left and upper left marked boundary arcs of the $i$th space-filling SLE segment, respectively.)
	Let $L_0^+$ be the side of $F_{0}$ clockwise of its root vertex. For $(i, \pm) \in \{ 1, \dots, n\} \times \{+,-\}$  and $(i, \pm) = (0, +)$, let $E_i^\pm \subset E_T$ be the set of edges that lie on the side $L_i^\pm$. Let $E_T^\ell = \bigcup_{i, \pm} E_i^\pm$. See  Figure~\ref{fig-template} (right). 
	By definition $\ell_i^\pm = \sum_{e\in E_i^\pm} x_e$;  this gives the affine bijection between $(\ell_0^+, \ell_1^-, \ell_1^+, \dots, \ell_n^-, \ell_n^+)$ and $(x_e)_{e \in E_T^\ell}$.
	We note for later use that this bijection implies $|E_T^\ell| = 2n+1$. 
	
	The edges $E_T^\ell$ form a tree rooted at the root vertex $v_0$ of $F_0$. (This tree corresponds to the tree of flow lines forming the left boundaries of the mated-CRT map cells.) 
	Consider the counterclockwise contour around this tree starting at $v_0$, i.e., the depth-first exploration starting at $v_0$. This contour traverses each edge of $E_T^\ell$ twice, once in each of the two possible directions. Furthermore, this contour passes through the faces in increasing order, see Figure~\ref{fig-template} (left) and (middle). This follows from property~\eqref{item-T-b} of Definition~\ref{def-T}. 
	Thus, it traverses the sides $L_0^+, L_1^-, L_1^+, \dots, L_n^+$ in order (tracing the edges on each $+$/$-$ side in the direction of increasing/decreasing distance from $v_0$  in the tree), then traces the left side of $F_{n+1}$ to return to $v_0$, see Figure~\ref{fig-template} (right). We assign an ordering $\prec$ to $E_T^\ell$ by the time an edge is \emph{first} visited by the contour. 
	
	We claim that for each $e \in E_T^\ell$, there is some $(i, \pm)$ such that $e \in E_i^\pm$ and  $e$ is the $\prec$-earliest edge in $E_i^\pm$, see Figure~\ref{fig-template} (right). 
	Let $j$ be the index such that $e \in E_j^+$. If $e$ is the $\prec$-earliest edge of $E_j^+$ (i.e.\ the edge of $L_j^+$ closest to $v_0$) we are done. Otherwise, let $v$ be the endpoint of $e$ closer to $v_0$. By~\eqref{item-T-c}, every internal vertex of the tree has degree 3, and, in our counterclockwise traversal, after tracing $L_j^+$  we have only visited two of the edges adjacent to $v$. Let $k > j$ be the index such that the side $L_k^+$ is traversed starting from $v$. Then $v$ is the endpoint of $L_k^-$, so $e$ is the $\prec$-earliest edge of $E_k^-$.
	Thus the claim holds.   Further, since $|E_T^\ell| = 2n+1=|\{L_0^+, L_1^-, \dots, L_n^+\}|$, each $e$ is the $\prec$-earliest edge of exactly one $L_i^\pm$.

	Consequently, if we order $E_T^\ell$ by $\prec$ and order the sides $L_i^\pm$ by the $\prec$-earliest edge they contain, the zero-one matrix sending $(x_e)_{e \in E_T^\ell}$ to $(\ell_0^+, \ell_1^-, \ell_1^+, \dots, \ell_n^-, \ell_n^+)$ is upper triangular with ones on the diagonal, and so has determinant $1$. 
	A similar argument shows the zero-one matrix 
	sending $(x_e)_{e \in E_T \backslash E_T^\ell}$ to $(r_0^-, r_0^+, r_1^-, r_1^+, \dots, r_n^-, r_n^+)$ has unit determinant. Thus $f$ has unit determinant as needed. 	
\end{proof}

\subsection{Independence of complementary subquilts}\label{subsec-indep-quilts}
Recall  $\cL^\eps$ is the probability measure on quilts arising from  discretized LQG disks (see Lemma~\ref{lem-unif-temp}). The goal of this section is to show Proposition~\ref{prop-indep-subquilts}: informally, a quilt sampled from $\cL^\eps$ conditioned on having a specific subquilt has conditionally independent  complementary subquilts. The proof uses our earlier factorization for quilts with a specified template (Lemma~\ref{lem-unif-temp}), a combinatorial factorization for the set of  templates having a fixed subtemplate (Proposition~\ref{prop-product-templates}) whose proof we defer to Section~\ref{subsec-topo}, and a factorization of Lebesgue measures (Lemma~\ref{lem-leb-fac}).

Let $T^*$ be a template in $\cT$ with a connected subset of faces marked. The marked faces give rise to another template as follows: say two faces are adjacent if their boundaries share an edge, replace each connected component of the non-marked faces with a hole, then delete every vertex except marked vertices of the faces and vertices adjacent to multiple faces. We call this the marked \emph{subtemplate} of $T^*$. Each edge of the marked subtemplate corresponds to the union of one or more edges of the quilt (since deleting vertices amounts to merging edges). 

Given a quilt $Q \in \bigcup_{T \in \cT} \cQ_T$, if we mark a connected subset of its faces, the marked \emph{subquilt} is the quilt whose template is the marked subtemplate and whose lengths are inherited from $Q$. We say that a quilt $M$ is a subquilt of $Q$ if one can mark a subset of faces of $Q$ so that the marked subquilt agrees with $M$. See Figure~\ref{fig-M} (a),(b).

Fix a connected template  $T_\mathrm{sub}$ with $b \geq 1$ holes labelled $1, \dots, b$, whose faces are all 4-gons except for a 1-gon, 2-gon and 3-gon. Suppose further that $T_\mathrm{sub}$ can be realized as a marked subtemplate of a template in $\cT$, see Figure~\ref{fig-M} (a)-(c), so in particular its holes share no common boundary edges. Let $\cQ_{T_\mathrm{sub}}$ be the set of quilts having template $T_\mathrm{sub}$ and whose 1-gon has length 1. The goal of this section is the following. 

\begin{proposition}\label{prop-indep-subquilts}
	Recall  the probability measure $\cL^\eps$ on quilts arising from  discretized LQG disks in Lemma~\ref{lem-unif-temp}.
	For fixed $T_\mathrm{sub}$, the following holds for all  $M \in \cQ_{T_\mathrm{sub}}$. Sample a quilt $Q$ from $\cL^\eps$ conditioned on $\{ Q \text{ has } M \text{ as a subquilt}\}$, and let $Q_i$ be the subquilt of $Q$ obtained by marking the faces of $Q$ in the $i$th hole of $M$. Then $Q_1, \dots, Q_b$ are conditionally independent. 
\end{proposition}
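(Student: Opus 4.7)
The plan is to combine three factorization properties. Lemma~\ref{lem-unif-temp} gives $\cL^\eps|_{\cQ_T}(dQ) = w(Q)\,\mathrm{Unif}_T(dQ)$ for each $T \in \cT$, with $w(Q)$ a product over the faces of $T$ of per-face weights depending only on each face's side lengths. Proposition~\ref{prop-product-templates} (to be proved in Section~\ref{subsec-topo}) will provide a combinatorial bijection $\cT(T_\mathrm{sub}) \cong \prod_{i=1}^b \cF_i$, where $\cT(T_\mathrm{sub}) \subset \cT$ denotes the set of templates containing $T_\mathrm{sub}$ as a subtemplate and $\cF_i$ is the set of admissible fillings of the $i$th hole. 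Lemma~\ref{lem-leb-fac} then lets us factor Lebesgue measure on $\R_+^{E_T}$ along any suitable partition of the edge set.

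First I would fix a template $T \in \cT(T_\mathrm{sub})$ corresponding to a tuple of fillings $(T_1,\dots,T_b)$, and partition $E_T = E_\mathrm{fixed} \sqcup \bigsqcup_{i=1}^b \wt E_i$. Here $E_\mathrm{fixed}$ collects those edges of $T$ which coincide with subtemplate edges lying in the interior of $T_\mathrm{sub}$ (i.e.\ not on any hole boundary), and $\wt E_i$ collects the edges attributable to the $i$th hole: those lying strictly inside hole $i$, together with the edges of $T$ whose merger through degree-two vertices yields a subtemplate edge on the boundary of hole $i$. The hypothesis that the holes of $T_\mathrm{sub}$ share no common boundary edges makes this assignment unambiguous. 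Simultaneously, the face set of $T$ partitions into the fixed faces of $T_\mathrm{sub}$ and the faces contributed by each $T_i$.

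Next I would translate the event $\{Q \text{ has } M \text{ as a subquilt}\}$ into edge-length constraints: on $E_\mathrm{fixed}$ the lengths are uniquely pinned by $M$, while for each subtemplate edge $e$ on the boundary of hole $i$ the lengths of the $T$-edges projecting to $e$ (all lying in $\wt E_i$) must sum to the prescribed $M$-length of $e$. Since these constraints factor across $i=1,\dots,b$, Lemma~\ref{lem-leb-fac} yields that $\mathrm{Unif}_T$ restricted to $\{M \text{ is a subquilt}\}$ is a product of $b$ constrained Lebesgue measures on $\R_+^{\wt E_i}$. Combined with the face partition, Lemma~\ref{lem-unif-temp} then gives $w(Q) = w_\mathrm{sub}(M) \prod_{i=1}^b w_i(Q_i)$, where $w_\mathrm{sub}(M)$ depends only on $M$ and each $w_i(Q_i)$ depends only on $Q_i$.

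Finally I would sum over $T \in \cT(T_\mathrm{sub})$ using the product bijection from Proposition~\ref{prop-product-templates}: the joint conditional law of $(Q_1,\dots,Q_b)$ given $\{Q \text{ has } M \text{ as a subquilt}\}$ then factors as a product of marginals, one per hole, giving the claimed conditional independence. The main obstacle will be the second step, i.e., verifying that the refined partition $E_T = E_\mathrm{fixed} \sqcup \bigsqcup_i \wt E_i$ and the corresponding factorization of the subquilt constraint are legitimate; this is where the no-shared-boundary-edges hypothesis on $T_\mathrm{sub}$ becomes essential, since it is what allows each subtemplate boundary edge to be attributed to a single hole. The combinatorial factorization $\cT(T_\mathrm{sub}) \cong \prod_i \cF_i$ is itself substantive but is handled separately in Section~\ref{subsec-topo}.
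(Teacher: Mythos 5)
Your proposal is correct and follows essentially the same route as the paper: the same three inputs (Lemma~\ref{lem-unif-temp}, Lemma~\ref{lem-leb-fac}, Proposition~\ref{prop-product-templates}), the same partition of $E_T$ into fixed edges and edges attributable to each hole, and the same multiplicative decomposition of $w(Q)$ over the faces. The one thing you leave implicit is how to make rigorous sense of conditioning on the Lebesgue-null event $\{Q\text{ has }M\text{ as a subquilt}\}$ for a \emph{fixed} $M$ (the statement is "for all $M$", not a.e.\ $M$); the paper handles this via an intermediate truncated statement, Proposition~\ref{prop-indep-subquilts-trunc}, which works on the positive-probability event $F_{\delta,M^0}$ that the subquilt lies in a $\delta$-ball around $M^0$, disintegrates $\cL^\eps|_{F_{\delta,M^0}}$ into a family of finite measures $m^\eps_M$ that each have product-form complementary subquilts, and then observes that the canonical disintegration of Lebesgue measure along a linear constraint is defined pointwise (footnote to Lemma~\ref{lem-leb-fac}). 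Your ``translate the subquilt event into edge-length constraints and apply Lemma~\ref{lem-leb-fac}'' step contains the right idea, but to get the ``for all $M$'' conclusion you would need to add this truncation-and-pointwise-disintegration argument; otherwise the approach and its ingredients are the same.
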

Implicitly, the proposition statement uses the fact that one can make sense of conditioning on the measure zero event of having a specific subquilt; this will be shown in the proof. 

Equip $\cQ_{T_\mathrm{sub}}$ with the metric $d(M^1, M^2) = \sum_{e} |x_e^1 - x_e^2|$, where the summation is taken over edges of $T_\mathrm{sub}$ and $x_e^i$ is the length of $e$ in the quilt $M^i$. Let $B_\delta(M^0)$ denote the radius $\delta$ ball with respect to this metric. 

\begin{proposition}\label{prop-indep-subquilts-trunc}
	For fixed $T_\mathrm{sub}$ as above, there is a family of finite measures $\{ m^\eps_M\}_{M \in \cQ_{T_\mathrm{sub}}}$  on $\bigcup_\cT \cQ_T$ such that the following holds. 
	\begin{itemize}
		\item The measure $m_M^\eps$ is supported on the subset of quilts having $M$ as a subquilt. Moreover, for all $M^0 \in \cQ_{T_\mathrm{sub}}$ and $\delta > 0$ sufficiently small in terms of $M^0$, letting $F_{\delta, M^0}$ be the event that a quilt has a subquilt lying in $B_\delta(M^0)$,  
		\eqb\label{eq-decomp-LM}
		\cL^\eps|_{F_{\delta, M^0}} =\int_{B_\delta(M^0)} m^\eps_M \mathrm{Unif}_{T_\mathrm{sub}}(dM).
		\eqe
		\item For a sample from $m_M^\eps/|m_M^\eps|$, the $b$ complementary subquilts of $M$ are independent.
	\end{itemize}
\end{proposition}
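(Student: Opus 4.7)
The plan is to define $m_M^\eps$ as the natural disintegration of $\cL^\eps$ over the subquilt and to show that it factorizes as a product across the $b$ holes of $T_\mathrm{sub}$, from which both the disintegration identity~\eqref{eq-decomp-LM} and the conditional independence of the complementary subquilts follow immediately.

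First I would invoke the combinatorial factorization Proposition~\ref{prop-product-templates} (proved in Section~\ref{subsec-topo}) to parametrize the set of templates $T \in \cT$ that contain $T_\mathrm{sub}$ as a subtemplate by a Cartesian product $\cT_1 \times \cdots \times \cT_b$, where $\cT_i$ is the set of ``fillings'' of the $i$-th hole of $T_\mathrm{sub}$. Any combination of fillings yields a valid template in $\cT$ with no additional joint constraint across holes; this is the combinatorial source of independence.

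Second, for each $T = T_\mathrm{sub} \cup (T_1,\ldots,T_b)$, I would combine Lemma~\ref{lem-unif-temp} with a Lebesgue-measure factorization (Lemma~\ref{lem-leb-fac}) to write $\cL^\eps|_{\cQ_T}$ in product form. Since every face of $T_\mathrm{sub}$ appears as a face of $T$ with the same marked vertices and hence the same side lengths, the weight $w(Q)$ of Lemma~\ref{lem-unif-temp} factorizes as $w(Q) = w_\mathrm{sub}(M)\prod_{i=1}^b w_i(Q_i)$, where $Q_i$ denotes the restriction of $Q$ to hole $i$ and $M$ is the subquilt of $Q$. Moreover, each $T_\mathrm{sub}$-edge borders at most one hole, so if $e \in E_{T_\mathrm{sub}}$ is subdivided into $k_e^T$ sub-edges in $T$, Lebesgue measure on the vector of sub-edge lengths disintegrates as Lebesgue on the sum (namely the length of $e$ in $M$) times Lebesgue on a simplex that can be grouped entirely with the corresponding hole. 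Altogether, this gives
\[\cL^\eps|_{\cQ_T}(dQ) = w_\mathrm{sub}(M)\prod_{i=1}^b \nu_i^{T_i}(M; dQ_i)\,\mathrm{Unif}_{T_\mathrm{sub}}(dM),\]
where $\nu_i^{T_i}(M;\cdot)$ depends on $M$ only through the restriction of $M$ to the boundary of the $i$-th hole.

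Third, I would set $m_M^\eps := w_\mathrm{sub}(M)\prod_{i=1}^b \bigl(\sum_{T_i \in \cT_i}\nu_i^{T_i}(M;\cdot)\bigr)$. Summing the display above over $T$ and using Proposition~\ref{prop-product-templates} to factor this sum as $\sum_{T_1 \in \cT_1}\cdots\sum_{T_b \in \cT_b}$ yields the disintegration~\eqref{eq-decomp-LM}, provided $\delta$ is small enough that every subquilt in $B_\delta(M^0)$ has template $T_\mathrm{sub}$ (so that $F_{\delta, M^0}$ is exactly the set of quilts with some subquilt in $B_\delta(M^0)$). The support property of $m_M^\eps$ is immediate, and the independence of complementary subquilts under $m_M^\eps/|m_M^\eps|$ follows directly from its product form. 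The main obstacle is Proposition~\ref{prop-product-templates} itself, whose content is that the global ``chaining'' constraints of Definition~\ref{def-T} localize cleanly to each hole of $T_\mathrm{sub}$; this is the main combinatorial work of the paper and is handled via the total-curvature/winding argument of Section~\ref{subsec-topo}. Given that combinatorial input, the measure-theoretic assembly above is essentially bookkeeping.
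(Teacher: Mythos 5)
Your proposal follows essentially the same route as the paper: decompose $\cL^\eps$ by marked template using Lemma~\ref{lem-unif-temp}, factor the weight $w(Q)$ across $M$ and the hole-fillings, apply the Lebesgue factorization of Lemma~\ref{lem-leb-fac}, and then use the combinatorial factorization Proposition~\ref{prop-product-templates} to interchange the sum over $\cT^*$ with the product over holes, defining $m_M^\eps$ as the resulting product measure. The only slip is your stated reason for requiring $\delta$ small: you say it is ``so that $F_{\delta,M^0}$ is exactly the set of quilts with some subquilt in $B_\delta(M^0)$,'' but that is just the definition of $F_{\delta,M^0}$ and holds for all $\delta$; the actual reason is to guarantee that a quilt in $F_{\delta,M^0}$ has a \emph{unique} subquilt in $B_\delta(M^0)$ (a.s., since edge lengths are a.s.\ distinct), without which the decomposition $\cL^\eps|_{F_{\delta,M^0}} = \sum_{T^*\in\cT^*}\cL^\eps|_{\cQ_{T^*}\cap F_{\delta,M^0}}$ would double-count quilts admitting more than one valid marking. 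This is a minor imprecision rather than a gap in the argument's structure, which is correct.
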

In~\eqref{eq-decomp-LM} we want  $\delta$ sufficiently small to avoid the situation that $Q$ has multiple subquilts lying in $B_\delta(M^0)$. If instead $\delta \in (0, \infty]$ is arbitrary, the left hand side of~\eqref{eq-decomp-LM} needs to be weighted by the number of subquilts lying in $B_\delta(M^0)$.

The desired Proposition~\ref{prop-indep-subquilts} is an immediate consequence. 
\begin{proof}[Proof of Proposition~\ref{prop-indep-subquilts}]
	The existence of the conditional law of $Q$ given $\{ Q \text{ has }M \text{ as a subquilt}\}$ follows from the first claim of Proposition~\ref{prop-indep-subquilts-trunc}, and the conditional independence of the $Q_i$ follows from the second claim. 
\end{proof}

We now turn to the proof of Proposition~\ref{prop-indep-subquilts-trunc}. The key ingredients are the factorization of quilts of a given template (Lemma~\ref{lem-unif-temp}),  a template factorization result (Proposition~\ref{prop-product-templates}), and a Lebesgue measure factorization statement (Lemma~\ref{lem-leb-fac}). 

Let $\cT^*$ be the collection of all $T^*$ having $T_\mathrm{sub}$ as its marked subtemplate. That is, $\cT^*$ contains all templates of $\cT$ obtained by filling in the holes of $T_\mathrm{sub}$ and marking the faces corresponding to $T_\mathrm{sub}$. Recall the holes of $T_\mathrm{sub}$ are labelled $1, \dots, b$. For $T^* \in \cT^*$ 
and $i \leq b$, let $T_i^*$ be the marked template with $b-1$ holes obtained from $T^*$ by replacing the $j$th connected component of the unmarked faces with a hole for $j \neq i$, see Figure~\ref{fig-M} (c),(d).  Let $\cT^*_i$ be the collection of all possible $T_i^*$ as $T^*$ varies. 

\begin{proposition}\label{prop-product-templates}
	The map $\cT^* \to \prod_{i=1}^b \cT^*_i$ sending $T^* \mapsto (T^*_1, \dots, T^*_b)$ is a bijection.  
\end{proposition}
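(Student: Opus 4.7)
The plan is to treat injectivity and surjectivity separately, using throughout the description of $T^*\in\cT^*$ as $T_{\mathrm{sub}}$ equipped with a filling of each of its $b$ holes. Injectivity is essentially by construction: a template $T^*\in\cT^*$ is determined, as a planar map with face data and ordering, by $T_{\mathrm{sub}}$ together with the combinatorial filling of hole $i$ for each $i$, and $T_i^*$ records exactly this filling (the other holes remaining empty). Hence $(T_1^*,\dots,T_b^*)$ determines $T^*$.

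For surjectivity, given $(T_1^*,\dots,T_b^*)\in\prod_{i=1}^b\cT_i^*$, I define a candidate $T^*$ by simultaneously gluing the filling of hole $i$ from $T_i^*$ into hole $i$ of $T_{\mathrm{sub}}$ for each $i$. The global face ordering on $T^*$ is obtained by inserting, between the pair of $T_{\mathrm{sub}}$-marked faces that bound hole $i$ (unambiguously identified by condition (b) applied to $T_{\mathrm{sub}}$'s chain of marked faces), the hole-$i$ faces in the order prescribed by $T_i^*$. It then remains to verify that this $T^*$ satisfies Definition~\ref{def-T}(a)--(d). Conditions (a), (b), (c) are local to individual vertices or to consecutive pairs of faces, and each face of $T^*$ lies either in $T_{\mathrm{sub}}$ or in exactly one hole filling, so each such condition is automatically inherited from the ambient $T^{(i)}\in\cT^*$ that produced $T_i^*$.

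The main obstacle is verifying condition (d), which has a global flavor. Combinatorially (d) is equivalent to the statement that, for every edge $e$ of each face $F_j$, the face across $e$ appears before $F_j$ in the ordering if and only if $e$ lies on a root-adjacent side of $F_j$. The crucial observation is that in $T^*$ every edge is either interior to $T_{\mathrm{sub}}$, interior to a single hole filling, or a boundary edge between a $T_{\mathrm{sub}}$ face and exactly one hole filling; this uses the hypothesis that distinct holes of $T_{\mathrm{sub}}$ share no common boundary edge, which prevents a single edge from straddling two different hole fillings whose orderings could conflict. In each of these three cases the two faces incident to $e$ appear together inside some $T_i^*$, so the required ordering condition at $e$ is inherited from the validity of the $T^{(i)}\in\cT^*$ giving rise to $T_i^*$. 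This verifies (d), shows $T^*\in\cT^*$, and establishes surjectivity.
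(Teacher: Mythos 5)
Your proof contains a genuine gap: the verification of conditions~\eqref{item-T-b} and~\eqref{item-T-d} is exactly where the real work lies, and your argument glosses over both.

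First, your construction of the global face ordering on $T^*$ is incorrect. You say the hole-$i$ faces get inserted ``between the pair of $T_{\mathrm{sub}}$-marked faces that bound hole $i$.'' But the space-filling ordering on the faces of $T^*$ can enter and exit a single hole $H_i$ many times, interleaved with visits to other holes and to $T_{\mathrm{sub}}$-faces; there is in general no single ``pair'' of $T_{\mathrm{sub}}$-faces bounding a hole. Concretely, each $v\in V_i$ (a root or terminal vertex on $\partial H_i$) is an entry or exit point, and a hole may have arbitrarily many of these. The correct picture is the graph $G$ plus a choice $\alpha_i\in A_i$ of matching edges inside each $H_i$; the resulting graph is a disjoint union of directed cycles, and condition~\eqref{item-T-b} requires that it be a \emph{single} cycle through $x$. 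That is precisely Lemma~\ref{lem-hamiltonian}, which is proved using total curvature and Hopf's Umlaufsatz (a cycle not through $x$ would have total curvature $0$, contradicting $\pm 2\pi$). Your proposal never addresses why you get one cycle rather than several.

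Second, the ``inheritance'' claim for condition~\eqref{item-T-d} assumes what must be proved. You correctly reduce~\eqref{item-T-d} to an assertion about the relative order of pairs of adjacent faces, and you correctly observe that each edge of $T^*$ has both incident faces lying in $T_{\mathrm{sub}}\cup(\text{one hole filling})$, hence in some single $T_i^*$. But the relative order of two adjacent faces in the \emph{global} ordering of $T^*$ depends a priori on how \emph{all} holes are filled, while the relative order recorded by $T^{(i)}$ used a possibly different filling of the other holes. That the two orderings agree is exactly Lemma~\ref{lem-adj-order-i}, which again requires the winding argument: if two different auxiliary fillings $(\alpha_j^{FF'})_{j\neq i}$ and $(\alpha_j^{F'F})_{j\neq i}$ gave opposite orderings for an adjacent pair $F,F'$, one obtains a simple closed curve of total curvature $0$, a contradiction. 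Without this, you cannot conclude~\eqref{item-T-d}. (Your remark that the holes of $T_{\mathrm{sub}}$ share no boundary edges is true and useful for the partition of edges, but it does nothing to fix the ordering issue.) In short, the topological content of the proof --- the winding function of Lemma~\ref{lem-winding} and its consequences in Lemmas~\ref{lem-hamiltonian} and~\ref{lem-adj-order-i} --- is missing entirely.
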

Proposition~\ref{prop-product-templates} is an independence statement about templates: no matter how we fill the holes indexed by  $\{1, \dots, b\} \backslash \{i\}$, the choices for how to fill the $i$th hole are the same, namely $\cT_i^*$. See Figure~\ref{fig-M} (b),(c).  We defer the proof of Proposition~\ref{prop-product-templates} to Section~\ref{subsec-topo}.

\begin{figure}[ht!]
	\begin{center}
		\includegraphics[scale=0.35]{}%
		\caption{\label{fig-M} \textbf{(a)} A template $T_\mathrm{sub}$. The white regions are holes of $T_\mathrm{sub}$. \textbf{(b), (c)} Two different $T^* \in \cT^*$ (vertices of polygons not shown). The marked gray region correspond to $T_\mathrm{sub}$, and the arrows depict the orderings on faces of $T^*$. Notice that the blue arrows are traversed in a different order in the two pictures. Perhaps surprisingly, regardless of how one hole is filled, the choices for filling the other hole are the same (Proposition~\ref{prop-product-templates}). \textbf{(d)} For the marked template $T^*$ depicted in (c), the marked template $T_i^*$ is obtained by replacing all but the $i$th connected component of non-marked faces (bolded) with holes. In the figure $T_i^*$ is the template with $b-1$ holes (white, not bolded). 			
			In Lemma~\ref{lem-leb-fac}  $J_{T_i^*} \subset E_{T_i^*}$ is the set of bolded edges, and the subquilt $Q_i$ (bolded) can be identified with a tuple of edge lengths (in $\R_+^{J_{T_i^*}}$).
		}
	\end{center}
\end{figure}

Recall that $\mathrm{Unif}_{T^*}$ is the measure on marked quilts having template $T^*$ given by Lebesgue measure on $\R_+^{E_{T^*}}$. For $Q$ sampled from $\mathrm{Unif}_{T^*}$, let $\wt{\mathrm{Unif}}_{T^*}$ be the  law of $(M, Q_1, \dots, Q_b)$ where $M$ and $Q_1, \dots, Q_b$ are the marked subquilt and complementary quilts of $Q$. For $T_i^* \in \cT_i^*$, let $J_{T_i^*} \subset E_{T_i^*}$ be the edges on the boundaries of the unmarked faces, see Figure~\ref{fig-M} (d). For any marked quilt $Q \in \cQ_{T^*}$, the subquilt $Q_i$ is specified by the lengths of the edges in $J_{T_i^*}$, so we may identify $Q_i$ with an element of $\R_+^{J_{T_i^*}}$. 

We now give a factorization for uniform measures on templates, which follows from the product structure of Lebesgue measure on Euclidean space, and the fact that the holes of $T_\mathrm{sub}$ do not share any common boundary edges. It is essentially just a linear change of variables. 
\begin{lemma}\label{lem-leb-fac}
	There is a collection $\{\mathrm{Unif}^M_{J_{T_i^*}}\}$ of measures on $\R_+^{J_{T_i^*}}$ for $M \in \cQ_{T_\mathrm{sub}}$, $1 \leq i \leq b$ and $T_i^* \in \cT_i^*$ such that 
	\[\wt{\mathrm{Unif}}_{T^*}(dM, dQ_1,\dots, dQ_b) = \left(\prod_{i=1}^b \mathrm{Unif}^{M}_{J_{T_i^*}}(dQ_i) \right) \mathrm{Unif}_{E_{T_\mathrm{sub}}}(dM) \qquad \text{ for all }T^* \in \cT^*,\] 
	where $T_1^*, \dots, T_b^*$ are defined via $T^*$ as in Proposition~\ref{prop-product-templates}. 
\end{lemma}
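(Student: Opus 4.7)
The plan is to realize $\wt{\mathrm{Unif}}_{T^*}$ as a pushforward of the Fubini product measure $\mathrm{Unif}_{T^*} = \mathrm{Leb}_{\R_+^{E_{T^*}}}$ by a linear map, and read off the product structure from a disjoint partition of $E_{T^*}$. The hypothesis that the holes of $T_\mathrm{sub}$ share no common boundary edges is exactly what makes this partition well-defined.

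First, I would partition $E_{T^*} = E^M \sqcup E^1 \sqcup \cdots \sqcup E^b$ by assigning $e \in E^i$ whenever at least one face adjacent to $e$ lies in the $i$-th connected component of unmarked faces of $T^*$, and $e \in E^M$ otherwise (so both adjacent faces lie in the marked $T_\mathrm{sub}$-region). The no-shared-boundary hypothesis guarantees that the different $E^i$ are pairwise disjoint, so this is genuinely a partition. By Fubini,
\begin{equation*}
\mathrm{Unif}_{T^*} = \mathrm{Leb}_{E^M} \otimes \mathrm{Leb}_{E^1} \otimes \cdots \otimes \mathrm{Leb}_{E^b}.
\end{equation*}
Next I would track how $M$ and the $Q_i$ depend on the coordinates. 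Each edge of $T_\mathrm{sub}$ is the concatenation of a chain of edges in $T^*$ that are merged when deleting the appropriate vertices to form $T_\mathrm{sub}$, and its length in $M$ is the sum along the chain. Crucially, since a merge chain cannot cross a hole boundary, each chain lives entirely inside $E^M$ (if the corresponding $T_\mathrm{sub}$-edge is interior to the marked region) or entirely inside a single $E^i$ (if it bounds the $i$-th hole of $T_\mathrm{sub}$). Simultaneously, each $Q_i$ is determined by $(x_e)_{e \in E^i}$ alone, via the identification $\R_+^{E^i} \leftrightarrow \R_+^{J_{T_i^*}}$ explained just before the lemma statement.

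Using the same upper-triangular unit-Jacobian chain reordering trick as in the proof of Lemma~\ref{lem-unif-temp}, within each chain I would change variables from the edge lengths to (chain sum, free chain coordinates). This pushes $\mathrm{Leb}_{E^M}$ forward to Lebesgue measure on the interior $T_\mathrm{sub}$-lengths tensored with Lebesgue measure on the free coordinates of each $E^M$-chain, and likewise converts each $\mathrm{Leb}_{E^i}$ into Lebesgue on the $i$-th-hole boundary lengths in $T_\mathrm{sub}$ tensored with a Lebesgue factor in the remaining coordinates of $E^i$. Because the partition is disjoint, the joint pushforward factorizes by Fubini; integrating out the $E^M$ free coordinates produces an $M$-dependent weight which I would absorb into (say) $\mathrm{Unif}^M_{J_{T_1^*}}(dQ_1)$. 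Defining $\mathrm{Unif}^M_{J_{T_i^*}}$ on $\R_+^{J_{T_i^*}}$ as the pushforward of $\mathrm{Leb}_{E^i}$ conditioned on the $i$-th-hole boundary lengths in $M$ (with the $E^M$ weight absorbed appropriately) then yields the claimed factorization.

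The main technical obstacle is the bookkeeping required to ensure that no merge chain straddles two different sets in the partition $E^M \sqcup E^1 \sqcup \cdots \sqcup E^b$. This is exactly what the no-shared-boundary assumption buys us: a merge chain lying on the boundary of a hole in $T_\mathrm{sub}$ bounds a unique hole, and an interior merge chain never touches a hole. Once this point is settled, the rest is a routine Fubini/change-of-variables computation on $\R_+^{E_{T^*}}$, together with a careful listing of the free coordinates created by each chain.
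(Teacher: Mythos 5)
Your proposal follows essentially the same route as the paper's proof: partition $E_{T^*}$ by unmarked connected component, apply Fubini for Lebesgue measure, and then trade each block $\mathrm{Leb}_{E^i}$ for a disintegration over the hole-boundary lengths of $M$. The paper packages the last step as a disintegration of $\mathrm{Leb}_{\R_+^{J_{T_i^*}}}$ over $\R_+^{E_i}$, which avoids your explicit Jacobian accounting (and the analogy to Lemma~\ref{lem-unif-temp}, which is really a different computation).

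There is one spot where your write-up needs an extra observation to close a potential loophole. You plan to integrate out the $E^M$ free coordinates and absorb the resulting $M$-dependent weight into $\mathrm{Unif}^M_{J_{T_1^*}}$. But the $E^M$-chain structure (and hence this putative weight) is a priori a function of the full $T^*$, so if any $E^M$-chain were nontrivial, the absorbed weight --- and therefore the resulting measure $\mathrm{Unif}^M_{J_{T_1^*}}$ --- could depend on the choices $(T_2^*,\dots,T_b^*)$, which the lemma statement does not permit. The resolution is that every $E^M$-chain is a single $T^*$-edge: any vertex of $T^*$ all of whose incident faces are marked is adjacent to at least two distinct marked faces (its corners cannot all belong to the same face, since templates in $\cT$ have Jordan faces), so it is not deleted when forming $T_\mathrm{sub}$. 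Hence there are no $E^M$ free coordinates and the weight is identically $1$. The paper uses this silently when identifying a quilt in $\cQ_{T^*}$ with a tuple $(M_0, Q_1, \dots, Q_b) \in \R_+^{E_0}\times \prod_{i}\R_+^{J_{T_i^*}}$; you should make the observation explicit.
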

\begin{proof}
	Consider the disjoint union $E_{T_\mathrm{sub}} = E_0 \cup \bigcup_{i=1}^b E_i$ where $E_i$ is the set of edges lying on the boundary of the $i$th hole of $T_\mathrm{sub}$ and $E_0$ the set of edges not lying on the boundary of any hole. By the identification $\cQ_{T_\mathrm{sub}} \leftrightarrow \R_+^{E_{T_\mathrm{sub}}}$ we can identify $M$ in $\cQ_{T_\mathrm{sub}}$ with a tuple $(M_0, M_1, \dots, M_b)$ in $\R_+^{E_0} \times \prod_{i=1}^b \R_+^{E_i}$. Likewise identify a quilt in $\cQ_{T^*}$ with a tuple $(M_0, Q_1, \dots, Q_b)$ in $\R_+^{E_0} \times \prod_{i=1}^b \R_+^{J_{T_i^*}}$.  
	Let $\mathrm{Unif}_{E_i}$ (resp.\ $\mathrm{Unif}_{J_{T_i^*}}$) denote Lebesgue measure on $\R_+^{E_i}$ (resp.\ $\R_+^{J_{T_i^*}}$), then 
	\eqb\label{eq-leb-fac}
	\mathrm{Unif}_{T^*}(dM_0, dQ_1, \dots, dQ_b)  = \mathrm{Unif}_{E_0}(dM_0) \times \prod_{i=1}^b \mathrm{Unif}_{J_{T_i^*}}(dQ_i).  
	\eqe
	Since $T_\mathrm{sub}$ is the marked subtemplate of $T^*$, each edge $e\in E_i$ is a concatenation of one or more edges $e' \in J_{T_i^*}$.
	Define $M_i$ as a function of $Q_i$ by assigning each edge $e \in E_i$ the sum of lengths of the corresponding $e' \in J_{T_i^*}$. Push forward the 
	measures in~\eqref{eq-leb-fac} under the map $(M_0, Q_1, \dots, Q_b) \mapsto (M_0, Q_1, M_1, \dots, Q_b, M_b)$ and denote the new measures with tildes:
	\eqb\label{eq-leb-fac2}
	\widetilde{\mathrm{Unif}}_{T^*}(dM, dQ_1, \dots, dQ_b)  = \mathrm{Unif}_{E_0}(dM_0) \times \prod_{i=1}^b \widetilde{\mathrm{Unif}}_{J_{T_i^*}}(dQ_i, dM_i).  
	\eqe
	Finally, we can disintegrate each $\wt{\mathrm{Unif}}_{J_{T_i^*}}$ to obtain a family of measures\footnote{The general theory of disintegrations only specifies this family up to measure zero sets of $M_i$, but we are just applying a linear change of variables to a Lebesgue measure, so $\mathrm{Unif}_{J_{T_i^*}}^{M_i}$ is well defined for every $M_i$.} $\{\mathrm{Unif}_{J_{T_i^*}}^{M_i}\}$ for $M_i \in \R_+^{E_i}$ such that $\wt{\mathrm{Unif}}_{J_{T_i^*}}(dQ_i, dM_i) = \mathrm{Unif}_{J_{T_i^*}}^{M_i}(dQ_i) \mathrm{Unif}_{E_i}(dM_i)$. Since $M_i$ is a function of $M$ we may define  $\mathrm{Unif}_{J_{T_i^*}}^{M} = \mathrm{Unif}_{J_{T_i^*}}^{M_i}$, then~\eqref{eq-leb-fac2} equals $\left(\prod_{i=1}^b \mathrm{Unif}_{J_{T_i^*}}^{M}(dQ_i)\right)  \mathrm{Unif}_{E_0}(dM_0) \prod_{i=1}^b \mathrm{Unif}_{E_i}(dM_i)$. The $dM_0$ and $dM_i$ terms factorize to give $\mathrm{Unif}_{E_{T_\mathrm{sub}}}(dM)$, as desired. 
\end{proof}

\begin{proof}[Proof of Proposition~\ref{prop-indep-subquilts-trunc}]
	We will derive an expression for the law of $\cL^\eps|_{F_{\delta, M^0}}$, then write down a choice of $m_M^\eps$ for which~\eqref{eq-decomp-LM} holds, and which satisfies the remaining conditions of finiteness and independent subquilts. 
	
	Fix $M^0 \in \cQ_{T_\mathrm{sub}}$. By definition, the quilts $Q \in \bigcup_{T \in \cT} \cQ_T$ satisfying $F_{\delta, M^0}$ are those which have a subquilt in $B_\delta(M^0)$; this subquilt is a.s.\ unique when $\delta > 0$ is sufficiently small since edge lengths are a.s.\ distinct.
	Thus, viewing a sample of $\cL^\eps|_{F_{\delta, M^0}}$ as being a \emph{marked} quilt where the subquilt in $B_\delta(M^0)$ is marked, we have 
	\[ \cL^\eps|_{F_{\delta, M^0}} = \sum_{T^*\in \cT^*} \cL^\eps|_{\cQ_{T^*} \cap F_{\delta, M^0}}.\]
	Identify a marked quilt $Q$ with the tuple $(M, Q_1, \dots, Q_b)$ where $M$ is the marked subquilt and $Q_i$ are the complementary subquilts of $M$. 	Write $F_{\delta, M^0}^*$ for the event that the marked subquilt lies in $B_\delta(M^0)$. 
	Lemma~\ref{lem-unif-temp} gives $\cL^\eps|_{\cQ_{T^*}\cap F_{\delta, M^0}} = 1_{F_{\delta, M^0}^*} w(Q) \mathrm{Unif}_{T^*}(dQ)$ where $w(Q) = Z_\eps^{-1} P^\eps_\mathrm{init}(C_0) \times \prod_{i=1}^n P^\eps(C_i) \times P^\eps_\mathrm{end} (C_{n+1})$ and $C_0, \dots, C_{n+1}$ are the tuples of side lengths of the faces of $Q$. Since $M, Q_1, \dots, Q_b$ partition the faces of $Q$, we can group the terms to get $w(Q) = Z_\eps^{-1} \wt w(M) \prod_{i=1}^b \wt  w(Q_i)$.
	Applying Lemma~\ref{lem-leb-fac} thus gives 
	\[\cL^\eps|_{\cQ_{T^*} \cap F_{\delta, M^0}}(dQ) =  1_{F_{\delta, M^0}^*}w(Q)  \mathrm{Unif}_{T^*}(dQ) =  1_{F_{\delta, M_0}^*} Z_\eps^{-1} \left(\prod_{i=1}^b \wt w(Q_i)\mathrm{Unif}^{M}_{J_{T_i^*}}(dQ_i) \right) \wt w(M) \mathrm{Unif}_{E_{T_\mathrm{sub}}}(dM) . \]
	By Proposition~\ref{prop-product-templates},
	\[ \cL^\eps|_{F_{\delta, M^0}} = \sum_{T^* \in \cT^*} \cL^\eps|_{\cQ_{T^*} \cap F_{\delta, M^0}}(dQ) = 1_{F_{\delta, M^0}^*} Z_\eps^{-1} \left(\prod_{i=1}^b \left(\sum_{T_i^* \in \cT_i^*} \wt w(Q_i)\mathrm{Unif}^{M}_{J_{T_i^*}}(dQ_i) \right)\right) \wt w(M) \mathrm{Unif}_{E_{T_\mathrm{sub}}}(dM) . \]
	Define for each $M \in \cQ_{T_\mathrm{sub}}$ the measure
	\eqb\label{eq-Meps}
	m^\eps_M := Z_\eps^{-1} \wt w(M) \prod_{i=1}^b \left( \sum_{T_i^* \in \cT_i^*} \wt w(Q_i)\mathrm{Unif}^{M}_{J_{T_i^*}}(dQ_i)\right),
	\eqe
	and view $m^\eps_M$ as a measure on the set of quilts having $M$ as a subquilt. The above gives~\eqref{eq-decomp-LM}. Finiteness  of the measures follows from~\eqref{eq-decomp-LM} since $|\cL^\eps| = 1 < \infty$, and independence of the complementary subquilts of a sample from $m^\eps_M/|m^\eps_M|$ follows from the product structure of~\eqref{eq-Meps}. 
\end{proof}

\subsection{Combinatorial independence of complementary subtemplates}\label{subsec-topo}
The goal of this section is to prove Proposition~\ref{prop-product-templates}. The arguments are topological in nature and essentially depend on Hopf's Umlaufsatz: the total curvature/winding in a simple loop is $\pm 2\pi$ (Lemma~\ref{lem-hopf}).  See Section~\ref{subsec-winding} for details on total curvature/winding. The reader may want to read Appendix~\ref{sec-meander} before reading this section to motivate the arguments involved.

Throughout this section we work in the setting of Proposition~\ref{prop-product-templates} and use the notation of that proposition. 
It is notationally convenient for us to slightly modify $\cT^*$ and $T_\mathrm{sub}$ by replacing the 1-gon face $F_\mathrm{ext}$ with a hole $H_\mathrm{ext}$.

The proof makes repeated use of the following observation. We say two faces of a template are \emph{adjacent} if their boundaries intersect on an edge. 

\begin{lemma}\label{lem-adj-order}
	Suppose $T \in \cT$ has external hole $H_\mathrm{ext}$ and faces $F_0, \dots, F_{n+1}$. For each 4-gon $F_i$, let $\partial_- F_i$ be the union of the two sides adjacent to the root vertex of $F_i$. Let $\partial_- F_0$ be the side counterclockwise of the root vertex of $F_0$, and let $\partial_- F_{n+1} = \partial F_{n+1}$. Let $\partial_+ F_i = \partial F_i \backslash \partial_- F_i$ for all $i$.  
	\\ If $F_i$ and $F_j$ are adjacent, then $i < j$ if and only if $\partial_+F_i$ intersects $\partial_-F_j$ on an edge. 
\end{lemma}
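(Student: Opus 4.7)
The plan is to prove the forward direction (if $i < j$ and $F_i, F_j$ share an edge $e$, then $e \subset \partial_+ F_i \cap \partial_- F_j$) and then derive the converse from disjointness of $\partial_- F_i$ and $\partial_+ F_i$ as edge sets. Fix an edge $e$ shared by $F_i$ and $F_j$, so $F_i$ sits on one side of $e$ and $F_j$ on the other.

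First, I would argue that $e \subset \partial_- F_j$ whenever $i < j$. When $j = n+1$ this is immediate since $\partial_- F_{n+1} = \partial F_{n+1}$. When $1 \leq j \leq n$, the edge $e$ lies on $\partial F_i$, which is contained in $\partial\bigl(F_\mathrm{ext} \cup \bigcup_{k<j} F_k\bigr)$ since $F_j$ (the other side of $e$) is not in this union. Property~(d) of Definition~\ref{def-T} says $\partial_+ F_j$ meets $\partial\bigl(F_\mathrm{ext} \cup \bigcup_{k<j} F_k\bigr)$ only at vertex endpoints; since $e$ is a whole edge, we must have $e \subset \partial_- F_j$.

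Next, I would show $e \subset \partial_+ F_i$ whenever $j > i$. For $i = 0$, property~(a) says $\partial_- F_0$ lies on $\partial F_\mathrm{ext}$, so the face on the opposite side of any edge in $\partial_- F_0$ is $F_\mathrm{ext}$, not $F_j$ with $j \geq 1$; hence $e \not\subset \partial_- F_0$, i.e., $e \subset \partial_+ F_0$. For $1 \leq i \leq n$, property~(d) gives $\partial_- F_i \subset \partial\bigl(F_\mathrm{ext} \cup \bigcup_{k<i} F_k\bigr)$, so the opposite side of any edge in $\partial_- F_i$ is $F_\mathrm{ext}$ or some $F_k$ with $k < i$; it cannot be $F_j$ with $j > i$, so again $e \subset \partial_+ F_i$.

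For the converse, suppose $F_i, F_j$ are adjacent and $e \subset \partial_+ F_i \cap \partial_- F_j$; since $e$ contains an actual edge we cannot have $i = j$. If $j < i$, the forward direction applied with the roles of $i$ and $j$ swapped would force $e \subset \partial_- F_i$, contradicting $e \subset \partial_+ F_i$ and the fact that by definition $\partial_- F_i$ and $\partial_+ F_i$ share no edges. Hence $i < j$. The only potential obstacle is tracking the boundary cases $i = 0$ and $j = n+1$, but these are handled by using property~(a) for $F_0$ and the definition $\partial_- F_{n+1} = \partial F_{n+1}$ directly in place of property~(d).
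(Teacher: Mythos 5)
Your proof is correct and takes essentially the approach the paper gestures at when it says the claim is ``immediate from~\eqref{item-T-d}'': you simply unwind property~\eqref{item-T-d} (and property~\eqref{item-T-a} for the boundary case $i=0$, plus the definition of $\partial_- F_{n+1}$) to check both inclusions $e\subset\partial_+F_i$ and $e\subset\partial_-F_j$ when $i<j$, and then the converse follows from edge-disjointness of $\partial_\pm F_i$. This is a careful and complete fleshing-out of the paper's terse argument; in particular you correctly notice that property~\eqref{item-T-a}, not just~\eqref{item-T-d}, is what handles $F_0$.
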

\begin{proof}
	This is immediate from~\eqref{item-T-d} in Definition~\ref{def-T}. 
\end{proof}

Recall the holes of $T_\mathrm{sub}$ are called $H_\mathrm{ext}, H_1, \dots, H_b$. 
We embed $T_\mathrm{sub}$ in the Riemann sphere $\hat \C = \C \cup \{\infty\}$ such that $H_\mathrm{ext}$ contains $\infty$. Let $x\in \partial H_\mathrm{ext}$ be the root vertex of the 3-gon. For $i = 1, \dots, b$ let $V_i \subset \partial H_i$ be the set of all points on the $i$th boundary loop which are a root vertex or a terminal vertex of some face. 

In each face $F$ of $T_\mathrm{sub}$ we draw a smooth curve $\eta_F$ from its root vertex to its terminal vertex; we call $\eta_F$ the curve of $F$. We do this in a way such that if the curves of two faces meet at an endpoint, they can be smoothly concatenated. This gives us an embedded directed graph $G$ in $\hat \C \backslash (H_\mathrm{ext}\cup \bigcup_{i=1}^b H_i)$ whose vertex set is $\{x\}\cup \bigcup_{i=1}^b V_i$ and whose edges are the maximal concatenations of the curves, see Figure~\ref{fig-winding} (left) for an embedding of $G$ for the $T_\mathrm{sub}$ depicted in Figure~\ref{fig-M} (a).

Suppose $T^* \in \cT^*$, so $T_\mathrm{sub}$ is the marked subtemplate of $T^*$. Embed $T^*$ in $\hat \C$ such that $T_\mathrm{sub}$ has the same embedding as before; this corresponds to drawing faces to fill the holes $H_1, \dots, H_b$. We draw curves in each added face from its root vertex to its terminal vertex, and so link up the previous smooth curves to get a single simple smooth curve from $x$ to $x$. This curve visits the faces of $T_\mathrm{sub}$ in the order induced by the face ordering of $T^*$. Recall the notion of \emph{total curvature} from Section~\ref{subsec-winding}. Let $\theta(x) \in [0,2\pi)$ be the angle such that the initial tangent to the curve at $x$ is parallel to $e^{\bbi \theta(x)}$, and for each point $p$ on the curve, let $\theta(p)$ equal $\theta(x)$ plus the total curvature of the curve from $x$ to $p$.
We label each $v \in \bigcup_i V_i$ with the value $\theta(v)$.
See Figure~\ref{fig-winding} (right).

We will show in Lemma~\ref{lem-winding} that the winding on $\bigcup V_i$ does not depend on $T^*$. We first need the following lemma. 
For each face $F$, let $\partial_\ell F \subset \partial F$ (resp.\ $\partial_r F \subset \partial F$) be the clockwise (resp.\ counterclockwise) boundary arc of $F$ from its root vertex to its terminal vertex. 
\begin{lemma}\label{lem-adj-winding}
	Let $F$ and $F'$ be faces of $T_\mathrm{sub}$, and let the root and terminal vertices of $F$ (resp.\ $F'$) be $v_-$ and $v_+$ (resp.\ $v_-'$ and $v_+'$). 
	Suppose $F$ and $F'$ are adjacent (have a common boundary edge),  $F$ preceeds $F'$ in the ordering of faces, and $F$ and $F'$ are not consecutive faces ($v_+ \neq v_-'$). Suppose $\partial_r F$ intersects $\partial_r F'$ along an edge and let $\eta_{FF'}$ be the clockwise boundary arc of $\partial (F \cup F')$ from $v_+$ to $v_-'$. Then the curve obtained by concatenating $\eta_F$, $\eta_{FF'}$ and $\eta_{F'}$ has total curvature $\theta(v_+') - \theta(v_-)$, in the sense of Definition~\ref{def-winding-nonsmooth}. See Figure~\ref{fig-adjacency} (left). 
	
	The statement also holds if we replace $\partial_r$ with $\partial_\ell$ and clockwise with counterclockwise.  
\end{lemma}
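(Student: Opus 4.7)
The plan is to apply Lemma~\ref{lem-total-winding} (in the sense of Definition~\ref{def-winding-nonsmooth}) to compare $\eta_0 := \eta_F * \eta_{FF'} * \eta_{F'}$ with $\eta_1 := \eta_F * \eta_{F_{k+1}} * \cdots * \eta_{F_{k'-1}} * \eta_{F'}$, the corresponding sub-arc of the complete space-filling curve for the fixed $T^* \in \cT^*$ used to define $\theta$, where $F = F_k, F' = F_{k'}$ in the $T^*$-ordering. By construction of $\theta$, the total curvature of $\eta_1$ equals $\theta(v_+') - \theta(v_-)$, so the lemma reduces to showing that $\eta_0$ and $\eta_1$ have the same total curvature. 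Both $\eta_0$ and $\eta_1$ begin with the smooth arc $\eta_F$ near $v_-$ and end with the smooth arc $\eta_{F'}$ near $v_+'$, so they coincide on small regular initial and final segments, which may be taken to lie in $\mathrm{int}(F)$ and $\mathrm{int}(F')$ respectively. It then suffices to homotope the middle portions of $\eta_0$ and $\eta_1$ through $\C$ minus those initial and final segments, relative to the common endpoints $v_+, v_-'$.

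The key observation is that the two middle portions both lie in the closed set $K := \hat\C \setminus \mathrm{int}(F \cup F')$. Indeed, the middle of $\eta_0$ is $\eta_{FF'} \subset \partial(F \cup F') \subset K$, and the middle of $\eta_1$ is $\eta_{F_{k+1}} * \cdots * \eta_{F_{k'-1}}$, which traverses the faces $F_{k+1}, \ldots, F_{k'-1}$; these are all distinct from $F$ and $F'$ and hence disjoint from their interiors, since distinct faces of the planar map $T^*$ meet only on their common boundaries. Since $F$ and $F'$ are closed topological disks (each being a simply connected $k$-gon by Definition~\ref{def-T}) sharing the single edge $\partial_r F \cap \partial_r F'$, their union $F \cup F'$ is a closed topological disk in $\hat\C$, and consequently $K$ is likewise a closed topological disk. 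As $K$ is contractible and both middle arcs lie in $K$ with common endpoints $v_+, v_-' \in \partial K$, they are homotopic through $K$ relative to those endpoints. Because the initial and final segments of $\eta_0$ lie in $\mathrm{int}(F) \cup \mathrm{int}(F')$, they are disjoint from $K$, so the homotopy takes place in the required twice-slitted plane; Lemma~\ref{lem-total-winding} then gives the equality of total curvatures.

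The main obstacle is establishing the topological structure of $F \cup F'$ as a closed topological disk, which relies on the hypothesis that $\partial_r F \cap \partial_r F'$ is a single edge together with the fact that $F, F'$ share no other boundary components. This is standard planar topology in the generic case, and if $F$ and $F'$ happen to share additional isolated boundary points one can handle the situation via a mild perturbation (slightly thickening the shared edge so that $F \cup F'$ becomes a genuine disk). The final statement of the lemma (with $\partial_\ell$ in place of $\partial_r$ and ``counterclockwise'' in place of ``clockwise'') follows by the same argument with the roles of left and right interchanged, so that the reversed orientation of $\eta_{FF'}$ still bounds $K$ on its accessible side.
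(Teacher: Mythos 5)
Your approach shares the core idea with the paper's proof: compare $\eta_0 = \eta_F * \eta_{FF'} * \eta_{F'}$ with the corresponding arc of the full space-filling curve, observe that they agree near the endpoints, and then homotope the middle portion $\eta_{FF'}$ to the middle arc $\eta_2$ (the concatenation of the face-curves strictly between $F$ and $F'$) inside a simply connected region disjoint from the initial and final slits, so that Lemma~\ref{lem-total-winding} / Definition~\ref{def-winding-nonsmooth} gives equality of total curvatures. The difference lies in which simply connected region is used for the homotopy.

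The gap is in your justification that $K := \hat\C \setminus \mathrm{int}(F \cup F')$ is a closed topological disk. You deduce this from the claim that $F$ and $F'$ share ``the single edge $\partial_r F \cap \partial_r F'$'' and ``share no other boundary components,'' but neither of these is given: the lemma only assumes that $\partial_r F$ intersects $\partial_r F'$ along an edge, which leaves open the possibility that $F$ and $F'$ meet along two or more disjoint boundary arcs (for instance one arc in the $\partial_r$-sides and another in the $\partial_\ell$-sides). In that case $\mathrm{int}(F \cup F')$ is an annulus, $K$ is disconnected, and the claim that the two middle arcs are homotopic in $K$ fails (indeed, they may lie in different components of $K$). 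Your remark about ``mild perturbation'' handles only shared isolated vertices, not shared edges, so it does not close the gap. Proving that $F \cup F'$ is always a disk in the setting of $\cT$ would require a separate structural argument about shared boundaries of space-filling SLE cells, which is not immediate and is not carried out.

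The paper avoids this difficulty by taking the homotopy region to be a connected component $D$ of $\C \setminus \overline{(\eta_1 \cup F \cup F' \cup H_\mathrm{ext})}$, where $\eta_1$ is the concatenation of face-curves \emph{preceding} $F$ (so a path from $x \in \partial H_\mathrm{ext}$ to $v_-$). The removed set is connected (it links $H_\mathrm{ext}$ through $\eta_1$ to $F$ and the adjacent $F'$) and contains $\infty$, so each connected component of its complement is automatically simply connected, with no assumption about the topology of $F \cup F'$. The remaining step is to identify the component containing $\eta_2$ and check that its closure also contains $\eta_{FF'}$, which is where the specific choice of $\eta_{FF'}$ as the ``clockwise'' boundary arc is used. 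To repair your proof you would either need to establish that $F \cup F'$ is always a topological disk, or switch to the more robust region $D$ used in the paper.
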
 
\begin{proof}
	We address the first claim, the other case follows the same argument. 
Let $\eta_1$ (resp.\ $\eta_2$) be the concatenations of the curves of all faces of $T^*$ preceding $F$ (resp.\ between $F$ and $F'$). Then $\eta_1$ is a curve from $x$ to $v_-$, and $\eta_2$ is a curve from $v_+$ to $v_-'$.  See Figure~\ref{fig-adjacency} (left). 
	The connected components of $\C \backslash \ol{(\eta_1 \cup F \cup F' \cup H_\mathrm{ext})}$ are simply connected. Let $D$ be the component containing $\eta_2$, then $\ol D$ also contains $\eta_{FF'}$. Since $\ol D$ is simply connected, $\eta_2$ and $\eta_{FF'}$ are homotopic in $\C \backslash (\eta_F\cup \eta_{F'})$, so by Definition~\ref{def-winding-nonsmooth} the total curvature of the concatenation of $\eta_F, \eta_{FF'}$ and $\eta_{F'}$ agrees with that of the concatenation of $\eta_F, \eta_2, \eta_{F'}$, which by definition is $\theta(v_+') - \theta(v_-)$. 
\end{proof}

\begin{lemma}\label{lem-winding}
	The function $\theta(v)$ on $\bigcup_i V_i$ defined above depends on the embedding of $T_\mathrm{sub}$ and the curves of the faces of $T_\mathrm{sub}$, but does not depend on the choice of $T^* \in  \cT^*$ or the embedding of $T^*$. 
\end{lemma}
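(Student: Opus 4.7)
The plan is to encode $\theta$ via a system of differences given by total-curvature computations that depend only on $T_\mathrm{sub}$, and to show these differences propagate from $\theta(x)$ to determine $\theta$ everywhere on $\bigcup_i V_i$.

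The initial value $\theta(x)$ is the direction of the outgoing tangent of the curve in $F_0$ at $x$, so it is intrinsic to $T_\mathrm{sub}$ and its embedding. For each face $F$ of $T_\mathrm{sub}$ with root $v_-$ and terminal $v_+$, the definition of $\theta$ gives $\theta(v_+) - \theta(v_-) = w(\eta_F)$, and the total curvature $w(\eta_F)$ is $T_\mathrm{sub}$-intrinsic. For every edge-adjacent pair $(F,F')$ of faces of $T_\mathrm{sub}$, Lemma~\ref{lem-adj-order} determines from $T_\mathrm{sub}$ alone whether $F$ precedes $F'$ in $T^*$; when it does and $v_+\neq v_-'$, Lemma~\ref{lem-adj-winding} yields
\[
\theta(v_+') - \theta(v_-) = w(\eta_F \eta_{FF'} \eta_{F'}),
\]
whose right-hand side is again $T_\mathrm{sub}$-intrinsic since the arc $\eta_{FF'}$ lies on $\partial(F\cup F')$. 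The remaining case where adjacent $F,F'$ in $T_\mathrm{sub}$ satisfy $v_+=v_-'$ forces $F$ and $F'$ to be consecutive in $T^*$: otherwise any intermediate filling faces would lie in a single hole $H_i$, and the loop would exit $\bar H_i$ at $v_+$ only to re-enter at $v_-' = v_+$, contradicting simpleness; hence $v_+$ and $v_-'$ denote the same point on the loop and no new relation is needed.

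With these two families of $T_\mathrm{sub}$-determined linear relations in hand, I would form a graph $\cW$ on the root and terminal vertices of faces of $T_\mathrm{sub}$, with an edge whenever two such vertices are related by one of the equations above; the weights are consistent because a solution exists for any $T^*\in\cT^*$. It then suffices to show that $\cW$ is connected and contains $x$, which reduces to connectivity of the face-adjacency graph of $T_\mathrm{sub}$. The latter follows from the facts that $\wh\C\setminus(H_\mathrm{ext}\cup H_1\cup\cdots\cup H_b)$ is connected and that the low-valence conditions in Definition~\ref{def-T}\,(c) rule out pathological pinch points at which consecutive faces of $T_\mathrm{sub}$ meet only at a single vertex with no interpolating chain of edge-adjacent marked faces.

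The step I expect to be the main obstacle is the correct handling of the geometric configurations in Lemma~\ref{lem-adj-winding}: its hypothesis splits into two cases according to whether the shared edge of $F$ and $F'$ lies in $\partial_r F\cap\partial_r F'$ or $\partial_\ell F\cap\partial_\ell F'$, and one must verify, using Lemma~\ref{lem-adj-order} and the counterclockwise orientation of the space-filling curve, that no ``mixed'' configuration arises for an edge-adjacent pair with $F$ preceding $F'$. Once this case analysis is set up, the two symmetric applications of Lemma~\ref{lem-adj-winding} combine with the per-face identity to determine $\theta$ at every root and terminal vertex along $\cW$, completing the proof.
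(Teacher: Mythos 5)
Your argument follows the paper's proof of Lemma~\ref{lem-winding} essentially verbatim: propagate $\theta$ from $x$ through the per-face and adjacent-pair total-curvature relations, invoking Lemma~\ref{lem-adj-order} for the intrinsic relative order and Lemma~\ref{lem-adj-winding} for the winding increment, then conclude by connectivity of the face-adjacency graph of $T_\mathrm{sub}$ (which holds by definition of ``connected subset of faces'' in the construction of a subtemplate). The $\partial_\ell/\partial_r$ dichotomy you flag as ``the main obstacle'' is indeed left implicit in the paper as well, being a structural feature of templates in $\cT$ inherited from the left/right boundary trees of the space-filling curve; so your proposal is correct and matches the paper.
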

\begin{proof}
	We claim that if $F$ and $F'$ are adjacent  faces of $T_\mathrm{sub}$ (i.e.\ have a common boundary edge), then the values of $\theta$ on the curve  of $F$ determine its values on the curve of $F'$, and vice versa. Each $T^* \in \cT^*$ induces an ordering on the faces of $T_\mathrm{sub}$, and Lemma~\ref{lem-adj-order} implies the relative order of $F$ and $F'$ does not depend on $T^*$ (since $F$ and $F'$ are adjacent); without loss of generality $F$ precedes $F'$. If the curve passes directly from $F$ to $F'$, then the claim is immediate. Otherwise, by Lemma~\ref{lem-adj-winding} $\theta(v_+') - \theta(v_-)$ equals the total curvature of a curve that does not depend on $T^*$, so the claim also holds in this case.

	The set of faces of $T_\mathrm{sub}$ is connected under the adjacency relation, and the values of $\theta$ on the curve segment in the 3-gon are known by definition, so repeatedly using the claim gives the result. 
\end{proof}

\begin{figure}[ht!]
	\begin{center}
		\includegraphics[scale=0.37]{}%
		\caption{\label{fig-winding} \textbf{Left:} We represent $T_\mathrm{sub}$ in Figure~\ref{fig-M} (a) by a graph embedded in the plane. Each edge is a smooth curve (blue arrow) corresponding to a sequence of faces of $T_\mathrm{sub}$ (blue arrows in Figure~\ref{fig-M} (b), (c)).  \textbf{Right:} We add red edges according to the template $T^*$ shown in  Figure~\ref{fig-M} (c).  We define $\theta(v)$ to be the total curvature of the concatenated curve from $x$ to $v$. This function $\theta$ (labelled values) turns out not to depend on the choice of $T^*$. }
	\end{center}
\end{figure}

In our embedded directed graph $G$, the degree of each vertex in $\bigcup_{i=1}^b V_i$ is one, each curve lies in $\hat \C \backslash (H_\mathrm{ext} \cup \bigcup_{i=1}^b H_i)$, and the in-degree and out-degree of $x$ is one. 
Let $A_i$ be the finite collection of ways of adding directed edges between vertices in $V_i$ such that the edges lie in $H_i$, the resulting graph is still planar, the indegree and outdegree of each vertex in $V_i$ is one, and the following compatibility relation holds: any added edge $e$ from $v$ to $w$, drawn as a simple smooth curve in $H_i$ which joins smoothly with the edges of $G$ at $v$ and $w$, has total curvature equal to  $\theta(w)-\theta(v)$. This compatibility condition makes sense since $\theta(u), \theta(v)$ are well defined (Lemma~\ref{lem-winding}), and every embedding of $e$ as a simple smooth curve in $H_i$ that joins smoothly with edges of $G$ has the same total curvature (Lemma~\ref{lem-total-winding}). See Figure~\ref{fig-winding} (right) where for each hole $H_i$ the collection of red arrows represents an element of $A_i$. 

\begin{lemma}\label{lem-Ti-a}
	Suppose $T_i^* \in \cT_i^*$. In each unmarked face of $T_i^*$ draw a curve from its root vertex to its terminal vertex. Concatenating these gives a collection of curves from $V_i$ to $V_i$; view each curve as an edge from $V_i$ to $V_i$ and let  $\alpha_i$ be the set of these edges.  Then $\alpha_i \in A_i$. 
\end{lemma}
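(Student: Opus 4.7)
The plan is to directly verify the four defining properties of $A_i$: that the edges of $\alpha_i$ are directed edges between vertices of $V_i$ lying in $H_i$; that $G \cup \alpha_i$ remains planar; that each $v \in V_i$ has in-degree and out-degree one; and that for each edge from $v$ to $w$ the total curvature equals $\theta(w) - \theta(v)$.

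First I would observe that each edge of $\alpha_i$ arises from a maximal run of consecutive unmarked faces of $T_i^*$ in the face ordering (inherited by restriction from any $T^* \in \cT^*$ that completes $T_i^*$). Since the unmarked faces of $T_i^*$ are precisely the faces of $T^*$ filling $H_i$, the face curves drawn inside them and their concatenations lie in $\overline{H_i}$. Planarity of $G \cup \alpha_i$ is immediate from the planar embedding of $T_i^*$: the face curves inhabit pairwise disjoint faces, and they join smoothly at junctions by construction.

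Next I would verify the endpoint condition. The starting vertex of an $\alpha_i$-edge is the root of the first unmarked face in its run, which by Definition~\ref{def-T}(b) is the terminal of the marked face of $T^*$ preceding the run. That marked face is a face of $T_\mathrm{sub}$, its terminal lies on $\partial H_i$ (since the following face lies in $H_i$), and hence it lies in $V_i$. A symmetric argument applies to the terminal vertex. For the in- and out-degree one condition, I would argue as follows. Using that in any $T^* \in \cT^*$ each vertex of the planar map appears exactly once as the root of a face and exactly once as the terminal of a face in the cyclic face ordering (a consequence of the degree conditions in Definition~\ref{def-T}(c) together with (b)), the junctions between marked and unmarked faces in $T_i^*$ partition into matched pairs: a junction where marked is followed by unmarked contributes the initial vertex of some $\alpha_i$-edge, and a junction where unmarked is followed by marked contributes the terminal vertex of some $\alpha_i$-edge. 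A counting/matching argument using the fact that $V_i$ is exactly the set of root/terminal vertices of $T_\mathrm{sub}$ faces on $\partial H_i$ shows that each $v \in V_i$ appears exactly once in each role, establishing the degree condition.

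Finally, for the total curvature compatibility, fix an edge $e \in \alpha_i$ from $v$ to $w$ and any completion $T^* \in \cT^*$ containing $T_i^*$. By the definition of $\theta$ via the smooth face curve of $T^*$, the total curvature of the concatenated face curve from $v$ to $w$ along the unmarked run equals $\theta(w) - \theta(v)$; since the curve of $e$ is exactly this concatenation, the compatibility holds, and by Lemma~\ref{lem-winding} the value is intrinsic to $T_\mathrm{sub}$ and independent of the choice of $T^*$. The main obstacle will be the combinatorial degree-one verification, which requires carefully matching junctions in the cyclic face ordering to the vertices of $V_i$; all other properties follow routinely from the construction.
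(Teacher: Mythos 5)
Your overall strategy—verify the four conditions of $A_i$ by embedding a completing $T^* \in \cT^*$ and invoking Lemma~\ref{lem-winding} for the curvature condition—is the same approach as the paper (which states the proof in one line). The curvature and planarity verifications are fine. However, your degree-one argument is muddled in a way that would fail if taken literally.

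The issue: the condition ``indegree and outdegree of each vertex in $V_i$ is one'' in the definition of $A_i$ refers to the \emph{combined} graph $G \cup \alpha_i$, not to $\alpha_i$ alone. In $G$, each $v\in V_i$ already has total degree exactly one (this is stated just before the definition of $A_i$), so $\alpha_i$ must contribute exactly one more incident edge, in the complementary direction. Your claim that ``each $v \in V_i$ appears exactly once in each role'' (i.e., once as the initial vertex and once as the terminal vertex of an $\alpha_i$-edge) is impossible: since the meander curve of $T^*$ is simple, it passes through each $v\in V_i$ exactly once, so $v$ is the endpoint of exactly \emph{one} $\alpha_i$-edge, in exactly one of the two roles. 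If your claim were true, $G \cup \alpha_i$ would give $v$ total degree 3. The correct observation makes the degree condition almost immediate: each $v \in V_i$ has degree 2 in $T^*$ (Definition~\ref{def-T}(c)) and lies on $\partial H_i$, so it sits between exactly one marked face and one unmarked face, and the simple meander curve of $T^*$ passes through $v$ once, transitioning between them; this gives $v$ one incident $G$-edge and one incident $\alpha_i$-edge pointing in the complementary direction. No elaborate counting or matching argument is needed. (Relatedly, your assertion that ``each vertex of the planar map appears exactly once as a root and once as a terminal'' is false for the degree-3 marked vertices $b_i,c_i$, though this does not affect $V_i$, which consists only of degree-2 vertices.) With the degree argument replaced as above, your proof becomes correct and matches the paper's.
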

\begin{proof}
	This follows from Lemma~\ref{lem-winding} since $T_i^*$ is a subtemplate of some marked template in $\cT^*$.
\end{proof}

\begin{lemma}\label{lem-hamiltonian}
	For any $(\alpha_1, \dots, \alpha_n) \in \prod_{i=1}^n A_i$, adding the edges of each of the $\alpha_i$ to $G$ gives a Hamiltonian cycle, i.e.\ a cycle that visits every vertex. 
\end{lemma}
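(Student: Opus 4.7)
The proof has two main steps: show that $G\cup\bigcup_i \alpha_i$ decomposes as a disjoint union of directed simple cycles, each realized as a regular simple closed curve in $\hat \C$, and then rule out the existence of more than one such cycle using a winding argument.

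First I would verify that every vertex of $G\cup \bigcup_i\alpha_i$ has equal in- and out-degree, so that the graph decomposes into edge-disjoint directed cycles covering every vertex. At $x$, only $G$ contributes, and the Hamiltonian cycle of any fixed $T^*\in \cT^*$ enters and leaves $x$ exactly once, so $d^+_G(x) = d^-_G(x) = 1$. At $v\in V_i$, the condition $d^{\pm}_{\alpha_i}(v) = 1$ built into the definition of $A_i$ reduces the claim to $d^+_G(v) = d^-_G(v)$. To prove this reduction, fix any $T^* \in \cT^*$ and apply Lemma~\ref{lem-Ti-a} to produce an $\alpha_i^* \in A_i$ such that $G\cup\bigcup_i\alpha_i^*$ is a single Hamiltonian cycle; this cycle must balance entries and exits at $v$, and subtracting the $\alpha_i^*$ contribution (which is $1$ in each direction) gives $d^+_G(v) = d^-_G(v)$. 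Since $G$ does not depend on $T^*$, the identity is intrinsic to $G$.

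Next I would show the decomposition consists of a single cycle. Suppose for contradiction that a cycle $C$ in the decomposition avoids $x$. Every vertex of $C$ then lies in $\bigcup_i V_i$ and has a well-defined real value $\theta(v)$ by Lemma~\ref{lem-winding}. For every directed edge $e = (v\to w)$ of $C$, the total curvature of $e$ equals $\theta(w)-\theta(v)$: for $e\in G$ this is the definition of $\theta$ as cumulative total curvature of face-curve concatenations starting at $x$, and for $e\in\alpha_i$ it is precisely the compatibility condition used to define $A_i$. Summing around $C$ yields a telescoping total of $0$, but by Lemma~\ref{lem-hopf} the total curvature of a regular simple closed curve is $\pm 2\pi$, a contradiction. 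Hence every cycle in the decomposition passes through $x$, and since the combined in- and out-degree of $x$ is $1$, there is only one such cycle. This unique cycle visits every vertex of $G\cup \bigcup_i\alpha_i$ and so is Hamiltonian.

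The main obstacle I anticipate is cleanly setting up the decomposition into regular simple closed curves: at a vertex where several $G$- and $\alpha_i$-edges meet, one must pair incoming with outgoing edges so that paired edges share a tangent direction, which requires that the multisets of incoming and outgoing tangent directions at each vertex agree. The smooth-joining condition in the construction of $G$ (maximal smooth concatenations of face-curves) together with the ``join smoothly with the edges of $G$'' clause in the definition of $A_i$ supplies the tangent data needed for such a pairing to exist and be canonical; verifying this carefully, perhaps by checking that the local picture at each vertex is identical to that produced by some reference $T^* \in \cT^*$ via Lemma~\ref{lem-Ti-a}, is the most delicate piece of the argument.
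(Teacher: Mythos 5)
Your approach is the same as the paper's: use the degree constraints to get a decomposition into directed simple cycles, then use Hopf's Umlaufsatz (via the fact that each edge's total curvature telescopes through $\theta$) to rule out any cycle avoiding $x$, so that the single cycle through $x$ must be Hamiltonian. The paper's proof is a three-sentence version of exactly this argument.

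However, your degree-balance paragraph is based on a misreading that, if pushed through, would yield a contradiction rather than the claim. The condition built into the definition of $A_i$ is that \emph{the resulting graph} $G \cup \alpha_i$ has in-degree and out-degree $1$ at every $v \in V_i$ — not that $\alpha_i$ by itself has $d^{\pm}_{\alpha_i}(v) = 1$. Indeed, the text just before Lemma~\ref{lem-Ti-a} asserts that $G$ already has total degree $1$ at each $v\in V_i$, so $\alpha_i$ contributes exactly one edge at $v$, in the opposite direction. If you take your reading and run your subtraction argument — Hamiltonian cycle gives $(1,1)$ at $v$, subtract a supposed $(1,1)$ contribution from $\alpha_i^*$ — you get $d^+_G(v) = d^-_G(v) = 0$, contradicting $\deg_G(v)=1$. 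The correct reading makes the degree claim immediate and removes the need for the detour through a reference $T^*$ and Lemma~\ref{lem-Ti-a}. Similarly, the ``pairing of incoming and outgoing tangent directions'' you worry about at the end is a non-issue: with in-degree and out-degree both equal to $1$ at every vertex, there is a unique in-edge and out-edge at each vertex, and the smooth-joining clause in the constructions of $G$ and $A_i$ already provides the common tangent. So the proof is correct once the degree condition is read as intended, but the auxiliary argument you built around your misreading should simply be deleted.
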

\begin{proof}
	In the new graph every vertex has indegree and outdegree one. Thus the edges form a collection of cycles. Suppose for the sake of contradiction that there is a cycle not containing $x$. Since for any edge from $w$ to $v$ the total curvature is $\theta(v)-\theta(w)$, the total curvature of the cycle is 0, contradicting the fact that simple smooth closed curves (loops)  have total curvature $\pm 2\pi$ (Hopf's Umlaufsatz). 
\end{proof}

By definition, each $T_i^* \in \cT_i^*$ is a subtemplate of some $T^* \in \cT^*$; this $T^*$ defines an ordering on the faces of $T_i^*$.  Lemma~\ref{lem-adj-order} says the relative ordering of adjacent faces of $T_i^*$ is determined by $T_i^*$, so this relative ordering does not depend on the choice of $(\alpha_j)_{j \neq i}$ arising from the choice of $T^*$. More strongly, as we see next, \emph{every} $(\alpha_j)_{j \neq i} \in \prod_{j\neq i}A_j$ gives the same partial ordering on adjacent faces, not just those coming from some $T^*$. (We expect, but do not prove, that every $(\alpha_j)_{j \leq b}\in \prod_{j \leq b} A_j$ arises from some choice of $T^*$.)

\begin{lemma}\label{lem-adj-order-i}
	Fix $i \leq b$ and $T_i^* \in \cT_i^*$. Suppose $\alpha_j \in A_j$ for each $j \neq i$. Draw a curve in each face of $T_i^*$ from the root vertex to the terminal vertex, and draw curves in each $H_j$ according to $\alpha_j$, to get a simple curve from $x$ to $x$ visiting each face exactly once (Lemma~\ref{lem-hamiltonian}). For the induced ordering on faces of $T_i^*$, the relative order of adjacent faces does not depend on $(\alpha_j)_{j \neq i}$. 
\end{lemma}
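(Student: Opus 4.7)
My plan is to show that the relative order of any two adjacent faces $F, F'$ of $T_i^*$ in the Hamiltonian cycle $\gamma$ is governed by a purely local criterion in $T_i^*$: namely, $F$ precedes $F'$ if and only if the shared edge $e$ lies in $\partial_+F \cap \partial_-F'$, where $\partial_-F$ denotes the union of the two sides of $F$ adjacent to its root vertex (as in Lemma~\ref{lem-adj-order}). Since $T_i^*$ specifies the root vertex of each face, the decomposition $\partial F = \partial_- F \cup \partial_+ F$ is intrinsic to $T_i^*$, which would immediately make the relative order independent of $(\alpha_j)_{j \neq i}$.

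First I will verify that for any adjacent $F, F'$ of $T_i^*$, the shared edge $e$ lies in exactly one of $\partial_+F \cap \partial_-F'$ or $\partial_-F \cap \partial_+F'$. By definition of $\cT_i^*$, the template $T_i^*$ is obtained from some $T^* \in \cT^*$ by replacing all unmarked face-components (other than the one inside $H_i$) with holes, so $F$ and $F'$ remain adjacent in $T^* \in \cT$ with the same shared edge. Property~\eqref{item-T-d} of Definition~\ref{def-T}, applied to $T^*$, then rules out the ``parallel'' cases $\partial_- \cap \partial_-$ and $\partial_+ \cap \partial_+$.

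Next I will show that this local criterion is respected by $\gamma$ for any $(\alpha_j)_{j \neq i}$. Assume without loss of generality $e \in \partial_+F \cap \partial_-F'$; I want to conclude $F$ precedes $F'$ in $\gamma$. The consecutive case (when the terminal of one of $F, F'$ equals the root of the other) is immediate from $T_i^*$. Otherwise, suppose for contradiction that $F'$ precedes $F$ in $\gamma$. Applying (the argument of) Lemma~\ref{lem-adj-winding} to the pair $(F', F)$ in this order---the proof of that lemma goes through verbatim since it uses only simple-connectedness of a complementary planar region cut by a sub-arc of the Hamiltonian cycle, not the ambient configuration coming from a template in $\cT$---I obtain that the total curvature of the boundary arc $\eta_{F'F} \subset \partial(F' \cup F)$ from $v_+^{F'}$ to $v_-^F$ through $e$ equals $\theta(v_+^F) - \theta(v_-^{F'})$. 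A direct computation of the same total curvature using Hopf's theorem (Lemma~\ref{lem-hopf}) applied to $\partial F$ and $\partial F'$, together with Lemma~\ref{lem-total-winding} and the fact that $e$ lies on $\partial_+ F$ and $\partial_- F'$ (not the reverse), gives a value that differs by a nonzero multiple of $2\pi$; this contradicts the $(\alpha_j)$-independence of $\theta$ from Lemma~\ref{lem-winding}.

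The main obstacle will be the total-curvature bookkeeping in the final step: a short case analysis is required depending on whether $e$ lies on the clockwise arc $\partial_\ell$ or counterclockwise arc $\partial_r$ of each of $F$ and $F'$ (with analogous minor modifications needed when one of $F, F'$ is the 2-gon or 3-gon rather than a 4-gon), and one must check that the original and mirror-image versions of Lemma~\ref{lem-adj-winding} combine consistently to produce the stated sign discrepancy. The topological flavor closely parallels the winding-function arguments for uniform meanders developed in Appendix~\ref{sec-meander}.
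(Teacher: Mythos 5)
Your first two steps are sound: property~\eqref{item-T-d} of Definition~\ref{def-T} applied to some $T^*\in\cT^*$ containing $T_i^*$ does show that the shared edge lies in exactly one of $\partial_+F\cap\partial_-F'$ or $\partial_-F\cap\partial_+F'$, and the consecutive case is immediate. But the final step, where the contradiction is supposed to emerge, has a genuine gap.

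There are two problems. First, you misquote the conclusion of Lemma~\ref{lem-adj-winding}: that lemma yields the total curvature of the \emph{concatenation} $\eta_{F'}\cdot\eta_{F'F}\cdot\eta_F$, not of the single arc $\eta_{F'F}$. Second, and more importantly, you propose to derive a contradiction by comparing this against ``a direct computation of the same total curvature using Hopf's theorem applied to $\partial F$ and $\partial F'$.'' But the total curvature of any one of these curves is a fixed geometric quantity, so there is nothing to contradict: the Lemma~\ref{lem-adj-winding}-type identity holds whenever its hypothesis (that $F'$ precedes $F$ in \emph{some} valid Hamiltonian cycle) holds, and a recomputation of the same fixed number cannot disagree with it. Hopf's theorem applied to the face boundaries $\partial F$ and $\partial F'$ individually also does not connect to the $\theta$-function in the way your sketch requires; those loops have nothing a priori to do with $\theta$.

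What is actually needed — and what the paper does — is \emph{two} applications of the Lemma~\ref{lem-adj-winding} argument, one for the hypothetical cycle $\gamma$ where $F'$ precedes $F$ (giving $w(\eta_{F'}\cdot\eta_{F'F}\cdot\eta_F)=\theta(v_+^F)-\theta(v_-^{F'})$), and one for a configuration where $F$ precedes $F'$ (e.g.\ the cycle coming from a realizable $\hat T^*\in\cT^*$ containing $T_i^*$, giving $w(\eta_F\cdot\eta_{FF'}\cdot\eta_{F'})=\theta(v_+^{F'})-\theta(v_-^F)$). Subtracting off $w(\eta_F)=\theta(v_+^F)-\theta(v_-^F)$ and $w(\eta_{F'})=\theta(v_+^{F'})-\theta(v_-^{F'})$ and summing shows that the closed curve $\eta_F\cdot\eta_{FF'}\cdot\eta_{F'}\cdot\eta_{F'F}$ — which is a single \emph{simple} loop, not $\partial F$ or $\partial F'$ — has total curvature $0$. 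Applying Hopf (Lemma~\ref{lem-hopf}) to \emph{this} loop, after basing it at an interior point of $\eta_F$ so that Definition~\ref{def-winding-nonsmooth} applies, gives the contradiction with $\pm 2\pi$. Your sketch contains neither the second Lemma~\ref{lem-adj-winding} computation nor the construction of the correct simple loop; these are the essential ingredients, not bookkeeping, so I cannot accept the proposal as written.
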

\begin{proof}
	Let $F,F'$ be adjacent faces of $T^*_i$, and let $v_-, v_+$ (resp.\ $v_-', v_+'$) be the root and terminal vertices of $F$ (resp.\ $F'$). If $\{ v_-, v_+\} \cap \{ v_-', v_+'\} \neq \emptyset$, then one face's terminal vertex is the other's root vertex, and the former face precedes the latter regardless of the choice of $(\alpha_j)_{j \neq i}$.

	Otherwise, suppose for the sake of contradiction that there exists $\{\alpha_j^{FF'}\}_{j \neq i}$ for which $F$ precedes $F'$, and $\{\alpha_j^{F'F}\}_{j \neq i}$ for which $F'$ precedes $F$. Assume  $\partial_r F$ intersects $\partial_r F'$ on an edge; the other case is similar.
	Let $\eta_{FF'}$ be the clockwise boundary arc of $\partial (F \cup F')$ from $v_+$ to $v_-'$, and let $\eta_{F'F}$ be the clockwise boundary arc of $\partial (F \cup F')$ from $v_+'$ to $v_-$, see Figure~\ref{fig-adjacency} (middle).
	By precisely the same argument of Lemma~\ref{lem-adj-winding} with the curve from $T^*$ replaced by the curve arising from $\{ \alpha_j^{FF'}\}_{ j \neq i}$, the total curvature of the concatenation of $\eta_F$, $\eta_{FF'}$ and $\eta_{F'}$ is $\theta(v_+') - \theta(v_-)$. By instead using the curve arising from $\{ \alpha_j^{F'F}\}_{ j \neq i}$, the total curvature of the concatenation of $\eta_{F'}, \eta_{F'F}, \eta_F$ is $\theta( v_+) - \theta(v_-')$. Let $\eta$ be a simple loop obtained by concatenating $\eta_F, \eta_{FF'}, \eta_{F'}, \eta_{F'F}$ and having its starting and ending point be a point on the interior of $\eta_F$, so its total curvature is well-defined by Definition~\ref{def-winding-nonsmooth}. By combining the previous two total curvature values, and the fact that the total curvatures of $\eta_F$ and $\eta_{F'}$ are $\theta(v_+) - \theta(v_-)$ and $\theta(v_+') - \theta(v_-')$, we conclude that the total curvature of $\eta$  is $0$. On the other hand, $\eta$ is a simple loop so it should have total curvature $\pm 2\pi$ (Lemma~\ref{lem-hopf}), a contradiction. 
\end{proof}
We now turn to the main proof. 

\begin{figure}[ht!]
	\begin{center}
		\includegraphics[scale=0.37]{}%
		\caption{\label{fig-adjacency}
				\textbf{Left:} Figure for Lemma~\ref{lem-adj-winding}. Suppose $F$ preceeds $F'$ but are not consecutive faces, and suppose $\partial_r F$ intersects $\partial_r F'$.
				 Then $\theta(v_+') - \theta(v_-)$ is the total curvature of the concatenation of $\eta_F$, $\eta_{FF'}$ and $\eta_{F'}$. 
			In this figure, the curves of $F$ and $F'$ are line segments, and the winding in $\eta_{F'}$ is $\pi$ less that that in $\eta_F$. 
			\textbf{Middle:} Figure for Lemma~\ref{lem-adj-order-i}. If $F$ preceeds $F'$ for some $\{ \alpha_j^{FF'}\}_{j \neq i}$ but $F'$ preceeds $F$ for some $\{ \alpha_j^{F'F}\}_{j \neq i}$, then the
the loop obtained by concatenating {$\eta_F, \eta_{FF'}, \eta_{F'},  \eta_{F'F}$} and rooting at a point in the interior of $\eta_F$ has total curvature 0 (in the sense of Definition~\ref{def-winding-nonsmooth}). This contradicts the fact that this loop has total curvature $\pm 2\pi$. \textbf{Right:} Figure for Lemma~\ref{lem-recover-psi-curve-cell} for $\gamma \in (\sqrt2, 2)$, where the interior of $D_i$ (green, left) is conformally mapped into a subset of $\bbH$ such that the right boundary of $D_i$ is sent to $[0,1]$.
		}
	\end{center}
\end{figure}

\begin{proof}[Proof of Proposition~\ref{prop-product-templates}]
	Clearly the map is injective. We will show it is surjective. 
	Suppose $(T_1^*, \dots, T_b^*) \in \prod_{i=1}^b \cT_i^*$. Let $T^*$ be the marked template obtained by filling in the $i$th hole of $T_\mathrm{sub}$ in the same way as in  $T_i^*$. We will show that $T^* \in \cT^*$. Since $T^*$ has $T_\mathrm{sub}$ as its marked subtemplate, we need to show the four conditions~\eqref{item-T-a}-\eqref{item-T-d} defining $\cT$ in Definition~\ref{def-T}  hold for $T^*$.~\eqref{item-T-a} is automatic since $T^*$ has $T_\mathrm{sub}$ as a subtemplate. 
	By Lemmas~\ref{lem-Ti-a} and~\ref{lem-hamiltonian} the faces of $T^*$ can be ordered as $F_0, F_1, \dots, F_{n+1}$ so that~\eqref{item-T-b} holds. Next~\eqref{item-T-c} holds for each $T_i^*$ since $T_i^*$ is a subtemplate for some element of $\cT^*$, so~\eqref{item-T-c} also holds for $T^*$.  Finally we address~\eqref{item-T-d}. First suppose $F$ is an unmarked face of $T^*$, i.e. $F$ lies in the $i$th hole of $T_\mathrm{sub}$ for some $i$. We know $T_i^*$ is a subtemplate of some $\hat T^* \in \cT^*$, and $\hat T^*$ satisfies~\eqref{item-T-d} for the face $F$. By Lemma~\ref{lem-adj-order-i}, the relative order of $F$ and any adjacent face is the same in $T^*$ and $\hat T^*$, so $T^*$ also satisfies~\eqref{item-T-d} for the face $F$. A similar argument applies if $F$ is a marked face of $T^*$, so~\eqref{item-T-d} holds. Thus, $T$ satisfies all four conditions so $T \in \cT$, and so $T^* \in \cT^*$.
\end{proof}

\subsection{Proof of Theorem~\ref{thm-eps-indep}}\label{subsec-indep-main-proof}

In order to prove Theorem~\ref{thm-eps-indep}, we will  show that the IG field is measurable with respect to the LQG surface decorated by space-filling SLE curve segments. We do this first for each $D_i$ (Lemma~\ref{lem-recover-psi-curve-cell}), then for the whole explored region (Lemma~\ref{lem-recover-psi-curve}).
	
	\begin{lemma}\label{lem-recover-psi-curve-cell}
		There is a measurable function $F$ such that the following holds. 
		In the setting described just above Theorem~\ref{thm-eps-indep}, a.s.\ $ (D_i, \Phi, \Psi, (\eta_i', a_i, b_i, c_i, d_i) )/{\sim}  = F((D_i, \Phi, \eta_i', a_i, b_i, c_i, d_i)/{\sim})$ for each $i=0,\dots,N+1$.
	\end{lemma}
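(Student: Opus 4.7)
The plan is to show that $\Psi|_{D_i}$, viewed modulo IG coordinate change, is a measurable function of the curve-decorated LQG surface $(D_i, \Phi, \eta_i', a_i, b_i, c_i, d_i)/{\sim}$. Since the embedding on the right-hand side already determines $(D_i, \Phi, \eta_i', a_i, b_i, c_i, d_i)/{\sim}$ by itself, this will suffice.

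First I would reduce to a canonical embedding. For $\gamma \in (0,\sqrt 2\,]$, each $D_i$ is simply connected, so pick a canonical conformal map $f_i$ sending $(D_i, a_i, d_i) \to (\bbH, 0, \infty)$ (further normalized using $b_i$ or $c_i$), and let $\wt\Phi_i, \wt\Psi_i, \wt\eta_i'$ be the images of $\Phi|_{D_i}, \Psi|_{D_i}, \eta_i'$ under $f_i$ via the LQG/IG coordinate change rules of Definition~\ref{def-QS-IG}. The triple $(\wt\Phi_i, \wt\eta_i')$ is a measurable function of $(D_i, \Phi, \eta_i', a_i, b_i, c_i, d_i)/{\sim}$. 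For $\gamma \in (\sqrt 2, 2)$, as indicated in Figure~\ref{fig-adjacency} (right), $D_i$ is a countable chain of simply connected components which may be enumerated using the time ordering of $\eta_i'$, and we apply the same procedure componentwise to obtain a canonical embedding into a disjoint union of subdomains of $\bbH$.

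Next I would invoke the imaginary geometry coupling. In the canonical embedding, $\wt\eta_i'$ has the law of a (segment of a) counterclockwise space-filling $\SLE_{\kappa'}$ in $\bbH$ coupled with $\wt\Psi_i$ in the standard way of Section~\ref{sec-sle-prelim}. The crucial input is that $\wt\Psi_i$ is measurable with respect to $\wt\eta_i'$: indeed the left and right outer boundaries of $\wt\eta_i'([0,t])$ for rational $t$ are the flow lines of $\wt\Psi_i$ with angles $\pm \pi/2$ (up to a sign convention) started from rational points, and by \cite[Theorem 1.2]{ig4} together with the construction of space-filling SLE recalled in Section~\ref{sec-sle-prelim}, the collection of all such flow lines determines $\wt\Psi_i$ modulo the global $2\pi\chi$-ambiguity built into Definition~\ref{def-IG}. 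The additive constant (i.e., the integer $m$ in Definition~\ref{def-IG}) is then pinned down by the boundary data of $\wt\Psi_i$ near the marked point $0 = f_i(a_i)$, which is determined by the boundary conditions prescribed in Section~\ref{sec-sle-prelim}. Thus $\wt\Psi_i$, and hence $(D_i,\Psi,\eta_i',a_i,b_i,c_i,d_i)/{\sim_{\ul\cc}}$, is a measurable function of $\wt\eta_i'$ together with the marked points. Composing with the embedding map gives the desired $F$.

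The main obstacle is the non-simply-connected case $\gamma \in (\sqrt 2, 2)$: there one must check that the ordered chain of components of $D_i$, together with the information of which boundary points are identified as $a_i, b_i, c_i, d_i$, is genuinely intrinsic to $(D_i, \Phi, \eta_i', a_i, b_i, c_i, d_i)/{\sim}$ and that the boundary-condition data of $\Psi$ propagates consistently across the pinch points joining consecutive components. This is handled by noting that the successive simply connected components of $D_i$ can be reconstructed from the time ordering of $\wt\eta_i'$, and that at each pinch point the two sides of $\wt\Psi_i$ are related by the same additive shift dictated by the total curvature calculation of Section~\ref{subsec-arg} (analogous to Lemma~\ref{lem-ig-bdy-cond}); once this is checked, the componentwise reconstruction assembles into a well-defined IG surface, and measurability of $F$ follows from measurability of each step.
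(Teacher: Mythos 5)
Your proposal is essentially correct and follows the same two-step strategy as the paper: first invoke an imaginary-geometry measurability result to recover $\Psi$ modulo the $2\pi\chi$-ambiguity on each simply connected piece, then tie the per-component ambiguities together in the $\gamma\in(\sqrt2,2)$ case via the flow-line boundary conditions on $\partial D_i$. Two small remarks. First, the measurability statement you need (that the field is determined, up to a global $2\pi\chi$ shift, by its flow lines / the associated space-filling SLE) is \cite[Theorem 1.10]{ig4}, not Theorem~1.2; the latter goes in the other direction. Second, the sentence about ``pinning down the integer $m$ near the marked point'' is unnecessary: since the target of $F$ is the equivalence class $(D_i,\Phi,\Psi,\dots)/{\sim}$, the relation $\sim$ already quotients out the additive $2\pi\chi(\cc) m$ shift, so there is nothing to fix beyond a single well-defined constant per $D_i$; the actual content is precisely the passage from one constant per component to one constant per $D_i$. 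On that last point, the paper is more concrete than your sketch: it chooses a conformal map $f$ from $\mathrm{int}(D_i)$ into $\bbH$ whose boundary extension sends the (prime-end) right boundary arc $\partial^p_r D_i$ homeomorphically onto $[0,1]$, and reads off that $\Psi\circ f^{-1}-\chi\arg((f^{-1})')$ is constant on $[0,f(c_i))$ and $(f(c_i),1]$ by the flow-line boundary conditions, which forces the additive constants on consecutive components to match. Your appeal to ``the total curvature calculation of Section~\ref{subsec-arg}'' is the right idea but would need to be spelled out at roughly this level of detail to constitute a complete proof.
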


\begin{proof}
	Let $\wh \Psi$ be the distribution $\Psi$ viewed modulo additive constant in $2\pi \Z$. We will show that $\wh \Psi$ is measurable with respect to $(D_i, \Phi, \eta_i', a_i, b_i, c_i, d_i)$. Since the arguments we will use do not depend on the choice of embedding of the decorated LQG surface, this implies the lemma. 
		
	By \cite[Theorem 1.10]{ig4}, for each connected component $U$ of $\mathrm{int}(D_i)$ (the interior of $D_i$), the field $\Psi|_U$ is determined up to additive constant in $2\pi \Z$. This completes the proof for $\gamma \in (0,\sqrt2]$ since $\mathrm{int}(D)$ is connected in that regime. For $\gamma \in (\sqrt2, 2)$, it remains to show that fixing one of these additive constants fixes all the remaining constants. Basically, the reason for this is that $\partial D_i$ is the union of four flow lines of $\Psi$, and the values of $\Psi$ along a flow line are well-defined up to adding a single integer multiple of $2\pi\chi$.
	
	Recall that $D_i$ was defined to be the region traced by the space-filling curve $\eta'$ between hitting two points. Therefore, $\partial D_i$ comprises segments of flow lines of $\Psi$ with specified angles. For each connected component $U$ of $D_i$ let $\partial^p U$ denote the prime end boundary of $D_i$, let $\partial^p_r U$ be the right boundary arc of $U$ with orientation chosen  such that its starting and ending points agree with those of $\eta$ in $U$, and let $\partial_r^p D_i$ be the concatenation of the $\partial^p_r U$ ordered by the time $U$ is filled out by $\eta$. 
	
	 Let $f: \mathrm{int}(D_i) \to \bbH$ be a map such that $f: \mathrm{int}(D_i) \to f(\mathrm{int}(D_i))$ is conformal, and whose extension to $\partial^p_r D_i$ is a homeomorphism from  $\partial^p_r D_i$ to $[0,1] \subset \bbH$. See Figure~\ref{fig-adjacency} (right). Let $\wt \Psi$ be the distribution on $F(\mathrm{int}(D_i))$ given by $\wt \Psi = \Psi \circ f^{-1} - \chi(26 - \ccL)\arg ((f^{-1})')$, where  $\mathrm{arg}((f^{-1})')$ is as defined in~\eqref{eq-def-arg}. 
	 By the definition of boundary conditions of interior flow lines~\cite[Theorem 1.1]{ig4}, and the consistency of~\eqref{eq-def-arg} with the usual definition of $\arg ((f^{-1})')$ for simply connected domains (Proposition~\ref{prop-arg-f'}), $\wt \Psi$ is constant on $[0, f(c_i))$ and on $(f(c_i), 1]$. Furthermore, the point $c_i$ lies on the boundary of one of the connected components of the interior of $D_i$. Consequently, fixing the additive constant of one of the $\Psi|_U$ fixes the additive constants of $\Psi|_{U'}$ for all $U'$, as needed. 
\end{proof}

\begin{lemma}\label{lem-recover-psi-curve}
	In the setting of Theorem~\ref{thm-eps-indep}  let $\cM_\tau'= (M_\tau, \Phi, \{(\eta'_i, a_i, b_i, c_i, d_i)\: : \: i \in I_\tau \})/{\sim}$ be the decorated LQG surface obtained from $\cM_\tau$ by forgetting the IG field $\Psi$. 
	Then
	$\cM_\tau$ is measurable with respect to $\cM_\tau'$. 
\end{lemma}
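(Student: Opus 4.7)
The plan is to invoke Lemma~\ref{lem-recover-psi-curve-cell} cell by cell and then glue the resulting reconstructions consistently across adjacent cells. First I will fix an arbitrary embedding of $\cM_\tau'$, with underlying domain $M_\tau \subset \C$ and distribution $\Phi$. Since the conclusion concerns equivalence classes under the joint LQG--IG coordinate change of Definition~\ref{def-QS-IG}, and the integer $m$ in that definition corresponds to a global additive shift of $\Psi$ by $2\pi \chi(26-\ccL)$, it suffices to produce a distribution $\Psi$ on $\mathrm{int}(M_\tau)$, determined by the fixed embedding of $\cM_\tau'$ up to such a global additive constant, for which $(M_\tau, \Phi, \Psi, \{(\eta'_i, a_i, b_i, c_i, d_i) : i \in I_\tau\})/{\sim}$ equals $\cM_\tau$.

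Next, I would apply Lemma~\ref{lem-recover-psi-curve-cell} to each $i \in I_\tau$: the per-cell data $(D_i, \Phi, \eta'_i, a_i, b_i, c_i, d_i)$ read off from the fixed ambient embedding determines the LQG--IG-decorated surface $(D_i, \Phi, \Psi, \eta'_i, a_i, b_i, c_i, d_i)/{\sim}$, and hence pins down $\Psi|_{D_i}$ as a distribution on $D_i$ up to an additive constant in $2\pi \chi \Z$. (When $\gamma \in (\sqrt 2, 2)$ and $\mathrm{int}(D_i)$ is disconnected, the relative constants between the different connected components inside a single $D_i$ have already been matched inside the proof of Lemma~\ref{lem-recover-psi-curve-cell}.) The only remaining task is to match these per-cell additive constants across adjacent cells.

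For the gluing step, I would observe that if $D_i$ and $D_j$ (with $i,j \in I_\tau$) share a nontrivial boundary arc $\gamma$ contained in $\mathrm{int}(M_\tau)$, then by the imaginary geometry construction of $\eta'$ (Section~\ref{sec-sle-prelim}), $\gamma$ lies on a flow or counterflow line of $\Psi$ with a specified angle. By the IG coupling, the boundary conditions of $\Psi|_{D_i}$ along $\gamma$ are given, as a distributional boundary condition, by an explicit harmonic function of the flow-line angle plus a single real additive constant (the additive constant of $\Psi|_{D_i}$), and similarly for $\Psi|_{D_j}$. Since $\Psi|_{D_i}$ and $\Psi|_{D_j}$ must both coincide with the restriction of a single ambient $\Psi$ on a one-sided neighborhood of any interior point of $\gamma$, these two boundary harmonic functions must agree, so the two additive constants must differ by an element of $2\pi \chi \Z$. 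Because the cells $\{D_i : i \in I_\tau\}$ form a connected collection under the peeling construction of Definition~\ref{def-eps-local} (each newly adjoined cell shares a nontrivial boundary arc with the previously explored region), iterating this matching starting from $D_0$ propagates a consistent choice of per-cell additive constants and leaves exactly one global ambiguity in $2\pi \chi \Z$. That global ambiguity is precisely the integer $m$ in Definition~\ref{def-QS-IG}, completing the argument.

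The hard part will be making the boundary-matching rigorous despite $\Psi$ being a generalized function rather than a pointwise function. This will be handled via the distributional flow-line boundary-value formalism of imaginary geometry reviewed in Section~\ref{sec-sle-prelim} (and in particular the absolute-continuity input \cite[Proposition 3.4]{ig1}), through which the ``boundary values'' of $\Psi|_{D_i}$ along a flow-line boundary arc are well-defined distributional data; agreement with $\Psi|_{D_j}$ along $\gamma$ is then a meaningful identity that forces the claimed constraint on the relative additive constant.
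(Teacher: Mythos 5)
Your proposal is correct and matches the paper's own argument: both apply Lemma~\ref{lem-recover-psi-curve-cell} cell-by-cell to recover $\Psi|_{D_i}$ modulo a $2\pi\chi\Z$-additive constant, then use flow-line boundary conditions along shared boundary arcs to pin down the relative constants between adjacent cells, and finally use connectivity of the cells (which you correctly tie to the peeling construction) to propagate a single global ambiguity. The extra care you take about what ``boundary value of $\Psi$ along an arc'' means distributionally is implicit in the paper's citation to \cite[Theorem 1.1]{ig4}, so the two proofs are essentially identical.
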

\begin{proof}
	Let $\wh \Psi$ be $\Psi$ viewed modulo additive constant in $2\pi \Z$. 
	We will show that $\wh \Psi|_{M_\tau}$ is measurable with respect to $(M_\tau, \Phi, \{(\eta'_i, a_i, b_i, c_i, d_i)\: : \: i \in I_\tau \})$, i.e., we prove that the desired result holds when LQG surfaces are replaced by their embeddings in $M_\tau$. None of the arguments we will use depend on the choice of embedding, so this implies the desired result. 
	
	Condition on $(M_\tau, \Phi, \{(\eta'_i, a_i, b_i, c_i, d_i)\: : \: i \in I_\tau \})$.
	By Lemma~\ref{lem-recover-psi-curve-cell}, $\Psi|_{D_i}$ is determined modulo additive constant in $2\pi \Z$ for each $i$. Next, if $D_i$ intersects $D_{i'}$ on an interval $\eta$,  then since $\eta$ is a segment of a  flow line, the additive constant for $\Psi|_{D_i}$ fixes the additive constant for $\Psi|_{D_{i'}}$  by  flow line boundary conditions  \cite[Theorem 1.1]{ig4}. Finally, for any $i, i' \in I_\tau$ it is possible to find a sequence $i_1, \dots, i_j$ with $i_0 = i$ and $i_j = i'$ such that $D_{i_k}$ and $D_{i_{k+1}}$ intersect on an interval for all $k$. We conclude that under this conditioning $\wh \Psi|_{M_\tau}$ is deterministic, i.e.,  $\wh \Psi|_{M_\tau}$ is measurable with respect to  $(M_\tau, \Phi, \{(\eta'_i, a_i, b_i, c_i, d_i)\: : \: i \in I_\tau \})$.
\end{proof}

Finally, we show that for all quilts having a certain subtemplate, the peeling procedure of Theorem~\ref{thm-eps-indep} chooses this subtemplate with probability not depending on the quilt.
\begin{lemma}\label{lem-peel-prob-same}
	Fix $M \in \cQ_{{T_\mathrm{sub}}}$ for some $T_\mathrm{sub}$ as in Section~\ref{subsec-indep-quilts}, and let $Q$ be any quilt having $M$ as a subtemplate. 
	Carry out the peeling procedure on $Q$ analogous to that of  Theorem~\ref{thm-eps-indep}. That is, sample $T \geq 0$ with $\P[T = t] = 2^{-t-1}$, explore all faces adjacent to the external face at step 0, at each subsequent step explore the faces adjacent to a boundary point chosen uniformly at random according to length, and stop after step $T$ or when all faces are explored, whichever is earlier. Then the probability the explored subtemplate is $M$ does not depend on the choice of $Q$.
\end{lemma}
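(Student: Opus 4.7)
The plan is to compute $\P[\text{explored subquilt equals } M \mid Q]$ explicitly and verify that every factor depends only on $M$. First I would handle step 0, which deterministically reveals the set of all faces of $Q$ adjacent to $F_\mathrm{ext}$. If some such face belongs to a hole of $T_\mathrm{sub}$ (equivalently, $F_\mathrm{ext}$ borders a hole in $T_\mathrm{sub}$), then step 0 reveals an unmarked face and the probability is $0$ for every eligible $Q$. Otherwise $I_0$ coincides with the set of faces of $T_\mathrm{sub}$ adjacent to $F_\mathrm{ext}$, which is determined by $M$ alone; let $k$ be the number of faces of $M$ not in $I_0$, which also depends only on $M$.

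Because the length measure on the boundary is non-atomic, a.s.\ each subsequent step picks an interior point of an edge and hence reveals exactly one new face. Any peeling trajectory consistent with revealing exactly $M$ is therefore encoded by an ordered sequence of $k$ edges $e_1,\dots,e_k$ of $Q$, where $e_j$ separates the currently explored set $I_{j-1}$ (a subset of $M$'s face set) from the next revealed face in $M\setminus I_{j-1}$. Since both endpoints of such an $e_j$ lie on the boundary of two marked faces and so survive the subtemplate construction, $e_j$ is itself an edge of $T_\mathrm{sub}$ with length inherited from $M$. The probability of a given trajectory $\pi=(e_1,\dots,e_k)$ equals $\prod_{j=1}^k \mathrm{length}(e_j)/L_{j-1}$, where $L_{j-1}$ is the total length of the non-external boundary of $\bigcup_{i\in I_{j-1}}D_i$ in $Q$.

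The crux will be to show $L_{j-1}$ depends only on $M$ and $I_{j-1}$. I would split this boundary into (a) $Q$-edges between $I_{j-1}$ and $M\setminus I_{j-1}$, which as above are edges of $T_\mathrm{sub}$ with lengths known from $M$; and (b) $Q$-edges between $I_{j-1}$ and faces of $Q\setminus M$. For (b), fix a marked face $F\in I_{j-1}$ and a hole $H$ of $T_\mathrm{sub}$ adjacent to $F$. The portion $\partial F \cap \partial H$ is a union of $T_\mathrm{sub}$-edges (separated by marked vertices of $F$), each of which is obtained by concatenating the consecutive $Q$-edges along $\partial F$ whose intermediate vertices get deleted in the subtemplate construction (because they are adjacent only to the marked face $F$). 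Hence the sum of lengths of $Q$-edges contributing to (b) on $\partial F \cap \partial H$ equals the sum of the corresponding $T_\mathrm{sub}$-edge lengths in $M$, and summing over all $F\in I_{j-1}$ and adjacent holes shows that $L_{j-1}$ is determined by $M$. This vertex-collapsing bookkeeping is the step I expect to demand the most care.

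Finally, since $T_\mathrm{sub}$ has $b\geq 1$ holes we have $M\neq Q$, so the stopping condition $\bigcup_{i\in I_t}D_i=\ol\D$ is not triggered at any trajectory step; hence the explored subquilt equals $M$ iff a valid trajectory is followed and $T=k$, which has probability $2^{-k-1}$. Summing over the finitely many valid trajectories yields
\[
\P[\text{explored subquilt equals } M \mid Q] = 2^{-k-1}\sum_{\pi}\prod_{j=1}^k \frac{\mathrm{length}(e_j^\pi)}{L_{j-1}^\pi},
\]
an expression depending only on $M$.
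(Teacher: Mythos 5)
Your proof is correct and follows the same approach as the paper's (much terser) argument: identify that the event requires $T$ to equal the number of non-boundary-touching faces of $M$ together with $k$ correct peeling choices, and observe that all lengths entering the probability computation are determined by $M$. The extra work you do to verify that the denominators $L_{j-1}$ depend only on $M$ — by decomposing the explored boundary into inter-marked-face edges and marked-face/hole boundary arcs, and tracking how vertex deletion merges $Q$-edges into $T_\mathrm{sub}$-edges without changing totals — is precisely the detail the paper elides with the phrase "$M$ determines all the lengths involved."
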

\begin{proof}
	Given $Q$, the event $\{ \text{explored subtemplate is } M\}$ occurs when $T$ equals the number of non-boundary-touching faces of $M$,  and during each of the $T$ peeling steps we pick a point that lies in $M$. The probability of this event depends only on $M$ since $M$ determines all the lengths involved in the probability computation for the $T$ peeling steps. 
\end{proof}

We can now prove Theorem~\ref{thm-eps-indep}.

\begin{proof}[Proof of Theorem~\ref{thm-eps-indep}]
	We can sample the $\eps$-discretized LQG disk in two steps. First, we sample a quilt $Q \in \bigcup_{T \in \cT} \cQ_T$ from $\cL^\eps$. Then conditioned on $Q$, for each face $F$ we  sample an independent LQG cell $\cC_F$ conditioned on having boundary lengths given by the side lengths of $F$, and conformally weld the $\cC_F$ according to $Q$. By  Proposition~\ref{prop-decomp-quilt-cell} this gives an $\eps$-discretized LQG disk. 
	
	The peeling process described can already be applied after the first step of the above sampling procedure, namely to the quilt $Q$ (see Lemma~\ref{lem-peel-prob-same}). Mark the faces of $Q$  discovered during the peeling process, and let $M$ be the marked subquilt. By Bayes' theorem, we may flip this procedure around: sample $M$ from its marginal law, then sample $Q \sim \cL^\eps$ conditioned on having $M$ as a subquilt and weighted by the probability of marking $M$ given $Q$. By Lemma~\ref{lem-peel-prob-same} this probability does not depend on $Q$, so there is in fact no weighting. Thus, by  Proposition~\ref{prop-indep-subquilts}, conditioned on $M$ the complementary subquilts $Q_1, \dots, Q_b$ of $M$ are conditionally independent.
	Conditioned on $M, Q_1, \dots, Q_b$, sample independent collections of LQG cells $(\cC_F)_{F \in M}, (\cC_F)_{F \in Q_1}, \dots, (\cC_F)_{F \in Q_b}$.

	Suppose $\gamma \in (0, \sqrt 2]$, and for $1\leq i \leq b$ let $\cD_i$ be the decorated LQG surface arising from conformally welding $(\cC_F)_{F \in Q_j}$ according to $Q_j$. Then $\cD_1, \dots, \cD_b$ are conditionally independent given $\sigma (M, (\cC_F)_{F \in M})$. 
	Let $\wt \cM_\tau$ be the conformal welding of $(\cC_F)_{F \in M}$ according to $M$, and $\cM_\tau'$ the decorated LQG surface $\cM_\tau$ after forgetting the IG field. Since $\wt \cM_\tau = \cM_\tau'$, and using Lemma~\ref{lem-recover-psi-curve}, we have $\sigma(M, (\cC_F)_{F \in M}) = \sigma(\wt \cM_\tau) = \sigma(\cM_\tau') = \sigma(\cM_\tau)$. Thus $\cD_1, \dots, \cD_b$ are conditionally independent given $\cM_\tau$. Finally, the loops $\eta_1, \dots, \eta_m$ partition $\{\cD_1, \dots, \cD_b\}$, and each $(O_k, \Phi, \Psi)/{\sim}$ is measurable with respect to $\cM_\tau$ and the $k$th subset of the partition (Lemma~\ref{lem-recover-psi-curve}), so $(O_1, \Phi, \Psi)/{\sim}, \dots, (O_m, \Phi, \Psi)/{\sim}$ are conditionally independent given $\cM_\tau$.

	When $\gamma \in (\sqrt2, 2)$, we do not necessarily have $\wt \cM_\tau = \cM_\tau'$. Each $\cC_F$ has cut points, so even though the planar maps $Q_i$ each have the disk topology, the LQG surfaces $\cD_i$ may have cut points. Thus $\cM_\tau'$ arises from $\wt \cM_\tau$ by identifying pairs of boundary points of $\wt \cM_\tau$ according to the cut points of $\cD_1, \dots, \cD_b$. Since the identifications on the $i$th boundary component of $\wt \cM_\tau$ are a function of $\wt \cM_\tau$ and $\cD_i$, and $\cD_1, \dots, \cD_b$ are conditionally independent given $\wt \cM_\tau$, we conclude $\cD_1, \dots, \cD_b$ are conditionally independent given $\cM_\tau'$, and thus conditionally independent given $\cM_\tau$ (Lemma~\ref{lem-recover-psi-curve}). The rest of the argument is identical to the $\gamma \in (0,\sqrt2]$ case. 
\end{proof}

\section{Open problems}
\label{sec-open-problems}

The results of this paper are only proven for LQG in the subcritical phase, i.e., when $\ccL > 25$ or equivalently $\gamma\in (0,2)$. 
The reason for this is that our proofs rely on the mating of trees theorem~\cite{wedges,ag-disk} in order to get a decomposition of the LQG disk into small pieces with nice independence properties. An analog of the mating of trees theorem in the critical case $\ccL = 25$, $\gamma=2$ is proven in~\cite{ahps-critical-mating}, but the analog of space-filling SLE is not a continuous curve, so this theorem does not give a decomposition of the LQG disk into \emph{small} independent pieces. 

\begin{prob} \label{prob-critical}
	Extend the theorems from Section~\ref{subsec-intro-results} to the case of critical LQG ($\ccL =25$, $\gamma=2$).
\end{prob}

One possible approach to Problem~\ref{prob-critical} is to take a limit of the objects involved in our theorem statements as $\gamma\to 2^-$. 

The recent paper~\cite{ag-supercritical-cle4} defines an analog of the LQG disk in the supercritical phase $\ccL \in (1,25)$. The paper also constructs a coupling of the supercritical LQG disk with CLE$_4$, which has similar properties to the relationships between LQG with $\ccL \geq 25$ and independent SLE with central charge $\cc_\mathrm{M} = 26-\ccL$. 
It is natural to ask whether there are also ``mismatched" coupling statements for LQG with $\ccL \geq 25$. 

\begin{prob} \label{prob-supercritical}
	Extend the theorems from Section~\ref{subsec-intro-results} to the case of supercritical LQG ($\ccL \in (1,25)$, $\gamma\in\mathbb C$ with $|\gamma|=2$).
\end{prob}

An especially interesting special case of Problem~\ref{prob-supercritical} is when we have a supercritical LQG surface of central charge $\ccL = 26-k \in \{2,\dots,24\}$ decorated by $k$ independent Gaussian free fields (i.e., $\mathbf c_1 = \dots = \mathbf c_k = 1$). 
The reason is that, from the work of Polyakov~\cite{polyakov-qg1}, this combination of objects is expected to be related to bosonic string theory in $\BB R^k$. Moreover, this same combination of objects is also potentially connected to Yang-Mills theory (see, e.g.,~\cite{cps-random-surface-ym}). See Section~\ref{sec-string-theory} for further discussion.

In the matched case $\kappa \in \{\gamma^2,16/\gamma^2\}$, there are relationships between SLE and LQG for a wide variety of different LQG surfaces (see, e.g.,~\cite{wedges,sphere-constructions,ag-disk,ahs-integrability,asy-quantum-triangle}). For simplicity, in this paper we focused exclusively on the case of the LQG disk, but we expect that analogs of our results are true for other types of LQG surfaces. 

\begin{prob} \label{prob-other-surfaces}
	Prove analogs of the theorems from Section~\ref{subsec-intro-results} for other LQG surfaces besides the LQG disk, such as LQG disks with boundary insertions, LQG triangles, LQG spheres, LQG wedges, LQG cones, and non-simply connected LQG surfaces. 
\end{prob}

It is of interest to consider variants of the results of this paper where the imaginary geometry fields are replaced by other types of ``matter fields" (random generalized functions satisfying a conformal covariance rule).
Some examples of possible matter fields to consider are conformal loop ensemble nesting fields~\cite{mww-extremes}, Ising spin fields~\cite{cgn-ising-spin1}, \textit{winding fields} and \textit{layering fields} associated with Brownian loop soups~\cite{cgk-loop-soup-correlation,vcl-spin-systems,le-jan-gff-loup-soup,cgpr-layering}, and the conformally invariant fields associated with Brownian loop soups constructed in~\cite{jlq-cle-field}. 

\begin{prob}
	Are there analogs of the conditional independence results of this paper where the auxiliary imaginary geometry fields are replaced by fields of the types mentioned just above?
\end{prob}

Our main results give exact conditional independence statements for LQG surfaces cut by various types of sets, but, in contrast to the matched case, we do not have an exact description of either the law of the information we are conditioning on or the conditional laws of the complementary LQG surfaces given this information. 

\begin{prob} \label{prob-exact}
	Assume that we are in the setting of Theorem~\ref{thm-sle-chordal}. 
	Is there an explicit random process which generates the $\sigma$-algebra $\cF $ of~\eqref{eqn-restriction-sigma-algebra}? 
	Can the law of such a random process can be described explicitly? 
	What about the analogous questions in the settings of the other main results from Section~\ref{subsec-intro-results}?
\end{prob}

\begin{prob} \label{prob-cond-law}
	In the setting of Theorem~\ref{thm-sle-chordal}, is there an explicit description of the conditional laws of the IG-decorated LQG surfaces parametrized by the complementary connected components of the SLE curve $K$ given the $\sigma$-algebra $\mcl F$?
	What about in the settings of the other main results from Section~\ref{subsec-intro-results}?
\end{prob}

Here is a possible approach for Problem~\ref{prob-exact}. 
For simplicity, assume that $K$ is the image of an SLE$_\kappa(\rho_L;\rho_R)$ curve $\eta$ with $\rho_L \geq \kappa/2-4$,  so that $\eta$ does not hit the clockwise arc of $\bdy\BB D$ from its starting point to its ending point~\cite[Lemma 15]{dubedat-duality}.   
Let $\nu_\Phi$ be the $\gamma$-LQG length measure on $\eta$, which should be defined as a Gaussian multiplicative chaos measure with respect to the Euclidean Minkowski content measure on $\eta$~\cite{lawler-rezai-nat}. 
Assume that $\eta$ is parametrized so that $\nu_\Phi(\eta[0,t]) = t$ for each $t \in [0,T]$, where $T = \nu_\Phi(\eta)$. 

Let $f$ be a conformal map from the connected component of $\BB D\setminus \eta$ which lies to the left of $\eta$ to the upper half-plane $\BB H$, taking the starting and ending points of $\eta$ to 0 and $\infty$, respectively. For $j=1,\dots,n$, define 
\eqb \label{eqn-int-along-sle}
F_j(t) = \int_0^t \left( \Psi_j - \chi(\mathbf c_j) \op{arg} f' \right)(\eta(s)) \,ds ,\quad \forall t \in [0,T], 
\eqe 
where we recall that $(\Psi_1,\dots,\Psi_n)$ are the auxiliary imaginary geometry fields of central charges $(\cc_1,\dots,\cc_n)$. 
We conjecture that the process $(F_1,\dots,F_n)$ generates the $\sigma$-algebra~\eqref{eqn-restriction-sigma-algebra}. It would be very interesting to find an explicit description of the law of $(F_1,\dots,F_n)$.

One of the most important relationships between SLE and LQG in the matched case is the \textbf{mating of trees} theorem~\cite[Theorem 1.9]{wedges}. This theorem says the following: suppose we have a $\gamma$-LQG cone together with an independent space-filling SLE$_{\kappa'}$ curve $\eta'$ for $\kappa'=16/\gamma^2$. Parametrize $\eta'$ so that it traverses one unit of $\gamma$-LQG mass in one unit of time. 
Let $L_t$ (resp.\ $R_t$) denote the net change in the $\gamma$-LQG length of the left (resp.\ right) outer boundary of $\eta'$ relative to time 0. Then the process $(L,R)$ is a two-dimensional Brownian motion with correlation $\op{Corr}(L_t,R_t) = -\cos(\pi\gamma^2/4)$ (note that the correlation is zero if $\kappa'=8$). This theorem is proven building on the quantum zipper theorem in~\cite{shef-zipper}. Our results can be viewed as a mismatched analog of the quantum zipper theorem. It is therefore natural to ask of there is also a mismatched mating of trees theorem.

\begin{prob} \label{prob-mating}
	Suppose we have an appropriate $\gamma$-LQG surface together with a space-filling SLE$_{\kappa'}$ curve for $\kappa'\not=16/\gamma^2 > 4$, sampled independently from the LQG surface and then parametrized by LQG mass. 
	Define the (mismatched) left/right $\gamma$-LQG boundary length process $(L ,R)$ as above.
	Is there a variant of our main results which says that the joint conditional law of $L$ and $R$ given certain auxiliary information along the curve $\eta'$ is particularly simple? Are $L$ and $R$ conditionally independent given this information for certain special values of $\gamma$ and $\kappa'$?
\end{prob} 

In a similar vein to~\eqref{eqn-int-along-sle}, a possible choice of auxiliary information to consider in Problem~\ref{prob-mating} is the following. Let $\Psi_1$ be the whole-plane GFF coupled with $\eta'$ in the sense of imaginary geometry and let $\Psi_2$ be another imaginary geometry field (independent from everything else) such that the central charges associated with $\eta'$, the $\gamma$-LQG surface, and $\Psi_2$ sum to 26. Let $a,b  \in [-1,1]$ with $a^2+b^2 = 1$ be chosen so that $a \Psi_1 + b\Psi_2$ is a field of central charge 1 (i.e., with $\chi = 0$). Then, consider the $\sigma$-algebra generated by the function  
\eqb \label{eqn-int-along-sle-2}
t\mapsto  \int_0^t \left( a \Psi_1 + b \Psi_2 \right)(\eta(s)) \,ds . 
\eqe 
In the special case when our LQG surface is an LQG sphere with $\ccL = 30$, $\kappa' = 8$, and $\Psi_2$ is an imaginary geometry field of central charge $-2$, we expect that $L$ and $R$ are conditionally independent given the $\sigma$-algebra generated by~\eqref{eqn-int-along-sle-2}. Indeed, such a conditional independence statement would be a continuum analog of the conditional independence statement for meanders given in Proposition~\ref{prop-meander}; c.f.~\cite[Conjecture 1.3]{bgs-meander}.

Next, let $\Psi_1$ and $\Psi_2$ be independent imaginary geometry fields on $\BB D$ of central charges $\cc_1$ and $\cc_2$, respectively. By Proposition~\ref{prop-ig-rotate}, if $a,  b \in [-1,1]$ with $a^2+b^2 = 1$, then $\wh\Psi := a\Psi_1 + b \Psi_2$ is another imaginary geometry field with a possibly different central charge. It is natural to wonder how the flow lines of $\Psi_1$, $\Psi_2$, and $\wh\Psi$ are related. 

\begin{prob} \label{prob-flow-lines}
	Is there a direct way to construct the flow lines of $\wh\Psi$ from the flow lines of $\Psi_1$ and $\Psi_2$?
\end{prob}

Here is a possible approach to Problem~\ref{prob-flow-lines}. Let $q > 0$ and $\ep>0$. Grow a flow line of $\Psi_1$ (parametrized by Minkowski content, say) up to time $\ep$. Then, grow a flow line of $\Psi_2$ started from the tip of this first flow line, stopped at time $q\ep$. Continue alternating $\ep$-increments of flow lines of $\Psi_1$ and $q\ep$-increments of flow lines of $\Psi_2$ in this manner to construct a curve $\eta_\ep$. It is reasonable to guess that if we choose $q = q(a,b)$ appropriately, then $\eta_\ep$ should converge to a flow line of $\wh\Psi$ as $\ep\to 0$. 

One motivation for Problem~\ref{prob-flow-lines} comes from lattice models. To explain this, we follow the exposition in~\cite[Section 1]{dubedat-coupling}. Let $\ep > 0$ and let $\mcl D_1$ and $\mcl D_2$ be two independent copies of the dimer model on $\ep \BB Z^2 \cap \BB D$. By the Temperlyan bijection, $\mcl D_1$ gives rise to a height function $\psi_1$ on the dual graph of $\ep\BB Z^2 \cap \BB D$, and similarly for $\mcl D_2$. These same height functions can also be obtained as height functions of two independent uniform spanning trees $T_1, T_2$ on $\ep\BB Z^2\cap \BB D$. The height functions $\psi_1$ and $\psi_2$ converge as $\ep\to 0$ to two independent copies of the $\kappa=2$ ($\cc = -2$) imaginary geometry field on the disk~\cite{kenyon-dominos-gff}. Moreover, the peano curves associated with $T_1$ and $T_2$ converge to two independent SLE$_8$s~\cite{lsw-lerw-ust} which should be counterflow lines of these two imaginary geometry fields. 

Now, consider the double dimer model $\mcl D_1 \cup \mcl D_2$. It is believed (but not proven) that the set of boundaries of connected components of $\mcl D_1 \cup \mcl D_2$ converges as $\ep\to 0$ to CLE$_4$. As explained in~\cite[Section 1.2]{dubedat-coupling}, the height function associated with $\mcl D_1\cup \mcl D_2$ is the difference $\wh\psi = \psi_1 - \psi_2$. This height function should converge as $\ep\to 0$ to a GFF on $\BB D$ whose level lines are the CLE$_4$ loops. Thus, the continuum limit of the double dimer model gives us two $\cc=-2$ imaginary geometry fields on $\BB D$ whose difference is a GFF. 

\begin{prob} \label{prob-lattice}
	Are there any other collections of lattice models where one naturally sees linear combinations of fields which converge to imaginary geometry fields? 
	Can the solution of Problem~\ref{prob-flow-lines} be useful in analyzing these discrete models?
\end{prob}

\appendix

\section{Planar maps decorated by multiple statistical physics models} 
\label{sec-rpm}

Theorem~\ref{thm-sle-chordal} is the continuum analog of certain Markovian properties for random planar maps {with boundary} decorated by two or more statistical physics models. The general principle is as follows. Suppose that one of our statistical physics models gives rise to an ``interface" $\eta$, viewed as a curve on the planar map $M$ (or on a closely related planar, such as the dual map). 
We can view the connected components of $M\setminus \eta$ as submaps of $M$. Then in many cases these submaps are conditionally independent given the appropriate information about the restrictions to $\eta$ of the \emph{other} statistical physics models (besides the one used to generate $\eta$). {In a similar vein, our other main results can be seen as discrete analogs of Markov properties for, e.g., random walk or graph distance balls on random planar maps decorated by one or more statistical physics models.}

For concreteness, we now fully explain this principle in one special case: planar maps decorated by two instances of the Gaussian free field and one spanning tree. This case is particularly nice since the underlying random planar map is exactly uniform. In general, the marginal law of the underlying random planar map may not admit a simple description.

For a graph $G$, write $\mcl V(G)$ and $\mcl E(G)$ for its sets of vertices and edges, respectively. 
For $n\in\BB N$, let $\mcl M_n$ be the set of triangulations with the disk topology with simple boundary of perimeter $n$ (i.e., triangulations with a marked exterior face whose boundary is a simple cycle of length $n$). For concreteness, we allow multiple edges but not self-loops. We note that everything in this section also works with other types of planar maps, e.g., triangulations with no multiple edges, quadrangulations, or planar maps with mixed face degrees.

Let $\mcl M_n^\bullet$ be the set of 4-tuples $(M , T , \phi_1,\phi_2)$ where $M\in \mcl M_n$, $T$ is a spanning tree on $M$ with wired boundary condition, and $\phi_1 , \phi_2 : \mcl V(M) \to \BB R$ are functions on the vertex set of $M$ which vanish at each boundary vertex. 

To define a probability measure on $\mcl M_n^\bullet$, let $ \beta > 0$ be a universal constant to be chosen later. For $M\in\mcl M_n$ write $\lambda^M$ for Lebesgue measure on the space of functions $\phi : \mcl V(M) \to\BB R$ which vanish on the boundary. 
Also write $\mu$ for counting measure on pairs $(M,T)$ where $M \in \mcl M_n$ and $T$ is a spanning tree on $M$ with wired boundary condition. 
We define a probability measure on $\mcl M_n^\bullet$ by
\eqb \label{eqn-map-gff-tree}
\frac{1}{Z} e^{-\beta\# \mcl E(M)} \exp\left( -   \sum_{i\in \{1,2\} } \sum_{ \{ x,y\} \in \mcl E(M)} |\phi_i(x) - \phi_i(y)|^2    \right) \,d\lambda^M(\phi_1) \, d\lambda^M(\phi_2) \, d\mu(M,T)
\eqe  
where $Z$ is a normalizing constant.

Under this probability measure, the conditional law of $(T,\phi_1,\phi_2)$ given $M$ is that of an independent triple consisting of a uniform spanning tree on $M$ with wired boundary conditions and two discrete zero-boundary Gaussian free fields (GFFs) on $M$. 
Moreover, the marginal law of $M$ is given by the counting measure on $\mcl M_n$ weighted by (a quantity proportional to) 
\allb \label{eqn-map-partition}
e^{-\beta\# \mcl E(M)}  \left[ \int \exp\left( -  \sum_{ \{ x,y\} \in \mcl E(M)} |\phi (x) - \phi (y)|^2    \right) \,d\lambda^M(\phi )  \right]^2   
\times \#\left\{\text{spanning trees of $M$} \right\} .
\alle

By a standard computation for the discrete GFF (see, e.g., \cite[Section 4.1]{dubedat-coupling}), the integral with respect to $\lambda^M$ in~\eqref{eqn-map-partition} is equal  to an exponential function of $\#\mcl V(M)$ times $(\det\Delta_M)^{-1/2}$, where $\Delta_M$ is the discrete Laplacian determinant on $M$ (restricted to functions which vanish on the boundary). Furthermore, by the Kirchoff matrix-tree theorem, the number of spanning trees of $M$ is equal to $\det\Delta_M$. 
Hence, by canceling two factors of $\det\Delta_M$, we get that the quantity~\eqref{eqn-map-partition} depends only on $\#\mcl V(M)$. Therefore, the marginal law of $M$ under~\eqref{eqn-map-gff-tree} is given by the counting measure on $\mcl M_n$ weighted by an exponential function of $\#\mcl V(M)$. Since we are looking at triangulations, the Euler characteristic formula shows that this is the same as the counting measure on $\mcl M_n$ weighted by an exponential function of $\#\mcl E(M)$.

If we choose the constant $\beta$ appropriately, then under the above marginal law of $M$ the law of $\#\mcl E(M)$ has a polynomial tail (see, e.g.,~\cite[Section 3.2]{curien-peeling-notes} for a more general statement). Henceforth fix this choice of $\beta$.
For this $\beta$, the triangulation $M$ converges in law $n\to\infty$ in the Gromov-Hausdorff sense to the so-called \textbf{free area Brownian disk}~\cite{bet-mier-disk,gwynne-miller-simple-quad,aasw-type2}, which is in turn equivalent (as a metric measure space) to the unit boundary length $\sqrt{8/3}$-LQG disk~\cite{lqg-tbm2}.

Due to the convergence of the discrete GFF to the continuum GFF and the convergence of the peano curve of the uniform spanning tree on a fixed lattice to SLE$_8$~\cite{lsw-lerw-ust}, the above convergence leads naturally to the following conjecture. 

\begin{conj} \label{conj-map-gff-tree} 
	Let $(M,T,\phi_1,\phi_2) \in \mcl M_n^\bullet$ be sampled from~\eqref{eqn-map-gff-tree} and let $P$ be the path in $M$ which traverses $T$ in contour (depth-first) order starting and ending at a marked boundary vertex. The 4-tuple $(M,P,\phi_1,\phi_2)$ converges in law under an appropriate scaling limit as $n\to\infty$ to a unit boundary length $\sqrt{8/3}$-LQG disk together with an SLE$_8$ curve starting and ending at the same marked boundary point and two independent zero-boundary GFFs. 
\end{conj}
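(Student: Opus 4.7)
The proof naturally breaks into three parallel subproblems by virtue of the conditional independence structure in~\eqref{eqn-map-gff-tree}: given $M$, the triple $(T,\phi_1,\phi_2)$ is a uniform spanning tree and two independent discrete GFFs. Thus my plan is to first establish the marginal convergence of $M$ to the unit boundary length $\sqrt{8/3}$-LQG disk, then establish the conditional convergence of each decoration given $M$ to the expected continuum limit, and finally invoke the conditional independence to assemble the joint convergence.

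For the first step, the marginal law of $M$ is the counting measure on $\mcl M_n$ weighted by an exponential function of $\#\mcl E(M)$ with the specific $\beta$ chosen so that $\#\mcl E(M)$ has a polynomial tail. Existing scaling limit results for random triangulations of the disk with simple boundary (Bettinelli--Miermont, Gwynne--Miller) give convergence of $M$ under the Gromov--Hausdorff--Prokhorov topology, after rescaling distances by $n^{-1/4}$ and areas by $n^{-1}$, to the free area unit-boundary-length Brownian disk, which by Miller--Sheffield is the unit boundary length $\sqrt{8/3}$-LQG disk.  For the second step, the joint convergence of $(M,P)$ to the $\sqrt{8/3}$-LQG disk decorated by an independent SLE$_8$ curve from the marked boundary vertex to itself is a boundary-surface variant of the Holden--Sun Cardy embedding theorem for UST-decorated triangulations. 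Embed $M$ via the Cardy (or equivalently Tutte) embedding associated to $(M,P)$; under this embedding the pair $(M,P)$ converges jointly to the desired LQG--SLE$_8$ coupling. The central charges satisfy $\ccL + \cc_{\mathrm{SLE}}(8) + 1 + 1 = 26 - 2 + 2 = 26$, so the limiting object is compatible with the coupling framework of Theorem~\ref{thm-sle-chordal} with $\cc_1 = \cc_2 = 1$.

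For the third step, conditional on $M$, each $\phi_i$ is a zero-boundary discrete GFF on $M$. Transport $\phi_i$ to $\ol{\BB D}$ via the embedding from step two and view it as a random distribution (tested against smooth compactly-supported functions on $\BB D$ or via its harmonic extension). The goal is to show that the pair $(\phi_1,\phi_2)$ converges jointly with $(M,P)$ to two independent continuum zero-boundary GFFs on $\BB D$ (which, as IG fields with $\cc=1$, are defined modulo conformal coordinate change). The main input would be convergence of the discrete Laplacian Green's function on the embedded $M$ to the continuum Dirichlet Green's function on $\BB D$, or equivalently convergence of random walk on the embedded $M$ (with time re-parametrized by degree) to planar Brownian motion stopped on exit from $\BB D$. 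Once this is in hand, joint convergence of $(\phi_1,\phi_2)$ given $M$ follows from the Gaussian nature of the fields and the fact that their joint law is determined by their covariance kernel, namely the Green's function.

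Combining the three steps: conditional on $(M,P)$, the laws of $\phi_1$ and $\phi_2$ are deterministic functions of $M$ (Gaussian with covariance equal to the Green's function of $M$), and they are independent of each other and of $P$. By the second and third steps these conditional laws converge in the topology of weak convergence of probability measures on the distribution space to the law of two independent continuum GFFs on the limiting embedded disk. A standard argument (e.g., convergence of conditional laws combined with tightness) then upgrades marginal convergence of $(M,P)$ to joint convergence of $(M,P,\phi_1,\phi_2)$. The main obstacle is unambiguously the third step: convergence of the discrete GFF on a random planar map to the continuum GFF under a canonical embedding is a well-known open problem, and although partial results exist (e.g., for circle packings of supercritical maps via the Gwynne--Miller--Sheffield framework), the estimates needed at the level of the Cardy embedding of uniform triangulations would require substantial new input---perhaps a quantitative version of the random-walk-to-Brownian-motion convergence on mated-CRT maps together with effective resistance bounds of the type used by Gwynne--Miller to identify the LQG metric. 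I would anticipate this being the technical heart of the proof.
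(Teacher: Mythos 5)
This statement is explicitly labeled a \emph{Conjecture} in the paper (Conjecture~\ref{conj-map-gff-tree}), and the paper does not prove it; the surrounding text only offers informal motivation: the known Gromov--Hausdorff convergence of $M$ to the Brownian disk (hence to the $\sqrt{8/3}$-LQG disk), the known convergence on a fixed lattice of the discrete GFF to the continuum GFF, and the known convergence on a fixed lattice of the UST Peano curve to SLE$_8$. So there is no ``paper's own proof'' to compare against.

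Your proposal is a reasonable reconstruction of that motivation, and you correctly identify where it stops being a proof. A few remarks. Your step one (marginal convergence of $M$ in the Gromov--Hausdorff--Prokhorov sense) is indeed established in the cited references. Your step two is overstated: Holden--Sun's Cardy embedding theorem is for \emph{percolation}-decorated uniform triangulations ($\kappa'=6$), not UST-decorated ones, so the joint convergence of $(M,P)$ to LQG decorated by SLE$_8$ under a canonical embedding is itself not known---you would at minimum need a UST analogue of the Cardy embedding convergence, which is also open. Your step three is the obstruction you yourself flag, and you are right that it is the crux: convergence of the discrete GFF on an embedded random planar map to the continuum GFF is open, and it is genuinely a separate problem from the Green's function / random-walk convergence that is known on fixed lattices or from the invariance-principle results available for mated-CRT maps. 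Finally, your step-three-to-joint-convergence argument implicitly assumes the conditional laws of $(\phi_1,\phi_2)$ given $(M,P)$ converge \emph{jointly with} $(M,P)$ in a topology strong enough to identify the limit as two independent GFFs on the limiting surface decorated independently of the SLE$_8$; spelling this out requires fixing a topology on field-decorated surfaces and verifying that the Gaussian structure passes to the limit in that topology, which is nontrivial (the LQG surface in the limit is only defined modulo conformal coordinate change, so one must also show the embedding itself converges). In short: your proposal is a faithful heuristic, not a proof, and your own assessment that the technical heart is step three is accurate---but step two is also open, and the paper intentionally leaves the whole statement as a conjecture.
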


Possible topologies of convergence in Conjecture~\ref{conj-map-gff-tree} include generalizations of the Gromov-Hausdorff topology for metric spaces decorated by a curve and a pair of generalized functions (see, e.g.,~\cite{gwynne-miller-uihpq,khezeli-gh}) or convergence under an appropriate embedding of $M$ into the disk (e.g., the Tutte embedding~\cite{gms-tutte} or the Cardy embedding~\cite{hs-cardy-embedding}). 

Let us now describe a Markov property for $(M,T,\phi_1,\phi_2)$. See Figure~\ref{fig-map-gff-tree} for an illustration. Let $M^*$ be the dual map of $M$ and let $T^*$ be the dual spanning tree of $T$, which consists of all edges of $M^*$ which do not cross edges of $T$. Let $k\leq n-2$ and let $e_1,e_2\in \bdy M$ be two edges with the property that the two boundary arcs separating them have lengths $k$ and $n-k-2$, respectively. There is a unique path $\eta$ in $T^*$ from the vertex of $M^*$ corresponding to the face of $M$ incident to $e_1$, to the vertex of $M^*$ corresponding to the face of $M$ incident to $e_2$. 
The path $\eta$ forms the outer boundary of the contour path $P$ of Conjecture~\ref{conj-map-gff-tree} if we start $P$ at the leftmost vertex of $e_1$ and stop $P$ when it first hits a vertex of $e_2$. By, e.g.,~\cite[Lemma 2.6]{ag-disk}, Conjecture~\ref{conj-map-gff-tree} suggests that the scaling limit of $\eta$ should be a chordal SLE$_2(-1,-1)$ curve between two boundary points of the disk (provided we choose $k = k(n)$ so that $k(n) / n$ converges to a number in $(0,1)$). 

Let $H$ be the sub-map of $M$ consisting the faces of $M$ traversed by $\eta$ and the vertices and edges of $M$ on the boundaries of these faces. 
We view $H$ as a planar map with boundary equipped with two marked edges, namely $e_1$ and $e_2$. 
We count each edge of $H$ which is not crossed by $\eta$ as a boundary edge (even if this edge is not part of the external face of $H$). See Figure~\ref{fig-map-gff-tree}. 

Let $M_L$ (resp.\ $M_R$) be the submap of $M$ consisting of the vertices, edges, and faces of $M$ which lie strictly to the left (resp.\ right) of $\eta$ (not including $e_1$, $e_2$, or the edges and faces which are crossed by $\eta$). 
Each of $M_L$ and $M_R$ is a triangulation with (not necessarily simple) boundary. 
We view $M_L$ (resp.\ $M_R$) as being equipped with two marked vertices, namely the endpoints of $e_1$ and $e_2$. 
Let $T_L$ (resp.\ $T_R$) be the subgraph of $T$ induced by the set of edges of $T$ which lie on $M_L$ (resp.\ $M_R$). 

The following is a discrete analog of Theorem~\ref{thm-sle-chordal}.

\begin{figure}[ht!]
	\begin{center}
		\includegraphics[scale=0.75]{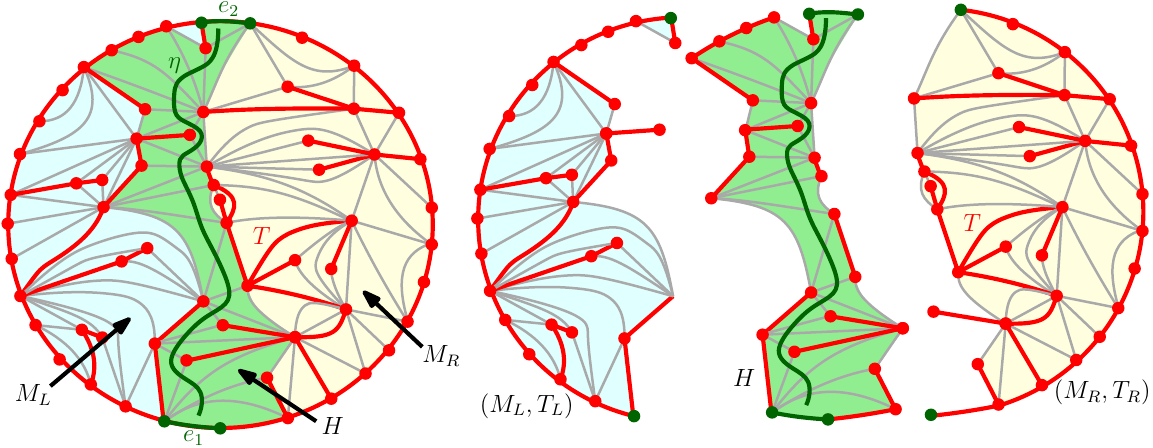}  
		\caption{\label{fig-map-gff-tree} \textbf{Left:} A triangulation with boundary $M$ (red, gray, and green edges) decorated by a spanning tree $T$ with wired boundary condition (red). The path $\eta$ is the branch of the dual tree between the marked boundary edges $e_1,e_2$, and $H$ (light green) is the submap of $M$ induced by the set of faces traversed by $\eta$. 
			\textbf{Right:} The decorated planar maps $(M_L, T_L)$ and $(M_R,T_R)$ lying to the left and right of $H$ (light blue and light yellow) are connected but do not necessarily have simple boundary. Identifying one boundary arc of each of these maps with the boundary arcs of $H$ to the left and right of $\eta$, respectively, gives us back $(M,T)$. 
			If we sample the tree / discrete GFF decorated triangulation $(M,T,\phi_1,\phi_2)$ from the probability measure~\eqref{eqn-map-gff-tree}, then Proposition~\ref{prop-discrete-markov} tells us that $(M_L , T_L ,\phi_1|_{M_L} , \phi_2|_{M_L})$ and $(M_R , T_R ,\phi_1|_{M_R} , \phi_2|_{M_R})$ are conditionally independent given $(H ,  \phi_1|_H,\phi_2|_H)$.  
		}
	\end{center}
	\vspace{-3ex}
\end{figure}

\begin{prop} \label{prop-discrete-markov}
	The left and right decorated random planar maps $(M_L , T_L ,\phi_1|_{M_L} , \phi_2|_{M_L})$ and $(M_R , T_R ,\phi_1|_{M_R} , \phi_2|_{M_R})$ are conditionally independent given the middle decorated planar map $(H ,  \phi_1|_H,\phi_2|_H)$. 
\end{prop}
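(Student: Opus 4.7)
The plan is to directly factorize the joint density in~\eqref{eqn-map-gff-tree} once we condition on $(H,\phi_1|_H,\phi_2|_H)$. The key combinatorial observation is that, because $\eta$ is a simple path in the dual spanning tree $T^*$, the strip $H$ has no vertex strictly in its interior: any vertex $v$ shared by several triangles of $H$ would force $\eta$ to loop around $v$ in the dual, contradicting acyclicity of $T^*$. Consequently $V(H) = V(\bdy_L H) \cup V(\bdy_R H) \cup V(\bdy M \cap H)$, and the edge set of $M$ decomposes as a disjoint union $\mcl E(M) = \mcl E(M_L) \sqcup \mcl E(M_R) \sqcup \mcl E^\circ(H)$ with $\mcl E^\circ(H) = \{e_1,e_2\} \cup \{\text{edges crossed by }\eta\}$, while $V(M) \setminus V(H)$ splits disjointly as $V(M_L \setminus H) \sqcup V(M_R \setminus H)$.

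The next step is to show that $T = T_L \sqcup T_R$ as a disjoint union of edge subsets of $\mcl E(M_L)$ and $\mcl E(M_R)$. By the Temperley correspondence between $T$ and its dual spanning tree $T^*$ of $M^*$, an edge of $M$ lies in $T$ iff its dual does not lie in $T^*$; in particular each edge crossed by $\eta \subset T^*$ is excluded from $T$. Moreover $e_1,e_2$ join two boundary vertices of $M$ and thus become self-loops in the wired quotient $M/\bdy M$, so they too cannot belong to $T$. Hence $T \cap \mcl E^\circ(H) = \emptyset$. In the wired quotient, $V(M_L/\sim) \cup V(M_R/\sim) = V(M/\sim)$ with intersection only at the root, and any $T$-path from a vertex $v \in V(M_L/\sim) \setminus \{\text{root}\}$ to the root must remain in $T_L$ until it reaches the root; this implies that $T$ is a spanning tree of $M/\sim$ iff $T_L$ and $T_R$ are spanning trees of $M_L/\sim$ and $M_R/\sim$ respectively.

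Combining these observations, the density~\eqref{eqn-map-gff-tree} factorizes cleanly: the edge-count exponential $e^{-\beta\#\mcl E(M)}$ and each Dirichlet form $\sum_{\{x,y\} \in \mcl E(M)} (\phi_i(x)-\phi_i(y))^2$ split additively over $\mcl E(M_L) \sqcup \mcl E(M_R) \sqcup \mcl E^\circ(H)$; the reference Lebesgue measure $d\lambda^M$ splits as $d\lambda^{V(M_L \setminus H)} \cdot d\lambda^{V(H)} \cdot d\lambda^{V(M_R \setminus H)}$; and the spanning tree indicator splits by the previous paragraph. Once we condition on $(H,\phi_1|_H,\phi_2|_H)$, the only terms coupling left and right data --- the boundary GFF values on $V(\bdy_L H) \cup V(\bdy_R H) \subset V(H)$ --- are frozen, and the conditional density becomes a product of a function of $(M_L, T_L, \phi_1|_{M_L}, \phi_2|_{M_L})$ times a function of $(M_R, T_R, \phi_1|_{M_R}, \phi_2|_{M_R})$. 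This gives the claimed conditional independence.

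The main obstacle is the combinatorial input in the second paragraph: showing that $T$ contains no edge strictly in the interior of $H$ (via Temperley duality combined with the wired boundary convention that rules out $e_1, e_2 \in T$) and that the spanning tree constraint on $M$ factorizes over $M_L$ and $M_R$ through their unique shared vertex after wiring. Once these combinatorial facts are in hand the GFF and edge-count factorizations are routine, and there is no need to invoke an abstract GFF Markov property as a separate tool.
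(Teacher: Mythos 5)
Your proposal follows essentially the same strategy as the paper: factorize the density~\eqref{eqn-map-gff-tree} into a middle piece depending only on $(H,\phi_1|_H,\phi_2|_H)$ and left/right pieces. The edge-count and Dirichlet-form splittings are the same as the paper's~\eqref{eqn-edge-decomp} and~\eqref{eqn-gff-decomp}, and your observation that $T$ avoids $\mcl E^\circ(H)$ (since $\eta\subset T^*$ and boundary edges become self-loops in the wired quotient) is the same as the paper's remark that ``$\eta$ crosses no edges of $T$,'' though what you are invoking is planar graph duality of spanning trees rather than the Temperley bijection, which relates spanning trees to dimers and is not what's being used here. Your ``no interior vertex of $H$'' claim is true and the underlying reason is correct (a vertex all of whose incident edges were crossed by $\eta$ would put a dual cycle inside $T^*$), though the phrase ``any vertex $v$ shared by several triangles of $H$'' overstates it --- many vertices of $H$ are shared by several triangles; the hypothesis you actually need is that \emph{all} incident edges are interior edges.

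The place you are genuinely lighter than the paper is the factorization of the counting measure $\mu$ on pairs $(M,T)$. You argue (correctly) that $T$ restricts to spanning trees $T_L,T_R$ of the wired quotients and that the spanning-tree \emph{indicator} splits, but factorizing a counting measure requires two further facts that you do not address: (i) the triple $((M_L,T_L),(M_R,T_R),H)$ determines $(M,T)$ \emph{uniquely}, so that passing from $(M,T)$ to the triple is a bijection and no multiplicity factor appears; and (ii) given $H$, the set of admissible $(M_L,T_L)$ does not depend on $(M_R,T_R)$ and vice versa, so that $\mu$ restricted to $\{H=\text{given}\}$ really is a product measure. The paper handles both via the observation that, in $(M_L,T_L)$, exactly one of the two boundary arcs between the marked points contains an edge not in $T_L$, which is how you identify which arc glues to $H$; this is what makes the reconstruction well-defined and the admissible sets factorize. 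Without some version of that argument, writing the density as a pointwise product of functions of left data, right data, and $H$ does not by itself establish that the underlying measure on decorated maps is a product, which is what conditional independence requires. This is a fixable omission rather than a wrong idea, but it is the only nontrivial combinatorial input and it is worth spelling out.
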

\begin{proof}
	We claim that the probability measure~\eqref{eqn-map-gff-tree} admits a factorization of the form
	\eqb \label{eqn-law-decomp}
	d\nu_{H,L} (M_L , T_L ,\phi_1|_{M_L} , \phi_2|_{M_L})  \, d\nu_{H,R} (M_L , T_L ,\phi_1|_{M_L} , \phi_2|_{M_L})  \, d\nu_0(H,  \phi_1|_H,\phi_2|_H)
	\eqe
	where $\nu_0$ is the law of $(H,  \phi_1|_H,\phi_2|_H)$ and $\nu_{H,L}$ and $\nu_{H,R}$ are probability measures that depend only on $H$. This immediately implies the desired conditional independence statement. We will now check~\eqref{eqn-law-decomp} via elementary manipulations of~\eqref{eqn-map-partition}.
	
	Let us first note that by construction, 
	\eqb \label{eqn-edge-decomp}
	\#\mcl E(M) = \#\mcl E(H\setminus \bdy H) + \#\mcl E(M_L) + \#\mcl E(M_R) .
	\eqe
	Next, note that the only edges of $M$ which join a vertex of $M_L$ and a vertex of $M_R$ are the marked boundary edges $e_1,e_2$. Since $\phi_1$ and $\phi_2$ vanish on $\bdy M$, for $i\in\{1,2\}$ we have
	\allb \label{eqn-gff-decomp}
	\exp\left( -  \sum_{ \{ x,y\} \in \mcl E(M)} |\phi_i(x) - \phi_i(y)|^2    \right) 
	= \prod_{A \in \{M_L,M_R,H\}}\exp\left(  -  \sum_{ \{ x,y\} \in \mcl E(A)} |\phi_i(x) - \phi_i(y)|^2    \right) 
	\alle
	
	Finally, we deal with the counting measure term in~\eqref{eqn-map-gff-tree}.
	We first claim that if we see the decorated planar maps $(M_L , T_L)$, $(M_R,T_R)$, and $H$, then there is a unique way of gluing them together to recover $(M,T)$. Indeed, exactly one of the two boundary arcs of $M_L$ separating its two marked points includes an edge which is not in $T_L$. We identify the edges of this boundary arc in chronological order with the right boundary arc of $H$ between its two marked edges, and similarly for $M_R$. 
	This allows us to recover $M$. 
	By definition, $\eta$ crosses no edges of $T$, so $T_L\cup T_R = T \setminus \{e_1 , e_2\}$. 
	
	One easily sees that the set of possibilities for $(M_L,T_L)$ depends only on $H$ (not on $(M_R,T_R)$) and similarly with $L$ and $R$ interchanged. This allows us to decompose the counting measure $\mu$ on pairs $(M,T)$ as a product measure 
	\eqb  \label{eqn-map-decomp}
	d\mu(M,T) =   d \mu_{H,L}(M_L,T_L) \, d\mu_{H,R}(M_R,T_R) \, d\mu_0(H) 
	\eqe 
	where $\mu_0$ is the law of $H$ and $\mu_{H,L}$ and $\mu_{H,R}$ are counting measures on $H$-dependent sets of tree-decorated maps. Plugging~\eqref{eqn-edge-decomp}, \eqref{eqn-gff-decomp}, and \eqref{eqn-map-decomp} into~\eqref{eqn-map-gff-tree} gives \eqref{eqn-law-decomp}.  
\end{proof}

Finally, we make some general comments on the central charges associated to statistical physics models. 

\begin{remark}\label{rem-discrete-cc}
	If $M$ is a planar map and $S$ is a statistical mechanics model on $M$, then the central charge associated with $S$ is the number $\cc$ such that when $M$ is large, the partition function of $S$ is ``close" (in a sense that we deliberately leave vague) to an exponential function of $\#\mcl E(M)$ or $\#\mcl V(M)$ times $(\det \Delta_M)^{-\cc/2}$, where $\Delta_M$ is the discrete Laplacian.
	By the discussion immediately after~\eqref{eqn-map-partition}, spanning trees have central charge $-2$ and the GFF has central charge $1$. As another example, random walks have central charge $0$. 
	Indeed, let $\mathfrak M$ be a finite collection of planar maps with boundary decorated by a  bulk marked point, and consider all pairs $(M,P)$ where $M \in \mathfrak M$ and $P = (v_1, \dots, v_n)$ is a path in $M$ started at the marked point and terminated when it hits the boundary. Each pair $(M,P)$ is given a weight $\prod_{i=1}^{n-1}{d_i^{-1}}$ where $d_i$ is the degree of $v_i$. If we sample $(M,P)$ proportionately to its weight, then the marginal law of $M$ is the uniform measure on $\mathfrak M$; this lack of weighting means random walks have central charge 0. Brownian motion, as the scaling limit of random walk, should also be viewed as having central charge 0.
\end{remark}

\section{Conditional independence for uniform meanders}
\label{sec-meander}

In this appendix we prove a conditional independence property for uniform meanders (Proposition~\ref{prop-meander} below). This property is the inspiration for the arguments in Section~\ref{subsec-topo} and can also be viewed as a discrete analog of certain variants of our main results.

An \emph{(open) meander} of size $m$ is a self-avoiding loop from $\infty$ to $\infty$ in $\C \cup \{\infty\}$ which intersects $\R$ at points $1, 2, \dots, 2m-1$, crossing it transversally at each of those points, and viewed modulo homeomorphisms $f: \C \to \C$ with $f(\bbH)= \bbH$ and $f(i) = i$ for $i \in \{ 1, \dots, 2m-1\}$. 
Meanders have been studied at least since the work of Poincar\'e~\cite{poincare-meander}, and are connected to many different topics in math and physics, but are notoriously difficult to analyze mathematically. See~\cite{zvonkin-meander-survey,lacroix-meander-survey} for expository works on meanders. 
A meander of size $m$ can be viewed as a planar map decorated by two Hamiltonian paths (corresponding to the loop and the real line). It is conjectured in~\cite{bgs-meander}, building on~\cite{dgg-meander-asymptotics}, that uniform random meanders of size $m$ converge in the scaling limit to the LQG sphere with central charge $\ccL = 30$ together with two independent SLE$_8$ curves.

A meander can be encoded by a pair of \emph{arc diagrams} (non-crossing perfect matchings) on $\{ 1, \dots, 2m - 1\} \cup \{ \infty\}$, one drawn above and the other drawn below $\R$. Orient the meander curve so it starts at $\infty$ in the lower half-plane and ends at $\infty$ in the upper half-plane. Then each arc inherits an orientation from the meander. 

We can define a \emph{winding function} $\theta : \{ 1, \dots, 2m-1\} \to \R$ as follows. With the above orientation of the curve, let it intersect $\R$ at points $v_1, \dots, v_{2m-1}$ in order (so $v_1, \dots, v_{2m-1}$ is a permutation of $1,\dots, 2m-1$). Let $\theta(v_1) = 0$, and inductively, we set  $\theta(v_{i+1}) = \theta(v_i) + \pi$ if the arc from $v_i$ to $v_{i+1}$ is traversed in the counterclockwise direction  and otherwise set $\theta(v_{i+1}) = \theta(v_i) - \pi$. See Figure~\ref{fig-meander}. 
Note that if we draw the arcs as semicircles and rays as in the figure, then  $\theta(v) + \frac\pi2$ is the restriction to $\{ 1, \dots, 2m-1\}$ of a  winding function of the curve (defined in Section~\ref{subsec-winding}).

\begin{figure}[ht!]
	\begin{center}
		\includegraphics[scale=0.54]{}%
		\caption{\label{fig-meander} The four meanders with winding function given by $(\theta(1), \dots, \theta(7)) = (0, \pi, 0, -\pi, 0, \pi, 0)$. 
			In the leftmost diagram, we have $(v_1, \dots, v_7) = (3,2,1,4,7,6,5)$.
			In the proof of Proposition~\ref{prop-meander}, we have $A_\theta^+ = \{ \text{red, blue}\}$ and $A_\theta^- = \{ \text{purple, pink}\}$, and the set of all meanders with winding function $\theta$ corresponds to $A_\theta^+ \times A_\theta^-$. }
	\end{center}
\end{figure}

We have the following conditional independence statement for meanders, whose proof is a simpler version of the argument of Section~\ref{subsec-topo}.

\begin{proposition}\label{prop-meander}
	For $m \geq 1$, sample a meander uniformly at random from the set of all meanders of size $m$. Conditioned on its winding function $\theta : \{ 1, \dots, 2m-1\} \to \R$, the (oriented) arc diagrams above and below $\R$ are conditionally independent. 
\end{proposition}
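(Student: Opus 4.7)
The plan is to establish a bijection between meanders with winding function $\theta$ and a Cartesian product $A_\theta^+ \times A_\theta^-$, from which the conditional independence follows immediately. Define $A_\theta^+$ (resp.\ $A_\theta^-$) as the set of non-crossing perfect matchings on $\{1,\dots,2m-1\}\cup\{\infty\}$ drawn above (resp.\ below) $\R$ such that each arc $\{a,b\}$ with $a<b$ satisfies the local compatibility relation $\theta(a)-\theta(b)=+\pi$ for upper arcs and $\theta(a)-\theta(b)=-\pi$ for lower arcs (with an appropriate convention at $\infty$ analogous to the treatment of the exterior face in Section~\ref{subsec-topo}). The first step is routine: a meander with winding function $\theta$ decomposes into one matching from $A_\theta^+$ and one from $A_\theta^-$, since along each semicircular arc of the meander the tangent direction rotates by exactly $\pm \pi$ and $\theta$ is defined to track these increments.

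The substantive step is the reverse direction: given any $(A^+,A^-)\in A_\theta^+\times A_\theta^-$, the union $A^+\cup A^-$ is a single closed curve in $\wh \C$ rather than a disjoint union of several. Because upper (resp.\ lower) arcs do not cross each other and upper and lower arcs meet only at $\{1,\dots,2m-1\}\cup\{\infty\}$, and because each such vertex has exactly one upper and one lower arc incident to it, $A^+\cup A^-$ is automatically a disjoint union of simple closed curves; the content is that no cycle other than the one containing $\infty$ arises. Suppose for contradiction that $L$ is a sub-cycle not containing $\infty$, so $L$ is a bounded simple closed curve in $\C$. By Hopf's Umlaufsatz (Lemma~\ref{lem-hopf}), $L$ has total curvature $\pm 2\pi$. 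On the other hand, at each crossing $v\in L$ the tangent direction prescribed by $\theta(v)$ is vertical and agrees for the incident upper and lower arcs, so $L$ is $C^1$ with no corner contributions and its total curvature can be computed arc-by-arc. The compatibility relations defining $A_\theta^\pm$ force the winding along each arc in $L$'s traversal to be the corresponding difference of $\theta$-values at its endpoints, so summing around $L$ the contributions telescope to $0$, contradicting Hopf.

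This telescoping step is the main obstacle, and the care needed is primarily in verifying that the sign conventions in $A_\theta^\pm$ give the correct winding regardless of the direction in which $L$ is traversed, and that tangent matching at vertices is truly automatic from the $\theta$-compatibility (so that Hopf applies without boundary corrections). Once the bijection is proven, the conclusion follows in one line: a uniform meander of size $m$ conditional on having winding function $\theta$ is uniformly distributed on the meanders with that winding function, which by the bijection is $A_\theta^+\times A_\theta^-$. The uniform measure on a product set is the product of its uniform marginals, so $A^+$ and $A^-$ are conditionally independent given $\theta$. This argument is precisely the meander analog of the combinatorial factorization Proposition~\ref{prop-product-templates} and its proof in Section~\ref{subsec-topo}, with $\infty$ playing the role of the distinguished root vertex $x$ and sub-cycles playing the role of non-trivial cycles ruled out by Hopf's theorem.
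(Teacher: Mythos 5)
Your proposal is correct and follows essentially the same approach as the paper: define $A_\theta^\pm$ via the local $\theta$-compatibility constraint, observe that any meander with winding $\theta$ decomposes into a pair, and rule out extraneous bounded cycles by Hopf's Umlaufsatz (your telescoping sum is just a repackaging of the paper's counting of clockwise versus counterclockwise half-turns). The explicit $C^1$-at-crossings remark is a slightly more careful version of a step the paper leaves implicit, but the argument is the same.
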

\begin{proof}
	Fix $\theta$, and let $A^+_\theta$ (resp.\ $A^-_\theta$) be the set of all arc diagrams on $\{ 1, \dots, 2m-1\} \cup \{ \infty\}$ drawn above $\R$ (resp.\ below $\R$), such that for every arc with endpoints $1 \leq x < y \leq 2m-1$ we have $\theta(y) = \theta(x) - \pi$ (resp.\ $\theta(y) = \theta(x) + \pi$). For each arc diagram in $A_\theta^+$ (resp.\ $A_\theta^-$), we give  the arc hitting $\infty$ the orientation pointing towards (resp.\ away from) $\infty$, then orient all other arcs such that along $\R$ the arcs alternately enter and exit $\R$.  
	By the definition of $\theta$, every meander with winding function $\theta$ decomposes into a pair of arc diagrams in $A^+_\theta \times A^-_\theta$, see Figure~\ref{fig-meander}. We claim that conversely, every pair of arc diagrams in $A^+_\theta \times A^-_\theta$ gives a meander with winding function $\theta$. Indeed, any pair of arc diagrams in $A^+_\theta \times A^-_\theta$ form a collection of one or more loops in $\C \cup \{\infty\}$; suppose for the sake of contradiction there is more than one loop. Pick a loop not containing $\infty$. By the definitions of $A_\theta^+, A_\theta^-$, when tracing the loop, each clockwise (resp.\ counterclockwise) arc should decrease (resp.\ increase) the winding function by $\pi$, so since the winding function at the start and end of the traversal is the same, there must be an equal number of clockwise and counterclockwise half-turns. This is topologically impossible: Hopf's Umlaufsatz states that the total curvature of a simple smooth loop is $\pm 2\pi$, meaning  $\#\text{clockwise turns} - \# \text{counterclockwise turns} =\pm2$. Thus, every pair in $A_\theta^+ \times A_\theta^-$ gives a single loop from $\infty$ to $\infty$. Each such loop has winding function $\theta$ by the definition of $A_\theta^\pm$. 
	
	We conclude that the set of all meanders with winding function $\theta$ is in bijection with $A_\theta^+ \times A_\theta^-$, giving the desired conditional independence.
\end{proof}

\begin{remark}
	A \emph{meandric system} is defined in a similar manner to a meander, except that one allows there to be multiple loops.
	For a meandric system, the arc diagrams above and below the real line are exactly independent, rather than just conditionally independent. It is conjectured in~\cite{bgp-meander-system} that uniform meandric systems converge in the scaling limit to the LQG sphere with central charge $\ccL = 28$ together with an SLE$_8$ curve and an independent CLE$_6$. Since $\cc(\kappa=8) = -2 = 26-28$, the LQG and the SLE$_8$ curve have ``matched" parameter values. The fact that we have conditional independence, rather than exact independence, in Proposition~\ref{prop-meander} is analogous to the fact that we have conditional independence, rather than exact independence, in the main results of this paper, as stated in Section~\ref{subsec-intro-results}. 
	
	We expect that it is possible to extend the results of this paper to get a continuum analog of Proposition~\ref{prop-meander}. See Problem~\ref{prob-mating} and the discussion just after. We plan to explore discrete analogs of the results of this paper in the setting of meanders and meandric systems further in future work. 
\end{remark}

\section{Dub\'edat's proof of locality}\label{appendix-locality}
In this appendix we explain the proof of Theorem~\ref{thm-loc-dom}. It is a special case of Theorem~\ref{thm-RN} below, which was proved by Dub\'edat  who used it to establish his seminal SLE-GFF coupling \cite{dubedat-coupling}. It is not stated as a theorem in \cite{dubedat-coupling}, but its discrete analog is stated in the last equation of Section 4.1 there. 

In the setting of Section~\ref{subsec-loc-planar}, we have four simply connected domains $D_{ij}$ having a common sub-domain $B$ bounded by a pair of cross cuts and common boundary point $x \in \partial B \cap \partial D_{ij}$.
Recall that $\mathrm{IG}_\mathbf{c}$ in $(D_{ij}, x)$ as defined in Definition~\ref{def-ig-disk} can be viewed as a probability measure on fields in $D_{ij}$ modulo $2 \pi \chi(\mathbf c)$; let $\Psi$ be a sample with its additive constant fixed by requiring that its boundary value infinitesimally counterclockwise of $x$ in $\partial D_{ij}$ lies in $[0,2\pi\chi(\mathbf c))$. Let  $\mathrm{IG}_{\mathbf c}^{B, ij}$ be the law of $\psi = \Psi|_B$.  Note that here,  $\mathrm{IG}_{\mathbf c}^{B, ij}$ is a measure on the space of generalized functions and not $\mathbf c$-IG fields; this is a slight change in notation from Theorem~\ref{thm-loc-dom}.

\begin{thm}[\cite{dubedat-coupling}]\label{thm-RN}
We have 
	\eqb\label{eq-locality-domain}
	\left(\frac{{\det}_\zeta(\Delta_{D_{11}}){\det}_\zeta(\Delta_{D_{22}})}{{\det}_\zeta(\Delta_{D_{21}}){\det}_\zeta(\Delta_{D_{12}})} \right)^{-\mathbf c/2}
	\frac{d\mathrm{IG}_{\mathbf c}^{B,11}}{d\mathrm{IG}_{\mathbf c}^{B,21}} (\psi) \frac{ d\mathrm{IG}_{\mathbf c}^{B,22}}{d\mathrm{IG}_{\mathbf c}^{B,12}} (\psi) = 1  \qquad \text{ for } \mathrm{IG}_\mathbf{c}^{B,ij}\text{-a.e. }
	\psi
	\eqe
	where ${\det}_\zeta(\Delta_D)$ is the {zeta regularized determinant of the Dirichlet Laplacian}.
\end{thm}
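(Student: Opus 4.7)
The plan is to express each of the four measures $\mathrm{IG}^{B,ij}_{\mathbf c}$ as a density with respect to a common reference measure on fields in $B$, show that the density factorizes into a ``left'' contribution depending only on $L_i$ and a ``right'' contribution depending only on $R_j$ (times an $(i,j)$-dependent normalizing constant), and then identify the normalizing constants with zeta-regularized Laplacian determinants. The factorization step is an essentially-classical consequence of the Markov property of the GFF; the identification with zeta determinants is the hard technical step.

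First, by Lemma~\ref{lem-ig-bdy-cond}, $\mathrm{IG}^{B,ij}_{\mathbf c}$ is the law of the restriction to $B$ of $\Psi_{ij} = h_{ij} + \chi(\mathbf c) \mathfrak h_{ij}$, where $h_{ij}$ is a zero-boundary GFF on $D_{ij}$ and $\mathfrak h_{ij}$ is the winding harmonic function on $D_{ij}$. The Markov property of the GFF gives the decomposition $h_{ij}|_B = h^0_B + m_{ij}$, where $h^0_B$ is a zero-boundary GFF on $B$ (same law for all $i,j$) and $m_{ij}$ is the harmonic extension to $B$ of the restriction of $h_{ij}$ to the two cross-cuts $\eta_\ell \cup \eta_r$. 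Fixing a reference measure $\mu_B$ on generalized functions on $B$ (say the law of $h^0_B + \chi \mathfrak h_{11}|_B$) and integrating out the fields $h_{ij}|_{L_i \cup R_j}$, a Gaussian computation using the conditional independence of $h_{ij}|_{L_i}$ and $h_{ij}|_{R_j}$ given $\psi := \Psi_{ij}|_B$ yields a density of the form
\[
\frac{d\mathrm{IG}^{B,ij}_{\mathbf c}}{d\mu_B}(\psi) \;=\; \frac{\mathcal L_i(\psi|_{\eta_\ell}) \,\cdot\, \mathcal R_j(\psi|_{\eta_r})}{Z_{ij}},
\]
where $\mathcal L_i$ depends only on the data of $L_i$ (together with the piece $\psi|_{\eta_\ell}$ of $\psi$), $\mathcal R_j$ depends only on the data of $R_j$, and $Z_{ij}$ is the partition function required to make the left-hand side a probability density. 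Forming the ratios in \eqref{eq-locality-domain}, the $\psi$-dependent factors $\mathcal L_i$ and $\mathcal R_j$ cancel pairwise, leaving
\[
\frac{d\mathrm{IG}^{B,11}_{\mathbf c}}{d\mathrm{IG}^{B,21}_{\mathbf c}}(\psi) \cdot \frac{d\mathrm{IG}^{B,22}_{\mathbf c}}{d\mathrm{IG}^{B,12}_{\mathbf c}}(\psi) \;=\; \frac{Z_{12} Z_{21}}{Z_{11} Z_{22}},
\]
which is independent of $\psi$.

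The main step that remains is to identify $Z_{ij}$ with the zeta-regularized determinant $\det_\zeta(\Delta_{D_{ij}})$ to the correct power, namely
\[
\frac{Z_{12} Z_{21}}{Z_{11} Z_{22}} \;=\; \Bigl(\frac{\det_\zeta(\Delta_{D_{11}})\,\det_\zeta(\Delta_{D_{22}})}{\det_\zeta(\Delta_{D_{21}})\,\det_\zeta(\Delta_{D_{12}})}\Bigr)^{\mathbf c/2}.
\]
The $1/2$ power is the standard Gaussian partition-function exponent; the enhancement to $\mathbf c/2 = (1-6\chi^2)/2$ comes from combining this Gaussian exponent with the Cameron-Martin density produced by the deterministic winding shift $\chi \mathfrak h_{ij}$, whose Dirichlet energy contributes a factor depending on $\chi^2$ that interacts with the conformal anomaly of the Laplacian determinant. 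The cleanest route is to approximate the four domains by lattice domains $D_{ij}^\delta$: on the lattice the analog of every step above holds by elementary linear algebra (using the standard identity $\int e^{-\tfrac12\langle \phi, \Delta^\delta \phi\rangle}\,d\phi \propto (\det\Delta^\delta)^{-1/2}$ for the GFF partition function and discrete harmonic extension for the winding term), and one obtains the lattice identity with $\det\Delta^\delta_{D_{ij}^\delta}$ in place of $\det_\zeta\Delta_{D_{ij}}$. Taking the scaling limit $\delta \to 0$, the logarithm of each lattice determinant has a divergent part depending only on area and boundary length (which cancels in our specific $2\times 2$ ratio because the four domains $D_{ij}$ share the same pieces $L_i, R_j$ and boundary $B$), and a conformally-covariant finite part that converges to $\log\det_\zeta\Delta_{D_{ij}}$ by Polyakov-Alvarez; the $\chi^2$-contribution from the winding shift combined with the residual $1/2$ from the Gaussian partition function assembles into the $\mathbf c/2$ exponent.

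The main obstacle is the last limiting step: passing from the lattice identity to the continuum requires controlling cancellations of divergent area/perimeter contributions in the four determinants simultaneously and matching them with the corresponding divergences coming from renormalizing the Gaussian measures $\mathrm{IG}^{B,ij}_{\mathbf c}$. This is precisely where Dub\'edat's original argument \cite{dubedat-coupling} does the technical work, and we follow his discretization together with the Polyakov-Alvarez conformal anomaly formula to conclude.
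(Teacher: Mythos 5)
Your overall strategy mirrors the paper's --- cancel all $\psi$-dependence to get a constant ratio of partition functions, then identify those with zeta determinants, with the exponent assembling as $-\tfrac12 + 3\chi(\mathbf c)^2 = -\mathbf c/2$ --- but the execution differs and one step is underspecified. The paper factors the Radon--Nikodym derivative through the zero-boundary reference laws $\mu_{ij}^0$ of $h_{ij}|_\delta$ and uses two Girsanov-type lemmas (Lemmas~\ref{lem-girsanov-bdy} and~\ref{lem-girsanov-zero}); the $\psi$-dependent Cameron--Martin terms cancel because the relevant harmonic functions, $m'_{ij}$ and $P_{ij}w$, restrict to $i$-only data on $L_i$ and $j$-only data on $R_j$. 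Your Dirichlet-to-Neumann factorization $\mathcal L_i(\psi|_{\eta_\ell})\mathcal R_j(\psi|_{\eta_r})/Z_{ij}$ is morally the same cancellation phrased as a product decomposition of the precision form, but your ``Gaussian computation using conditional independence'' buries exactly the delicate point: the deterministic winding shift $\chi\mathfrak h_{ij}|_B$ does \emph{not} split into an $i$-only plus $j$-only piece (it is the harmonic extension of winding data into all of $D_{ij}$, which couples $i$ and $j$ through the global geometry), and the proposed common reference measure $\mu_B$ (a zero-boundary GFF on $B$ plus a deterministic shift) is mutually singular with every $\mathrm{IG}^{B,ij}_{\mathbf c}$. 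Both issues are repaired by the paper's device of working with $m'_{ij}$, the harmonic function on $D_{ij}\setminus\delta$ vanishing on $\delta$, whose restrictions to $L_i$, $B$, $R_j$ depend only on $i$, on nothing, and only on $j$, respectively; making this explicit is the content your sketch needs.

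For the identification of $Z_{ij}$ with $\det_\zeta(\Delta_{D_{ij}})^{-\mathbf c/2}$, you propose a lattice approximation followed by a Polyakov--Alvarez limit, whereas the paper stays in the continuum throughout: the ratio of the GFF reference laws is computed via Dub\'edat's Brownian-loop-soup identity (Lemma~\ref{lem-zeta}), and the residual regularized Dirichlet energy of the winding harmonic function is converted to a power of $\det_\zeta$ by the exact identity $e^{-\tfrac12(m,m)_\nabla^{\mathrm{reg}}} = \lambda\,\det_\zeta(\Delta_D)^{3\chi^2}$ (Lemma~\ref{lem-reg-dirichlet}). Your route is a legitimate alternative in principle --- it is essentially how Dub\'edat motivates the continuum statement from the discrete one --- but as written it leaves the lattice-to-continuum limit, where all the divergent area/perimeter terms must cancel simultaneously across the four domains, entirely to the reader. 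The continuum argument via the loop-soup formula avoids this.
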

We note that Theorem~\ref{thm-RN} suggests the partition function of $\mathrm{IG}_\cc$ in a domain $D$ should be ${\det}_\zeta(\Delta_D)^{-\cc/2}$.

\begin{proof}[Proof of Theorem~\ref{thm-loc-dom}]
	Set $\mathbf c= 0$ in Theorem~\ref{thm-RN}.
\end{proof}

Let $D$ be one of the domains $D_{ij}$ and let $m = m_{ij}$ be the harmonic function on $D_{ij}$ whose boundary values agree with those of a sample from $\mathrm{IG}_{\mathbf c}$ in $(D_{ij}, x)$ with additive constant fixed as before. Note that $m_{ij}$ is continuous on $\partial D_{ij}$ except at $x$ where there is a jump of size $2 \pi \chi(\mathbf c)$. Let $\delta = \partial B \cap D_{ij}$ be the union of the pair of crosscuts bounding $B$. Let $\mu$ be the law of $\Psi|_{\delta}$ where $\Psi$ is a GFF with boundary conditions given by $m$, and $\mu^0$ the law of $\Psi^0|_{\delta}$ where $\Psi^0$ is a zero boundary GFF on $D$.
As explained in \cite[Section 4.3]{dubedat-coupling}, $\mu$ and $\mu^0$ are probability measures on the negative Sobolev space $H^{-s}(\delta)$ for any $s>0$.  We will write $w$ to denote a field on $\delta$. Let $Pw$ be the harmonic extension of $w$ to $D \backslash \delta$ with zero boundary conditions on $\partial D$. 

For a function $f$ on $D$ and a point $z \in \delta$ let $\partial_\ell f (z) := \lim_{\eps \to 0} \eps^{-1} (f(z + \eps n_\ell) - f(z))$ where $n_\ell$ is the left-pointing normal unit vector on $\delta$ at $w$, and define $\partial_r$ the same way with left replaced by right. Thus $\partial_\ell f$ and $\partial_r f$ are functions with domain $\delta$, and $(\partial_\ell + \partial_r)f = 0$ at points where $f$ is smooth.

Let $m' = m - P(m|_\delta)$. This is the function on $D$ such that $m'|_\delta = 0, m'|_{\partial D} = m|_{\partial D}$, and $m'$ is harmonic on $D \backslash \delta$.  The following computation is carried out in \cite[Proof of Lemma 6.3]{dubedat-coupling}. We repeat the proof for the reader's convenience. 
\begin{lemma}\label{lem-girsanov-bdy} 
	Define the regularized Dirichlet energy
	\[(m, m)_{\nabla}^\mathrm{reg} := \frac1{2\pi}\lim_{\eps \to 0} \left( \frac{(2\pi \chi(\mathbf c))^2}\pi \log \eps + \int_{D_{ij} \backslash B_\eps(x)} \nabla m \cdot \nabla m \right).\]
	With the following integral performed with respect to arc length on $\delta$, we have 
	\[\frac{d\mu}{d\mu^0}(w) = \exp\left(-\frac1{2\pi}\int_\delta w (\partial_\ell + \partial_r)m' - \frac12 (m', m')_\nabla^\mathrm{reg} + \frac12 (m, m)_\nabla^\mathrm{reg} \right) .\]
\end{lemma}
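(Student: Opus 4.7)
The plan is to derive the Radon--Nikodym derivative via the Cameron--Martin theorem for the Gaussian free field, and then to convert the resulting Hilbert-space expressions into the boundary-integral form claimed in the lemma by means of Green's identity.

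Concretely, I would first write $\Psi = \Psi^0 + m$ where $\Psi^0$ is a zero-boundary GFF on $D$, so that $\mu$ is the pushforward of $\mu^0$ under the deterministic shift $w \mapsto w + m|_\delta$. The crucial observation is that while $m$ has infinite Dirichlet energy (because of the $2\pi\chi(\mathbf c)$-jump at $x \in \partial D$), the trace $m|_\delta$ is smooth, since $m$ is harmonic on $D$ and $\delta$ sits at positive distance from $x$. Hence $m|_\delta$ lies in the Cameron--Martin space of the trace Gaussian $\mu^0$, whose CM inner product is identified as $\langle f,g\rangle_{\mathrm{CM}} = \tfrac{1}{2\pi}(Pf,Pg)_\nabla$, where $P$ denotes harmonic extension to $D\setminus\delta$ with zero boundary values on $\partial D$. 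Thus
\[
\frac{d\mu}{d\mu^0}(w) \;=\; \exp\!\left(\tfrac{1}{2\pi}(Pw,Pm_\delta)_\nabla \;-\; \tfrac{1}{4\pi}(Pm_\delta,Pm_\delta)_\nabla\right),
\]
a well-defined Cameron--Martin shift entirely avoiding the singularity at $x$.

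Next I would use Green's identity in each connected component of $D\setminus\delta$. Since $Pw$ vanishes on $\partial D$ and $Pm_\delta$ is harmonic off $\delta$, integrating by parts yields a pure boundary integral $(Pw,Pm_\delta)_\nabla = -\int_\delta w\,(\partial_\ell + \partial_r)Pm_\delta$, where the jump operator $\partial_\ell + \partial_r$ collects the one-sided normal derivatives from the two sides of $\delta$. Now write $m = m' + Pm_\delta$; since $m$ is harmonic across $\delta$ one has $(\partial_\ell+\partial_r)m = 0$, so $(\partial_\ell+\partial_r)Pm_\delta = -(\partial_\ell+\partial_r)m'$. This rewrites the linear-in-$w$ term in the exponent as $\pm \tfrac{1}{2\pi}\int_\delta w\,(\partial_\ell+\partial_r)m'$, matching the form appearing in the lemma.

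It remains to re-express the normalization constant $-\tfrac{1}{4\pi}(Pm_\delta,Pm_\delta)_\nabla$ as $-\tfrac12(m',m')_\nabla^{\mathrm{reg}} + \tfrac12(m,m)_\nabla^{\mathrm{reg}}$. Expanding
\[
(m,m)_\nabla^{\mathrm{reg}} - (m',m')_\nabla^{\mathrm{reg}} = \tfrac{1}{2\pi}\bigl(2(m',Pm_\delta)_\nabla + (Pm_\delta,Pm_\delta)_\nabla\bigr),
\]
I would compute the cross term by another application of Green's identity: $(m',Pm_\delta)_\nabla = -\int_\delta m|_\delta\,(\partial_\ell+\partial_r)m'$, using that $m'$ vanishes on $\delta$ and $Pm_\delta$ vanishes on $\partial D$. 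Combining everything produces the claimed formula. The main obstacle is ensuring that the divergences are handled consistently: both $m$ and $m'$ carry identical logarithmic Dirichlet-energy divergences coming from the $2\pi\chi(\mathbf c)$-jump at $x$ (since $m - m' = Pm_\delta$ is smooth near $x$), so the regularization counterterm $\tfrac{(2\pi\chi(\mathbf c))^2}{\pi}\log\eps$ cancels exactly in the difference $(m,m)_\nabla^{\mathrm{reg}} - (m',m')_\nabla^{\mathrm{reg}}$. Verifying this cancellation rigorously, and checking the compatibility of sign conventions between the outward-normal directions in Green's identity and the paper's definition of $\partial_\ell$ and $\partial_r$, is where one must be most careful; everything else is a routine manipulation once the Cameron--Martin identity on the trace space is in hand.
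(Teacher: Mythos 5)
Your proposal follows essentially the same route as the paper's proof: a Girsanov/Cameron--Martin shift, Green's identity to convert the Hilbert-space pairings to boundary integrals on $\delta$, and the relation $(\partial_\ell+\partial_r)m''=-(\partial_\ell+\partial_r)m'$ (in your notation $m''=Pm_\delta$). Phrasing the shift via Cameron--Martin on the trace Gaussian rather than Girsanov on the bulk field is a cosmetic difference, since the trace Cameron--Martin norm is by definition the bulk Dirichlet energy of the harmonic extension.

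Two things are missing. First and most importantly, your final paragraph flags the regularization issue as ``where one must be most careful'' but does not actually resolve it. The expansion $(m,m)^{\mathrm{reg}}_\nabla-(m',m')^{\mathrm{reg}}_\nabla=\tfrac{1}{2\pi}\bigl(2\langle m',m''\rangle+\langle m'',m''\rangle\bigr)$ requires showing that the $\log\eps$ counterterms match exactly and that the remaining integrals converge; moreover the Green's identity $\langle m',m''\rangle=-\int_\delta m|_\delta(\partial_\ell+\partial_r)m'$ is an integration by parts with a $1/r$ singularity at $x\in\partial D$, and one must excise a small ball around $x$ and check the boundary contribution on $\partial B_\eps(x)$ vanishes. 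The paper does all this by introducing approximations $m_\eps$ with continuous, uniformly bounded boundary data agreeing with $m$ outside $B_{\eps^2}(x)$, then passing to the limit using harmonic-measure estimates; this approximation scheme is the technical core of the lemma and should not be waved off. Second, a smaller point: to close the constant-term computation you also need $(m, Pm_\delta)_\nabla = 0$ (equivalently, to compute $\langle Pm_\delta, Pm_\delta\rangle$ by another Green's identity and combine with your cross-term); you have all the ingredients but ``combining everything produces the claimed formula'' skips over this. In the paper that identity is the explicitly stated step $(m_\eps,m_\eps'')_\nabla=0$, and everything else is organized around it.
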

\begin{proof}
	Let $m'' = P(m|_\delta)$, so $m''|_\delta = m|_\delta$ and $m''|_{\partial D} = 0$.
	Let $\Psi^0$ be a zero boundary GFF on $D$. By Girsanov's theorem (see e.g.\ \cite[Proposition 2.9]{ig4}), the Radon-Nikodym derivative of the law of $\Psi^0+ m''$ with respect to that of $\Psi^0$ is
	\[
	\exp((\Psi^0, m'')_\nabla - \frac12 (m'', m'')_\nabla). 
	\]
	By Stoke's theorem in each of the components of $D \backslash \delta$ and since $\Delta m''|_{D \backslash \delta} = 0$ and  $\Phi^0|_{\partial D} = 0$,  
	\[(\Psi^0, m'')_\nabla = \frac1{2\pi} \int_\delta \Psi^0 (\partial_\ell + \partial_r) m''  = - \frac1{2\pi} \int_\delta \Psi^0 (\partial_\ell + \partial_r)m'.  \]
	Here, the second equality follows from $m = m' + m''$ and  $(\partial_\ell + \partial_r) m = 0$. 
	Thus, 
	\eqb\label{eq-girsanov-bdy}
	\frac{d\mu}{d\mu^0}(w) = \exp(-\frac1{2\pi}\int_\delta w (\partial_\ell + \partial_r)m' - \frac12 (m'', m'')_\nabla ) .
	\eqe
	
	Finally, we need to show that $(m'', m'')_\nabla = (m', m')_\nabla^\mathrm{reg} - (m, m)_\nabla^\mathrm{reg}$. Let $\eps > 0$ and let $m_\eps$ be a harmonic function on $D$ with continuous boundary data.
	With $m_\eps'' = P(m_\eps|_\delta)$ and $m_\eps' = m_\eps - m_\eps''$, Stoke's theorem gives  $2\pi(m_\eps, m_\eps'')_\nabla =  -\int_{\partial D} m_\eps'' \partial_n m_\eps + \int_Dm_\eps'' \Delta m_\eps$. This is zero because $m_\eps''|_{\partial D} = 0$ and $\Delta m_\eps = 0$, so expanding $(m_\eps, m_\eps'')_\nabla = 0$ gives $(m_\eps, m_\eps)_\nabla = (m_\eps, m_\eps')_\nabla$. This implies
	$(m_\eps'', m_\eps'')_\nabla = (m_\eps', m_\eps')_\nabla - (m_\eps, m_\eps)_\nabla$.

	We will now choose $m_\eps$ to approximate $m$ then send $\eps \to 0$. Assume the boundary data of $m_\eps$ agrees with that of $m$ in $\partial D \backslash B_{\eps^2}(x)$, and assume this boundary data is uniformly bounded for all $\eps$. 
	Since $m_\eps|_\delta$ converges uniformly to $m|_\delta$ we have $(m''_\eps, m''_\eps)_\nabla \to (m'', m'')_\nabla$. Next, since the harmonic measure of $\partial D \cap B_{\eps^2}(x)$ is $o_\eps(1)$ uniformly for all $z \in D \backslash B_{\eps}(x)$, we have $\left| \int_{D \backslash B_\eps(x)} \nabla m_\eps \cdot \nabla m_\eps - \nabla m \cdot \nabla m \right| = o_\eps(1)$, and similarly $\left| \int_{D \backslash B_\eps(x)} \nabla m_\eps' \cdot \nabla m_\eps' - \nabla m' \cdot \nabla m' \right| = o_\eps(1)$. Moreover, since Brownian motion started in $D \cap B_\eps(x)$ hits $\partial D$ before $B_{\sqrt \eps}(x)$ with probability $1-o_\eps(1)$ uniformly in the starting point, and the harmonic functions $m_\eps$ in $D$ and $m'_\eps$ in $D \backslash \delta$ have the same boundary data on $\partial D \cap B_{\sqrt\eps}(x)$, we conclude $\left|\int_{D \cap B_\eps(x)} \nabla m_\eps \cdot \nabla m_\eps -  \nabla m_\eps' \cdot \nabla m_\eps' \right| = o_\eps(1)$. To summarize, $(m_\eps', m_\eps')_\nabla  - (m_\eps, m_\eps)_\nabla \to (m', m')_\nabla^\mathrm{reg} - (m, m)_\nabla^\mathrm{reg} $. Therefore $(m'', m'')_\nabla = (m', m')_\nabla^\mathrm{reg} - (m, m)_\nabla^\mathrm{reg}$, which with~\eqref{eq-girsanov-bdy} completes the proof. 
\end{proof}
The following identity follows from setting $(a,b) = (0, \chi(\mathbf c)/\sqrt{2\pi})$ in  \cite[Proposition 5.2]{dubedat-coupling}.
\begin{lemma}[{\cite[Proposition 5.2]{dubedat-coupling}}]\label{lem-reg-dirichlet}
	There is a constant $\lambda$ such that 
	\[e^{-\frac12(m,m)_\nabla^\mathrm{reg}} = \lambda {\det}_\zeta (\Delta_D)^{3 \chi(\mathbf c)^2}.\]
\end{lemma}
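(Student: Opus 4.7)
\medskip

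The plan is to reduce the identity to a single conformal invariance computation. Both sides of the claim depend on the domain $D$, and the boundary values of $m$ depend on both $D$ and the choice of additive constant for $m$. My strategy is to show that the ratio
\[
R(D) := e^{-\frac12(m,m)_\nabla^\mathrm{reg}} \cdot {\det}_\zeta(\Delta_D)^{-3\chi(\mathbf c)^2}
\]
is invariant under conformal change of domain (i.e.\ if $f: \C \to \C$ is a homeomorphism with $f(D) = \wt D$ that is conformal on $D$, and $\wt m$ is the IG transform of $m$, then $R(D) = R(\wt D)$), and then compute $R$ explicitly on a reference domain, e.g.\ the unit disk. Since the family of admissible $(D,x)$ is connected under conformal equivalence, this will identify $R$ with a universal constant $\lambda$.

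For the denominator, the transformation of ${\det}_\zeta(\Delta_D)$ under the conformal map $f$ is given by the classical Polyakov-Alvarez (conformal anomaly) formula: writing $\sigma = \log|f'|$, we have
\[
\log{\det}_\zeta(\Delta_{\wt D}) - \log{\det}_\zeta(\Delta_D) = -\frac1{6\pi}\int_D (\nabla \sigma)\cdot(\nabla\sigma)\, dA + \text{boundary terms in }\sigma,
\]
where the boundary terms involve $\sigma$ and the geodesic curvature of $\partial D$. This is a classical result (see, e.g., Osgood-Phillips-Sarnak); our task is only to compare its form with the transformation of $(m,m)_\nabla^\mathrm{reg}$.

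For the numerator, the IG coordinate change rule (Definition~\ref{def-IG}) gives $m = \wt m \circ f - \chi(\mathbf c)\arg f' + 2\pi\chi(\mathbf c)k$ for some integer $k$, which we translate into an identity for the \emph{harmonic} representatives with boundary data. Expanding
\[
(m,m)_\nabla = (\wt m \circ f, \wt m\circ f)_\nabla - 2\chi(\mathbf c)(\wt m\circ f, \arg f')_\nabla + \chi(\mathbf c)^2(\arg f', \arg f')_\nabla
\]
on the relevant region $D\setminus B_\eps(x)$, using the conformal invariance of the Dirichlet energy for the first term, and Stokes' theorem for the cross term (noting $\arg f'$ is harmonic, and $\wt m$ has no jump at $f(x)$ along the boundary -- only its $\arg f'$ correction does), the cross term becomes a boundary integral. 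The key technical step is the analysis near $x$: the $\log\eps$ counterterm absorbs the divergence from the jump of $m$, and the contribution from the $\chi(\mathbf c)\arg f'$ term produces exactly the $-\frac{1}{6\pi}\int |\nabla\sigma|^2\, dA$ plus boundary terms predicted by Polyakov-Alvarez, times the coefficient $6\chi(\mathbf c)^2$. The factor $6$ matches (after multiplying by $\frac12$ in the exponent) the factor $3\chi(\mathbf c)^2$ on the right-hand side.

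The main obstacle is the careful tracking of finite contributions arising from the $\log\eps$ regularization near $x$, since the boundary datum of $m$ has a jump of size $2\pi\chi(\mathbf c)$ there while $\wt m$ has a jump of size $2\pi\chi(\mathbf c)$ at $f(x)$, and the $\chi(\mathbf c)\arg f'$ correction has no jump but blows up logarithmically. Pairing these terms correctly -- and confirming that the boundary curvature/geodesic terms from Polyakov-Alvarez match those coming from $(\arg f', \arg f')$ via Stokes -- is where the factor $3\chi(\mathbf c)^2$ is pinned down. Once the conformal invariance of $R$ is established, picking $D = \D$ with $x = 1$ reduces the computation to an explicit harmonic function on the disk, where both sides can be evaluated in closed form to identify $\lambda$.
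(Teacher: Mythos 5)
The paper does not prove this lemma at all: it imports it as {\cite[Proposition 5.2]{dubedat-coupling}}, with the only work being a specialization of parameters (setting $(a,b) = (0, \chi(\mathbf c)/\sqrt{2\pi})$ there). So the comparison is not proof-against-proof; you are attempting a from-scratch derivation of a result that the paper simply cites.

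That said, the strategy you sketch --- establish that $R(D)$ is conformally invariant by matching the anomaly of $(m,m)_\nabla^{\mathrm{reg}}$ under the IG coordinate change against the Polyakov--Alvarez anomaly of $\det_\zeta(\Delta_D)$, then pin down the constant on a reference domain --- is indeed the standard way to prove such identities and is very likely how Dubédat argues. You also correctly locate the source of the exponent: the $\chi(\mathbf c)^2(\arg f',\arg f')_\nabla$ term equals $\frac{\chi(\mathbf c)^2}{2\pi}\int|\nabla\sigma|^2$ by Cauchy--Riemann, pairs with the bulk $\int|\nabla\sigma|^2$ in Polyakov--Alvarez, and after the overall $-\frac12$ yields the $3\chi(\mathbf c)^2$. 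However, as written this is not yet a proof: its technical core is deferred. The cross term $(\wt m\circ f,\arg f')_\nabla$ does not obviously vanish or simplify; after Stokes it becomes a boundary integral involving $\wt m\circ f$, $\partial_n\arg f'$, and the $2\pi\chi(\mathbf c)$ jump at $x$, and matching it against the Polyakov--Alvarez boundary terms (which are phrased in terms of $\sigma=\log|f'|$, $\partial_n\sigma$, and geodesic curvature) is precisely where the calculation lives. Likewise the regularization bookkeeping near $x$ is subtle: the counterterm $\frac{(2\pi\chi(\mathbf c))^2}{\pi}\log\eps$ is defined with Euclidean balls, which $f$ distorts, contributing a $\log|f'(x)|$ anomaly that must be tracked alongside everything else. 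You acknowledge these as "the main obstacle" but do not resolve them, so the sign and coefficient cancellations between the cross term and the pure $\chi(\mathbf c)^2$ term are never actually verified. You also have the jump behavior reversed: $\wt m$ does have a $2\pi\chi(\mathbf c)$ jump at $f(x)$ (that is exactly what the IG boundary data on $\wt D$ looks like, per Lemma~\ref{lem-ig-bdy-cond}), while $\arg f'$ is continuous for a smooth boundary map; since the boundary Stokes computation turns on exactly this structure, the slip is not harmless. In short: right skeleton, unfilled flesh, and one factual error; since the statement is available verbatim from Dubédat, citing it, as the paper does, is both shorter and safer than completing this derivation.
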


Now we state two other results from \cite{dubedat-coupling}. Let $m^l(D, ; K_1, K_2)$ be the mass of Brownian loops in $D$ intersecting both $K_1$ and $K_2$, as introduced in~\cite{lawler-werner-soup}. 
\begin{lemma}[{\cite[Proposition 2.1]{dubedat-coupling}}]\label{lem-zeta}
	Let $C$ be a collar neighborhood of $\delta$ in $D$, that is, a neighborhood of $\delta$ which is diffeomorphic to $\delta \times (-1, 1)$ via a diffeomoephism which takes the intersection of the neighborhood with $\partial D$ to $(\partial \delta) \times (-1, 1)$. Then 
	\[\exp(-m^l (D; \delta, D\backslash C)) = \frac{{\det}_\zeta(\Delta_D) {\det}_\zeta(\Delta_{C \backslash \delta})}{{\det}_\zeta(\Delta_{D \backslash \delta})  {\det}_\zeta(\Delta_C)}.\]
\end{lemma}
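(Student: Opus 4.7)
The strategy is to pass through the heat kernel representation of both sides. First, I would apply inclusion-exclusion together with the restriction property of the Brownian loop measure (a loop in $D$ is contained in an open subset $U\subset D$ with the same measure as a loop in $U$) to write
\[
m^l(D;\delta, D\setminus C) = m^l(D) - m^l(D\setminus\delta) - m^l(C) + m^l(C\setminus\delta),
\]
where $m^l(U)$ denotes the total Brownian loop measure in $U$. The argument is: a loop in $D$ hits both $\delta$ and $D\setminus C$ iff it hits $\delta$ and is not contained in $C$, i.e.\ $m^l(D;\delta,D\setminus C) = m^l(D;\delta) - m^l(C;\delta)$, and each term $m^l(U;\delta)$ can be further split as $m^l(U) - m^l(U\setminus\delta)$. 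Individually the four terms are divergent, but the alternating combination will turn out to be finite.

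Second, I would use the standard representation $m^l(U) = \int_0^\infty \mathrm{Tr}(e^{t\Delta_U/2})\, t^{-1}\,dt$, where $\Delta_U$ is the Dirichlet Laplacian on $U$. This follows from the definition of the loop measure as $\int_0^\infty \int_U \mu^{br,U,x,x}_t\, dx\,t^{-1}\,dt$ and the fact that the total mass of a Brownian bridge of duration $t$ from $x$ to $x$ killed on $\partial U$ is the Dirichlet heat kernel $p_t^U(x,x)$, which integrates over $U$ to $\mathrm{Tr}(e^{t\Delta_U/2})$.

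Third, I would invoke the Mellin-type identity for zeta-regularized determinants: whenever the trace difference $\mathrm{Tr}(e^{t\Delta_{U_1}/2}) - \mathrm{Tr}(e^{t\Delta_{U_2}/2})$ is integrable on $(0,\infty)$ against $t^{-1}\,dt$,
\[
\log\frac{\det_\zeta(-\Delta_{U_1})}{\det_\zeta(-\Delta_{U_2})} = -\int_0^\infty\bigl[\mathrm{Tr}(e^{t\Delta_{U_1}/2}) - \mathrm{Tr}(e^{t\Delta_{U_2}/2})\bigr]\frac{dt}{t}.
\]
This is obtained by differentiating at $s=0$ the Mellin representation $\zeta_U(s) = \frac{1}{\Gamma(s)}\int_0^\infty t^{s-1}\mathrm{Tr}(e^{t\Delta_U/2})\,dt$ and using $-\zeta_U'(0) = \log\det_\zeta(-\Delta_U)$. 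Grouping the four trace terms as $[\mathrm{Tr}(e^{t\Delta_D/2}) - \mathrm{Tr}(e^{t\Delta_{D\setminus\delta}/2})] - [\mathrm{Tr}(e^{t\Delta_C/2}) - \mathrm{Tr}(e^{t\Delta_{C\setminus\delta}/2})]$ and applying this identity to each bracket gives
\[
m^l(D;\delta, D\setminus C) = \log\frac{\det_\zeta(-\Delta_{D\setminus\delta})\det_\zeta(-\Delta_C)}{\det_\zeta(-\Delta_D)\det_\zeta(-\Delta_{C\setminus\delta})},
\]
from which exponentiating yields the claimed formula.

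The main technical obstacle is the justification of convergence of the trace-difference integrals near $t=0$. Adding a Dirichlet boundary along $\delta$ is a local perturbation: the method-of-images heuristic together with standard Gaussian heat kernel bounds shows that $p_t^U(x,x) - p_t^{U\setminus\delta}(x,x)$ is, to leading order, comparable to $(2\pi t)^{-1}\exp(-2\, d(x,\delta)^2/t)$ and decays rapidly away from $\delta$. Integrating in $x$ gives each bracketed trace difference behavior of order $t^{-1/2}$ as $t\to 0$, with the same leading constant for the $D$-bracket and for the $C$-bracket because $\delta$ sits inside the collar $C$ and the relevant heat kernel asymptotics depend only on an arbitrarily small neighborhood of $\delta$. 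The leading $t^{-1/2}$ terms thus cancel between the two brackets, leaving a combined integrand that is integrable at $0$ against $t^{-1}\,dt$. Large-$t$ convergence is automatic from the spectral gap of each Dirichlet Laplacian. Once the integrability is in hand, applying the Mellin identity separately to each bracket delivers the lemma.
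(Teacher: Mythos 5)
This lemma is cited from Dub\'edat \cite[Proposition 2.1]{dubedat-coupling} and the paper does not reprove it, so the right comparison is with Dub\'edat's argument. Your overall strategy --- expressing $m^l(D;\delta,D\setminus C)$ via inclusion-exclusion and the loop-soup restriction property as the alternating sum $m^l(D)-m^l(D\setminus\delta)-m^l(C)+m^l(C\setminus\delta)$, writing each $m^l(U)$ as $\int_0^\infty t^{-1}\operatorname{Tr}(e^{t\Delta_U/2})\,dt$, and then passing to zeta determinants via the Mellin transform at $s=0$ --- is the standard route and is essentially what Dub\'edat does.

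There is one step that does not work as written. You state the Mellin identity
\[
\log\frac{\det_\zeta(\Delta_{U_1})}{\det_\zeta(\Delta_{U_2})} = -\int_0^\infty\bigl[\operatorname{Tr}(e^{t\Delta_{U_1}/2})-\operatorname{Tr}(e^{t\Delta_{U_2}/2})\bigr]\frac{dt}{t}
\]
under the hypothesis that the trace difference is integrable against $t^{-1}\,dt$, and then propose applying it ``separately to each bracket'' $\operatorname{Tr}(e^{t\Delta_U/2})-\operatorname{Tr}(e^{t\Delta_{U\setminus\delta}/2})$ for $U\in\{D,C\}$. But, as you yourself observe, each of those brackets behaves like $c\,t^{-1/2}$ as $t\to0$ (with $c$ proportional to the length of $\delta$), so neither bracket satisfies the integrability hypothesis, and the two-term identity as stated does not apply to it. If one carries out the $s$-analytic continuation bracket by bracket, the $c\,t^{-1/2}$ asymptotic produces a simple pole of $\int_0^1 t^{s-1}(\operatorname{Tr}_U-\operatorname{Tr}_{U\setminus\delta})\,dt$ at $s=1/2$, and $-\zeta'(0)$ acquires an extra additive constant ($2c$) beyond the naive divergent integral; this constant only drops out after subtracting the two brackets, precisely because $c$ is the same for both. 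The clean fix is to formulate and invoke the Mellin step directly for the four-term alternating combination, whose integrand really is integrable. A second, smaller point: cancellation of the leading $c\,t^{-1/2}$ terms alone does not establish integrability of the remainder against $t^{-1}\,dt$ (a remainder of size $t^{-1/2}/\log(1/t)$ would still diverge). What actually justifies integrability is the locality of the Dirichlet heat kernel (``principle of not feeling the boundary''), which gives $|p_t^U(x,x)-p_t^{U'}(x,x)|\le C e^{-c'/t}$ whenever $U$ and $U'$ agree near $x$, and hence that the four-term trace combination vanishes to exponential order in $t^{-1}$ as $t\to0$. Your informal appeal to ``arbitrarily small neighborhoods of $\delta$'' should be upgraded to this quantitative statement.
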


For $i,j, \in \{1,2\}$, let $\mu_{ij}$ (resp.\  $\mu^0_{ij}$) be the law of $h$ sampled from $\mathrm{IG}_{\mathbf c}^{K, ij}$ (resp.\ zero boundary GFF on $D_{ij}$) restricted to $\delta$. Let $P_{ij} w$ be the function on $D_{ij}$ which is harmonic on $D_{ij} \backslash \delta$, equals $w$ on $\delta$  and equals zero on $\partial D_{ij}$. 
\begin{lemma}[{\cite[Lemma 4.4]{dubedat-coupling}}]\label{lem-girsanov-zero}
	Let $C$ be any common collar neighborhood of $\delta$ for $D_{ij}$ and $D_{i'j'}$. Then 
	\[
	\frac{d\mu_{i'j'}^0}{d\mu_{ij}^0}(w) = \exp\left(\pi ( ( P_{ij} w, P_{ij}w )_\nabla - ( P_{i'j'}w, P_{i'j'}w )_\nabla) +\frac12 (m^l(D_{ij}; \delta, D_{ij} \backslash C) - m^l(D_{i'j'}; \delta, D_{i'j'} \backslash C) ) \right).
	\]
\end{lemma}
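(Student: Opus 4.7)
The plan is to derive the Radon-Nikodym derivative by decomposing each zero-boundary GFF via its Markov property, writing down formal densities for the restriction to $\delta$, and then taking their ratio. Let $\Psi_D^0$ denote a zero-boundary GFF on $D \in \{D_{ij}, D_{i'j'}\}$. By the domain Markov property, $\Psi_D^0 = P_D w + \tilde\Psi_D^0$, where $w = \Psi_D^0|_\delta$ and, conditionally on $w$, $\tilde\Psi_D^0$ is an independent zero-boundary GFF on $D\setminus\delta$ (which has two components). Using the orthogonality $(P_D w, \tilde\Psi_D^0)_\nabla = 0$ together with the fact that the formal partition function of the GFF on $D$ is proportional to $\det_\zeta(\Delta_D)^{-1/2}$ (with the same universal divergent prefactor for every domain), one obtains the formal density of $\mu_D^0$ against a common reference measure on distributions on $\delta$:
\[ p_D(w) \propto \det_\zeta(\Delta_D)^{1/2}\det_\zeta(\Delta_{D\setminus\delta})^{-1/2}\exp(-\pi (P_D w, P_D w)_\nabla). \]
Taking $p_{D_{i'j'}}(w)/p_{D_{ij}}(w)$ immediately yields the energy contribution $\exp(\pi((P_{ij}w, P_{ij}w)_\nabla - (P_{i'j'}w, P_{i'j'}w)_\nabla))$ multiplied by the determinant ratio
\[ R := \left(\frac{\det_\zeta(\Delta_{D_{i'j'}})\det_\zeta(\Delta_{D_{ij}\setminus \delta})}{\det_\zeta(\Delta_{D_{ij}})\det_\zeta(\Delta_{D_{i'j'}\setminus \delta})}\right)^{1/2}. \]

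Next, I would invoke Lemma~\ref{lem-zeta} separately for $D_{ij}$ and $D_{i'j'}$, using the common collar $C$ in each case. Because the factors $\det_\zeta(\Delta_C)$ and $\det_\zeta(\Delta_{C\setminus\delta})$ are the same in both applications, they cancel in the ratio, leaving
\[ R = \exp\left(\tfrac12\bigl(m^l(D_{ij}; \delta, D_{ij}\setminus C) - m^l(D_{i'j'}; \delta, D_{i'j'}\setminus C)\bigr)\right), \]
which combines with the energy term to give the claimed formula. Note that the fact that one can use the \emph{same} collar $C$ for both domains is crucial: this is what allows the $C$-dependent zeta factors to cancel, and it is ultimately the geometric source of the locality that Theorem~\ref{thm-loc-dom} will extract when we set $\mathbf c = 0$ and multiply two copies of the identity to eliminate the collar term entirely.

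The hard part will be making the formal density calculation rigorous, since the zero-boundary GFF lives in an infinite-dimensional negative Sobolev space and does not admit a density against any canonical reference measure. The standard resolution is to approximate by finite-dimensional projections: expand the GFFs in eigenfunction bases of the Dirichlet Laplacians on $D\setminus \delta$ (taking advantage of the fact that the two domains agree on the collar), truncate to an $N$-dimensional subspace, compute the exact Gaussian Radon-Nikodym derivative of the marginals on $\delta$ of the truncated GFFs, and let $N \to \infty$. Convergence of the finite-dimensional log-determinants to the ratio of zeta-regularized determinants is standard. The remaining point is that the two marginal laws on $\delta$ are mutually absolutely continuous; this holds because $D_{ij}$ and $D_{i'j'}$ share the collar $C$, so the corresponding Green's functions differ by a function which is smooth in a neighborhood of $\delta$, and hence the associated covariance operators on $\delta$ differ by a smoothing (in particular, Hilbert--Schmidt) operator, so the Feldman--H\'ajek dichotomy gives equivalence of the two Gaussian measures.
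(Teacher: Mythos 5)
The paper does not prove this lemma; it is cited directly from Dub\'edat \cite[Lemma~4.4]{dubedat-coupling}, so there is no internal proof to compare against. Your reconstruction does capture what is essentially Dub\'edat's own argument: decompose each zero-boundary GFF via the domain Markov property at $\delta$, read off a formal Gaussian density for $w$ whose normalization is the ratio of $\zeta$-regularized determinants of $D$ and $D\setminus\delta$ (with the quadratic-form exponent $(P_Dw,P_Dw)_\nabla$ coming from the orthogonal decomposition), take the ratio of two such densities, and convert the resulting determinant ratio into loop-soup masses via Lemma~\ref{lem-zeta} applied with the \emph{same} collar $C$ for both domains so that the $\det_\zeta(\Delta_C)$ and $\det_\zeta(\Delta_{C\setminus\delta})$ factors cancel. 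That last step---exploiting the common collar to kill the $C$-dependence---is exactly the point, and you have isolated it correctly; it is what makes the right-hand side depend on the collar only through a symmetric subtraction, which then vanishes in the application in Theorem~\ref{thm-RN}.

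Two places where the sketch is thinner than a complete proof. First, you state the formal density with exponent $-\pi(P_Dw,P_Dw)_\nabla$ without deriving the coefficient; this coefficient is tied to the particular normalization of the GFF and of $(\cdot,\cdot)_\nabla$ being used (the paper's $(\cdot,\cdot)_\nabla$ carries a factor $\tfrac{1}{2\pi}$, as one reads off from the proof of Lemma~\ref{lem-girsanov-bdy}), and a careless reader could easily land on $-\tfrac12$ instead. Since the final exponent must match the $\pi$ in the lemma statement, the coefficient should be pinned down, not posited. Second, the mutual absolute continuity discussion is slightly too casual: Feldman--H\'ajek requires the operator $C_{ij}^{-1/2}(C_{i'j'}-C_{ij})C_{ij}^{-1/2}$ to be Hilbert--Schmidt, not merely that $C_{i'j'}-C_{ij}$ be smoothing. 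On the one-dimensional set $\delta$ the trace covariance $C_{ij}$ behaves like a pseudodifferential operator of order $-1$, so conjugating a smoothing kernel by $C_{ij}^{-1/2}\sim(-\Delta_\delta)^{1/4}$ still leaves a Hilbert--Schmidt operator, but that extra step should be said. Neither point changes the structure of the argument; both are the sort of thing that would need to be spelled out to turn the sketch into a full proof, which is presumably why the paper cites Dub\'edat rather than reproducing it.
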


We turn to the proof of Theorem~\ref{thm-RN}. It is essentially done in \cite[Proof of Lemma 6.3]{dubedat-coupling}.
\begin{proof}[{Proof of Theorem~\ref{thm-RN}}]
	All Radon-Nikodym derivative identities in this proof hold for $\mu_{ij}$-a.e.\ $w$. 
	We have 
	\[ \frac{d\mu_{11}}{d\mu_{21}} \times \frac{d\mu_{22}}{d\mu_{12}} = \frac{d\mu_{11}}{d\mu_{11}^0}  \frac{d\mu_{11}^0}{d\mu_{21}^0}  \frac{d\mu_{21}^0}{d\mu_{21}} 
	\times 
	\frac{d\mu_{22}}{d\mu_{22}^0}  \frac{d\mu_{22}^0}{d\mu_{12}^0}  \frac{d\mu_{12}^0}{d\mu_{12}}
	. \]
	Let $P_{ij}w$ denote the harmonic function on $D_{ij}$ with boundary conditions of $w$ on $\delta$ and zero boundary conditions on $\partial D_{ij}$. Then $m_{ij}':= m_{ij} - P_{ij}(m_{ij}|_\delta)$ is the harmonic function on $D_{ij} \backslash \delta$ with boundary conditions on $\partial D_{ij}$ agreeing with those of $m_{ij}$, and zero boundary conditions on $\delta$.
	Notice that the boundary values of $m_{ij}$ in the left subdomain of $D_{ij}$ do not depend on $j$ since the boundary values near $x$ do not depend on $j$ and the boundary values along $\partial \D$ in the clockwise direction from $x$ vary according to the total curvature (winding) of the boundary.
	Thus $m_{ij}'$ restricted to the left subdomain of $D_{ij}$ does not depend on $j$, and likewise $m_{ij}'$ restricted to the right subdomain does not depend on $i$.  Thus, by Lemma~\ref{lem-girsanov-bdy} 
	\[\frac{d\mu_{11}}{d\mu_{11}^0}
	\frac{d\mu_{21}^0}{d\mu_{21}}
	\frac{d\mu_{22}}{d\mu_{22}^0}
	\frac{d\mu_{12}^0}{d\mu_{12}} 
	= \exp(\frac12 (
	(m_{11}, m_{11})_\nabla^\mathrm{reg}
	- (m_{21}, m_{21})_\nabla^\mathrm{reg}
	+ (m_{22}, m_{22})_\nabla^\mathrm{reg}
	- (m_{12}, m_{12})_\nabla^\mathrm{reg}
	)  ),
	\]
	since all terms involving $m'_{ij}$ cancel. Next, since $P_{ij} w$ restricted to the left (resp.\ right) subdomain of $D_{ij}$ does not depend on $j$ (resp.\ $i$),  we have 
	\[(P_{11}w, P_{11}w)_\nabla - (P_{21}w, P_{21}w)_\nabla + (P_{22}w, P_{22}w)_\nabla - (P_{12}w, P_{12}w)_\nabla = 0.\]
	Thus, by Lemmas~\ref{lem-zeta} and~\ref{lem-girsanov-zero}, we have 
	\[\frac{d\mu_{11}^0}{d\mu_{21}^0} \frac{d\mu_{22}^0}{d\mu_{12}^0} = \sqrt{\frac{{\det}_\zeta(\Delta_{D_{11}}) {\det}_\zeta(\Delta_{D_{22}}) }{ {\det}_\zeta(\Delta_{D_{21}}) {\det}_\zeta(\Delta_{D_{12}}) }}.\]
	Combining the above identities and Lemma~\ref{lem-reg-dirichlet}, and using  $-\frac12 + 3\chi(\mathbf c)^2 = -\frac{\mathbf c}2$, we conclude 
	\[	\left(\frac{{\det}_\zeta(\Delta_{D_{11}}){\det}_\zeta(\Delta_{D_{22}})}{{\det}_\zeta(\Delta_{D_{21}}){\det}_\zeta(\Delta_{D_{12}})} \right)^{-\mathbf c/2} \frac{d\mu_{11}}{d\mu_{21}}\frac{d\mu_{22}}{d\mu_{12}} = 1.\]
	By the domain Markov property of the Gaussian free field, this implies~\eqref{eq-locality-domain}. 
\end{proof}

\bibliography{cibib,bib}
\bibliographystyle{hmralphaabbrv}

\end{document}